\newcommand{\vungoc}{V\~u Ng\k{o}c}
\numberwithin{equation}{section}
\definecolor{linkblue}{rgb}{0,0,.6}
\definecolor{citered}{rgb}{.7,0,0}
\newtheorem{theorem}{Theorem}[section]
\newtheorem{proposition}[theorem]{Proposition}
\newtheorem{corollary}[theorem]{Corollary}
\newtheorem{lemma}[theorem]{Lemma}
\theoremstyle{definition}
\newtheorem{definition}[theorem]{Definition}
\newtheorem{remark}[theorem]{Remark}
\newtheorem{example}[theorem]{Example}
\theoremstyle{plain}
\def\R{{\mathbb R}}
\newcommand{\AGL}{\mathrm{AGL}}
\title{On the affine invariant of simple hypersemitoric systems}
\author{Konstantinos Efstathiou \& Sonja Hohloch \& Pedro Santos}
\begin{document}
\begin{abstract}
Hypersemitoric systems are a class of integrable systems on $4$-dimen{\-}sional symplectic manifolds which only have mildly degenerate singularities and where one of the integrals induces an effective Hamiltonian $S^1$-action and is proper.
We introduce the affine invariant of simple hypersemitoric systems, which is a generalization of the Delzant polytope of toric systems and the polytope invariant of semitoric systems. Along the way, we compute and plot this invariant for meaningful and more and more complicated examples.

MSC codes. Primary: 53D05 53D20, 37J35. Secondary: 70H06.
\end{abstract}

\maketitle
\tableofcontents

\section{Introduction}
Hamiltonian integrable systems are an important and interesting class of dynamical systems, which describe many physical phenomena. Furthermore, they obey certain conservation laws and have interesting rigidity properties. Examples are common in mathematics, physics and other natural sciences (biology, chemistry, etc.), such as the coupled angular momenta, the spherical pendulum, and the Lagrange, Euler and Kovalevskaya spinning tops. Hamiltonian integrable systems are a field with a long tradition at the intersection of dynamical systems, ODEs, PDEs, Lie theory, symplectic geometry, classical mechanics, and so on, for references see \cite{Perelomov,factorizationproblems,papachristou,delzant1988hamiltoniens,arnol2013mathematical}.

Roughly speaking, a Hamiltonian integrable system, from now on briefly an \textit{integrable system}, contains the maximal number of independent symmetries and
conserved quantities. More precisely, an integrable system is given by a triple $(M,\omega,F=(f_1,...,f_n):M\rightarrow \mathbb{R}^n)$ where $(M,\omega)$ is a $2n$-dimensional symplectic manifold, the functions $f_1,...,f_n$ Poisson commute, and the Hamiltonian vector fields of the functions are linearly independent at almost every point. A point where this linear independence fails to hold is called a singularity. It is at the singularities that the most interesting dynamical behavior of the system occurs. 

The classification of integrable systems has been a central point of study over the past decades. There is a long list of mathematicians that worked on many types of classifications, to name just a few, ranging from topological (Bolsinov \& Fomenko \cite{bolsinov2004integrable} and their school) to symplectic (Delzant \cite{delzant1988hamiltoniens}, Karshon \cite{karshon1999periodic}, Pelayo \& \vungoc \ \cite{pelayo2009semitoric},...), from local (Eliason \cite{eliasson1990normal}, Zung \& Miranda \cite{miranda2004equivariant}, Dullin \& \vungoc \ \cite{dullin2007symplectic}, \vungoc \  \& Wacheux \cite{vu2013smooth},...) to global (Delzant \cite{delzant1988hamiltoniens}, Karshon \cite{karshon1999periodic}, Pelayo \& \vungoc\ \cite{pelayo2009semitoric},...), etc.

When classifying integrable systems one has to make a choice: either go for a very large set of integrable systems which leads to quite involved invariants or restrict to a smaller set of integrable systems which give us more specific invariants. 

In this paper we are interested in integrable systems on a $4$-dimensional symplectic manifold with two conserved quantities one of which generates an $S^1$-symmetry, and with singularities that are either nondegenerate or mildly degenerate (in a manner that will be made more precise later). The ultimate goal is to come up with a classification of these systems. This paper aims at introducing one of the naturally appearing invariants, which is the image of the local actions of the system glued together appropriately. In what follows, we motivate the idea behind this invariant.

From the global symplectic point of view, the first classification result was for toric systems, i.e., integrable systems where the flows of the Hamiltonians are periodic of minimal period $2\pi$, i.e., the induced toric action is effective. Delzant \cite{delzant1988hamiltoniens}, in 1988, showed that if $(M,\omega,F)$ is a toric system then its image $F(M)$ is a rational convex polytope of special type, called a Delzant polytope, which completely classifies the toric system. Furthermore, given any Delzant polytope $\Delta$ one can construct an integrable system $(M',\omega',F')$ such that $F'(M')=\Delta$. We note that toric systems are the ``simplest'' kind of integrable system, where the only type of singularities that appear are  of elliptic type.

In 1999, Karshon \cite{karshon1999periodic} symplectically classified Hamiltonian $S^1$-spaces, i.e., triples $(M,\omega,J)$ where $M$ is a $4$-dimensional compact symplectic manifold and $J$ generates an effective $S^1$-action. The classification is in terms of a labeled graph that encodes information about the fixed points and isotropy groups of the $S^1$-action. Using this classification, Hohloch \& Palmer \cite{hohloch2021extending} were able to prove that for each Hamiltonian $S^1$-space $(M,\omega,J)$ there exists a Hamiltonian $H:M\rightarrow \mathbb{R}$ such that $(M,\omega,(J,H))$ is a hypersemitoric integrable system (see Definition \ref{d.hypersmitoric}). In particular, every Hamiltonian $S^1$-space extends to an integrable system. Hypersemitoric systems have not yet been classified, and to establish one invariant for these systems is the motivation for the present paper.  

Then, in 2009-2011, Pelayo \& \vungoc\ 
\cite{pelayo2009semitoric,pelayo2011constructing} advanced the question of global symplectic classification of integrable systems to systems of simple semitoric type (see Definition \ref{d.semitoric} and what follows), and later on Palmer \& Pelayo \& Tang \cite{semitoricnonsimple} extended this classification to semitoric systems. Semitoric systems are a generalization of toric systems, since all but one of the Hamiltonians generate periodic flows, and the systems allow for more general singularities than in the toric case, called focus-focus singularities.  The classification of semitoric systems is in term of five invariants, with one of them being a generalization of the Delzant polytope of toric systems. We refer to this invariant as the semitoric polytope invariant. 

Hypersemitoric systems are the next natural class of integrable systems when generalizing semitoric systems.  A hypersemitoric system is an integrable system $(M,\omega,(J,H))$ on a $4$-dimensional symplectic manifold such that $J$ is proper and generates an effective $S^1$-action and all singularities are either nondegenerate or parabolic (where parabolic singularities are the simplest type of degenerate singularities). In this paper we are interested in a specific type of hypersemitoric systems called simple. A hypersemitoric system $(M,\omega,F=(J,H))$ is called simple if each connected component of a fiber $F^{-1}(f)$ contains at most one $S^1$ orbit of singular points. The goal of this paper is to contribute to the classification of hypersemitoric systems, by generalizing the semitoric polytope invariant to simple hypersemitoric systems. We call this invariant the affine invariant of a simple hypersemitoric system.
In contrast to toric and semitoric systems, hypersemitoric systems may have fibers that have several components. The presence of disconnected fibers makes defining a generalization of a polytope invariant a more challenging task. 

Compared to a semitoric system, there may be two new main features in a simple hypersemitoric system: a flap (see Section \ref{s.flaps/pleatsdefinition} and Figure \ref{p.flap}) and/or a pleat/swallowtail (see Section \ref{s.flaps/pleatsdefinition} and Figure \ref{p.pleat}). In order to define the affine invariant for a simple hypersemitoric system we focus first on studying systems just admitting one of these structures and then generalize our procedure to general systems. 

When studying examples with hyperbolic-regular points, we noticed that systems with so called curled tori do not fall into the class of hypersemitoric systems due to appearance of degenerate nonparabolic points. To obtain a more general classification of integrable systems, it is important to also understand what happens in these examples. For this reason, in this paper we also compute the affine invariant of two nonhypersemitoric examples. 

\subsection{Our results}The main result of this paper is the following:
\begin{theorem}
\label{t.ourresult}
To each simple hypersemitoric system $(M,\omega,F:=(J,H))$ on a $4$-dimensional compact symplectic manifold $(M,\omega)$ one can associate an affine invariant, which is the image of a map defined by the action variables obtained after decomposing $F(M)$ into leaves and introducing certain vertical cuts. In fact, this affine invariant is a symplectic invariant of the system.
\end{theorem}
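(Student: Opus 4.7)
The plan is to mimic the semitoric polytope construction of Pelayo--\vungoc\ while accommodating the new phenomena that hypersemitoric systems admit, most notably disconnected fibers above flaps and pleats. The construction would proceed in three steps: build a leaf space that separates the components of the fibers, slit it along vertical rays at focus-focus values to obtain a simply connected model, and then develop local action coordinates into $\R^2$; the output of the last step is the candidate affine invariant.

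First, since $F^{-1}(c)$ may have several connected components when $c$ lies inside a flap or a pleat or near a hyperbolic-regular curve, the image $F(M)$ by itself cannot carry the full symplectic information. I would therefore replace $F(M)$ with its leaf space $\widetilde{F(M)}$, the set of connected components of fibers with the quotient topology, together with the projection $\pi\colon \widetilde{F(M)}\to F(M)$. Using the local normal forms at elliptic, elliptic-transverse, focus-focus, hyperbolic-regular, and parabolic points (Eliasson and \vungoc--Wacheux), I would show that $\widetilde{F(M)}$ has the structure of a topological surface with corners, étale over $F(M)$ on the regular stratum and branching in a controlled way over the images of hyperbolic-regular curves and of parabolic points (the tips of flaps and pleats).

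Second, on each leaf $J$ is already a global action because it generates an effective, proper Hamiltonian $S^1$-action, so only a conjugate action $K$ must be constructed. Around every regular value inside a leaf the Mineur--Arnold formula produces such a $K$, well defined up to an additive constant and the transformation $K\mapsto \pm K + nJ$ with $n\in\Z$. The obstruction to globalizing $K$ on a given leaf comes from the focus-focus values sitting in its interior; following \vungoc, I would remove it by choosing a vertical cut in the direction $\pm\partial_H$ at each focus-focus value and gluing the two sides of the slit so that the resulting domain is simply connected. On this cut leaf space the pair $(J,K)$ extends continuously, and the affine invariant is taken to be the image of $(J,K)$ over all leaves.

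The main obstacle, and the actual content of the theorem, will be showing that this image is independent of the auxiliary choices and therefore a genuine symplectic invariant. The freedoms are: the sign of each vertical cut at a focus-focus value, the integration constants of the local $K$'s, and the relative placement in $\R^2$ of sheets of $\widetilde{F(M)}$ that meet along a hyperbolic-regular curve or a parabolic tip. I would show that changing these choices alters the image only by an explicit piecewise affine transformation, generated by the $(\Z/2)^{m_f}$-action on the focus-focus cuts as in the semitoric case, by $GL(2,\Z)$ transitions along hyperbolic-regular walls dictated by the \vungoc--Wacheux normal form at adjacent parabolic points, and by a global $AGL(2,\Z)$ ambiguity. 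Symplectic invariance then follows formally: a symplectomorphism intertwining two hypersemitoric systems induces a homeomorphism of their leaf spaces that is affine in the action coordinates, so the images coincide modulo the group above. The hardest step will be the compatibility check across hyperbolic-regular curves and parabolic tips, where the local models are less rigid than in the elliptic or focus-focus cases; I would handle it by a careful case analysis around each parabolic singularity and by tracking the developing map along small loops encircling the branching strata.
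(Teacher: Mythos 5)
Your overall architecture --- pass to the leaf space (the paper's ``unfolded momentum domain''), slit along vertical rays, develop via action coordinates, and organize the residual choices into a group acting on the set of representatives --- is the same as the paper's, which works ``layer wise'' on connected components of the fibers and proves the statement through Theorem \ref{t.polytopegeneral}. But there is one concrete gap: you cut only at focus-focus values. In a hypersemitoric system the rank-zero critical values in the interior of a layer also include elliptic-elliptic values sitting on the boundaries of flaps, and these contribute nontrivial monodromy on the background leaf: by the $S^1$-monodromy formula (Proposition \ref{p.monodromy}) a loop in the background encircling a standard flap with $n_i$ elliptic-elliptic points has monodromy $\begin{bsmallmatrix}1&0\\n_i&1\end{bsmallmatrix}$ even though it encloses no focus-focus value. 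So after slitting only at focus-focus values your domain is still not affinely simply connected and $(J,K)$ does not extend. The paper therefore introduces a vertical cut at \emph{every} rank-zero critical value in the interior (equivalently one cut per flap, or one per elliptic-elliptic value on a flap; Theorems \ref{t.polytopenflaps} and \ref{t.polytopeflap}), transplanting each such cut point to the hyperbolic-regular value of the underlying initial flap with the same $J$-value, and the group acting on representatives is accordingly $\{-1,1\}^{|C_0|}$ rather than $\{-1,1\}^{m_f}$.

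A second, smaller omission concerns pleats: there the set of regular values is simply connected and no cut is needed, but the two families of tori $T^{\pm}(c)$ over the interior of the pleat give two genuinely different developing maps, which the paper records as an extra discrete choice $\alpha\in\{+,-\}^{p}$ with an explicit compatibility relation, Equation \eqref{eq.pleatgoodaction}. Your ``relative placement of sheets'' freedom gestures at this, but the mechanism that actually pins down the constants and the continuity of the developed image across hyperbolic-regular curves in the paper is not a case analysis of normal forms: it is the volume identity that the vertical extent of the image over $J=x$ must equal the Duistermaat--Heckman density $\rho_J(x)=\mathrm{vol}(J^{-1}(x)/S^1)$ (Proposition \ref{p.lengthnflaps} and Karshon's formula). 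Incorporating the cuts at interior elliptic-elliptic values and this normalization would close the argument.
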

Furthermore, we compute certain representatives of the affine invariant for $3$ specific systems, see Figure \ref{f.fullinvariantffinsideflap}, Figure \ref{f.fullinvarianffoutsideflap} and Figure \ref{f.fullinvariantflaponflap} in Section \ref{s.generalhypersemitoric}:
\begin{itemize}
  \item Figure \ref{f.fullinvariantffinsideflap} shows $4$ representatives of the affine invariant of a hypersemitoric system containing two focus-focus values inside of a flap.
  \item Figure \ref{f.fullinvarianffoutsideflap} shows $4$ representatives of the affine invariant of a hypersemitoric system containing two focus-focus values outside of a flap.
  \item Figure \ref{f.fullinvariantflaponflap} shows $4$ representatives of the affine invariant of a hypersemitoric system containing a flap with two elliptic-elliptic values inside of another flap.
\end{itemize}
Theorem \ref{t.ourresult} is proven in Section \ref{s.generalhypersemitoric}. The rough idea behind the proof is as follows:
\begin{itemize}
\item Work layer wise, where each layer corresponds to a connected component of the associated fibers.
\item Introduce a certain choice of cuts, as \vungoc \ \cite{vu2007moment}, for certain critical values of rank $0$ of the system.
\item Make a suitable choice of action coordinates in the resulting simply connected set.
\end{itemize}
In more detail, to obtain Theorem \ref{t.ourresult} we first analyze the situation of a hypersemitoric system exhibiting only a flap or a pleat/swallowtail. In particular, we note that in the case of a flap different approaches can be taken in order to arrive at an affine invariant: one can make a cut for each flap or one can make a cut for each elliptic-elliptic value present in the image of the flap. The first approach is ``smoother'' while the second approach gives more continuity at the expense of introducing more corners on the invariant, see Figure \ref{f.polytopeinvariantsflapwith2elliptic}. This leads to the following statement:
\begin{theorem}
\label{t.resultflaps}
     Let $(M,\omega)$ be a closed $4$-dimensional symplectic manifold:
     \begin{enumerate}
   \item Let $(M,\omega,F=(J,H))$ be a hypersemitoric system exhibiting only standard flaps. Then making a choice of cut for each flap and a suitable choice of action coordinates one can associate with the system an affine invariant.
    \item Let $(M,\omega,F=(J,H))$ be a hypersemitoric system exhibiting only standard flaps. Then making a choice of cut for each elliptic-elliptic value on the image of the flap and a suitable choice of action coordinates one can associate with the system an affine invariant.
    \end{enumerate}
    If the number of elliptic-elliptic values present on the image of a flap is bigger than one, then the two approaches yield a different affine invariant.
\end{theorem}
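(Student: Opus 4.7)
The plan is to adapt \vungoc's vertical-cut construction from the semitoric setting to the presence of standard flaps, following the outline given just after Theorem \ref{t.ourresult}. First I would decompose the image $F(M)$ into layers, where each layer is a maximal subset of $F(M)$ on which the connected components of the fibers can be coherently labeled. Under the hypothesis that only standard flaps occur, the layer structure is very explicit: over the complement of the flaps there is a single leaf, while over each flap there is an extra upper leaf corresponding to the additional connected component of the fiber. On each layer the $S^1$-action generated by $J$ restricts, and the regular part of each layer is foliated by Liouville tori on which Arnold--Liouville applies.

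On each layer I would then remove the chosen vertical half-lines so that the resulting cut layer becomes simply connected. On such a cut layer one takes action coordinates $(\hat{J}, K)$, where $\hat{J}$ coincides with $J$ (since $J$ already generates the global $S^1$-action) and $K = \tfrac{1}{2\pi}\oint_{\gamma}\alpha$ is defined from a smoothly varying basis cycle $\gamma$ of the regular Liouville torus and any primitive $\alpha$ of $\omega$ on a tubular neighborhood. Because $(\hat{J}, K)$ is then a developing map for the integral affine structure, its image is a rational polygonal region bounded by pieces of lines of rational slope together with the images of the hyperbolic-regular and elliptic arcs. Gluing the images of the various cut layers along their shared boundary pieces produces the affine invariant representative.

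To obtain part (1), I would place a single vertical cut per flap, anchored at a chosen singular value inside the flap; this keeps the upper leaf as a single simply connected region and yields a representative with as few extra corners as possible. For part (2), under the assumption that only one flap is present, I would instead place one vertical cut at each elliptic-elliptic value of the flap. Each additional cut subdivides the upper leaf into further simply connected pieces, producing a representative with more corners but improved continuity behavior at the boundary of the flap, in the spirit of Figure \ref{f.polytopeinvariantsflapwith2elliptic}. In both cases, the affine invariant is declared to be the equivalence class of the resulting polygonal region under the piecewise integer affine transformations generated by changing the direction of the vertical cuts and by the sign choices in the basis cycles, mirroring the equivalence used in the semitoric polytope invariant.

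The step I expect to be the main obstacle is controlling how the action coordinate $K$ matches across the hyperbolic-regular edges that bound a flap. On the two sides of such an edge the regular Liouville torus degenerates to a singular hyperbolic-regular fiber, and the period integrals defining $K$ on the upper and lower leaves must agree up to an explicit $SL(2,\Z)$-twist so that the gluing of the two polygonal pieces is consistent. Verifying this will require a local normal-form analysis at hyperbolic-regular and parabolic points, together with a check that the parabolic endpoints of the flap contribute no additional monodromy beyond what is already encoded by the chosen cut. Once this compatibility is established, the equivalence class of the resulting polygonal image is independent of the auxiliary choices and defines the advertised affine invariant.
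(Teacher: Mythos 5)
Your overall skeleton (decompose $F(M)$ into leaves, cut along vertical half-lines to kill monodromy, develop the integral affine structure via period integrals, and quotient by the cut/sign choices) is the same as the paper's, but the step you single out as the main obstacle is aimed at the wrong edge, and the step that actually carries the content is missing. You propose to glue the upper leaf of a flap to the background across the hyperbolic-regular edge by an explicit $SL(2,\Z)$-twist. No such gluing exists and the paper does not attempt one: crossing that edge the number of fiber components jumps from one to two, a cycle on the big torus degenerates through the bitorus into a \emph{sum} of cycles on the two component tori, so the developing map is genuinely discontinuous there. In the paper (Theorems \ref{t.polytopenflaps} and \ref{t.polytopeflap}) the flappy part is handled as a separate simply connected leaf carrying its own affine map $g$ (resp.\ $g_i$), producing a polytope with flap-shaped holes; the only relation between $g$ and $f_{\vec{\epsilon}}$ is a height identity (Equations \eqref{eq.goodactionflap1}--\eqref{eq.goodactionflap2}) forced by the Duistermaat--Heckman density, not a normal-form matching of period lattices across the hyperbolic-regular line.

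The missing ingredient is the computation of the transition matrices across the vertical cuts in the \emph{background}. The reason a cut is needed at all is that the background is not simply connected around a standard flap and the torus bundle over a loop encircling it has monodromy $\begin{bsmallmatrix}1 & 0\\ n_i & 1\end{bsmallmatrix}$, where $n_i$ is the number of elliptic-elliptic values on the flap; this is obtained from Proposition \ref{p.monodromy} via the isotropy weights of the elliptic-elliptic fixed points, and it is exactly what yields the relation \eqref{eq.monodromyrelationnflaps} (one cut per flap, multiplicity $n_i$) versus \eqref{eq.monodromyrelation} (one cut per elliptic-elliptic value, multiplicity $1$ each), and hence the rational convexity of the image. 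Your proposal never identifies where these matrices come from, and it also omits the two devices the paper uses to finish: the Duistermaat--Heckman density function, which fixes the vertical normalization so the pieces on either side of each cut fit continuously, and the path-rule/Abelian-monodromy argument needed when the cuts disconnect the set of regular values. Without these, you cannot establish the transition formulas at the cuts, and the "check that the parabolic endpoints contribute no additional monodromy" that you defer is not where the difficulty lies.
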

Theorem \ref{t.resultflaps} is proven in Section \ref{s.flap}. Furthermore, in Section \ref{s.flap} we compute representatives of these different choices of affine invariant, see Figure \ref{p.iafsflap} and Figure \ref{f.polytopeinvariantsflapwith2elliptic}.
\begin{itemize}
    \item Figure \ref{p.iafsflap} shows a representative of the affine invariant of a hypersemitoric system containing a standard flap with a single elliptic-elliptic value. Both approaches of the affine invariant yield the same result in this example.
    \item Figure \ref{f.polytopeinvariantsflapwith2elliptic} shows representatives of the two types of affine invariant for a system containing a standard flap with two different elliptic-elliptic values. The two approaches yield different results.
\end{itemize}
\noindent
Next we need to study hypersemitoric systems exhibiting only a pleat/swallowtail:
\begin{theorem}
    Let $(M,\omega,F=(J,H))$ be a hypersemitoric system exhibiting only a pleat/swallowtail. Then making suitable choices of action coordinates one can associate with $(M,\omega,F=(J,H))$ an affine invariant. In particular no choice of cut is necessary. 
\end{theorem}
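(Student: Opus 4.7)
The plan is to mirror the flap analysis of Section \ref{s.flap}, but to exploit the fact that a pleat/swallowtail introduces no affine monodromy and hence requires no cut. First I would decompose $F(M)$ into layers according to the connected components of the fibers of $F$. A pleat/swallowtail creates a cusp-shaped region in $F(M)$ bounded by arcs of hyperbolic-regular values that meet at two parabolic values; inside this region the fibers acquire one extra connected component, which becomes its own layer. Each layer is a two-dimensional topological disk, possibly with cuspidal boundary, with the parabolic values sitting at the cusp tips where the two layers meet.

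Next, on the regular part of each layer the Arnold--Liouville theorem furnishes local action coordinates, and $J$ is already a globally defined action. The substantive task is to produce a second action variable, conjugate to $J$, defined globally on each layer. Since the system by hypothesis exhibits no focus-focus values and no flap (only the pleat), and since each layer is simply connected, the sheaf of local action variables has trivial affine monodromy on the regular part of every layer; thus a smooth second action variable extends globally on the regular part of each layer, and one obtains a well-defined action map into $\R^2$ for each layer separately.

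The delicate point is the behaviour at the parabolic cusps where the two layers come together. Using the known local normal form for parabolic singular points, one verifies that the period integral associated with the vanishing cycle at a parabolic value has a finite limit, and that the action maps of the two adjacent layers agree there up to the integral affine transformation prescribed by the normal form. Once this local compatibility is established, the images of the layers glue together into a single subset of $\R^2$ carrying a canonical integral affine structure, and this set is the desired affine invariant.

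The main obstacle is exactly the matching of the action integrals across the parabolic values: parabolic points are degenerate, so one cannot appeal to Arnold--Liouville there and must instead control the leading behaviour of the period integrals via the explicit normal form. However, because parabolic singularities are only mildly degenerate, this is tractable, and once the matching is done the argument concludes: the absence of focus-focus values removes the monodromy obstruction that forces cuts in the semitoric case, and the absence of an "unattached" sheet removes the reason cuts are needed for flaps. Hence a globally defined action map exists without any cut, yielding the affine invariant.
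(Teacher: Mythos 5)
Your high-level framing is right on one point: because the set of regular values is simply connected and there are no focus-focus values, there is no affine monodromy and hence no vertical cut is needed; the construction really is just a careful choice of action coordinates. But the core of your construction imports the \emph{flap} picture into a situation where it does not apply. For a pleat, the fiber over an interior value of the triangular region is a disjoint union of two tori $T^{+}(c)\cup T^{-}(c)$, and --- this is the defining difference from a flap --- the component that dies at one elliptic-regular boundary arc is \emph{not} the one that dies at the other: $T^{-}$ vanishes across $l_1$ while $T^{+}$ continues into the exterior, and the roles are exchanged across $l_2$. Consequently there is no canonical decomposition into ``background layer plus one extra layer forming its own disk with cusp tips'': each of the two sheets of the unfolded momentum domain is attached to the exterior region along a \emph{different} elliptic-regular arc. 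The paper's proof (Theorem \ref{t.polytopepleat}) therefore makes a binary choice of torus family and produces \emph{two} maps, $f_1$ smooth and affine on $F(M)\setminus(l_2\cup\gamma([0,1]))$ and $f_2$ smooth and affine on $F(M)\setminus(l_1\cup\gamma([0,1]))$, and the affine invariant is the \emph{pair} of their images; this choice reappears as the parameter $\alpha\in\{+,-\}^{p}$ in the general Theorem \ref{t.polytopegeneral}. Your proposal never makes this choice and so does not actually specify which second action is being used inside the pleat.

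The more serious problem is your gluing step. You locate the delicate point at the parabolic cusps and claim the action maps of adjacent layers ``agree up to the integral affine transformation prescribed by the normal form,'' so that the images ``glue together into a single subset of $\R^2$.'' This fails along the hyperbolic-regular curve $\gamma(]0,1[)$: when a regular torus from outside $P$ degenerates to the bitorus and splits into $T^{+}\cup T^{-}$, a generating cycle of its first homology splits into the \emph{sum} of the corresponding cycles of $T^{+}$ and $T^{-}$. The resulting relation between the actions is additive --- the sum of the limits of $f_1^{(2)}$ and $f_2^{(2)}$ from one side equals the common limit of $f_1^{(2)}=f_2^{(2)}$ from the other side, Equation \eqref{eq.pleatgoodaction} --- not a composition with an element of $AGL(2,\Z)$. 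In particular neither $f_1$ nor $f_2$ extends continuously over its discontinuity locus, and there is no single glued affine image; the invariant must be recorded as the pair of images together with this additive compatibility relation. The behaviour at the cusps, which you single out as the main obstacle, is comparatively harmless and is not where the argument can break.
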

This is proven in Section \ref{s.pleat}. In particular:
\begin{itemize}
    \item Figure \ref{f.panelswallowtail} shows the affine invariant for a hypersemitoric system containing a single pleat/swallowtail.
\end{itemize}

On the way towards defining an affine invariant for a simple hypersemitoric system we came across interesting, formally nonhypersemitoric examples. Here "formally" means that there exist degenerate nonparabolic singular points as the only obstruction for being hypersemitoric. If one wants to further extend the affine invariant to more classes of integrable systems, it is important to understand these examples. Let us call these examples a \textit{system exhibiting a line of curled tori}, see Definition \ref{d.lineofcurledtori}.
\begin{proposition}
    Let $(M,\omega,F=(J,H))$ be a system exhibiting a line of curled tori. Let $C_0$ denote the set of critical values of rank $0$ of the system. If $F(M)\backslash C_0$ is simply connected then by making a suitable choice of action coordinates one can associate an affine invariant with the system. Furthermore, no choice of cuts is necessary.
\end{proposition}
This is the content of Proposition \ref{p.generalizedcurledtori}.
Furthermore, we compute the affine invariant of a system exhibiting a line of curled tori for two examples, see Figure \ref{f.polytopecurledtori} and Figure \ref{f.gridmicroflap}.
\begin{itemize}
    \item Figure \ref{f.polytopecurledtori} shows the affine invariant for a system exhibiting a line of curled tori connecting to a single degenerate value. 
    \item Figure \ref{f.gridmicroflap} shows the affine invariant for a system exhibiting a line of curled tori connecting to the image of a generalized flap.
\end{itemize}

\subsection{Structure of the paper}
\begin{itemize}
    \item In Section \ref{s.premilinaries} we recall and state the necessary definitions and conventions. 
    \item In Section \ref{s.structureflap} we give an example of a system exhibiting a flap and describe the basic structure of the image of a flap.
    \item In Section \ref{s.introquantization} we recall the method of quantization. In particular we apply it to two examples, one of which is the Hirzebruch surface. Later in the paper we will use the quantization of certain systems to obtain the (representatives of the) affine invariants of said systems.
    \item In Section \ref{s.flap} we show that with a hypersemitoric system exhibiting only flaps one can associate an affine invariant. We explore two ideas that allow us to obtain an affine invariant: making a cut for each flap, see Subsection \ref{s.polytopenflaps}, and making a cut for each elliptic-elliptic value present in the image of a flap, see Subsection \ref{s.polytopeflap}. Moreover, in Subsection \ref{s.grouporbitnflaps} and Subsection \ref{s.orbitpolytope} we study the effect the choice of cut direction has on the affine invariant. Furthermore, in Subsection \ref{s.examplespolytopeflap} we compute the affine invariant of specific systems using both approaches. 
    \item In Section \ref{s.pleat} we show that with a hypersemitoric system exhibiting only a pleat/swallowtail one can associate an affine invariant. Furthermore, in Subsection \ref{s.pleatexample} we compute the affine invariant for a specific system. 
    \item In Section \ref{s.curledtori} we study systems exhibiting a line of curled tori and associate an affine invariant with them. Furthermore, we compute this invariant for two examples.
    \item In Section \ref{s.generalhypersemitoric} we associate with a simple hypersemitoric system an affine invariant. Furthermore, in Subsection \ref{s.examplesgeneralhypersemitoric} we compute representatives of this affine invariant in three examples. 
    \item In Appendix \ref{s.classicalactions} we compute the classical actions of $4$ systems, which is necessary to obtain (representatives of) their affine invariants. 
    \item In Appendix \ref{s.quantizationofthesystems} we show how to quantize $3$ systems defined on a certain Hirzebruch surface which is then used to obtain the affine invariant of specific examples. 
\end{itemize}
\subsection*{Acknowledgments}
The second author was partially and third author fully supported by the FWO-EoS project {\em Beyond symplectic geometry} with UA Antigoon number 45816. Moreover, the second author was also partially supported by the grant "Francqui Research Professor 2023-2026" of the Francqui Foundation with UA Antigoon number 49741.

\section{Preliminaries}
\label{s.premilinaries}
In this section we introduce the background, concepts, notations and results necessary for this paper, i.e., experts may proceed directly to Section \ref{s.structureflap}.

\subsection{Integrable systems}
\begin{quotation}
{\em Throughout this paper we assume all manifolds $M$ to be connected. }
\end{quotation}
Let $(M,\omega)$ be a symplectic manifold. Since the symplectic form $\omega$ is nondegenerate, we can associate to each $f\in C^{\infty}(M,\mathbb{R})$ a vector field $X_{f}$ using the relation $\omega(X_f,\cdot )=-df(\cdot)$. We say that $f$ is a Hamiltonian function and $X_f$ is its Hamiltonian vector field. The flow of $X_f$ is called the Hamiltonian flow generated by $f$. The Poisson bracket of the Hamiltonian functions $f,g\in C^{\infty}(M,\mathbb{R})$ induced by $\omega$ is defined by $\{f,g\}:=\omega(X_f,X_g)$.

\begin{definition}
Let $(M,\omega)$ be a $2n$-dimensional symplectic manifold. A $2n$-dimen{\-}sional (completely) integrable system is a triple $(M,\omega,F)$ where $F=(f_1,...,f_n):M\rightarrow \mathbb{R}^n$, called the momentum map, satisfies the following conditions: 
\begin{itemize}
\item $\{f_i,f_j\}=0$ for all $1\leq i,j \leq n$;
\item The Hamiltonian vector fields $X_{f_1},...,X_{f_n}$ are linearly independent almost everywhere in $M$.
\end{itemize}
\end{definition}

\begin{definition}
Let $(M,\omega,F)$ and $(M',\omega',F')$ be $2n$-dimensional integrable systems. We say that $(M,\omega,F)$ and $(M',\omega',F')$ are isomorphic if there exists a pair $(\phi,\rho)$ where $\phi:(M,\omega)\rightarrow (M',\omega')$ is a symplectomorphism and $\rho:F(M)\rightarrow F'(M')$ is a diffeomorphism, such that $\rho\circ F=F'\circ \phi$.
\end{definition}

The rank of $dF$ at a point $x\in M$ is defined by the dimension of the span of $X_{f_1},...,X_{f_n}$ at $x$ which is equal to the rank of $dF$ at $x$. A point $x\in M$ is called a \textbf{regular} point of $F$ if $dF(x)$ has maximal rank, and otherwise it is called \textbf{singular}. A value $c\in F(M)$ is called a \textbf{regular value} of $F$ if $F^{-1}(c)$ only contains regular points, in which case the fiber $F^{-1}(c)$ is also called \textbf{regular}. A value $c\in F(M)$ is called singular if there exists at least one singular point in $F^{-1}(c)$.

Note that the symplectic form vanishes on the fibers of $F$, and in particular, the regular fibers of $F$ are Lagrangian submanifolds of $M$. Therefore, the map $F$ induces a \textbf{singular Lagrangian fibration} on $M$. 

\subsection{Regular points} Let us recall an example of an integrable system on the cotangent bundle $T^*\mathbb{T}^n$ of the $n$-torus $\mathbb{T}^n$. Let $(q_1,\dots,q_n,p_1,\dots,p_n)$ be the standard local coordinates in $T^*\mathbb{T}^n \simeq \mathbb{T}^n \times \mathbb{R}^n$, then the symplectic form is locally given by $\omega_0:=-\sum_{j=1}^{n}dq_j\wedge dp_j$. Consider the momentum map given by $F:=(p_1,\dots,p_n)$. This example serves as a model for neighborhoods of regular fibers:

\begin{theorem}
\label{t.AL}
({Liouville-Arnold-Mineur Theorem, Arnold \cite{arnol2013mathematical}}) Let $(M,\omega,F)$ be an integrable system and let $c\in F(M)$ be a regular value. If $\Lambda_c:=F^{-1}(c)$ is a regular, compact and connected fiber, then there exist neighborhoods $U\subset F(M)$ of $c$ and $V\subset \mathbb{R}^n$ of the origin, such that for
\begin{equation*}
\mathcal{U}:= \bigsqcup_{r\in U}F^{-1}(r)\quad  \text{and} \quad \mathcal{V}:=\mathbb{T}^n\times V \subset T^*\mathbb{T}^n
\end{equation*}
we have that $(\mathcal{U},\omega|_{\mathcal{U}},F|_{\mathcal{U}})$ and $(\mathcal{V},\omega_0|_{\mathcal{V}},F|_{V})$ are isomorphic integrable systems. 
\end{theorem}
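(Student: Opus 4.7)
The plan is to construct explicit action-angle coordinates on a neighborhood of $\Lambda_c$. First I would show that $\Lambda_c\cong\T^n$. Since $c$ is regular, the Hamiltonian vector fields $X_{f_1},\dots,X_{f_n}$ are tangent to $\Lambda_c$ (because $\{f_i,f_j\}=0$ implies each $f_j$ is invariant under the flow of $X_{f_i}$) and pointwise linearly independent on it. Moreover $[X_{f_i},X_{f_j}]=-X_{\{f_i,f_j\}}=0$, so the joint flow defines a smooth locally free $\R^n$-action on $\Lambda_c$. Compactness and connectedness then force the isotropy to be a cocompact full-rank lattice $\Lambda(c)\subset\R^n$, yielding $\Lambda_c\cong\R^n/\Lambda(c)\cong\T^n$.

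Next I would extend the picture to a neighborhood. Because $F$ is a submersion along $\Lambda_c$, on a small open $U\ni c$ the preimage $\mathcal U:=F^{-1}(U)$ is a smooth torus bundle with fibers $\Lambda_r$. A smooth local section $s\colon U\to\mathcal U$ combined with the commuting flows yields a surjection $\R^n\times U\to\mathcal U$, $(t,r)\mapsto\phi_{f_1}^{t_1}\circ\dots\circ\phi_{f_n}^{t_n}(s(r))$, which descends to an isomorphism of torus bundles $\mathcal U\cong\bigsqcup_{r\in U}\R^n/\Lambda(r)$, where the lattice $\Lambda(r)\subset\R^n$ varies smoothly in $r$.

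The heart of the argument is replacing $F$ by new integrals whose Hamiltonian flows are $2\pi$-periodic. I would fix a basis $\gamma_1(c),\dots,\gamma_n(c)$ of $H_1(\Lambda_c,\Z)$ and extend it smoothly to cycles $\gamma_j(r)\subset\Lambda_r$ using the trivialization above. Shrinking $\mathcal U$ so that $\om$ admits a primitive $\alpha$ on it, I would define the \emph{action variables} by
\[
I_j(r):=\frac{1}{2\pi}\oint_{\gamma_j(r)}\alpha.
\]
Stokes' theorem together with the fact that each fiber is Lagrangian shows that $I_j$ depends only on $r$ and the homology class of $\gamma_j(r)$; a direct computation identifies $X_{I_j}$ with the generator of the $j$-th factor of the lattice action on $\Lambda_r$, so its flow is $2\pi$-periodic and the $I_j$ Poisson commute.

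Finally, the \emph{angle variables} $\theta_j$ are defined on $\mathcal U$ by requiring that the time-$t$ flow of $X_{I_j}$ increment $\theta_j$ by $t$, with $s(r)$ placed at $\theta=0$. The resulting map $(\theta,I)\colon\mathcal U\to\T^n\times V$ is the required diffeomorphism and intertwines $F$ with the projection to $V$ via the diffeomorphism $\rho(r):=(I_1(r),\dots,I_n(r))$. The step I expect to be the main obstacle is verifying that this map is a symplectomorphism, i.e.\ that $(\theta,I)^*\om_0=\om|_{\mathcal U}$, since the $\theta_j$ are only defined modulo $2\pi\Z$ and the section $s$ need not itself be Lagrangian. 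I would address this by first arranging $s$ to be Lagrangian (possible after modifying the initial section, using that Lagrangian sections correspond to primitives vanishing along them), and then showing the pushed-forward form is $\T^n$-invariant under angle translations, is closed, has the correct periods $\oint_{\gamma_j}\alpha=2\pi I_j$, and agrees with $\om_0$ on the image of $s$; these constraints rigidify the form and force equality with $\om_0$, completing the desired isomorphism.
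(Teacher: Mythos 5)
The paper does not prove this statement: Theorem \ref{t.AL} is quoted as the classical Liouville--Arnold--Mineur theorem with a citation to Arnold, so there is no internal proof to compare yours against. Your sketch is the standard Arnold--Duistermaat argument (transitive locally free $\R^n$-action on the compact connected fiber, period lattice, action variables as period integrals of a primitive, angle variables from a Lagrangian section), and it is essentially correct; in particular you correctly isolate the one genuinely delicate step, namely that the section must be taken Lagrangian for $(\theta,I)$ to be a symplectomorphism. The only points left implicit that a full write-up would need are the completeness of the flows on nearby fibers (which requires shrinking $U$ so that $F$ restricted to the relevant components is proper) and the smooth dependence of the period lattice $\Lambda(r)$ on $r$ (via the implicit function theorem applied to the time-$T$ flow map); both are routine.
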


In particular this means that $\Lambda_c\simeq \mathbb{T}^n$ and that $F|_{\mathcal{U}}$ is a trivial torus bundle. The local coordinates $p_j$ on $T^*{\mathbb{T}^n}$ are called \textbf{action coordinates} and the $q_j$ are called \textbf{angle coordinates}. Moreover, if $\{\gamma_1(c),\dots,\gamma_n(c)\}$ is a basis of the homology group $H_1(\Lambda_c)$, varying smoothly with $c\in U$, then the action coordinates are given by the formula
\begin{equation*}
p_j(c)=\frac{1}{2\pi}\oint_{\gamma_j(c)}\overline{\omega},
\end{equation*}
where $\overline{\omega}$ is any $1$-form such that $d\overline{\omega}=\omega$ on $\mathcal{U}$.

\subsection{Integral affine structures}
\label{s.IAFS}
The group of integral affine maps of $\mathbb{R}^n$ is the semidirect product $\AGL(n,\mathbb{Z}) = GL(n,\mathbb{Z}) \ltimes \mathbb{R}^n$. An element $T = (A, b) \in \AGL(n,\mathbb{Z})$, where $A \in GL(n,\mathbb{Z})$ and $b\in \mathbb{R}^n$, acts on $\mathbb{R}^n$ as  $T(x) = A x + b$, $x \in \mathbb{R}^n$. An \textbf{integral affine structure} on an $n$-manifold $X$ is an atlas of charts on $X$ such that the transition functions between these charts are integral affine maps on $\mathbb{R}^n$. If $X$ and $Y$ are manifolds equipped with integral affine structures, we call a map $g:X\rightarrow Y$ an integral affine map if it sends the integral affine structure of $X$ to the integral affine structure of $Y$. Equivalently, one can define an \textbf{integral affine structure} $\mathcal{A}$ on an $n$-manifold $X$ as a lattice in its tangent bundle, see for example Symington \cite[Proposition $2.10$]{symington2002four}.

Suppose that $(M,\omega,F)$ is an integrable system such that all fibers are connected, and let $B=F(M)$ denote the momentum map image. Let $B_r\subset B$ denote the set of regular values of $F$. Given any $c\in B_r$, applying Theorem \ref{t.AL}, we obtain coordinates $p_1,\dots,p_n$ in a neighborhood of $c$. Since they arise from a choice of primitive of $\omega$ and a choice of basis of $H_1(\Lambda_c)$, the action coordinates are unique up to the action of $\AGL(n,\mathbb{Z})$. Therefore, the action coordinates induce an integral affine structure on $B_r$. In fact, this construction also works if the fibers of $F$ are disconnected, but then one needs to work on each connected component.

\begin{example}
The standard lattice $\Lambda_0$ generated by the unit vectors tangent to the coordinate axes in $\mathbb{R}^n$ defines the \textbf{standard integral affine structure} $\mathcal{A}_0$ in $\mathbb{R}^n$.
\end{example}

\subsection{Topological monodromy}
\label{s.tm}In the notation of Section \ref{s.IAFS}, a natural question to ask is whether the integral affine structure on the set $B_r$ can be chosen to be trivial, i.e., if $M_{r}\simeq B_r \times \mathbb{T}^n$, with $M_r:=F^{-1}(B_r)$. Note that Theorem \ref{t.AL} guarantees the existence of \textbf{local} action-angle coordinates on a semi-global neighborhood of each regular fiber. So the actual question is whether these coordinates can be extended globally. One of the possible obstructions to this extension is the so called \textit{topological monodromy}, see for example Duistermaat \cite{duistermaat1980global}, which we recall in the following. 

Let $(M,\omega,F)$ be an integrable system with compact and connected fibers. By Theorem \ref{t.AL}, all regular fibers are diffeomorphic to an $n$-torus. In particular, their first homology groups are isomorphic. Let $\gamma \subset B_r$ be a loop. For each $s\in \gamma$, let $\Lambda_s:=F^{-1}(s)\subset M$ be the corresponding regular fiber. Fix a value $s_0\in \gamma$ and consider the first homology group of the corresponding fiber $H_1(\Lambda_{s_0})$. Due to Theorem \ref{t.AL}, for any $s\in \gamma$ close to $s_0$, there exists an isomorphism $H_1(\Lambda_s)\simeq H_1(\Lambda_{s_0})$, obtained by considering a smoothly varying basis $\{\alpha_1(\gamma(s)),\cdots ,\alpha_n(\gamma(s))\}$ of $H_1(\Lambda_{\gamma(s)})\simeq \mathbb{Z}^n$. Iterating this argument, following $\gamma$ until we are back at $s_0$, we obtain an automorphism
\begin{equation*}
\mu_{\gamma,s_0}:H_1(\Lambda_{s_0})\rightarrow H_1(\Lambda_{s_0}),
\end{equation*}
called the \textbf{monodromy} transformation. This map does not depend on the representative $\gamma\in [\gamma]$ of the homotopy class nor on the point $s_0\in \gamma$, so we obtain in fact a transformation $\mu_{[\gamma]}\in GL(n,\mathbb{Z})$. The association of this matrix to each homotopy class of the fundamental group defines the \textbf{monodromy map}
\begin{equation*}
\mu:\pi_1(B_r)\rightarrow GL(n,\mathbb{Z}).
\end{equation*}
\subsection{Affine monodromy}Similar to the topological monodromy associated to a torus fibration (see section \ref{s.tm}), an integral affine structure $\mathcal{A}$ on a manifold $B$ has an \textbf{affine monodromy} map
\begin{equation*}
\Psi:\pi_1(B,b)\rightarrow \text{Aut}(\Lambda_b),
\end{equation*}
where $\Lambda_b$ is the restriction to $T_bB$ of the lattice $\Lambda$ that defines $\mathcal{A}$. Specifically, if we identify $(T_bB,\Lambda_b)$ with $(\mathbb{R}^n,\mathbb{Z}^n)$ and let $\gamma$ be a loop with base point $b$, $\Psi([\gamma])$
is the automorphism of $(\mathbb{R}^n,\mathbb{Z}^n)$ such that $\gamma^{*}(TB,\Lambda)$ is isomorphic to $[0,1]\times (\mathbb{R}^n,\mathbb{Z}^n)/((0,p)\sim (1,\Psi([\gamma])(p)))$.

Let $(M,\omega,F)$ be a (completely) integrable system and $b\in B_r$. Recall the following lemma:
\begin{lemma}
(Symington \cite{symington2002four})
\label{l.af}
Consider, with respect to local standard coordinates near the regular fiber $F^{-1}(b)$, the topological monodromy of $\gamma\in \pi_1(B,b)$ given by $A\in GL(n,\mathbb{Z})$. Then the affine monodromy is given by the inverse transpose $A^{-T}$.
\end{lemma}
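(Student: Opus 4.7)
The plan is to derive the affine monodromy directly from the explicit formula for the action coordinates, and then use standard linear duality between the tangent and cotangent bundles to convert the topological monodromy matrix (which acts on homology) into the affine monodromy matrix (which acts on the tangent lattice).

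First, I would fix a basis $\{\gamma_1(b),\dots,\gamma_n(b)\}$ of $H_1(\Lambda_b)$ at the base point and extend it smoothly along the loop $\gamma$ by parallel transport through the local Liouville--Arnold--Mineur neighborhoods guaranteed by Theorem \ref{t.AL}. By the definition of topological monodromy, after one full traversal of $\gamma$ we return to a basis $\{\gamma'_1,\dots,\gamma'_n\}$ of the same $H_1(\Lambda_b)$ related to the original basis by $\gamma'_j=\sum_i A_{ij}\gamma_i$. Using the defining formula
\begin{equation*}
p_j(c)=\frac{1}{2\pi}\oint_{\gamma_j(c)}\overline{\omega},
\end{equation*}
the two systems of action coordinates $p$ and $p'$ associated with the two bases are related by
\begin{equation*}
p'_j=\frac{1}{2\pi}\oint_{\gamma'_j}\overline{\omega}=\sum_i A_{ij}\,p_i,
\end{equation*}
that is, $p'=A^{T}p$ as column vectors. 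This is the essential computation, and it is the step where it is most important to be careful about conventions (column versus row vectors, and whether the monodromy matrix describes the change of basis of cycles or its inverse).

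Second, I would translate this into a statement about tangent lattices. The integral affine structure on $B_r$ is encoded in the lattice $\Lambda\subset TB$ spanned pointwise by $\partial/\partial p_1,\dots,\partial/\partial p_n$ in any action chart. Since $p'=A^{T}p$ is the transition between the two trivializations of $(TB,\Lambda)$ near $b$, the cotangent basis transforms as $dp'=A^{T}dp$, hence by duality the tangent basis transforms as
\begin{equation*}
\frac{\partial}{\partial p'_k}=\sum_{j}(A^{-T})_{jk}\,\frac{\partial}{\partial p_j}.
\end{equation*}
Reading off the columns of $A^{-T}$ as the images of the standard basis vectors $e_k$ of $(\mathbb{R}^n,\mathbb{Z}^n)$, this is exactly the statement that $\Psi([\gamma])=A^{-T}$, matching the definition of affine monodromy given just before the lemma.

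Finally, I would verify that the construction is independent of the auxiliary choices: the choice of representative $\gamma$ in its homotopy class is handled by the same argument that makes the topological monodromy well defined, and the choice of primitive $\overline{\omega}$ only shifts the action coordinates by a constant, hence does not affect the Jacobian $A^{T}$. I expect the only genuine obstacle to be bookkeeping with transposes and inverses; the geometric content reduces to the single observation that action coordinates pair linearly with homology cycles, so homology and the action-coordinate lattice are dual $\mathbb{Z}$-modules, and any change of basis on one side is applied by the inverse transpose on the other.
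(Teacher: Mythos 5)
Your argument is correct, and it is the standard one: the action coordinates pair with homology cycles via $p_j=\frac{1}{2\pi}\oint_{\gamma_j}\overline{\omega}$, so the period lattice $\Span_{\mathbb Z}\{dp_j\}\subset T^*B$ is identified with $H_1(\Lambda_b)$, and the tangent lattice defining the affine structure is its dual; an automorphism $A$ on one side therefore acts as $A^{-T}$ on the other. Note that the paper does not prove this lemma at all — it is quoted from Symington — so there is nothing to compare your proof against; the only point deserving extra care is the one you already flag, namely that the conclusion $A^{-T}$ (rather than $A^{T}$) depends on both monodromies being defined with the same convention (both as the automorphism induced by traversing $\gamma$ once in the same direction), which is consistent with the definitions given in Sections \ref{s.tm} and the paragraph preceding Lemma \ref{l.af}.
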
 

\subsection{Nondegenerate singularities}
\label{sec:locNormalForm}
A singular point $p\in M$ of rank zero of an integrable system $(M,\omega,F=(f_1,\dots,f_n))$ is \textbf{nondegenerate} if the corresponding Hessians $d^2f_1(p), \dots, d^2f_n(p)$ span a Cartan subalgebra of the Lie algebra of quadratic forms on $T_pM$. 
We refer to Bolsinov \& Fomenko \cite{bolsinov2004integrable} for the general definition of nondegenerate points of higher rank, and to Section ~\ref{s.nondegeneratefour} for the special case $\dim M = 4$.

Nondegenerate singularities are in fact linearizable. More precisely, there exists a local normal form, developed in works by R\"ussmann \cite{russmann1964verhalten}, Vey \cite{vey1978certains}, Colin de Verdi\'ere \& Vey \cite{de1979lemme}, Eliasson \cite{eliasson1990normal, eliassonphdthesis}, Dufour and Molino \cite{dufour1991compactification}, Miranda \cite{mirandaphdthesis,miranda2014integrable}, Miranda \& Zung \cite{miranda2004equivariant}, Miranda \& \vungoc \ \cite{miranda2005singular}, \vungoc\ \& Wacheux \cite{vu2013smooth}, and Chaperon \cite{chaperon2013normalisation}.
\begin{theorem}
\label{t.localnormalform}
If $p \in M$ is a nondegenerate singular point of an integrable system $(M,\omega,F=(f_1,\dots,f_n))$, there exist local canonically symplectic coordinates $(x_1,\dots,x_n,\xi_1,\dots,\xi_n)$ near $p$, in which $p$ corresponds to the origin and the symplectic form becomes $\omega=\sum_{i=1}^n dx_i\wedge d\xi_i$, and functions $q_1,\dots,q_n$ of $(x_1,\dots,x_n,\xi_1,\dots,\xi_n)$ such that $\{f_i,q_j\}=0$, for all $1 \leq i,j \leq n$, and each $q_i$ has one of the following types:
\begin{enumerate}[label={(\roman*)}]
    \item \textbf{elliptic:} $q_i=\frac{1}{2} (x_i^2+\xi_i^2)$;
    \item \textbf{hyperbolic:} $q_i=x_i\xi_i$;
    \item \textbf{regular:} $q_i=\xi_i$;
    \item \textbf{focus-focus:} $q_i=x_i\xi_{i+1}-x_{i+1}\xi_i$ and $q_{i+1}=x_i\xi_1+x_{i+1}\xi_{i+1}$.
\end{enumerate}
If there are no components of hyperbolic type, then
\begin{equation*}
    F-F(p) = g\circ (q_1,\dots,q_n) \circ (x_1,\dots,x_n,\xi_1,\dots,\xi_n)
\end{equation*}
where $g$ is a diffeomorphism from a small neighborhood of $(0,\dots,0)\in \mathbb{R}^n$ onto another such neighborhood such that $g(0,\dots,0)=(0,\dots,0)$.
\end{theorem}

The number of regular type components $q_i$ appearing in Theorem~\ref{t.localnormalform} equals the rank of the nondegenerate singularity $p$. Additionally, the relations $\{f_i,q_j\}=0$, $1 \leq i,j \leq n$, imply that each $q_i$ factors through $F$.



\subsection{Nondegeneracy in four dimensions}
\label{s.nondegeneratefour}

Theorem \ref{t.localnormalform} implies that in dimension four, the types of occurring nondegenerate singularities $p \in M$ are limited to the following ones, determined by the possible combinations of $q_1$ and $q_2$:
\begin{enumerate}[label={(\roman*)}]
\item $p$ is \textbf{elliptic-elliptic} (rank $0$): $q_1=\frac{1}{2}(x_1^2+\xi_1^2)$ and $q_2=\frac{1}{2}(x_2^2+\xi_2^2)$;
\item $p$ is \textbf{elliptic-hyperbolic} (rank $0$): $q_1=\frac{1}{2}(x_1^2+\xi_1^2)$ and $q_2=x_2\xi_2$;
\item $p$ is \textbf{hyperbolic-hyperbolic} (rank $0$): $q_1=x_1\xi_1$ and $q_2=x_2\xi_2$;
\item $p$ is \textbf{focus-focus} (rank $0$): $q_1=x_1\xi_2-x_2\xi_1$ and $q_2=x_1\xi_1+x_2\xi_2$;
\item $p$ is \textbf{elliptic-regular} (rank $1$): $q_1=\frac{1}{2}(x_1^2+\xi_1^2)$ and $q_2=\xi_2$;
\item $p$ is \textbf{hyperbolic-regular} (rank $1$): $q_1=x_1\xi_1$ and $q_2=\xi_2$.
\end{enumerate}
Note that elliptic-regular singularities are also often referred to as {\bf transversally-elliptic} and hyperbolic-regular ones as {\bf transversally-hyperbolic}.

Instead of working with Cartan subalgebras, nondegenerancy of rank $0$ points, i.e., fixed points, can be verified using the following linear algebra technique.
\begin{lemma}
    ({Bolsinov \& Fomenko \cite{bolsinov2004integrable}}) Let $(M,\omega,F=(f_1,f_2))$ be a $4$-dimen\-sional completely integrable system having a fixed point $p\in M$. Let $\omega_p$ be the matrix of the symplectic form with respect to a basis of $T_pM$ and let $d^2f_1(p)$ and $d^2f_2(p)$ be the matrices of the Hessians of $f_1$ and $f_2$ with respect to the same basis. Then, the fixed point $p$ is nondegenerate if and only if $d^2f_1(p)$ and $d^2f_2(p)$ are linearly independent and if there exists a linear combination of $\omega_p^{-1}d^2f_1(p)$ and $\omega_p^{-1}d^2f_2(p)$ which has four distinct eigenvalues.
\end{lemma}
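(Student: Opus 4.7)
The plan is to reduce the statement to the structure theory of the symplectic Lie algebra $\mathfrak{sp}(T_pM,\omega_p)\cong \mathfrak{sp}(4,\mathbb{R})$. The key observation is that the map $Q\mapsto \omega_p^{-1}d^2Q(p)$ sending a quadratic form on $T_pM$ to its Hamiltonian operator is a Lie algebra isomorphism from the space of quadratic forms (under Poisson bracket) onto $\mathfrak{sp}(4,\mathbb{R})$. Writing $A_i:=\omega_p^{-1}d^2f_i(p)$, the definition of non-degeneracy given in Section \ref{sec:locNormalForm} then translates into the statement that $\mathrm{span}(A_1,A_2)\subset \mathfrak{sp}(4,\mathbb{R})$ is a Cartan subalgebra. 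So the task is to show that this property is equivalent to the two concrete conditions in the lemma.

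First I would verify the \emph{abelian} and \emph{dimension} conditions. Since $\{f_1,f_2\}\equiv 0$ and $p$ is a fixed point (so $df_i(p)=0$), the quadratic part of the bracket vanishes at $p$, which under the isomorphism above gives $[A_1,A_2]=0$ automatically. Moreover, $\omega_p$ being invertible implies that $d^2f_1(p),d^2f_2(p)$ are linearly independent if and only if $A_1,A_2$ are; since the rank of $\mathfrak{sp}(4,\mathbb{R})$ equals $2$, this linear independence is exactly what guarantees that $\mathrm{span}(A_1,A_2)$ has the correct dimension to be a Cartan subalgebra.

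The core of the argument is then the equivalence: a $2$-dimensional abelian subalgebra $\mathfrak{h}\subset \mathfrak{sp}(4,\mathbb{R})$ is a Cartan subalgebra if and only if it contains an element with four distinct (complex) eigenvalues. For the ``only if'' direction, a Cartan subalgebra consists entirely of semisimple elements, and regular elements (those whose centralizer has the minimal dimension, equal to the rank $2$) form a Zariski-open, hence non-empty, subset of $\mathfrak{h}$. In $\mathfrak{sp}(4,\mathbb{R})$, since the spectrum is symmetric about $0$, an element is regular precisely when its four eigenvalues are pairwise distinct. For the ``if'' direction, if some $\lambda_1 A_1+\lambda_2 A_2$ has four distinct eigenvalues, then it is regular semisimple and its centralizer $\mathfrak{z}\subset \mathfrak{sp}(4,\mathbb{R})$ is a Cartan subalgebra of dimension $2$; since $A_1,A_2$ commute with it they lie in $\mathfrak{z}$, and by dimension count $\mathrm{span}(A_1,A_2)=\mathfrak{z}$.

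The main obstacle is the identification, in the real form $\mathfrak{sp}(4,\mathbb{R})$, of regular semisimple elements with matrices having four distinct eigenvalues, together with the fact that the centralizer of such an element is a Cartan subalgebra of the expected dimension $2$. These are standard facts from the structure theory of classical Lie algebras that I would quote from \cite{bolsinov2004integrable}; everything else is linear algebra once the translation $Q\mapsto \omega_p^{-1}d^2Q(p)$ has been put in place.
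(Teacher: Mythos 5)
Your argument is correct, and it is essentially the standard proof: the paper itself states this lemma as a quoted result from Bolsinov \& Fomenko \cite{bolsinov2004integrable} without proof, and their argument is exactly the reduction you describe, via the isomorphism $Q\mapsto\omega_p^{-1}d^2Q(p)$ onto $\mathfrak{sp}(4,\mathbb{R})$ and the characterization of Cartan subalgebras as centralizers of regular semisimple elements. The only fine points worth a sentence in a written version are that commutativity $[A_1,A_2]=0$ follows because the quadratic part of $\{f_1,f_2\}\equiv 0$ at the common critical point $p$ is the Poisson bracket of the Hessians, and that ``four distinct eigenvalues'' (which, the spectrum being symmetric about $0$, is exactly the non-vanishing of all roots of $C_2$) automatically forces diagonalizability, hence semisimplicity.
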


\begin{remark}
The matrices $\omega_p^{-1}d^2f_1(p), \omega_p^{-1}d^2f_2(p) \in \mathrm{sp}(4,\mathbb{R})$ represent the linearization of the Hamiltonian vector fields $X_{f_1}$, $X_{f_2}$ at $p$.
\end{remark}

Then one can classify the type of of a nondegenerate rank zero singular point $p$ in terms of the eigenvalues as follows, with $\alpha,\beta\in \mathbb{R}^{\neq 0}$:
\begin{enumerate}[label={(\roman*)}]
    \item \textbf{elliptic-elliptic:} eigenvalues $\pm i \alpha$, $\pm i \beta$, $\alpha \ne \beta$
    \item \textbf{elliptic-hyperbolic:} eigenvalues $\pm i\alpha$, $\pm \beta$;
    \item \textbf{hyperbolic-hyperbolic:} eigenvalues $\pm \alpha$, $\pm \beta$, $\alpha \ne \beta$;
    \item \textbf{focus-focus:} eigenvalues $\pm \alpha \pm i \beta$.
\end{enumerate}

Nondegenerancy of rank one points can be characterised on $4$-dimensional symplectic manifolds $(M,\omega)$ as follows (see Bolsinov \& Fomenko \cite[ Section $1.8.2$]{bolsinov2004integrable} for details). Let $p$ be a singular point of rank one of a $4$-dimensional integrable system $(M,\omega,F=(f_1,f_2))$. Then, there are $\mu,\lambda\in \mathbb{R}$ such that $\mu\, df_1(p)+\lambda\, df_2(p)=0$ and $L_p:=\text{Span}\{X_{f_1}(p),X_{f_2}(p)\}\subset T_pM$ is the tangent line of the orbit generated by the $\mathbb{R}^2$-action at $p$. Denote by $L_p^{\perp}$ the symplectic orthogonal complement of the space $L_p$ in $T_pM$. Note that we have $L_p\subset L_p^{\perp}$. The Poisson commutativity $\{f_1,f_2\}=0$ implies that $L_p$ and $L_p^{\perp}$ are invariant under the $\mathbb{R}^2$-action. Therefore, $\mu\, d^2f_1(p)+\lambda\, d^2f_2(p)$ descends to the quotient $L_p^{\perp}/L_p$. This allows us to define:

\begin{definition}
    ({Bolsinov \& Fomenko, \cite{bolsinov2004integrable}}) A rank one critical point $p$ of a $4$-dimensional completely integrable system $(M,\omega,F=(f_1,f_2))$ is \textbf{nondegenerate} if $\mu\,d^2f_1(p)+\lambda\, d^2f_2(p)$ is invertible on the quotient $L_p^{\perp}/L_p$.
\end{definition}
The possible types of nondegenerate rank one points on $4$-dimensional symplectic manifolds are classified in terms of the eigenvalues of $\omega_p^{-1}(\mu d^2f_1(p)+\lambda d^2f_2(p))$ on the quotient $L_p^{\perp}/L_p$ as follows, where $\alpha\in \mathbb{R}^{\neq 0}$:
\begin{enumerate}[label={(\roman*)},start=5]
\item \textbf{elliptic-regular:} eigenvalues $\pm i\alpha$;
\item \textbf{hyperbolic-regular:} eigenvalues $\pm \alpha$.
\end{enumerate}

\subsection{Toric systems}
\label{s.toric}

Very `easy' integrable systems are the following ones. 

\begin{definition}
An integrable system $F=(f_1,\dots,f_n):M\rightarrow \mathbb{R}^n$ on a symplectic $2n$-dimensional manifold $(M, \omega)$ is \textbf{toric} if the Hamiltonian vector fields $X_{f_1},\dots,X_{f_n}$ generate periodic flows of the same period (in our convention $2\pi$) and the action of $\mathbb{T}^n$ on $M$ induced by these flows is effective, i.e., only the identity acts trivially.
\end{definition}
Having $n$ periodic flows implies that the singularities of toric integrable systems cannot have focus-focus or hyperbolic components. Speaking in terms of the local normal form, see Theorem \ref{t.localnormalform}, if $m=(0,\dots,0)$ and $\omega=\sum_{i=1}^n dx_i\wedge dy_i$, then the integrable system is locally in a neighborhood of $m$ of the form
\begin{equation*}
F(x_1,\dots,x_n,\xi_1,\dots,\xi_n) = \left( \frac{x_1^2+\xi_1^2}{2}, \dots, \frac{x_k^2+\xi_k^2}{2}, 
\xi_{k+1}, \dots, \xi_n \right)
\end{equation*}
for some $0 \leq k \leq n$.
Toric integrable systems have connected fibers, see Atiyah \cite{atiyah1982convexity}.
In particular, all fibers of a toric system $F$ are diffeomorphic to tori of varying dimensions $\mathbb{T}^k$ with $ k\in \{0,...,n\}$. This distinguishes toric systems among other integrable systems.

Let $(M,\omega,F)$ be a toric system. A fundamental theorem of equivariant symplectic geometry, due to Atiyah \cite{atiyah1982convexity} and Guillemin \& Sternberg \cite{guillemin1982convexity} says that, if $M$ is compact and connected, the image $F(M)$ is a convex polytope in $\mathbb{R}^n$, obtained as the convex hull of the images of the fixed points of the Hamiltonian $\mathbb{T}^n$ action.
Delzant showed that the convex polytopes obtained as images of toric integrable systems are of a special type \cite{delzant1988hamiltoniens}. They are referred to as Delzant polytopes and they are distinguished by being \textbf{simple}, \textbf{rational}, and \textbf{smooth} which means, respectively, that there are precisely $n$ edges meeting at each vertex, that the slopes of the edges are rational, and that the normal vectors to the facets meeting at each vertex form a basis of the integral lattice. Furthermore, Delzant showed that there exists a one-to-one correspondence between Delzant polytopes, up to an element of $\AGL(n,\mathbb{Z})$, and isomorphic toric integrable systems.

\subsection{Semitoric systems}
\label{s.semitoric}

Semitoric integrable systems form a class of integrable systems which generalizes the class of toric integrable systems.

\begin{definition}
    \label{d.semitoric}
    Given $(M,\omega)$ an integrable system 
    \begin{equation*}
        F=(f_1,\dots,f_n):M\rightarrow \mathbb{R}^n
    \end{equation*}
    is \textbf{semitoric} if the Hamiltonian vectors fields $X_{f_1},\dots,X_{f_{n-1}}$ generate periodic flows of the same period (in our convention $2\pi$), if the action of $\mathbb{T}^{n-1}$ induced on $M$ by these flows is effective, and if the singularities of $F$ are nondegenerate and do not have hyperbolic components. If $M$ is not compact the integrals $f_1,\dots,f_{n-1}$ are required to be proper.
\end{definition}
The singular points of these systems may have regular, elliptic and/or focus-focus components. 

In dimension four, semitoric systems were classified, first under some conditions, by Pelayo \& \vungoc\ \cite{pelayo2009semitoric, pelayo2011constructing} and then in full generality by Palmer, Pelayo and Tang \cite{semitoricnonsimple}. \vungoc\ \cite{vu2007moment} showed that the fibers of these systems are connected. The singular fibers of these systems are either points, circles or tori with a finite number of pinches. This last type of fiber does not appear in toric integrable systems.

Two semitoric systems $(M,\omega,(f_1, f_2))$ and $(\tilde{M},\tilde{\omega},(\tilde{f}_1, \tilde{f}_2))$ are \textbf{isomorphic} (as semitoric systems) if there exists a symplectomorphism $\psi: M\rightarrow \tilde{M}$ and a smooth map $g: \mathbb{R}^2\rightarrow \mathbb{R}$ such that $(\tilde{f}_1, \tilde{f}_2) \circ \psi =(f_1,g(f_1, f_2))$ with $\partial_2 g >0$. 

A semitoric system is \textbf{simple} if each fiber contains at most one focus-focus singularity. Such systems are determined up to isomorphism, according to Pelayo \& \vungoc\ \cite{pelayo2009semitoric}, by the following five symplectic invariants:
\begin{enumerate}
    \item The \textbf{number of focus-focus singularities}, denoted by $n_{FF}$.
    \item The \textbf{Taylor series invariant}, consisting of $n_{FF}$ formal Taylor series in two variables describing the foliation around each focus-focus singular fiber.
    \item The \textbf{polytope invariant}, a family of weighted rational convex polytopes generalizing the Delzant polytope and which may be viewed as a bifurcation diagram.
    \item The \textbf{height invariant}, given by $n_{FF}$ numbers corresponding to the height of the focus-focus critical values in the representatives of the polytope invariant.
    \item The \textbf{twisting index invariant}, given by $n_{FF}$ integers measuring how twisted the system is around singularities from a `toric point of view'.
\end{enumerate}

The classification result for semitoric systems states that two semitoric systems are isomorphic if and only if they have the same list of symplectic invariants. Furthermore, given any admissible list of invariants, a semitoric system with these invariants can be constructed.

Since the aim of this paper is to generalize the semitoric polytope invariant to other types of integrable systems let us consider it now in more detail. Let $(M,\omega,F=(f_1,f_2))$ be a semitoric system in a $4$-dimensional symplectic manifold $(M,\omega)$. Furthermore, let
\[ \{c_i=(x_i,y_i) \mid i=1,\dots,m_f\}\in \mathbb{R}^2 \]
be the set of focus-focus critical values, ordered in such a way that $x_1\leq x_2\leq \dots \leq x_{m_f-1}\leq x_{m_f}$ and consider the set $B_r$ of regular values in $B=F(M)$. For $i \in \{1, \dots, m_f\}$ and $\epsilon \in \{-1,+1\}$, define $\mathcal{L}_i^{\epsilon}$ to be the vertical ray starting at $c_i$ and going to $ \pm {\infty}$ depending on the sign of $\epsilon$, i.e., $\mathcal{L}_i^{\epsilon}=\{(x_i,y) \mid \epsilon y\geq \epsilon y_i\}$.
Given $\vec{\epsilon}=(\epsilon_1,...,\epsilon_{m_f})\in \{-1,+1\}^{m_f}$, define the line segment $l_i:=F(M)\cap \mathcal{L}_i^{\epsilon_i}$, set
\begin{equation*}
l^{\vec{\epsilon}}:=\cup_{i}l_i,
\end{equation*}
and decorate each $l_i$ with the multiplicity $\epsilon_i k_i$, where $k_i$ is the number of focus-focus points in the fiber $F^{-1}(c_i)$. If several $c_i$'s have the same $x_i$ coordinate, $l_i$ is the union of all corresponding segments. Given $c\in l_i$, define $k(c):=\sum_{c_j}\epsilon_jk_j$, where the sum runs over all focus-focus values $c_j \in l_i$. 

Denote by $\mathcal{T}$ the subgroup of the group of integral affine maps $\AGL(2,\mathbb{Z})$, introduced in Section~\ref{s.IAFS}, which leaves a vertical line, including its orientation, invariant. In other words, an element of $\mathcal{T}$ is a composition of a vertical translation and an element of the Abelian subgroup $\{T^k| \ k\in \mathbb{Z}\}$ of $GL(2,\mathbb{Z})$, where
\begin{equation*}
T^k:= \begin{bmatrix}
1 & 0 \\
k & 1 
\end{bmatrix}.
\end{equation*}

\begin{theorem}({\vungoc\ \cite[Theorem 3.8]{vu2007moment}})
\label{th.straigheningHomeo}
Using the notation from above for a semitoric system $(M,\omega,(f_1,f_2))$, then for all $\vec{\epsilon}\in \{-1,+1\}^{m_f}$, there exists a homeomorphism $f_{\vec{\epsilon}}=(f_{\vec{\epsilon}}^{(1)}, f_{\vec{\epsilon}}^{(2)})$ from $B$ to $f_{\vec{\epsilon}}(B) \subseteq \R^2$ such that:
\begin{itemize}
    \item $f_{\vec{\epsilon}}|_{(B \setminus l^{\vec{\epsilon}})}$ is a diffeomorphism onto its image;
    
    \item $f_{\vec{\epsilon}}|_{(B_r \setminus l^{\vec{\epsilon}})}$ is affine, sending the integral affine structure of $B_r$ to the standard integral affine structure of $\mathbb{R}^2$;
    
    \item $f_{\vec{\epsilon}}$ preserves $f_1$, i.e., $f_{\vec{\epsilon}}(x,y)=(x,f_{\vec{\epsilon}}^{(2)}(x,y))$;
    
    \item for each $i \in \{1,\dots,m_f\}$ and each $c\in \operatorname{int}(l_i)$, there is an open ball $D$ around $c$ such that the restrictions of
    $f_{\vec{\epsilon}}|_{(B \setminus l^{\vec{\epsilon}})}$ to the domains $\{(x,y)\in D| \ x\leq x_i\}$ and $\{(x,y)\in D| \ x\geq x_i\}$ are smooth maps. Furthermore,
    \begin{equation*}
        \lim_{\stackrel{(x,y)\rightarrow c}{x<x_i}} df_{\vec{\epsilon}}(x,y)=T^{k(c)}\lim_{\stackrel{(x,y)\rightarrow c}{ x>x_i}} df_{\vec{\epsilon}}(x,y),
    \end{equation*}
\end{itemize}
where
\begin{equation*}
    T^{k(c)}= \begin{bmatrix}
1 & 0\\
k(c) & 1
\end{bmatrix}.
\end{equation*}
The map $f_{\vec{\epsilon}}$ is unique modulo a left composition by a transformation in $\mathcal{T}$, and its image is a representative of the polytope invariant.
\end{theorem}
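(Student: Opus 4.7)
The plan is to construct $f_{\vec\epsilon}$ from the integral affine structure on the set of regular values and to show that the cuts $l^{\vec\epsilon}$ are precisely what one needs to remove in order to globalize action coordinates. First I would observe that since $X_{f_1}$ generates a genuine global $S^1$-action, the function $f_1$ is itself a globally defined action coordinate on $M$, so any choice of action-angle coordinates on a neighbourhood of a regular fiber can be taken to have $f_1$ as its first component. In particular, on the open set $B_r$ of regular values the action variables $(p_1,p_2)$ induced by Liouville--Arnold--Mineur can always be chosen so that $p_1=f_1$, which pins down the first component of $f_{\vec\epsilon}$ up to a constant and restricts the admissible changes of basis on each $T_bB_r$ to the subgroup $\mathcal T$.

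The key input for the second component is the local computation of the monodromy around a focus-focus critical value. By the semi-global normal form of \vungoc{} around a focus-focus fiber, the transport of the basis $(\gamma_1,\gamma_2)$ of $H_1(\Lambda_b)$, with $\gamma_1$ being the orbit of the $S^1$-action, around a small loop enclosing $c_i$ is given by $T^{k_i}$, where $k_i$ is the number of focus-focus points in the fiber. Therefore the integral affine structure on $B_r$ has trivial monodromy around any loop in $B\setminus\{c_1,\dots,c_{m_f}\}$ once one has cut along a family of vertical rays $l^{\vec\epsilon}$ that separates the $c_i$ from infinity in the prescribed direction $\epsilon_i$; the open set $B\setminus l^{\vec\epsilon}$ is then monodromy-free and one can integrate a globally consistent action $p_2$ on it, with $p_1=f_1$ fixed. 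The resulting map $f_{\vec\epsilon}|_{B\setminus l^{\vec\epsilon}}$ is by construction a smooth integral affine immersion of the form $(x,y)\mapsto(x,f_{\vec\epsilon}^{(2)}(x,y))$.

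Next I would check that $f_{\vec\epsilon}$ extends to a homeomorphism of $B$ and that the prescribed jump formula holds. For $c$ in the interior of $l_i$, work in a small disk $D$ around $c$ whose only critical values are the $c_j$ lying on the vertical line through $c$. On each half disk $\{x\lessgtr x_i\}\cap D$ the map $f_{\vec\epsilon}$ is smooth and extends smoothly up to $D\cap\{x=x_i\}$ because the action $p_2$ extends smoothly across any cut by the normal form. The difference between the two smooth extensions is then a locally constant element of $\mathcal T$ preserving the vertical line, and by following a small loop around each $c_j\in l_j$ with $c\in l_j$ and keeping track of orientations via the sign $\epsilon_j$, it must equal $T^{k(c)}$. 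Continuity of $f_{\vec\epsilon}$ on $l^{\vec\epsilon}$ follows because $T^{k(c)}$ fixes the vertical line through $c$ pointwise, so the two smooth extensions agree along $l^{\vec\epsilon}$ itself. Injectivity globally is a convexity argument: the image of each horizontal slice $\{f_1=x\}\cap B$ is an interval parametrized monotonically by $p_2$.

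The hardest step I expect is the careful determination of the local monodromy matrix $T^{k_i}$ in the basis distinguished by the $S^1$-action, because it requires the semi-global normal form around focus-focus fibers and a coherent orientation convention for the cut direction $\epsilon_i$. Once this is in hand, uniqueness modulo $\mathcal T$ follows quickly: any other choice $\tilde f_{\vec\epsilon}$ has the same first component $f_1$ and differs on each connected component of $B\setminus l^{\vec\epsilon}$ by an integral affine transformation that preserves vertical lines and the orientation of the $f_1$-direction, hence lies in $\mathcal T$; the jump condition forces the same element of $\mathcal T$ to be used on every component, giving a global left composition by an element of $\mathcal T$.
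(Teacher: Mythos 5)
Your proposal is essentially the argument: the paper does not reprove this statement (it is quoted from \vungoc's \cite[Theorem 3.8]{vu2007moment}), but your route --- fix $p_1=f_1$ via the global $S^1$-action, compute the monodromy $T^{k_i}$ around each focus-focus value from the semi-global normal form, cut along $l^{\vec\epsilon}$ to kill the monodromy and integrate a global second action, then verify the jump formula and uniqueness modulo $\mathcal T$ --- is exactly the strategy of the original proof and of the generalizations this paper proves later (Theorems \ref{t.polytopenflaps}, \ref{t.polytopeflap}, \ref{t.polytopegeneral}). One small point to tighten: continuity across the cut does not follow merely because the linear part $T^{k(c)}$ fixes vertical vectors --- the two one-sided extensions differ a priori by an affine map with linear part $T^{k(c)}$ \emph{and} a vertical translation, and you must argue the translation vanishes, e.g.\ by continuity of both extensions at the endpoint $c_i$ of the cut (reached through the uncut region), or by normalizing the height of each vertical slice to the Duistermaat--Heckman density as this paper does in its own proofs.
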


The map $f_{\vec{\epsilon}}$ is sometimes referred to as {\bf straightening homeomorphism}.
Intuitively, one obtains $f_{\vec{\epsilon}}(B)$ by first cutting the set $B$ along each of the vertical lines $\mathcal{L}_i^{\epsilon_i}$ to the focus-focus values. Then the resulting set is simply connected, and thus there exists a global $2$-torus action on the preimage of this set. A representative of the polytope invariant can thus be seen as the closure of the image of a toric momentum map.

Describing all possible choices (like for instance the signs $\epsilon_j$) by means of a group action allows to write the polytope invariant as an equivalence class or orbit of this group action. 

We now describe the dependence of the map $f_{\vec{\epsilon}}$ on the choice of $\epsilon$. Let $t_{c_i}:\mathbb{R}^2\rightarrow \mathbb{R}^2$ be the map given by
\begin{equation*}
t_{c_i}(x,y)=
\begin{cases}
	(x,y), & x\leq x_i, \\
	(x,y+x-x_i), & x>x_i,
\end{cases}
\end{equation*}
that is, intuitively the map $t_{c_i}$ leaves the half-plane to the left of $c_i$ invariant and applies $T$, relative to a choice of origin on $c_i$, to the half-plane on the right of $c_i$. For a vector $k=(k_1,...,k_{m_f})\in \mathbb{Z}^{m_f}$, define $t_{k}:=t^{k_1}_{c_1}\circ ... \circ t_{c_{m_f}}^{k_{m_f}}$, which is a piecewise integral affine map. 

We now define the \textbf{polytope invariant}. For a choice of $\vec{\epsilon}$ consider $\Delta:=f_{\vec{\epsilon}}(B)$. The triple $(\Delta,(c_{i})_{i=1}^{m_f},(\epsilon_i)_{i=1}^{m_f})$ is called the \textbf{weighted polytope} associated to $\vec{\epsilon}$. The freedom in the definition of $f_{\vec{\epsilon}}$ can be expressed as an action of the group $(\mathbb{Z}_2)^{m_f}\times \mathcal{T}$ on the space of weighted polytopes: letting $(\vec{\epsilon'},T^n)\in (\mathbb{Z}_2)^{m_f}\times \mathcal{T}$ and taking $u:=\frac{1}{2}(\vec{\epsilon}-\vec{\epsilon'}\vec{\epsilon})$ the action is given by
\begin{equation*}
(\vec{\epsilon'},T^n)\cdot (\Delta,(c_{i})_{i=1}^{m_f},(\epsilon_i)_{i=1}^{m_f})= (t_{u}(T^n(\Delta)),(c_i)_{i=1}^{m_f},(\epsilon_i'\epsilon_i)_{i=1}^{m_f}).
\end{equation*}

\subsection{Local normal form and isotropy weights of an \texorpdfstring{$S^1$}{S1}-action}
\begin{definition}
\label{d.s1space}
A Hamiltonian $S^1$-space is a triple  $(M,\omega,J)$ where $(M,\omega)$ is a compact four dimensional symplectic manifold and $J:M\rightarrow \mathbb{R}$ a Hamiltonian such that  the flow of its Hamiltonian vector field $X_J$ is periodic of minimal period $2\pi$. The Hamiltonian flow of such a $J$ generates an effective Hamiltonian action of $S^1$ on $M$. 
\end{definition}

Karshon \cite{karshon1999periodic} classified Hamiltonian $S^1$-spaces in terms of a labeled graph encoding information about the fixed points and isotropy groups of the $S^1$-action. 

For each subgroup $G\subset S^1$, denote by $M^{G}$ the set of points in $M$ whose stabilizer is $G$. Note that the connected components of $M^{S^1}$ are symplectic manifolds, thus, since the action is effective, either points or surfaces.

\begin{lemma}
\label{l.weights}
({Chaperon, \cite{chaperon1983quelques}})
For every $p\in M^{S^1}$ there exist neighborhoods $U\subset M$ of $p$ and $U_0\subset \mathbb{C}^2$ of $(0,0)$ and a symplectomorphism $\Psi:(U,\omega)\rightarrow (U_0,\omega_0)$ with $\omega_0=\frac{i}{2}(dz_1\wedge d\overline{z}_1+dz_2\wedge d\overline{z}_2)$ and $J_0(z_1,z_2)=J(p)+\frac{m}{2}|z_1|^2+\frac{n}{2}|z_2|^2$ 
with $n,m\in \mathbb{Z}$, such that the following diagram commutes:
\begin{center}
\begin{tikzcd}
{(U,\omega)} \arrow[rd, "J"] \arrow[rr, "\Psi"] &            & {(U_0,\omega_0)} \arrow[ld, "J_0"'] \\
                                                & \mathbb{R} &                                    
\end{tikzcd}
\end{center}
\end{lemma}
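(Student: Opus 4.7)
The plan is to combine an $S^1$-equivariant Darboux-Moser argument with the classification of linear symplectic $S^1$-representations on $\R^4$. First, since $S^1$ is compact, averaging any Riemannian metric on $M$ over the $S^1$-action yields an $S^1$-invariant metric, whose exponential map at $p$ gives an $S^1$-equivariant diffeomorphism between a neighborhood of $0 \in T_pM$ and a neighborhood of $p$. Under this identification, the $S^1$-action on $T_pM$ becomes its own linearization at $p$, i.e.\ the differential $d\varphi_\theta(p)$ of the action. This reduces the statement to a linear problem on $(T_pM,\omega_p)\cong(\R^4,\omega_0)$ equipped with two $S^1$-invariant symplectic structures: the pullback $\omega^{*}$ of $\omega$ along the exponential chart, and the constant form $\omega_p$, which agree at the origin.

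Next, I would exploit the classification of linear symplectic $S^1$-representations: by compactness of $S^1$ and complete reducibility, $T_pM$ splits as an orthogonal direct sum $V_1\oplus V_2$ of two symplectic $2$-planes on each of which $S^1$ acts by rotation with some weight. The weights must be integers because the flow is required to be $2\pi$-periodic. Choosing complex symplectic coordinates $(z_1,z_2)$ adapted to this decomposition, the action reads $\theta\cdot(z_1,z_2)=(e^{im\theta}z_1,e^{in\theta}z_2)$ with $m,n\in\Z$, and the moment map with respect to the standard form $\omega_0$ is readily computed to be $J_0 = J(p)+\tfrac{m}{2}|z_1|^2+\tfrac{n}{2}|z_2|^2$ (the additive constant is fixed by evaluating at $0$).

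To match the two symplectic forms I would run the equivariant Moser trick on the family $\omega_t := (1-t)\omega_0 + t\omega^{*}$; both endpoints are $S^1$-invariant and coincide at $0$, so $\omega_t$ stays symplectic in a neighborhood of the origin for all $t\in[0,1]$. Taking an $S^1$-invariant primitive $\alpha$ of $\omega^{*}-\omega_0$ vanishing at $0$ (obtained by applying the Poincar\'e lemma and then averaging the resulting primitive over the compact group $S^1$), and integrating the time-dependent vector field $X_t$ defined by $\iota_{X_t}\omega_t = -\alpha$, produces the desired $S^1$-equivariant symplectomorphism $\Psi$ fixing $p$ and sending $\omega^{*}$ to $\omega_0$. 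The pullback $J\circ\Psi^{-1}$ and the model function $J_0$ then generate the same Hamiltonian vector field on a connected neighborhood of the origin and share the value $J(p)$ there, hence coincide, yielding the commuting diagram.

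The main obstacle is the equivariant Moser step: I must ensure that the interpolating vector field $X_t$ is itself $S^1$-invariant so that its flow is equivariant. This is exactly what the averaging of the primitive $\alpha$ over $S^1$ delivers, but it requires the preliminary care of producing an $S^1$-invariant primitive rather than the one given directly by the Poincar\'e lemma. A secondary subtlety is identifying the integer weights intrinsically and verifying that the Hamiltonian read off from the linearized action is $J_0$ on the nose rather than $J_0$ plus a function constant along $S^1$-orbits; this is closed off by the connectedness of the chart and the equality of Hamiltonian vector fields, which forces equality up to an additive constant pinned down by $J(p)$.
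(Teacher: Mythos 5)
Your proof is correct: Bochner linearization of the $S^1$-action via an averaged invariant metric, decomposition of the linear symplectic $S^1$-representation on $T_pM$ into weight planes, an equivariant Moser deformation with an invariant primitive, and the final identification of $J\circ\Psi^{-1}$ with $J_0$ through equality of Hamiltonian vector fields plus connectedness. Note that the paper does not prove this lemma at all — it is imported verbatim from Chaperon (and is the local normal form also used by Karshon) — so there is no in-paper argument to compare against; yours is the standard proof of the cited result, and the only point worth making explicit is that the symplectic orthogonality of the two weight planes is obtained by first averaging a compatible complex structure so that the invariant inner product and the symplectic form are simultaneously block-diagonalized.
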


We refer to the integers $n,m$ in Lemma \ref{l.weights} as the \textbf{isotropy weights} of $J$ at $p$. If the action is effective the integers $n,m$ are coprime. 

\subsection{Duistermaat-Heckman measure for an \texorpdfstring{$S^1$}{S1}-action}
\label{s.duistermaatmeasuregeneral}
Let $(M,\omega)$ be a symplectic manifold of dimension $2n$. The Liouville measure on $(M,\omega)$ is induced by the volume form $\frac{1}{(2\pi)^n}\frac{\omega^n}{n!}$. Now suppose that $(M,\omega,J)$ is a Hamiltonian $S^1$- space. The Duistermaat-Heckman measure $\mu_{J}$ of the $S^1$-action is given by $\mu_J([a,b]):=\text{vol}(J^{-1}([a,b]))$, where $\text{vol}$ is the volume with respect to the Liouville measure of $(M,\omega).$
\begin{itemize}
    \item Denote by $B_{min}$ and $B_{max}$ the extremal sets of $J$. If $B_{min}$ is a two dimensional surface, denote its self intersection number by $e_{min}$. If $B_{min}$ is an isolated fixed point with isotropy weights $n$ and $m$, define $e_{min}:=\frac{1}{nm}$. Similarly define $e_{max}$ for $B_{max}$. 
    \item Denote the isotropy weights at an interior fixed point $p$ by $m_p$ and $n_p$. 
    \item Let $a_{min}:=\frac{1}{2\pi}\int_{B_{min}}\omega$ and $a_{max}:=\frac{1}{2\pi}\int_{B_{max}}\omega$.
    \item Denote the values of $J$ at the fixed points by $y_{min}=J(B_{min})$, $y_{max}=J(B_{max})$, and $y_{p}=J(p)$ for interior fixed points $p$. 
    \item Let 
    \begin{equation*}
      H(x):=\begin{cases}
            1,\ x\geq 0,\\
            0,\ x<0 
        \end{cases}
    ,\quad
        \theta(x):=\begin{cases}
            x,\ x\geq 0,\\
            0,\ x<0 .
        \end{cases}
    \end{equation*} 
\end{itemize}
\begin{lemma}
    \cite[Lemma $2.12$]{karshon1999periodic}
    The density function for the Duistermaat-Heckman measure is 
    \begin{align*}
         \rho_J(y)  =& \ a_{min}H(y-y_{min})-e_{min}\theta(y-y_{min}) \\
        & +\sum_{p}\frac{1}{m_pn_p}\theta(y-y_p)\\
        & -e_{max}\theta(y-y_{max})-a_{max}H(y-y_{max}).
        \end{align*}
\end{lemma}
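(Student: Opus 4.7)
The plan is to derive the density formula by combining symplectic reduction on the regular part of $J$ with a local analysis near each component of the fixed-point set, tracking jumps and slope changes of $\rho_J$ at the critical values of $J$.

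First, I would use Duistermaat--Heckman reduction on each open interval of regular values. Since the $S^1$-action is free on every regular level set of $J$, the reduced space $M_y := J^{-1}(y)/S^1$ is a smooth symplectic surface (as $\dim M = 4$) with reduced form $\omega_y$, and a Fubini argument applied to the Liouville form $\omega^2/(2(2\pi)^2)$ yields
\begin{equation*}
\rho_J(y) \;=\; \frac{1}{2\pi}\int_{M_y}\omega_y.
\end{equation*}
The Duistermaat--Heckman theorem then asserts that on any connected component of the regular values $[\omega_y]\in H^2(M_y,\R)$ varies affinely in $y$, so $\rho_J$ is affine on each such component. It therefore suffices to compute the jump and slope change of $\rho_J$ at each critical value of $J$, since these pieces of data determine $\rho_J$ once one imposes $\rho_J \equiv 0$ outside $[y_{\min}, y_{\max}]$.

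Second, I would compute the contribution of each isolated interior fixed point $p$ with isotropy weights $m_p, n_p$ (which must have opposite signs since $y_p$ is neither the minimum nor maximum of $J$). By Lemma \ref{l.weights} an equivariant symplectic chart around $p$ identifies $J$ with $J_0 = y_p + \tfrac{m_p}{2}|z_1|^2 + \tfrac{n_p}{2}|z_2|^2$ on $(\C^2,\omega_0)$. Computing the normalized Liouville volume of $\{J_0 \le y\}$ in this model and differentiating produces a slope change of $\tfrac{1}{m_p n_p}$ in $\rho_J$ at $y_p$; equivalently, the reduced space undergoes a weighted symplectic cut. Summing over all interior fixed points gives the middle term $\sum_p \tfrac{1}{m_p n_p}\,\theta(y-y_p)$.

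Third, I would treat the extremal sets via the equivariant symplectic neighborhood theorem. If $B_{\min}$ is a surface, an equivariant tubular neighborhood of it is a Hermitian line bundle $L\to B_{\min}$; reducing just above $y_{\min}$ identifies $M_y$ with $B_{\min}$ equipped with a linearly varying symplectic form whose initial area is $a_{\min}=\tfrac{1}{2\pi}\int_{B_{\min}}\omega$ and whose slope in $y$ is determined by the first Chern number of $L$, contributing $a_{\min}H(y-y_{\min}) - e_{\min}\theta(y-y_{\min})$. If $B_{\min}$ is an isolated fixed point, $a_{\min}=0$ and the same weighted-ellipsoid computation as in the interior case produces the contribution $-e_{\min}\theta(y-y_{\min})$ with $e_{\min} := \tfrac{1}{nm}$. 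The symmetric analysis at $y_{\max}$ yields the remaining two terms, with signs fixed by $\rho_J \equiv 0$ for $y > y_{\max}$. The main obstacle is the bookkeeping of signs, since the isotropy weights at extremal fixed points share a common sign while those at interior fixed points are of opposite sign, and these conventions must be reconciled with the orientation of each reduced space $M_y$; additionally, Atiyah's connectedness theorem is needed to ensure that the reduced spaces on adjacent regular intervals are related by the birational transformations dictated by the local models, so the piecewise contributions indeed glue into a single closed-form expression for $\rho_J$.
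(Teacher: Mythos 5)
The paper offers no proof of this lemma at all: it is imported verbatim from Karshon \cite[Lemma 2.12]{karshon1999periodic}. So there is no in-paper argument to compare against, and what you have written is essentially a reconstruction of Karshon's own proof (Duistermaat--Heckman piecewise linearity of $\rho_J$ on the regular part, local normal forms at the fixed components to read off the jumps and slope changes). The architecture is the right one, but two steps do not hold up as written. First, the assertion that the $S^1$-action is free on every regular level set of $J$ is false: regularity only excludes fixed points, not points with finite stabilizer, and $\Z_k$-spheres genuinely occur in the setting of this paper (its Hirzebruch-surface examples have isotropy weights $(1,n)$ with $|n|\geq 2$). The reduced spaces $M_y$ are in general orbifolds; the Duistermaat--Heckman theorem still gives affine variation of $[\omega_y]$ in that category, so the conclusion survives, but the justification must be amended.

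Second, the sign bookkeeping that you yourself flag as ``the main obstacle'' is precisely where the argument fails to close. In the normal form of Lemma \ref{l.weights}, $J_0 = J(p)+\tfrac{m}{2}|z_1|^2+\tfrac{n}{2}|z_2|^2$, the two weights at an extremum necessarily have the same sign, so $\tfrac{1}{nm}>0$ there; the weighted-ellipsoid computation at an isolated minimum gives $\mu_J([y_{\min},y])=(y-y_{\min})^2/(2mn)$, hence a contribution $+\tfrac{1}{nm}\,\theta(y-y_{\min})$ to the density. Your claim that this same computation ``produces the contribution $-e_{\min}\theta(y-y_{\min})$ with $e_{\min}:=\tfrac{1}{nm}$'' is therefore not what the computation yields: with $a_{\min}=0$ that term would make $\rho_J$ negative just above an isolated minimum. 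The computation forces $e_{\min}=-\tfrac{1}{nm}$ in this convention (check it on $\CP^1\times\CP^1$ with $J$ the sum of the two height functions, whose density is the tent function $2-|y|$), so a complete proof must reconcile the normal-form convention with the definition of $e_{\min}$ rather than assert the target expression. A smaller point of the same kind: for an interior fixed point the weights have opposite signs and $\{J_0\leq y\}$ is unbounded in the local model, so the slope change $\tfrac{1}{m_pn_p}$ should be obtained as the jump of $\rho_J'$ across $y_p$ (via the change of the reduced space, or the stationary-phase jump formula), not as the derivative of a volume of a sublevel set.
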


\subsection{Monodromy in the presence of an \texorpdfstring{$S^1$}{S1}-action}
\label{s.monodromy}
Let $(M,\omega)$ be a $4$-dimen{\-}sional connected symplectic manifold, such that $F=(J,H):M\rightarrow \mathbb{R}^2$ is a completely integrable system and assume that:
\begin{itemize}
\item $J$ induces an effective $S^1$-action;
\item $F$ is proper;
\item the $S^1$-action is free on $F^{-1}(B_r)$, where $B_r\subset F(M)$ is the set of regular values.
\end{itemize}
We now want to recall the notation of monodromy in the presence of an $S^1$-action as in  Martynchuk \& Efstathiou \cite{efstathiou2017monodromy,martynchuk2017parallel}. Consider a closed curve $\gamma\subset B_r$ without self-intersections
and assume that the fibers $F^{-1}(b)$, with $b\in \gamma$, are connected. By the Arnold-Liouville Theorem \ref{t.AL} we have an $n$-torus bundle $(E_{\gamma}=F^{-1}(\gamma),\gamma,F)$ over $\gamma$. Consider a fiber $F^{-1}(b_0)$, $b_0\in \gamma$ and let $S^1$ be any orbit of the Hamiltonian $S^1$-action on $F^{-1}(b_0)$.
Choose a basis $(e_1,e_2)$ of the integer homology group $H_1(F^{-1}(b_0),\mathbb{Z})$, such that $e_1$ is a basis of $H_1(S^1,\mathbb{Z})$. Since the $S^1$-action is globally defined on $E_{\gamma}$, the generator $e_1$ is also globally defined. In this situation the monodromy matrix of the bundle $(E_{\gamma},\gamma,F)$ with respect to the basis $(e_1,e_2)$ has the form 
\begin{equation*}
 \begin{bmatrix}
  1 & m \\
  0 & 1
 \end{bmatrix},
\end{equation*}
where $m$ is be related to the $S^1$-action as explained below in Proposition \ref{p.monodromy}.
The formula in Proposition \ref{p.monodromy} is the same as in Theorem $6$ in Martynchuk \& Efstathiou \cite{martynchuk2017parallel}. But note that we are taking $\gamma$ to have the reverse orientation and our definition of isotropy weights has a sign difference from the one found in Martynchuk \& Efstathiou \cite{efstathiou2017monodromy,martynchuk2017parallel}.
\begin{proposition}
\label{p.monodromy}
Let $F:M\rightarrow \mathbb{R}^2$ be as above. Consider a curve $\gamma\subset B_r$ without self-intersections, oriented in the clockwise direction, with all fibers $F^{-1}(b)$, with $b\in \gamma$, connected and such that the $S^1$-action is free on $E_{\gamma}.$ Moreover, assume that :
\begin{itemize}
 \item There exists a $2$-disk $U$ in the image of $F$ such that $\partial U =\gamma$;
 \item The preimage $F^{-1}(\overline{U})$ is a closed submanifold with boundary of $M$;
 \item There are precisely $k$ fixed points $p_1,...,p_k\in F^{-1}(U)$ and the $S^1$-action is free on $F^{-1}\left(\overline{U}\right)\backslash\cup_{i=1}^{k}\{p_i\}$.
\end{itemize}
Then 
\begin{equation*}
 m=\sum_{i=1}^{k}\frac{1}{m_in_i},
\end{equation*}
\end{proposition}
where $m_i,n_i$ are the isotropy weights of the fixed point $p_i$.

\subsection{Fractional monodromy}
\label{s.fractionalmonodromy}
Fractional monodromy is an invariant that generalizes standard monodromy to singular torus fibrations. We refer to Efstathiou \& Martynchuck  \cite{martynchuk2017parallel} for more details, and we summarize their construction in the following. 

First we introduce the notion of a Seifert manifold and parallel transport along it. 
\begin{definition}
    A \textbf{Seifert} manifold $X$ is a compact orientable $3$-dimensional manifold which is invariant under an effective fixed point free $S^1$-action. In addition if the manifold has boundary $\partial X\neq \emptyset$ the action must be free on $\partial X$. 

    We call $\rho:X\rightarrow B:=X/S^1$ a \textbf{Seifert fibration}.
\end{definition}
Now consider a closed Seifert manifold $X$ and its Seifert fibration
\begin{equation*}
    \rho:X\rightarrow B=X/S^1.
\end{equation*}
Since $X$ is compact the number $N$ given by the least common multiple of the orders of nontrivial isotropy groups for the $S^1$-action is finite. Let $\mathbb{Z}_{N}:=\mathbb{Z}/N\mathbb{Z}$ denote the subgroup of order $N$ of the acting group $S^1$. Then $\mathbb{Z}_{N}$ acts on the Seifert manifold $X$. We thus have the so called reduction map $h:X\rightarrow X':=X/\mathbb{Z}_{N}$ and $\rho':X'\rightarrow B$ defined by $\rho=\rho'\circ h$. By construction, $\rho':X'\rightarrow B$ is a principal $S^1$ bundle over $B$. We denote its Euler number by $e(X')$.
\begin{definition}
    The \textbf{Euler number} of the Seifert fibration $\rho:X\rightarrow B$ is defined by $e(X):=e(X')/N$. 
\end{definition}
\begin{definition}
\label{d.paralleltransport}
    Let $X$ be a $3$-dimensional manifold with boundary $\partial X:=X_0\sqcup X_1$.
    The cycle $\alpha_1\in H_1(X_1)$ is a \textbf{parallel transport} of the cycle $\alpha_0\in H_1(X_0)$ along $X$ if 
    \begin{equation*}
        (\alpha_0,-\alpha_1)\in \partial_*(H_2(X,\partial X)),
    \end{equation*}
    where $\partial_*$ is the connecting homeomorphism of the exact sequence 
    \begin{equation*}
        \cdot \cdot \cdot \rightarrow H_2(X)\rightarrow H_2(X,\partial X)\xrightarrow[]{\partial_*} H_1(\partial X)\rightarrow H_1(X)\rightarrow \cdot \cdot \cdot
    \end{equation*}
\end{definition}
\begin{remark}
    Definition \ref{d.paralleltransport} can be reformulated as follows: $\alpha_1$ is a parallel transport of $\alpha_0$ along $X$ if there exists an oriented $2$-dimensional submanifold $S\subset X$ that "connects" $\alpha_0$ and $\alpha_1$, i.e., 
    \begin{equation*}
        \partial S = S_0\sqcup S_1, \quad [S_i]=(-1)^i\alpha_i\in H_1(X_i), \quad i=0,1.
    \end{equation*}
\end{remark}

Let $M$ be a $2n$-dimensional symplectic manifold. 
Consider a Lagrangian fibration $F:M\rightarrow P$ over an $n$-dimensional manifold $P$, given by a proper integral map $F$. Locally such a fibration gives rise to an integrable system. Let $\gamma:[0,1]\rightarrow F(M)$ be a continuous curve such that the set 
\begin{equation*}
    Y=\{(x,t)\in M\times [0,1]| \ F(x)=\gamma(t)\}
\end{equation*}
is connected and such that $\partial Y=Y_0\sqcup Y_1$ is a disjoint union of the two regular tori $Y_0:=F^{-1}(\gamma(0))$ and $Y_1:=F^{-1}(\gamma(1))$. Let 
\begin{equation*}
    H_1^{0}:=\{\alpha_0\in H_1(X_0)|\ \alpha_0\  \text{can be parallel transported along}\ Y\}.
\end{equation*}
\begin{definition}
    If the parallel transport along the above defined $Y$ defines an automorphism of the group $H_1^0$, then this automorphism is called \textbf{fractional monodromy} along $\gamma$. 
\end{definition}
Now suppose that $F:M\rightarrow P$ is invariant under an effective $S^1$-action. Take a simple closed curve $\gamma$ in $F(M)$, oriented in the clockwise direction, that satisfies the following regularity conditions:
\begin{itemize}
    \item the fiber $F^{-1}(\gamma(0))$ is regular and connected;
    \item the $S^1$-action is fixed point free on the preimage $E:=F^{-1}(\gamma)$;
    \item the preimage $E$ is a closed oriented connected submanifold of $M$.
\end{itemize}
Let $e(E)$ be the Euler number of $E$ and let $N$ denote the least common multiple of the orders of nontrivial isotropy groups for the $S^1$-action. Take a basis $(a,b)$ of the homology group $H_1(Y_0)\cong \mathbb{Z}^2$, where $b$ is given by an orbit of the $S^1$-action.
\begin{theorem}
\label{t.fractionalmonodromy}
\cite[Theorem $5$]{martynchuk2017parallel}
    Using the notation from above, fractional monodromy along $\gamma$ is well defined. Furthermore, $(Na,b)$ form a basis of the parallel transport group $H_1^0$ and the corresponding isomorphism has the form $b\mapsto b$ and $Na\mapsto Na+kb$ where $k=Ne(E)\in \mathbb{Z}$. 
\end{theorem}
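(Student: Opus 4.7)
\emph{Plan.} The plan is to reduce the claim about parallel transport on the Seifert manifold $E$ to an ordinary monodromy computation on the smooth principal $S^1$-bundle obtained from the Seifert reduction, and then to transfer the result back through the $N$-fold quotient $h:E\to E':=E/\mathbb{Z}_N$. By the hypotheses the $S^1$-action on the closed, oriented, connected $3$-manifold $E$ is effective and fixed-point-free, so $E$ is a Seifert manifold with base $B:=E/S^1$, and by the construction recalled in the excerpt $h$ descends to a genuine principal $S^1$-bundle $E'\to B$ with Euler number $e(E')=N\,e(E)\in\mathbb{Z}$; restricted over $\gamma$, the map $E'\to\gamma$ is moreover a smooth oriented torus bundle over $S^1$.

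In the second step I would track how the basis behaves under the induced map $h_{\ast}:H_1(Y_0)\to H_1(Y_0')$, where $Y_0':=h(Y_0)$. Because the quotient identifies $N$ equally spaced points on each $S^1$-orbit, one obtains $h_{\ast}(b)=Nb'$ and $h_{\ast}(a)=a'$ with $(a',b')$ a basis of $H_1(Y_0')$. The class $b$ lies in $H_1^0$ with trivial transport $b\mapsto b$, realized by the cylinder swept out by a single $S^1$-orbit across $F^{-1}(\gamma)$. For the class $Na$ I would first compute the ordinary monodromy on the smooth torus bundle $E'\to\gamma$, which by the standard theory of $T^2$-bundles over $S^1$ carrying a global $S^1$-action along $b'$ has the form $a'\mapsto a'+e(E')\,b'$ and $b'\mapsto b'$, witnessed by a spanning relative $2$-cycle $S'\subset E'$. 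Lifting $S'$ to a relative $2$-chain $S\subset E$ through the branched quotient $h$ and using $h_{\ast}(Na)=Na'$ together with $h_{\ast}(b)=Nb'$, one verifies that $\partial S$ represents $(Na,\,-(Na+Ne(E)\,b))$ in $H_1(Y_0)\sqcup H_1(Y_0)$; this both shows $Na\in H_1^0$ and produces the transport law $Na\mapsto Na+k\,b$ with $k=Ne(E)$.

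In the third step I would show that $(Na,b)$ is the full basis of $H_1^0$ rather than a proper sublattice. Arguing by contradiction, suppose some class $ma+nb\in H_1^0$ with $0<m<N$ is transportable. Then any relative $2$-chain in $E$ whose boundary equals $ma+nb$ on $Y_0$ must, by the standard solid-torus local model around each exceptional orbit of the Seifert fibration, intersect that orbit in a number of points divisible by the order of its isotropy subgroup; taking the least common multiple over all exceptional orbits then forces $N\mid m$, a contradiction. Well-definedness of the fractional monodromy automorphism (independence of the bounding surface) follows directly from the exact sequence in Definition \ref{d.paralleltransport}, which identifies the transport class with $\partial_{\ast}$ applied to a relative class.

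The main obstacle I expect is the lifting argument in the second step: spanning surfaces live in a Seifert manifold with exceptional fibers rather than in a smooth bundle, so carrying out the matching of boundary classes after the $\mathbb{Z}_N$-branched quotient requires careful local analysis near each exceptional orbit, using the standard solid-torus local model for Seifert fibrations to control the contribution of exceptional fibers to $[\partial S]$. This local bookkeeping is precisely what introduces the factor $N$ in $k=Ne(E)$ rather than only $e(E)$.
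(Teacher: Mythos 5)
This statement is not proved in the paper at all: it is imported verbatim, with citation, from Martynchuk--Efstathiou \cite[Theorem 5]{martynchuk2017parallel}, so there is no in-paper proof to compare against. Judged on its own terms, your overall strategy --- pass to the quotient $E':=E/\mathbb{Z}_N$, transport there, transfer back, and control the index of $H_1^0$ by the solid-torus local models around exceptional orbits --- is the right skeleton, and the arithmetic in your transfer computation ($h_*(b)=Nb'$, preimage of a spanning surface having boundary $Na_0-(Na_1+e(E')b_1)$ with $e(E')=Ne(E)$) does reproduce $k=Ne(E)$.

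The genuine gap is in your second step. You compute the transport in $E'$ by invoking ``the standard theory of $T^2$-bundles over $S^1$'' for $E'\to\gamma$, but $E'\to\gamma$ is in general \emph{not} a smooth torus bundle: the hypotheses only require the single fiber $F^{-1}(\gamma(0))$ to be regular, and in every situation where fractional monodromy is nontrivial the curve $\gamma$ crosses singular values of $F$ (e.g.\ hyperbolic-regular values with curled-torus fibers). Quotienting by $\mathbb{Z}_N$ frees the $S^1$-action but does not desingularize these Lagrangian fibers, so there is no ``ordinary monodromy'' of $E'\to\gamma$ to appeal to. What is actually smooth after reduction is the principal $S^1$-bundle $E'\to B_E:=E/S^1$ over the closed quotient surface, and the existence of the spanning surface with boundary $a'_0-(a'_1+e(E')b'_1)$ must be extracted homologically from that structure (Gysin sequence / the relation that the fiber class has order $e(E')$ in $H_1(E')$), which is exactly the content of the general Seifert parallel-transport theorem in the cited source. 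Relatedly, your claim that well-definedness ``follows directly from the exact sequence'' is too quick: by exactness, the possible transports of a fixed $\alpha_0$ form a coset of $\ker\bigl(H_1(Y_1)\to H_1(Y)\bigr)$, and one must show this ambiguity is trivial on the relevant classes, which again requires the explicit presentation of $H_1$ of the Seifert manifold rather than formal exactness alone. Your divisibility argument in step three is essentially correct once rephrased: the constraint is that $m\beta_j\equiv 0 \pmod{q_j}$ for the boundary class of $S\cap V_j$ to bound in the solid torus $V_j$, whence $q_j\mid m$ and $N\mid m$.
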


Now let $i_0:Y_0\rightarrow Y$ and $i_1:Y_1\rightarrow Y$ denote the corresponding inclusions. The composition
\begin{equation*}
    i_1^{-1}\circ i_0:H_1(Y_0,\mathbb{Q})\rightarrow H_1(Y_0,\mathbb{Q})
\end{equation*}
gives an automorphism of $H_1(Y_0,\mathbb{Q})$. In a basis of $H_1(Y_0,\mathbb{Z})$ this isomorphism is written as a matrix in $GL(2,\mathbb{Q})$, called the \textbf{matrix of fractional monodromy}. Theorem \ref{t.fractionalmonodromy} shows that in a basis $(b,a)$ of $H_1(X_0)$, where $b$ corresponds to the $S^1$-action, the fractional monodromy matrix has the form
\begin{equation*}
    \begin{bmatrix}
        1 & e(E)=k/N \\
        0 & 1
    \end{bmatrix}.
\end{equation*}
In certain cases the parameter $e(E)$ is easily computed:
\begin{theorem}
\label{t.weightsfractionalmonodromy}
    \cite[Theorem $6$]{martynchuk2017parallel} Assume that $\gamma$ bounds a compact $2$-dimensional manifold $U\subset P$ such that $F^{-1}(U)$ has only finitely many fixed points $p_1,...,p_l$ of the $S^1$-action. Then 
    \begin{equation*}
        e(E)=\sum_{k=1}^{l}\frac{1}{m_kn_k}
    \end{equation*}
    where $m_k,n_k$ are the isotropy weights of the fixed points $p_k$. 
\end{theorem}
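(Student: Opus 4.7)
The plan is to compute $e(E)$ by excising small invariant neighborhoods of the fixed points and reducing the problem to a purely local computation near each $p_k$, combined with an ``Euler numbers of boundaries add up to zero'' argument for $S^1$-manifolds with free boundary action.

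First I set up the excision. Let $W := F^{-1}(\overline{U})$, a compact oriented $4$-manifold whose boundary is $E$ and which carries an $S^1$-action with only the isolated fixed points $p_1,\dots,p_l$ (the action is free on $E$ by hypothesis). Applying Lemma \ref{l.weights} to each $p_k$ gives an $S^1$-equivariant symplectic chart identifying an open neighborhood $V_k$ of $p_k$ with a ball in $\mathbb{C}^2$ on which the action has weights $(m_k,n_k)$. Shrink the $V_k$ so that they are mutually disjoint and contained in $\operatorname{Int}(W)$, and set $W_0 := W\setminus\bigsqcup_k V_k$. Then $W_0$ is compact oriented with boundary $\partial W_0 = E \,\sqcup\, (-\bigsqcup_k S^3_k)$, where $S^3_k := \partial V_k$ inherits the weighted Hopf action of weights $(m_k,n_k)$, and the $S^1$-action on $W_0$ is free.

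Second, I exploit the free action on $W_0$. The projection $W_0 \to W_0/S^1$ is a principal $S^1$-bundle over a compact oriented $3$-manifold with boundary whose components are $E/S^1$ and the weighted projective lines $S^3_k/S^1 \cong \mathbb{P}(m_k,n_k) \cong S^2$. Interpreting the Euler number of a Seifert fibration as the pairing of (a rational extension of) the first Chern class of the associated reduced circle bundle with the fundamental class of its base, and using that any class in $H^2(W_0/S^1;\mathbb{Q})$ pairs to zero with the cycle $\partial(W_0/S^1)$, I obtain after sorting out orientations the cobordism identity
\[
e(E) \;=\; \sum_{k=1}^{l} e(S^3_k).
\]

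Third, I compute each $e(S^3_k)$ locally. Because $\gcd(m_k,n_k)=1$ (the $S^1$-action is effective), the $\mathbb{Z}_{m_k n_k}$-subgroup of $S^1$ acting on $S^3_k$ by scalar multiplication of $\mathbb{C}^2$ has free quotient, and its quotient by $\mathbb{Z}_{m_kn_k}$ is the standard Hopf fibration $S^3_k/\mathbb{Z}_{m_kn_k} \to S^2$, a principal $S^1$-bundle of Chern number $1$. By the definition of $e$ for a Seifert fibration (the Chern number of the reduced principal bundle divided by the least common multiple of the orders of nontrivial isotropies, here $m_kn_k$), this gives $e(S^3_k) = 1/(m_k n_k)$. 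Combining with the previous display yields the desired formula.

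The main obstacle I expect is the orientation/sign bookkeeping and, relatedly, pinning down the exact convention used to define $e(\cdot)$ in the preceding subsection. Specifically one must verify (a) that the outward orientation of $E$ as $\partial W$ agrees with the orientation convention that goes into $e(E)$, (b) that the boundary spheres $S^3_k$ appear with the correct relative sign in $\partial W_0$, and (c) that the identification of $S^3_k/\mathbb{Z}_{m_k n_k} \to S^2$ as the Hopf bundle carries Chern number $+1$ rather than $-1$ in the chosen orientation; any sign error here would flip the overall sign of the formula. The actual local computation and the cobordism argument itself are, by contrast, essentially formal once conventions are fixed.
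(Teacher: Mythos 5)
A preliminary remark: the paper does not prove this statement itself --- it is quoted from \cite[Theorem 6]{martynchuk2017parallel} with no proof given --- so your argument has to stand on its own. Your overall strategy (excise equivariant balls around the fixed points via Lemma \ref{l.weights}, reduce to a cobordism identity $e(E)=\sum_k e(S^3_k)$ for the boundary Seifert fibrations, and compute each local contribution from the weighted Hopf fibration) is the standard and correct one, and your third step does land on the right answer: $S^3_k/\mathbb{Z}_{m_kn_k}$ is again $S^3$ with a free residual action whose orbit map is the Hopf fibration, so $e(S^3_k)=1/(m_kn_k)$ in the conventions of Section \ref{s.fractionalmonodromy}. (Two small imprecisions there: the $\mathbb{Z}_{m_kn_k}$-subgroup acts with weights $(m_k,n_k)$, not by scalar multiplication, and its action on $S^3_k$ is not free --- it has $\mathbb{Z}_{m_k}$- and $\mathbb{Z}_{n_k}$-isotropy on the two coordinate circles; only the residual $S^1$-action on the quotient is free. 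The conclusion is unaffected.)

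The genuine gap is in your second step, where you assert that the $S^1$-action on $W_0=W\setminus\bigsqcup_kV_k$ is \emph{free} and treat $W_0\to W_0/S^1$ as a principal bundle whose ordinary first Chern class restricts to the boundary components. The hypothesis only excludes points fixed by \emph{all} of $S^1$; it does not exclude points with nontrivial finite isotropy, and these are unavoidable in exactly the situations where the theorem is interesting. If some weight satisfies $|m_k|\geq 2$ or $|n_k|\geq 2$, the local model of Lemma \ref{l.weights} forces a two-dimensional surface of points with cyclic isotropy through $p_k$ which persists into the interior of $W_0$; this is what happens in the curled-torus example of Section \ref{s.examplecurledtori}, and it is also why the integer $N$ of Theorem \ref{t.fractionalmonodromy} can exceed $1$ (were the action free off the fixed points one would have $N=1$ and ordinary monodromy). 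Indeed your own description of the boundary pieces $S^3_k/S^1$ as weighted projective lines is incompatible with freeness on $W_0$. So the principal-bundle justification of $e(E)=\sum_k e(S^3_k)$ fails as written. The identity is nevertheless true, but it needs the orbifold (rational) version of the argument: for instance, let $N$ be the least common multiple of all finite isotropy orders occurring in $W_0$, pass to $W_0/\mathbb{Z}_N$, on which the residual $S^1$-action is genuinely free, run your Chern-class/boundary argument there, and then use that $e(X/\mathbb{Z}_N)=N\,e(X)$ for each boundary Seifert manifold $X$ to divide back by $N$. With that repair, together with the orientation bookkeeping you already flag, the proof is complete.
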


\section{Hypersemitoric systems}
\label{s.structureflap}

Hypersemitoric systems form a class of integrable Hamiltonian systems which generalizes that of semitoric systems by allowing for the existence of parabolic singularities. The latter are defined and characterized in Section~\ref{s.parabolicpoints}.

\begin{definition}
\label{d.hypersmitoric}
An integrable system $(M,\omega,F=(J,H))$ is called a \textbf{hypersemitoric system} if the following hold true:
\begin{enumerate}[label={(\roman*)}]
\item $J$ is proper and generates an effective $S^1$-action;
\item all degenerate singular points of $F$ (if any) are of parabolic type. 
\end{enumerate}
\end{definition}

The existence of the effective $S^1$-action with Hamiltonian function $J$ imposes restrictions to the types of nondegenerate singularities that can appear in hypersemitoric systems:
\begin{proposition}
\label{p.hyperbolicstuff}
(Hohloch $\&$ Palmer \cite[Proposition $4.1$]{hohloch2021extending})
Let $(M,\omega,F=(J,H))$ be an integrable system such that $J$ generates an effective $S^1$-action. Then :
\begin{itemize}
\item $(M,\omega,F)$ has no critical points of hyperbolic-hyperbolic type.
\item If $p \in M$ is a critical point of hyperbolic-elliptic type then it lies in a fixed surface of the $S^1$-action. 
\end{itemize}
\end{proposition}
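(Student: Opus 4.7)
The plan is to exploit the fact that any rank-zero critical point of $F=(J,H)$ is automatically a fixed point of the $S^1$-action, and that at such a fixed point the linearization of $X_J$ is very restricted. Concretely, at such a point $p$ both $X_J(p)$ and $X_H(p)$ vanish, so $p$ is fixed by the flow of $J$. By Chaperon's Lemma \ref{l.weights}, in suitable symplectic coordinates $(z_1,z_2)$ centered at $p$ we have $J = J(p) + \tfrac{m}{2}|z_1|^2 + \tfrac{n}{2}|z_2|^2$ for integers $m,n$, and therefore the linearization $A := \omega_p^{-1} d^2J(p) \in \mathfrak{sp}(T_pM,\omega_p)$ is semisimple with purely imaginary spectrum $\{\pm im, \pm in\}$. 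Set $B := \omega_p^{-1} d^2H(p)$. Since $\{J,H\}=0$ the operators $A,B$ commute, and non-degeneracy of $p$ says that $A,B$ span a Cartan subalgebra $\mathfrak{c}$ of $\mathfrak{sp}(T_pM,\omega_p) \cong \mathfrak{sp}(4,\mathbb{R})$ containing a regular element with four distinct eigenvalues.

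For the first assertion, I would invoke Williamson's classification of Cartan subalgebras of $\mathfrak{sp}(4,\mathbb{R})$: the hyperbolic-hyperbolic Cartan is $Sp(4,\mathbb{R})$-conjugate to the split diagonal Cartan $\{\mathrm{diag}(a,-a,b,-b)\}$, on which every element has purely real spectrum. If $p$ were hyperbolic-hyperbolic, then $\mathfrak{c}$ would be of this type, and $A \in \mathfrak{c}$ would have only real eigenvalues; combined with its purely imaginary spectrum obtained above, this forces $A = 0$, contradicting linear independence of $A$ and $B$.

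For the second assertion, suppose $p$ is hyperbolic-elliptic. Then $\mathfrak{c}$ is $Sp$-conjugate to the mixed Cartan, which corresponds to a symplectic splitting $T_pM = V_e \oplus V_h$ on which every element of $\mathfrak{c}$ acts as an elliptic generator on $V_e$ and a hyperbolic one on $V_h$. The purely imaginary spectrum of $A$ forces $A|_{V_h}=0$, so $\ker A$ is two-dimensional. Matching this with the spectrum $\{\pm im, \pm in\}$ obliges one of $m,n$ to vanish; say $n=0$. By Lemma \ref{l.weights} the $S^1$-action is then locally conjugate to $(z_1,z_2)\mapsto(e^{im\theta}z_1,z_2)$, whose fixed-point set is the symplectic surface $\{z_1=0\}$ through $p$, so $p$ indeed lies on a fixed surface. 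The main technical hurdle throughout is the dictionary between the combinatorial type of a non-degenerate critical point (elliptic-elliptic, focus-focus, elliptic-hyperbolic, hyperbolic-hyperbolic) and the $Sp$-conjugacy class of $\mathfrak{c}=\mathrm{span}(A,B)$; once this dictionary is in place, the rigidity of the $S^1$-action—namely the purely imaginary spectrum of $A$—cleanly obstructs any hyperbolic direction in $\mathfrak{c}$ and simultaneously drives both conclusions.
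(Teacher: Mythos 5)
The paper does not prove this statement itself but cites it from Hohloch--Palmer, and the cited argument is essentially the one you give: at a rank-zero fixed point the equivariant normal form of Lemma \ref{l.weights} makes $\omega_p^{-1}d^2J(p)$ semisimple with purely imaginary spectrum $\{\pm im,\pm in\}$, which is incompatible with lying in a Cartan subalgebra of hyperbolic-hyperbolic type (all of whose elements have real spectrum), and in the elliptic-hyperbolic case kills the hyperbolic block, forcing one isotropy weight to vanish so that the local fixed-point set through $p$ is the surface $\{z_1=0\}$. Your proof is correct and follows the same route as the source.
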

\begin{remark}
 Karshon \cite{karshon1999periodic} showed that a fixed surface of the $S^1$-action can only be located at the maximum or minimum of $J$.
\end{remark}

Therefore, the only rank-$0$ nondegenerate singularities that can appear in a hypersemitoric system are elliptic-elliptic, focus-focus and hyperbolic-elliptic, while at rank $1$ both elliptic-regular and hyperbolic-regular nondegenerate singularities are possible. 
Notice that if a hypersemitoric system has parabolic singularities then it also has hyperbolic-regular and elliptic-regular singularities, see Section~\ref{s.parabolicpoints}. 

Hypersemitoric systems form a significantly more general class than semitoric systems and---unlike semitoric systems---are not yet classified. 
Besides being a natural class of integrable Hamiltonian systems to symplectically classify, hypersemitoric systems are also important to understand because of their close relation to Hamiltonian $S^1$-spaces.
In particular, it was shown by Hohloch \& Palmer \cite{hohloch2021extending} that each Hamiltonian $S^1$-space $(M,\omega,J)$, see Definition~\ref{d.s1space}, lifts to a hypersemitoric system $(M,\omega,(J,H))$, i.e., there exists $H$ such that $\{H,J\}=0$ and $(M,\omega,F=(J,H))$ is hypersemitoric. 
However, the class of general hypersemitoric systems defined above is very broad.
To simplify the discussion, in this work we consider the following subclass of hypersemitoric systems.

\begin{definition}
\label{d.simple-hypersemitoric}
An hypersemitoric integrable system $(M,\omega,F=(J,H))$ is called a \textbf{simple hypersemitoric system} if each connected component of a fiber $F^{-1}(f)$ contains at most one $S^1$ orbit of singular points.
\end{definition}

Singularities in simple hypersemitoric systems have typical arrangements that we call flaps and pleats, following \cite{efstathiou2012topology}.
We give the definition of flaps and pleats in Section~\ref{s.flaps/pleatsdefinition} and we discuss their differences.
Then we describe in detail the structure of flaps in Section~\ref{s.structureimageflap}.


\subsection{Parabolic singularities} 
\label{s.parabolicpoints}

Parabolic singularities were referred to by Colin de Verdi\`ere \cite{colin2003singular} as ``the simplest non Morse example of singular points in integrable systems'', and moreover, we will see that they naturally occur in many systems with hyperbolic-regular points. 

\begin{definition}
Let $(M,\omega)$ be a $4$-dimensional symplectic manifold, $(M,\omega,G)$ an integrable system, and $p\in M$ a singular point of the map $G$ for which there exists a local diffeomorphism $\phi$ of $\mathbb{R}^2$ defined locally around $G(p)$ such that $df_1(p) \ne 0$, where $(f_1,f_2) := \phi\circ G$. Let 
\begin{equation*}
\tilde{f_2}:=\tilde{f}_{2,p}:=(f_2)|_{f_1^{-1}(f_1(p))}:f_1^{-1}(f_1(p))\rightarrow \mathbb{R}.
\end{equation*}
The point $p$ is a \textbf{parabolic degenerate singular point} (briefly, \textbf{parabolic point}, sometimes also called a \textbf{cuspidal point} or \textbf{cusp}) if the following hold true:
\begin{enumerate}[label={(\roman*)}]
\item The point $p$ is a critical point of the map $\tilde{f}_2$.
\item $\operatorname{rank}(d^2\tilde{f}_2(p))=1$.
\item There exists a vector $v\in \ker(d^2\tilde{f}_2(p))$ such that 
\begin{equation*}
v^3(\tilde{f_2}):=\frac{d^3}{dt^3}\bigg|_{t=0}\tilde{f}_2(\gamma(t)) \ne 0,
\end{equation*}
where the curve $\gamma(t):\ ]-\epsilon,\epsilon[\rightarrow f_1^{-1}(f_1(p))$ satisfies $\gamma(0)=p$ and $\dot{\gamma}(0)=v$; the definition of $v^3(\tilde{f_2})$ does not depend on the choice of $\gamma$, see \cite[Remark~2.1]{bolsinov2018symplectic}.
\item $\operatorname{rank}(d^2(f_2-kf_1)(p))=3$, where $k \in \mathbb{R}$ is determined by $df_2(p)=k\,df_1(p)$.
\end{enumerate}
The image of a parabolic singular point is called a $\textbf{parabolic singular value}$ of $F$, briefly, a \textbf{parabolic value}.
\end{definition}

Intuitively, a parabolic point is a singular point where the rank of all relevant maps is as maximal as possible without the point being nondegenerate.


Parabolic points do not admit a symplectic normal form but they do admit a smooth normal form:


\begin{proposition}
\label{p.parabolicnormalform}
({Efstathiou $\&$ Giacobbe \cite[Proposition 2]{efstathiou2012topology}; Kudryavtseva \& Martynchuk \cite[Theorem 3.1]{kudryavtseva2021c}}) Let $p\in M$ be a parabolic singular point of an integrable system $(M,\omega,F=(f_1,f_2))$ for which $df_1(p)\neq 0$. Then there exists a neighborhood $U$ of $p$ equipped with coordinates $(x,y,t,\theta)$ centered at $p$, and a local diffeomorphism $g=(g_1,g_2)$ of $\mathbb{R}^2$ around the origin with $g_1(x_1,x_2)=\pm x_1 + const$ and $\partial g_2 / \partial x_2 \ne 0$ such that 
\begin{equation*}
g\circ F|_{U}=(t,x^3+tx+y^2).
\end{equation*}
\end{proposition}

\begin{example}
\label{ex.cusp}
The origin is a parabolic point for the integrable system given by the local normal form $F:\mathbb{R}^4\rightarrow \mathbb{R}^2, (x,y,t,\theta)\mapsto (t,x^3+tx+y^2)$ equipped with the symplectic form $\omega=dx\wedge dy+dt\wedge d\theta$. 
\end{example}



\begin{remark}
In the analytic case the complete set of symplectic invariants of parabolic points and parabolic orbits is described by Bolsinov, Guglielmi, and Kudryavtseva \cite{bolsinov2018symplectic}. This was extended to the smooth case by Kudryavtseva \& Martynchuk \cite{kudryavtseva2021c}.
\end{remark}

\begin{figure}[htb]
\centering
\includegraphics[width=0.3\textwidth]{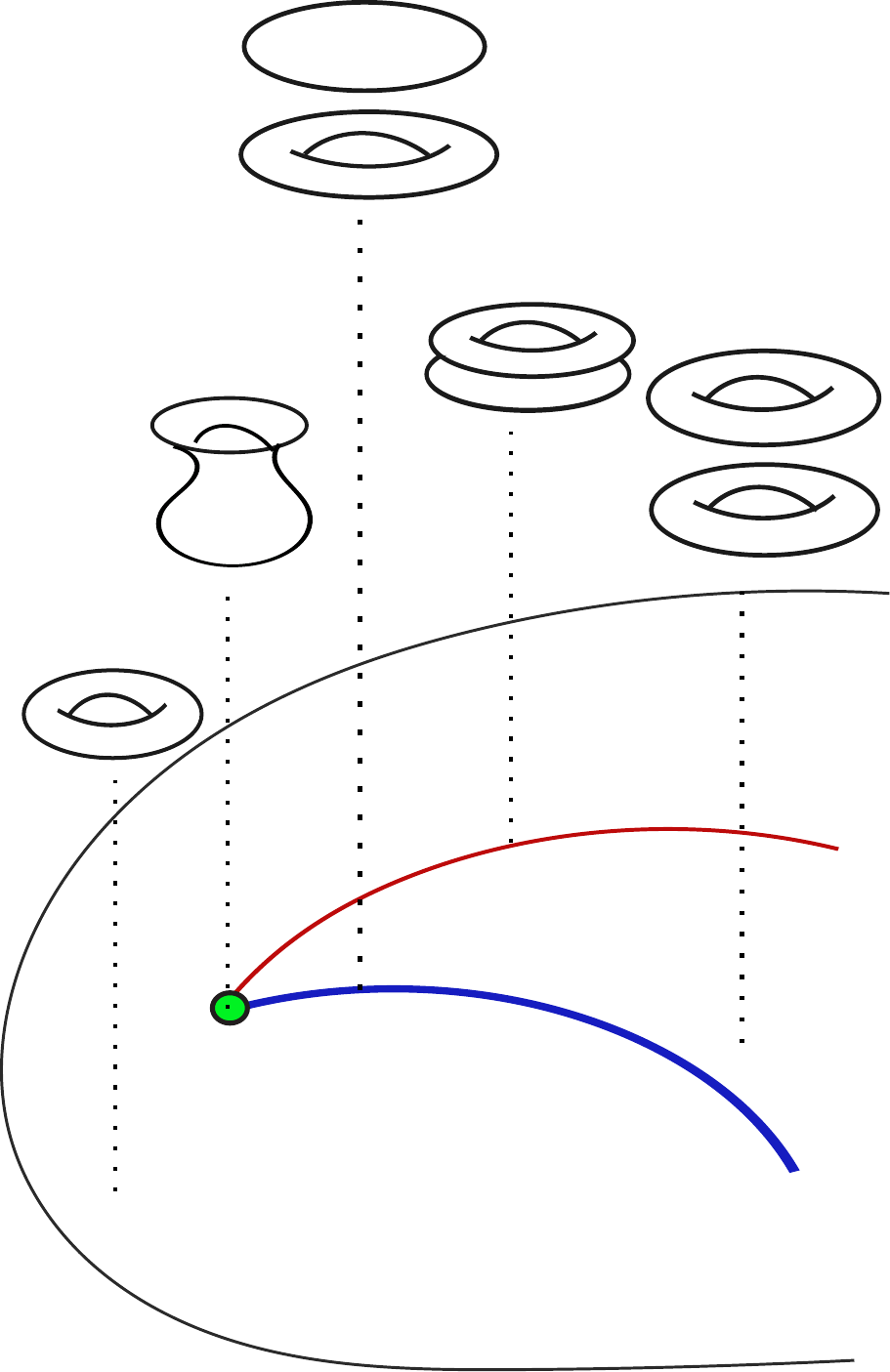}
\hspace{1cm}
\includegraphics[width=0.4\textwidth]{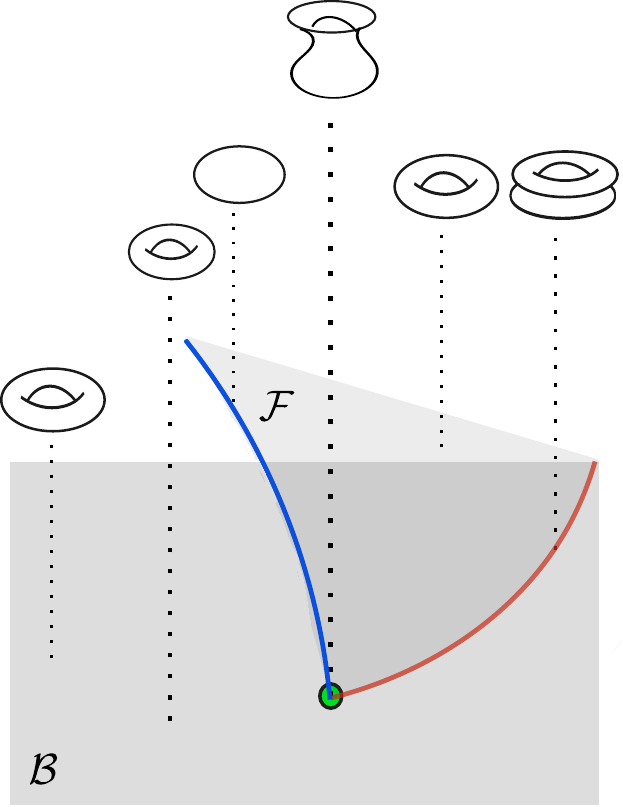}
\caption{On the left the local structure of the critical values of $F$ near a cuspidal value. The green points represent cuspidal values. On the right the local structure of the local flap $\mathcal{F}$ in the unfolded momentum diagram. The local base is represent by $\mathcal{B}$.
The red lines represent hyperbolic-regular values, and the blue lines represent elliptic-regular values. }
\label{p.localstructurecusp}
\end{figure}

\subsection{Parabolic singularities in simple hypersemitoric systems}
\label{s.parabolichypersemitoric}

Let $c$ be a parabolic value of $F$ of a simple hypersemitoric system.
The fiber $F^{-1}(c)$ is a so-called cuspidal torus, i.e., the product of $S^1$ with a `teardrop'.
The normal form in Proposition~\ref{p.parabolicnormalform} implies that in a neighborhood of $c$ the set of critical values of $F$ consists of two curves emanating from $c$, see Fig.~\ref{p.localstructurecusp}.
One curve consists of hyperbolic-regular values.
For each such hyperbolic-regular value, the corresponding fiber is a so-called bitorus, i.e., the product of $S^1$ with a figure eight, and it contains an $S^1$ orbit of hyperbolic-regular points.
Approaching $c$, one of the loops of the figure eight shrinks and vanishes, i.e., the figure eight degenerates to the teardrop on $F^{-1}(c)$. 
The other curve consists of elliptic-regular values. 
For each such elliptic-regular value, the corresponding fiber is the (disjoint) union of an $S^1$ orbit of elliptic-regular points and a smooth torus $\mathbb{T}^2$.
The two curves of critical values separate an open neighborhood $U$ of $c$ in two regions of regular values. 
In one region the fibers are connected and on the other region the fibers are disconnected. 
For more details see, for example, Efstathiou \& Giacobbe \cite{efstathiou2012topology}.


For the integrable system $(M,\omega,F)$ one can define a branched covering of $F(M)$, i.e., there exists a stratified space $A$, usually referred to as \textbf{unfolded momentum domain}, a map $\widetilde{F} : M \rightarrow A$ and a projection $\pi : A \rightarrow F(M)$ such that the level sets of $\widetilde{F}$ are the connected components of the level sets of $F$ and $F = \pi \circ \tilde{F}$. 

The earlier discussion concerning the fibration in a neighborhood $U$ of a parabolic value $c$ implies that the set $\pi^{-1}(U) \subset A$ has two sheets which admit the following description, see Fig.~\ref{p.localstructurecusp}. One sheet, which we call the \text{local base} $\mathcal{B}$, contains regular values, the curve of hyperbolic-regular values and no elliptic-regular values. The other sheet, which we call the \textbf{local flap} $\mathcal{F}$, contains the curve of elliptic-regular values, the curve of hyperbolic-regular values as well as regular values ``in between'' these curves. The two sheets intersect along the curve of hyperbolic-regular values, and at the parabolic value. The curve of regular values in the topological boundary of $\mathcal{F}$ is called the \textbf{free boundary} of $\mathcal{F}$. 

For hypersemitoric systems it is important to consider ``normal forms'' near parabolic points that preserve the generator $J$ of the $S^1$ action. We have the following result.

\begin{lemma}\label{p.parabolicnormalformfixingj}
Let $(M,\omega,F=(J,H))$ be a hypersemitoric system and $p$ be a parabolic critical point. 
Then there exist local coordinates $(x,y,t,\theta)$ such that $(x,y,t,\theta)|_p = (0,0,J(p),0)$ and 
\[ H = \kappa ( x^3+y^2+f(t)x+g(t) ), \quad J = t, \]
where $f$, $g$ are real analytic functions with $f(J(p)) = 0$, $f'(J(p)) \ne 0$ and $\kappa \in \{-1,+1\}$.
\end{lemma}

\begin{proof}
The result directly follows from \cite[Lemma~2.2]{bolsinov2018symplectic}.
\end{proof}

A direct consequence of Lemma~\ref{p.parabolicnormalformfixingj} is that the set of critical values near the parabolic value $c = F(p)$ is given by
\[ H = \kappa \left( g(J) \pm \frac{2}{3\sqrt3} (-f(J))^{3/2} \right), \quad f(J) \le 0. \]
That is, the set of critical values consists locally of two curves such that each curve is a graph over $J$ with $J \le J(p)$ if $f'(J(p)) > 0$ and $J \ge J(p)$ if $f'(J(p)) < 0$. 
The curve with the `$+$' sign corresponds to hyperbolic-regular values while the curve with the `$-$' sign corresponds to elliptic-regular values.

\subsection{Flaps and pleats}
\label{s.flaps/pleatsdefinition}
Consider now the case where the integrable system $(M, \omega, F)$ contains a curve of critical values parameterized by $c: [0,1] \to \mathbb{R}^2$, $s \mapsto c_s$, so that the endpoints $c_0$ and $c_1$ are parabolic critical values while the interior points $c_s$ for $s\in\ ]0,1[$ are hyperbolic-regular critical values.
As described earlier, as $s$ approaches $0$ or $1$ one of the loops of the figure eight of the bitorus $C_s = F^{-1}(c_s)$ shrinks and vanishes, and the figure eight degenerates to a teardrop. 
Then there are two possible cases: either the loop that vanishes when approaching $c_0$ is the same that vanishes when approaching $c_1$, or not.
The former case typically gives rise to a \emph{flap}, Fig.~\ref{p.flap} (left), while the latter typically gives rise to a pleat, Fig.~\ref{p.pleat} (right).

\begin{figure}[ht]
\centering
\includegraphics[width=0.4\textwidth]{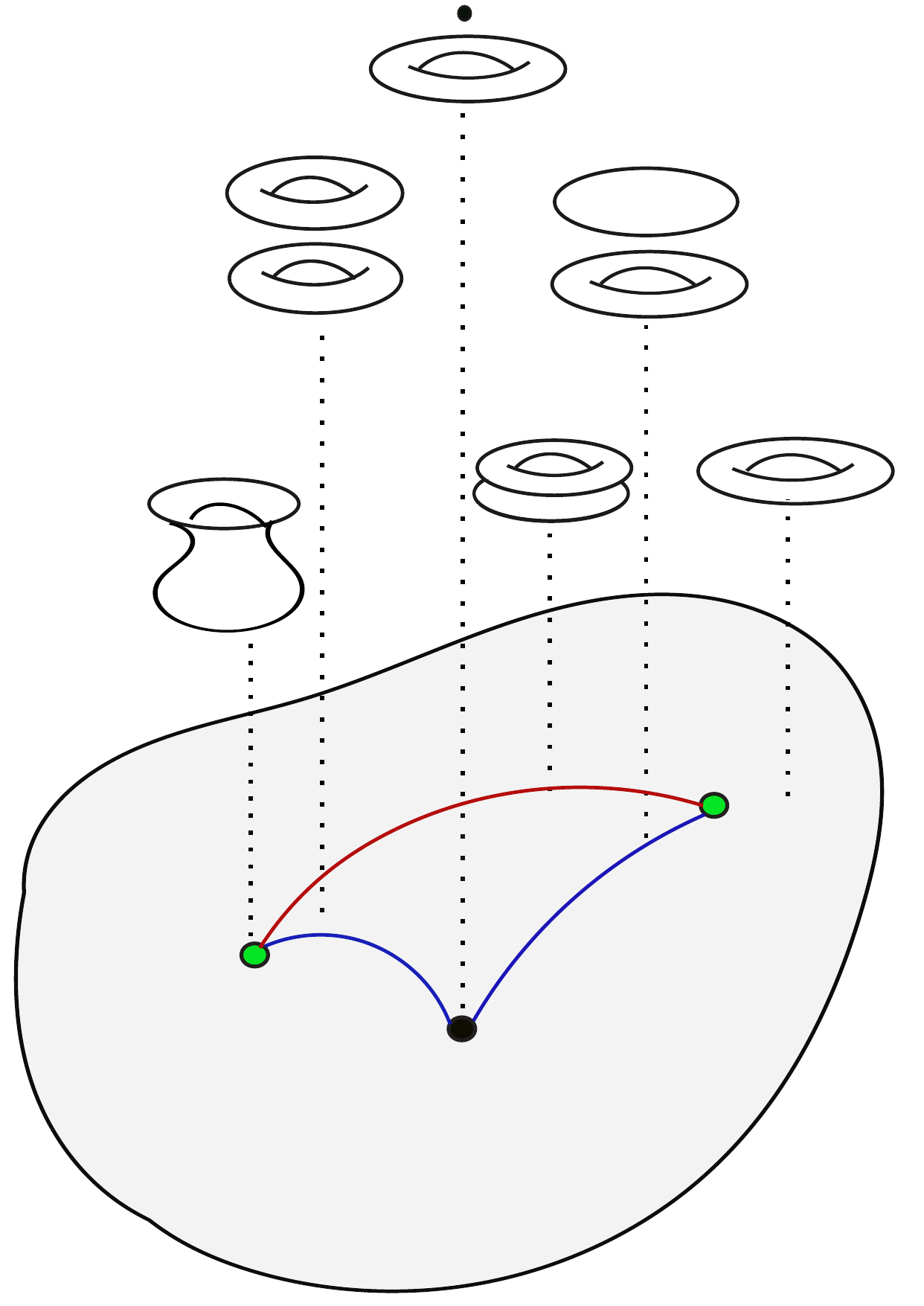}
\hspace{1cm}
\includegraphics[width=0.4\textwidth]{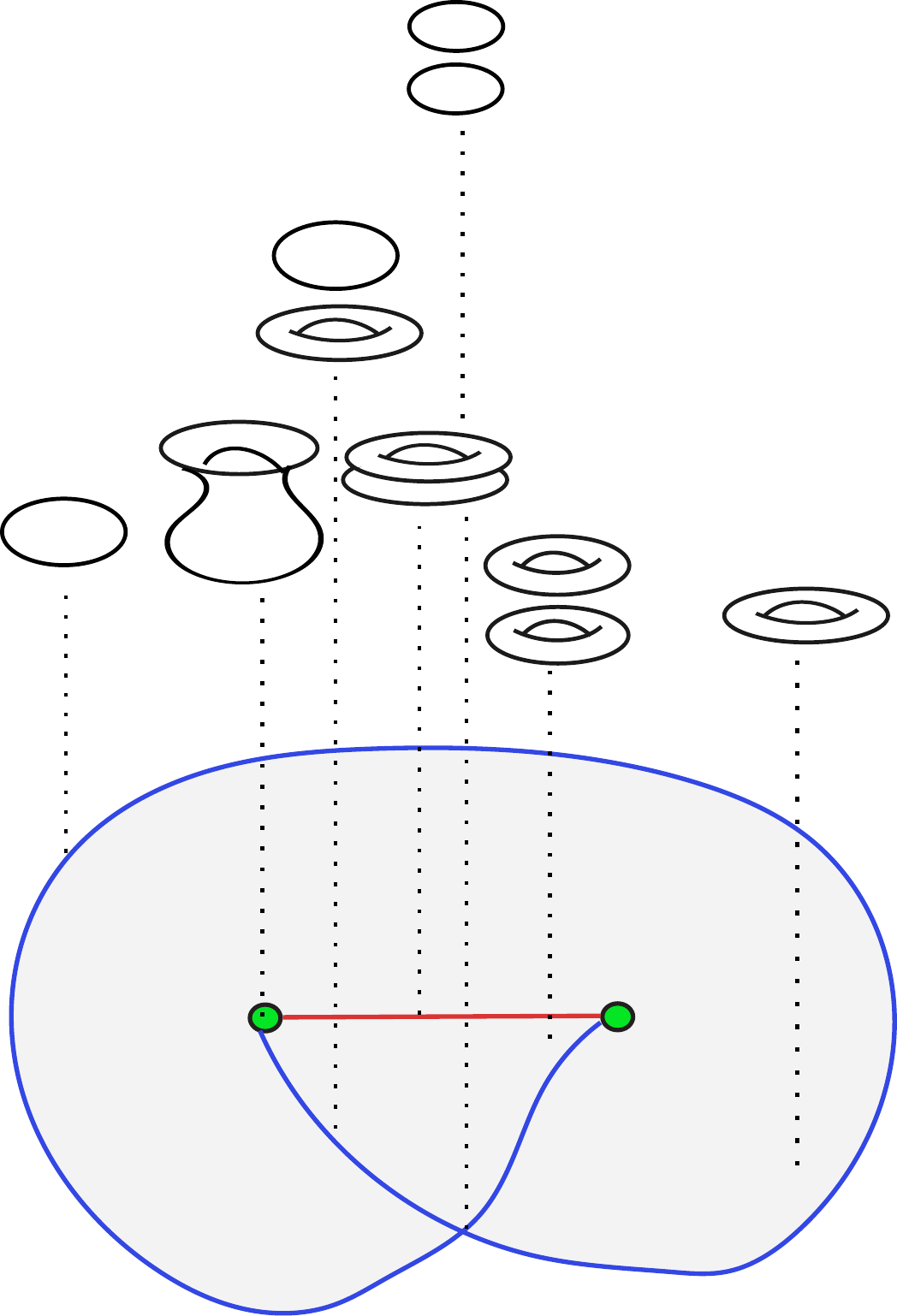}
\caption{Left: Standard flap. Right: Pleat. The black point at the right represents an elliptic-elliptic value. In both examples, the green points represent parabolic values, the red lines represent hyperbolic-regular values, and the blue lines represent elliptic-regular values. }
\label{p.flap}
\label{p.pleat}
\end{figure}

For $i\in \{0,1\}$ let $\mathcal{B}_i$ denote the local bases of $c_i$ and $\mathcal{F}_i$ denote the local flaps of $c_i$. Then, the \textbf{flap topology} is obtained if we glue a subset of the free boundary of $\mathcal{F}_1$ to a subset of the free boundary of $\mathcal{F}_2$, and glue a subset of the boundary of $\mathcal{B}_1$ to a subset of the boundary of $\mathcal{B}_2$. Similarly, the \textbf{pleat topology} is obtained if we glue a subset of the free boundary of $\mathcal{F}_1$ to a subset of the boundary of $\mathcal{B}_2$, and glue a subset of the free boundary of $\mathcal{F}_2$ to a subset of the boundary of $\mathcal{B}_1$.

\subsection{Structure of flaps}
\label{s.structureimageflap}


If $c : [0,1] \to \mathbb{R}^2$ has the flap topology, then the sheet in $A$ that contains the curves of elliptic-regular values emanating from each of the parabolic values $c_0$ and $c_1$ is called a \textbf{flap}, and we denote it again by $\mathcal{F}$. Notice that $\mathcal{F}$ contains both local flaps $\mathcal{F}_0$ and $\mathcal{F}_1$. The set $\pi(\mathcal{F})$ is called the \textbf{image of the flap} and we denote it by $\operatorname{Im}(\mathcal{F})$.
Additionally, we denote by $\gamma_{\mathcal{F}} = c : [0,1] \to F(M)$ the curve of critical values joining the two parabolic values.
The sheet that contains the local bases of $c_0$ and $c_1$ is called the \textbf{base}, and we denote it by $\mathcal{B}$. The set $\pi^{-1}(\pi(\mathcal{F}))\cap \mathcal{B} \subset A$ is called the \textbf{background of the flap} $\mathcal{F}$.

%
In order to define an affine invariant for a simple hypersemitoric system, we have to deal with the appearance of flaps. The appearance of critical values on the image of the flap may lead to extra difficulties. This is because the critical values will introduce monodromy in the underlying integrable system. Therefore, in order to study the simplest type of flaps for which defining an affine invariant is non-trivial, which is done in Section \ref{s.flap}, and gain some intuition for the general problem, we introduce the following definitions:
\begin{definition}
\label{d.standardflap}
A flap $\mathcal{F}$ is \textbf{standard} if there are no critical values in the interior of $\operatorname{Im}(\mathcal{F})$ and there exists at least one elliptic-elliptic value in the boundary of $\operatorname{Im}(\mathcal{F})$.
\end{definition}

\begin{definition}
    Let $(M,\omega,F)$ be an integrable system such that the only critical values outside of the boundary occur in the image of flaps. Then the \textbf{background of the system} is the sheet in its unfolded momentum domain $A$ that contains the background of the flaps. 
\end{definition}

\begin{definition}
\label{d.bounded}
    Let $\mathcal{F}$ be a flap in a hypersemitoric system $(M,\omega,F=(J,H))$, and $\mathcal{B}$ the background of the flap $\mathcal{F}$. Furthermore, let $\pi$ be the projection associated with the unfolded momentum domain of $(M,\omega,F)$. The flap $\mathcal{F}$ is said to be \textit{bounded by the background} if $\pi(\mathcal{F})\subseteq \pi(\mathcal{B})$. Otherwise, we say its \textit{unbounded by the background}. For sake of readability, we often drop {\em `by the background'} later in the paper.
\end{definition}

Our aim in this section is to prove the following result describing the boundary of the image of a flap.

\begin{proposition}
\label{p.flapstructure}
Let $(M,\omega,F=(J,H))$ be a hypersemitoric system and $\mathcal{F}$ a flap with parabolic values $c_0 = \gamma_{\mathcal{F}}(0)$, $c_1 = \gamma_{\mathcal{F}}(1)$. 
Then $J|_{F^{-1}(\operatorname{Im}(\mathcal{F}))} = \, [J(c_0),J(c_1)]$.
Moreover, the boundary of $\operatorname{Im}(\mathcal{F})$ consists of the two parabolic values $c_0$, $c_1$ joined by two curves $\gamma^h$ and $\gamma^e$, such that each curve is a graph over the $J$-interval $]J(c_0), J(c_1)[$. In particular:
\begin{itemize}
\item The curve $\gamma^h$ consists only of hyperbolic-regular values and $\operatorname{Im}(\gamma^h) = \operatorname{Im}(\gamma_{\mathcal{F}}) \setminus \{c_0,c_1\}$. 
\item The curve $\gamma^e$ consists of elliptic-regular values possibly interrupted by isolated elliptic-elliptic values. 
\end{itemize}
\end{proposition}




First consider the following auxiliary results:

\begin{lemma}\label{l.nonverticaltangencieshyperbolic}
(Hohloch $\&$ Palmer \cite[Lemma $4.2$]{hohloch2021extending}) 
Let $(M,\omega,F=(J,H))$ be an integrable system such that $J$ generates an effective $S^1$-action and let $C\subset M$ be a connected component of the set of hyperbolic-regular points of $M$. Then $F(C)$ does not have vertical tangencies. 
\end{lemma}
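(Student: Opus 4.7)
The plan is to reduce the lemma to the single claim that $dJ(p)\neq 0$ at every hyperbolic-regular point $p\in C$. Using the local normal form at $p$ (Theorem \ref{t.localnormalform}) with $q_1=x_1\xi_1$ and $q_2=\xi_2$, the Poisson relations $\{J,q_j\}=\{H,q_j\}=0$ imply, after differentiating at $p$, that $dJ(p),dH(p)\in\mathbb{R}\cdot d\xi_2$. Since $p$ has rank $1$, the image of $dF(p)$ is the line spanned by $(\partial_{\xi_2}J(p),\partial_{\xi_2}H(p))$, and since $F\vert_C$ has rank $1$ at $p$ this line is precisely the tangent line to the curve $F(C)$ at $F(p)$. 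This line is vertical in $\mathbb{R}^2$ precisely when $dJ(p)=0$, so the lemma reduces to excluding this case.

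Suppose for contradiction that $dJ(p)=0$. Then $X_J(p)=0$, so $p$ is a fixed point of the effective $S^1$-action generated by $J$. The linearization $A:=\omega_p^{-1}d^2J(p):T_pM\to T_pM$ is the infinitesimal generator of the linearized $S^1$-action at $p$; in particular $e^{2\pi A}=\mathrm{id}$, which forces every eigenvalue of $A$ on $T_pM$ to lie in $i\mathbb{Z}$, hence to be purely imaginary.

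On the other hand, the Bolsinov--Fomenko rank-$1$ criterion from Subsection \ref{s.nondegeneratefour} requires, for hyperbolic-regularity at $p$, that the operator $\omega_p^{-1}(\mu d^2J(p)+\lambda d^2H(p))$ descended to $L_p^\perp/L_p$ have real nonzero eigenvalues $\pm\alpha$, where $\mu dJ(p)+\lambda dH(p)=0$. Since $dJ(p)=0$ and $dH(p)\neq 0$ (else the rank would be $0$), we must take $(\mu,\lambda)=(1,0)$, so this operator is just $A$ descended to $L_p^\perp/L_p$. Differentiating the identity $[X_J,X_H]=0$ at the fixed point $p$ yields $AX_H(p)=0$, so $L_p=\mathrm{Span}\{X_H(p)\}\subset\ker A$; combined with the $\omega$-antisymmetry of any Hamiltonian linearization ($\omega(Av,w)=-\omega(v,Aw)$), this yields $A(L_p^\perp)\subset L_p^\perp$. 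Consequently the spectrum of the induced operator on $L_p^\perp/L_p$ is contained in the spectrum of $A$ on $T_pM$, hence purely imaginary---contradicting the requirement that it be real and nonzero. The main technical point is this joint $A$-invariance of $L_p$ and $L_p^\perp$; once it is established, the contradiction is immediate, giving $dJ(p)\neq 0$, and hence $F(C)$ has no vertical tangency at $F(p)$.
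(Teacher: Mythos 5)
Your argument is correct, and it is self-contained where the paper is not: the paper does not prove this lemma at all, but simply cites it to Hohloch--Palmer, and only proves the analogous statement for elliptic-regular components (Lemma \ref{l.nonverticaltangenciesellitpic}). Your first step coincides with the reduction used there --- a vertical tangency of $F(C)$ at $F(p)$ forces $dJ(p)=X_J(p)=0$ --- except that you derive the identification of the tangent line of $F(C)$ with the image of $dF(p)$ from the local normal form, whereas the paper quotes Bolsinov--Fomenko (Proposition $1.16$) for the fact that the tangent direction is $(a,b)$ with $bX_J(p)-aX_H(p)=0$; both routes are fine. Where the two arguments genuinely diverge is in how the case $dJ(p)=0$ is excluded: the paper's elliptic-regular proof concludes that $p$ lies on a fixed surface of the $S^1$-action and hence that $F(p)\in\partial(F(M))$, which is why that lemma needs the hypothesis $F(C)\subset \operatorname{int}(F(M))$; your hyperbolic-regular argument instead produces an outright contradiction by comparing spectra --- the linearized period-$2\pi$ circle action forces the eigenvalues of $A=\omega_p^{-1}d^2J(p)$ into $i\mathbb{Z}$, while the Bolsinov--Fomenko rank-one criterion with $(\mu,\lambda)=(1,0)$ demands real nonzero eigenvalues on $L_p^{\perp}/L_p$. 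The technical hinge you identify, namely that $L_p\subset\ker A$ (from linearizing $[X_J,X_H]=0$ at the zero of $X_J$) and hence $A(L_p^{\perp})\subset L_p^{\perp}$, is exactly what is needed to compare the quotient spectrum with that of $A$ on $T_pM$, and it is argued correctly. The payoff of your route is that no interior hypothesis is needed, consistent with the statement of the lemma; the payoff of the paper's route for the elliptic case is that it localizes where such tangencies could occur (only on $\partial(F(M))$) rather than ruling them out absolutely.
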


Analogously, we have:

\begin{lemma}
\label{l.nonverticaltangenciesellitpic}
Let $(M,\omega,F=(J,H))$ be an integrable system such that $J$ generates an effective $S^1$-action and let $C \subset M$ be a connected component of the set of elliptic-regular points of $M$ such that $F(C) \in \operatorname{int}({F(M)})$. Then $F(C)$ does not have vertical tangencies.
\end{lemma}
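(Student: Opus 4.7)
The plan is to mirror the strategy of the preceding lemma on hyperbolic-regular values, substituting the elliptic-regular Eliasson normal form and adding an argument to exclude $S^1$-fixed points on $C$ whose image lies in the interior. I would argue by contradiction: assume $F(C)$ has a vertical tangency at $F(p)$ for some $p\in C$, and show that $F(p)$ must lie on the boundary of $F(M)$.

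First, I would invoke Theorem \ref{t.localnormalform} to obtain, in a neighborhood of $p$, symplectic coordinates $(x_1,\xi_1,x_2,\xi_2)$ centered at $p$ and a local diffeomorphism $g$ of $\mathbb{R}^2$ such that $F=g\circ(q_1,q_2)$ with $q_1=\tfrac{1}{2}(x_1^2+\xi_1^2)$ and $q_2=\xi_2$. Locally, $C$ is cut out by $\{x_1=\xi_1=0\}$, so $F(C)$ is parameterized near $F(p)$ by $\xi_2\mapsto g(0,\xi_2)$, with tangent vector $\partial_{q_2}g(0,0)$ at $F(p)$. Writing $J=g_1(q_1,q_2)$, the $J$-component of this tangent is $\partial_{q_2}g_1(0,0)$. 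Since
\[
X_J \;=\; (\partial_{q_1}g_1)\,X_{q_1}+(\partial_{q_2}g_1)\,X_{q_2},
\]
and since $X_{q_1}(p)=0$ while $X_{q_2}(p)=\partial_{x_2}\neq 0$, vertical tangency of $F(C)$ at $F(p)$ is equivalent to $X_J(p)=0$, i.e., to $p$ being fixed by the $S^1$-action generated by $J$.

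Finally, I would rule out $p\in M^{S^1}$ under the hypothesis $F(p)\in\mathrm{int}(F(M))$. The connected component of $M^{S^1}$ through $p$ is a symplectic submanifold of $M$ and hence either an isolated point or a $2$-dimensional fixed surface. The isolated case is incompatible with $p$ being elliptic-regular: by Lemma \ref{l.weights}, at an isolated $S^1$-fixed point there exist coordinates $(z_1,z_2)$ in which $J-J(p)=\tfrac{m}{2}|z_1|^2+\tfrac{n}{2}|z_2|^2$ with coprime nonzero integers $m,n$, and since $H$ is $S^1$-invariant and the isotropy representation on $T_pM$ admits no nonzero invariant linear functional when both weights are nonzero, necessarily $dH(p)=0$. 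Together with $dJ(p)=0$, this forces $\mathrm{rank}\,dF(p)=0$, contradicting that $p$ is a rank-one elliptic-regular point. Hence $p$ lies on a fixed surface, which by Lemma \ref{l.fixedsurfaceS1action} is contained in $\{J=J_{\min}\}$ or $\{J=J_{\max}\}$. But then $F(p)$ lies on a vertical side of $F(M)$, contradicting $F(p)\in\mathrm{int}(F(M))$.

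The main obstacle is the last paragraph: excluding isolated $S^1$-fixed points from $C$. This requires the small equivariant observation that the isotropy representation at an isolated fixed point has no invariant linear functionals when both weights are nonzero, so any $S^1$-invariant $H$ automatically has vanishing differential there. Everything else is a direct extraction of the tangent direction from the Eliasson normal form, completely parallel to the hyperbolic-regular case.
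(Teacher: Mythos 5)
Your proof is correct, and while its skeleton coincides with the paper's (vertical tangency forces $dJ(p)=X_J(p)=0$, which is then incompatible with $F(p)\in\mathrm{int}(F(M))$), you fill in both halves differently. For the first implication the paper simply cites Bolsinov--Fomenko's Proposition 1.16 to identify the tangent direction of $F(C)$ with the coefficients of the linear relation $bX_J(p)-aX_H(p)=0$; your derivation via the elliptic-regular Eliasson normal form, reading off that the tangent of $F(C)$ is $\partial_{q_2}g(0,0)$ and that its $J$-component is exactly the coefficient of $X_{q_2}(p)$ in $X_J(p)$, is an equivalent but self-contained route. The more substantive divergence is in the second implication: the paper invokes Hohloch--Palmer's Lemma 4.3 as a black box to conclude $dJ(p)=0\Rightarrow F(p)\in\partial(F(M))$, whereas you prove this directly for the point at hand by splitting $M^{S^1}$ into isolated fixed points and fixed surfaces, excluding the isolated case by the observation that the isotropy representation with two nonzero weights admits no nonzero invariant linear functional (so $dH(p)=0$ and $p$ would have rank $0$, not rank $1$), and excluding the surface case via Lemma \ref{l.fixedsurfaceS1action}. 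What your version buys is self-containedness within the paper's own preliminaries, at the cost of implicitly relying on the compactness/connectedness hidden in Karshon's statement that fixed surfaces sit at the global extrema of $J$ --- but that is exactly the same hypothesis debt the paper incurs through its citation, so the two proofs stand on equal footing there.
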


\begin{proof}
Let $p\in M$ be an elliptic-regular point such that $p\in C$. According to Bolsinov \& Fomenko \cite[Proposition $1.16$]{bolsinov2004integrable}, the set $F(C)$ is a one-dimensional immersed submanifold and there exist $a,b\in \mathbb{R}$ such that $bX_{J}(p)-aX_{H}(p)=0$ holds for the Hamiltonian vector fields of $J$ and $H$. Thus the tangent vector of the curve $F(C)$ at $F(p)$ is given by $(a,b)$. If $F(C)$ has a vertical tangency at $p$ then $a=0$, $b\neq 0$ and $dJ(p)=X_J(p)=0$. By Hohloch \& Palmer \cite[Lemma $2.43$]{hohloch2021extending} this implies that $F(p) \in \partial(F(M))$. Therefore, we obtain a contradiction.\end{proof}

We can now prove the main result of this section:

\begin{proof}[Proof of Proposition~\ref{p.flapstructure}]
%
By definition, a flap $\mathcal{F}$ has a curve $\gamma_{\mathcal{F}} : [0,1] \to \mathbb R^2$ of critical values so that the endpoints $c_0 = \gamma_{\mathcal{F}}(0)$, $c_1 = \gamma_{\mathcal{F}}(1)$ are parabolic values, while for $t \in \ ]0,1[$, $\gamma_{\mathcal{F}}(t)$ is a hyperbolic-regular value. Then Lemma~\ref{l.nonverticaltangencieshyperbolic} implies that the curve $\gamma_{\mathcal{F}} |_{]0,1[}$ is a graph over $J$ and thus it can be parameterized as $\gamma^h : \, ]j_0,j_1[ \to \mathbb R^2$ with $J \circ \gamma^h = \operatorname{id}$ and $j_0 = J(c_0)$, $j_1 = J(c_1)$.

By the local normal form for a parabolic singularity (Proposition~\ref{p.parabolicnormalform}), from a parabolic value there emanate two curves of critical values, one of hyperbolic-regular type, and the other of elliptic-regular type.
Suppose that the curve starting with elliptic-regular values later contains other types of singularities. Then by the local normal form for nondegenerate singularities (Theorem~\ref{t.localnormalform}) this may happen only at an elliptic-elliptic value or at a parabolic value. 
Apart from the two endpoints, this cannot occur at a parabolic value due to the local normal form for a parabolic singularity (Proposition~\ref{p.parabolicnormalform}), as we explain below.
Therefore, the only possibilities of other singularities in the curve are elliptic-elliptic values. In particular, the boundary of the image of a flap has only two parabolic values.

Assume that the statements above are not true. Then, by the local normal form for a parabolic singularity (Proposition \ref{p.parabolicnormalform} and Lemma \ref{p.parabolicnormalformfixingj}) there are two options:

\begin{itemize}

\item Either there would be a vertical tangency in a line of elliptic-regular values that is in the interior of $F(M)$, which is impossible due to Lemma \ref{l.nonverticaltangenciesellitpic},

\item or there would exist an elliptic-elliptic point $p$ such that $F(p) \in \text{int}(F(M))$ and $J(p)$ is a local extremum of $J$. By \cite[Lemma $5.1$]{guillemin1982convexity}, $J(p)$ is a global extremum of $J$. This contradicts the fact that $F(p)\in \text{int}(F(M))$.
\end{itemize}

Therefore, we obtain the desired result. 
\end{proof}

Right above we described the image of a flap of a simple hypersemitoric system. There is however another (quite similar) structure that can appear. Recall that for a parabolic value there is an associated curve of hyperbolic-regular values, see Lemma \ref{p.parabolicnormalformfixingj}. This curve of hyperbolic-regular values does not need to end on another parabolic value, i.e., the hyperbolic-regular points can converge to a hyperbolic-elliptic point, which lies in a fixed surface of the $S^1$-action, whose image lies in the boundary of the momentum map, more specifically in the image of an extremal level set of $J$,  see Hohloch \& Palmer \cite[Lemma $2.1$ \& Proposition $4.1$ \& Corollary $4.3$ ]{hohloch2021extending}. In this situation, we call the structure emanating by the local flap $\mathcal{F}$ a \textit{boundary flap} $\mathcal{F}$. Analogously to the case of a flap we define the background of a boundary flap, the curve $\gamma_{\mathcal{F}}:[0,1]\rightarrow F(M)$ such that $\gamma_{\mathcal{F}}:\ ]0,1[\rightarrow F(M)$ consists of hyperbolic-regular values, and \textit{bounded} and \textit{unbounded} boundary flaps. The image of a boundary flap is represented in Figure \ref{p.boundaryflap}, and the analogous of Proposition \ref{p.flapstructure} follows.
\begin{figure}[ht]
\centering
\includegraphics[width=0.4\textwidth]{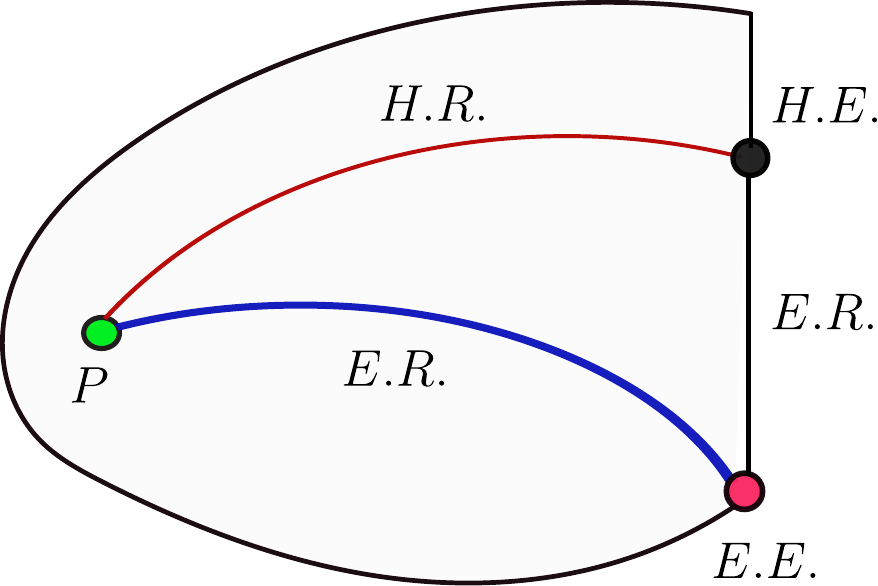}
\\
\caption{Example of the image of a boundary flap. The parabolic value is drawn in green, the hyperbolic-regular values in red and the elliptic-regular values in blue. The black dot represents the hyperbolic-elliptic regular value in $\partial(F(M))$, which lies the image of an extremal level set of $J$. The pink dot represents the elliptic-elliptic value in $\partial(F(M))$.}
\label{p.boundaryflap}
\end{figure}
Recalling the notation of Section \ref{s.flaps/pleatsdefinition}, if $c:[0,1]\rightarrow \mathbb{R}^2$ has the pleat topology, we call the region bounded by the curve $c$ and the curves emanating from the parabolic values $c(0),c(1)$ in the image of the momentum map a \textit{pleat}, see Figure \ref{p.pleat}. For an example of an integrable system exhibiting a pleat in its momentum map image, see Section \ref{s.pleatexample}.
Analogously to Proposition \ref{p.flapstructure} we can also describe the structure of pleats:
\begin{proposition}
\label{p.pleatstructure}
Let $(M,\omega,F=(J,H))$ be a hypersemitoric system and $P$ a pleat with parabolic values $c_0 = \gamma_{\mathcal{P}}(0)$, $c_1 = \gamma_{\mathcal{P}}(1)$. 
Then $J|_{F^{-1}(P)} = \, [J(c_0),J(c_1)]$.
Moreover, the boundary of $P$ consists of the two parabolic values $c_0$, $c_1$ and three  curves $\gamma^h$, $\gamma_1^e$ and $\gamma_2^e$, such that the curve $\gamma^h$ is a graph over the $J$-interval $]J(c_0), J(c_1)[$. In particular:
\begin{itemize}
\item The curve $\gamma^h$ consists only of hyperbolic-regular values and $\operatorname{Im}(\gamma^h) = \operatorname{Im}(\gamma_{\mathcal{F}}) \setminus \{c_0,c_1\}$. 
\item The curves $\gamma_1^e$ and $\gamma_2^e$ consist of elliptic-regular values possibly interrupted by isolated elliptic-elliptic values, and intersect transversally at a point in the boundary of $F(M)$. 
\end{itemize}
\end{proposition}

\subsection{The modified Jaynes-Cummings model}
\label{s.jaynescummingsmodel}


We give here an example of a hypersemitoric system, originally introduced by Dullin \& Pelayo in \cite{dullin2016generating}.
Consider $M:=\mathbb{R}^2\times S^2$ with the symplectic form $\omega_{0}\oplus \omega_{S^2}$ where $\omega_0$ is the standard symplectic form in $\mathbb{R}^2$ and $\omega_{S^2}$ is the standard symplectic form in $\mathbb{S}^2$. For Cartesian variables $(u,v,x,y,z)$ where $(u,v)\in \mathbb{R}^2$ and $(x,y,z)\in \mathbb{R}^3$ such that $x^2+y^2+z^2=1$ we define $F=(J,H):M\rightarrow \mathbb{R}^2$ with 
\begin{equation*}
J(u,v,x,y,z) := \frac{1}{2} (u^2+v^2) + z , \quad 
H(u,v,x,y,z) := \frac{1}{2} (xu+yv).
\end{equation*}
The system $(M,\omega,F)$ is called the Jaynes-Cummings model; it is a semitoric system with one focus-focus singularity at the point $m=(0,0,0,0,1)$. 
Its polytope invariant and all the other invariants of the semitoric system can be found in Section~5 of Alonso et al. \cite{alonso2019symplectic} and references therein. A representative of the polytope invariant is shown in Figure~\ref{f.JCpolytope}.

\begin{figure}[ht]
\centering
\includegraphics[width=0.6\textwidth]{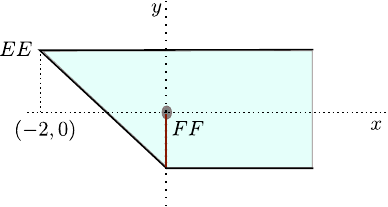}
\caption{A representative of the polytope invariant of the Jaynes-Cummings model for $\epsilon=-1$ (for the meaning of $\epsilon$ see the discussion in Section~\ref{s.semitoric}). The point labeled $FF$ is the focus-focus value and the point labeled $EE$ is the elliptic-elliptic value of the system. The red line represents the cut associated with the choice of $\epsilon$. }
\label{f.JCpolytope}
\end{figure}

Dullin \& Pelayo \cite{dullin2016generating} considered the system $(M,\omega,F=(J,H+G))$ where the function $G$ is given by $G(u,v,x,y,z)=\gamma z^2$ with $\gamma = 4/5$. In the system $(M,\omega,(J,H+G))$ the focus-focus singularity of the Jaynes-Cummings model is replaced by an elliptic-elliptic singularity that lies on a flap, see Figure~\ref{f.bifurcationdiagramJCmodification}. 

\begin{figure}[ht]
\centering
\includegraphics[]{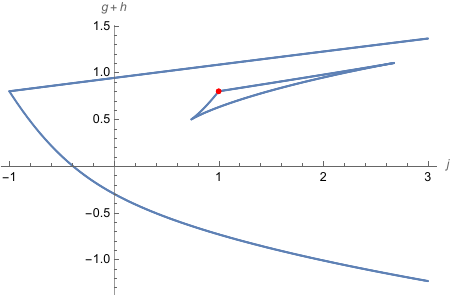}
\caption{The bifurcation diagram for the modified Jaynes-Cummings system $(M, \omega, (J,H+G))$. The red dot represents the elliptic-elliptic value on the image of the flap.}
\label{f.bifurcationdiagramJCmodification}
\end{figure}

\section{Introduction to quantization and joint spectrum}
\label{s.introquantization}

Briefly, quantization is a process that takes a classical phase space, here a symplectic manifold $M$, to a Hilbert space $\hat{M}_{\hbar}$, and a classical Hamiltonian $f\in C^{\infty}(M)$ to a self-adjoint operator $\hat{f}_{\hbar}$ acting on $\hat{M}_{\hbar}$, where $\hbar\in \mathbb{R}^{>0}$ is called the Planck constant. Furthermore, $\lim_{\hbar\rightarrow 0}\text{spec}(\hat{f}_{\hbar})=\operatorname{Im}(f)$. This procedure is also applied to tuples of Poisson commuting Hamiltonians.

In Subsection \ref{s.quantizationflap} we will show how to quantize the modification of the Jaynes-Cummings model, defined in Subsection \ref{s.jaynescummingsmodel}, and in Subsection \ref{s.quantizationhirzebruch} we will show how quantize the so called Hirzebruch surface. Experts may skip this section. In the upcoming sections we use the methods of quantization to help compute the (representatives of the) affine invariants of specific integrable systems.

\subsection{The joint spectrum of the modified Jaynes-Cummings model}
\label{s.quantizationflap}
This subsection serves as an introduction to the methods of quantization and our line of thought follows closely Pelayo \& \vungoc\ \cite{pelayo2012hamiltonian}. 

Recall the integrable system $(\mathbb{R}^2\times S^2,\omega_0\oplus \omega_{S^2},(J,H+G))$ defined in Section \ref{s.jaynescummingsmodel}. We now proceed to quantize this system according to Pelayo \& \vungoc\ \cite{pelayo2012hamiltonian}.

In order to quantize the Jaynes-Cummings model we first recall the quantization of the harmonic oscillator, and the quantization of the Cartesian coordinate functions on $S^2$.

We start by recalling the quantization of the harmonic oscillator: let $M=\mathbb{R}^2$ with coordinates $(u,v)$, canonical symplectic form $\omega_0$, and Hamiltonian $N(u,v)=\frac{u^2+v^2}{2}$. We call the system $(M,\omega_0,N)$ the \textbf{harmonic oscillator}. The quantization of $M$ is the Hilbert space $\hat{M}=L^2(\mathbb{R})$. Furthermore, the self-adjoint operator that quantizes $N$ is
\begin{equation*}
    \hat{N}=-\frac{\hbar^2}{2}\frac{\partial ^2}{\partial^2u}+\frac{u^2}{2}
\end{equation*}
where $\hbar$ is the Planck constant. Notice that the spectrum of $\hat{N}$ is the discrete set $\{\hbar(n+\frac{1}{2})| n\in \mathbb{N}\}$.

In order to obtain a quantization of $S^2$, we consider $S^2$ as a reduced space of $\mathbb{R}^4\cong \mathbb{C}^2$. The quantization of $\mathbb{R}^4$ is the Hilbert space $L^2(\mathbb{R}^2)$.  
Let $L(z_1,z_2):=\frac{|z_1|^2+|z_2|^2}{2}$ be the harmonic oscillator. The quantization of $L$ is the self-adjoint operator 
\begin{align*}
\hat{L}:=-\frac{\hbar^2}{2}\Bigl(\frac{\partial ^2}{\partial x_1^2}+\frac{\partial^2}{\partial x_2^2}\Bigr)+\frac{x_1^2+x_2^2}{2}.
\end{align*}
Furthermore, the spectrum of $\hat{L}$ is the discrete set $\{\hbar (n+1) | n\in \mathbb{N}\}$. 

Let $Y_{2}:=\{L=2\}$ be the Euclidean $3$-sphere of radius $2$ and notice that $S^2$ is the reduced space $Y_2/S^1$, where the quotient map is the following: let $(z_1,z_2)\in Y_2\subset \mathbb{C}^2$, then 
\begin{align*}
    x = \frac{\Re(z_1\overline{z_2})}{2}, \quad  y = \frac{\Im(z_1\overline{z_2})}{2}, \quad 
    z = \frac{|z_1|^2-|z_2|^2}{4}.
\end{align*}
Therefore, the quantization of $S^2$ is the Hilbert space $\mathcal{H}_{S^2}:=\ker(\hat{L}-2)$. Note that
\begin{equation*}
    \dim(\mathcal{H}_{S^2}) =
    \begin{cases}
        n+1, \quad 2=\hbar(n+1),\\
        0, \qquad \quad \text{otherwise}.
    \end{cases}
\end{equation*}
Now we quantize the Cartesian coordinate functions. First, the quantization of $\frac{z_j}{\sqrt{2\hbar}}$ is $a_j:=\frac{1}{\sqrt{2\hbar}}(\hbar \frac{\partial}{\partial x_j}+x_j)$, $j=1,2$. The operators $a_j$ are usually referred to as \textbf{annihilation operators}. Henceforth, an upper $*$ denotes the dual of an operator. The quantization of the Cartesian coordinate functions $x,y,z$ on $S^2$ are the restrictions to $\mathcal{H}_{S^2}$ of the following self-adjoint operators:
\begin{equation*}
\hat{x}:=\frac{\hbar}{2}(a_1a_2^*+a_2a_1^*), \quad \hat{y}:=\frac{\hbar}{2i}(a_1a_2^*-a_2a_1^*), \quad \hat{z}:=\frac{\hbar}{2}(a_1a_1^*-a_2a_2^*).
\end{equation*}
Furthermore, we define the quantization of $z^2$ as $\widehat{(z^2)}:=(\hat{z})^2$. We can summarize the previous discussion in the following definition:
\begin{definition}
\label{n.quantizedoperators}
The quantization of $\mathbb{R}^2\times S^2$ is the (infinite) dimensional Hilbert space $L^2(\mathbb{R})\otimes \mathcal{H}_{S^2}\subset L^2(\mathbb{R})\otimes L^2(\mathbb{R}^2)$. The quantization of $J$ is the operator $\hat{J}=(-\frac{\hbar^2}{2}\frac{\partial^2}{\partial^2u}+\frac{u^2}{2})\otimes Id + Id\otimes \hat{z}$. The quantization of $H$ is the operator $\hat{H}=\frac{1}{2}(u\otimes \hat{x}+(\frac{\hbar}{i}\frac{\partial}{\partial u})\otimes \hat{y})$. The quantization of $G$ is the operator $\hat{G}=Id\otimes \hat{z}^2$. 
\end{definition}
\begin{remark}
The operators $\hat{H}+\hat{G}$ and $\hat{J}$ commute, i.e., $[\hat{H}+\hat{G},\hat{J}]=0$. 
\end{remark}
Now we proceed to compute the joint spectrum of $\hat{H}+\hat{G}$ and of $\hat{J}$ in order to approximate the image of the momentum map of $(J,H+G)$, see Figure \ref{f.jointspectrummodificationJC}.

We need the following auxiliary lemmas:
\begin{lemma}(Pelayo $\&$ \vungoc\ \cite{pelayo2012hamiltonian})
\label{l.B1}
    The operators $A_j:=\frac{1}{\sqrt{2}}(\frac{\partial}{\partial x_j}+x_j)$, $j=1,2$, are unitary conjugated to the annihilation operators $a_j$.
\end{lemma}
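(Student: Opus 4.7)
The plan is to exhibit an explicit unitary dilation on $L^2(\mathbb{R}^2)$ that conjugates $A_j$ to $a_j$. Since
\[
A_j = \tfrac{1}{\sqrt{2}}\bigl(\tfrac{\partial}{\partial x_j}+x_j\bigr)
\qquad\text{and}\qquad
a_j = \tfrac{1}{\sqrt{2\hbar}}\bigl(\hbar\tfrac{\partial}{\partial x_j}+x_j\bigr)
   = \tfrac{1}{\sqrt{2}}\bigl(\sqrt{\hbar}\,\tfrac{\partial}{\partial x_j}+\tfrac{1}{\sqrt{\hbar}}x_j\bigr),
\]
the two operators differ only by the relative rescaling of the differentiation and multiplication parts. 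This strongly suggests using the dilation $x\mapsto x/\sqrt{\hbar}$.

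Concretely, I would define $U:L^2(\mathbb{R}^2)\to L^2(\mathbb{R}^2)$ by
\[
(Uf)(x_1,x_2) := \hbar^{-1/2}\, f(x_1/\sqrt{\hbar},\,x_2/\sqrt{\hbar}),
\]
so that $(U^{-1}g)(x_1,x_2) = \hbar^{1/2}\, g(\sqrt{\hbar}\,x_1,\sqrt{\hbar}\,x_2)$. A routine change of variables (the factor $\hbar^{-1/2}$ is precisely $\lambda^{n/2}$ with $\lambda=\hbar^{-1/2}$ and $n=2$) yields $\|Uf\|_{L^2}=\|f\|_{L^2}$, so $U$ is unitary.

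The heart of the argument is to track how $U$ conjugates the two elementary building blocks. By the chain rule, one shows
\[
U\,\tfrac{\partial}{\partial x_j}\,U^{-1} = \sqrt{\hbar}\,\tfrac{\partial}{\partial x_j},
\qquad
U\,M_{x_j}\,U^{-1} = \tfrac{1}{\sqrt{\hbar}}\,M_{x_j},
\]
where $M_{x_j}$ denotes multiplication by $x_j$. Plugging these into the definition of $A_j$ gives
\[
U A_j U^{-1} = \tfrac{1}{\sqrt{2}}\Bigl(\sqrt{\hbar}\,\tfrac{\partial}{\partial x_j}+\tfrac{1}{\sqrt{\hbar}}\,x_j\Bigr) = a_j,
\]
which is exactly the required unitary equivalence.

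There is no genuine obstacle: the entire proof reduces to a careful bookkeeping of the scaling factors produced by the chain rule and the change of variables formula, making sure that the domain issues (e.g.\ both operators are essentially self-adjoint on the Schwartz class, which is invariant under $U$) are handled so that the conjugation identity passes from a dense core to the closures. The only real content is recognising that a single dilation simultaneously absorbs the $\hbar$ on the derivative side and produces the $1/\sqrt{\hbar}$ on the multiplication side.
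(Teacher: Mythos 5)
Your proof is correct and is exactly the standard dilation argument used in the cited reference (the paper itself states the lemma without proof, deferring to Pelayo--V\~u Ng\k{o}c, where the unitary is precisely this rescaling $x\mapsto x/\sqrt{\hbar}$ with the $\hbar^{-1/2}$ normalization making it unitary on $L^2(\mathbb{R}^2)$). The computation $U\partial_{x_j}U^{-1}=\sqrt{\hbar}\,\partial_{x_j}$, $UM_{x_j}U^{-1}=\hbar^{-1/2}M_{x_j}$ and hence $UA_jU^{-1}=a_j$ checks out.
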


\begin{lemma}(Bargmann \cite{bargmann1961hilbert})
\label{l.B2}
    Let $L^2_{\text{hol}}(\mathbb{C}^2,\pi^{-1}e^{-|z|^2})$ be the space of holomorphic functions on two variables with decay $\pi^{-1}e^{-|z|^2}$.
    Consider the operators $\frac{\partial}{\partial z_j}$ and $z_j$ on the Hilbert space $L^2_{\text{hol}}(\mathbb{C}^2,\pi^{-1}e^{-|z|^2})$. Then $A_j$ is unitary equivalent to $\frac{\partial}{\partial z_j}$ and $A_j^*$ is unitary equivalent to $z_j$.
\end{lemma}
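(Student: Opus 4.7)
The plan is to exhibit the unitary equivalence explicitly via the Bargmann transform. Define the integral operator $B: L^2(\mathbb{R}^2) \to L^2_{\mathrm{hol}}(\mathbb{C}^2, \pi^{-1}e^{-|z|^2})$ by $(Bf)(z) := \int_{\mathbb{R}^2} K(z,x) f(x)\, dx$, where
\begin{equation*}
K(z,x) := \pi^{-1/2}\exp\!\bigl(-\tfrac{1}{2}(z_1^2 + z_2^2) -\tfrac{1}{2}(x_1^2 + x_2^2) + \sqrt{2}(z_1 x_1 + z_2 x_2)\bigr).
\end{equation*}
Since $K$ is holomorphic in $z$, so is $Bf$. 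The claim is that $B$ is a unitary isomorphism intertwining $A_j$ with $\partial_{z_j}$ and $A_j^*$ with multiplication by $z_j$.

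First, I would establish the intertwining identities by direct calculation. Differentiating under the integral sign using $\partial_{z_j} K = (-z_j + \sqrt{2}\,x_j)K$ yields $\partial_{z_j}(Bf) = -z_j(Bf) + \sqrt{2}\,B(x_j f)$, while integration by parts (valid for Schwartz $f$, then extended by density) gives $B(\partial_{x_j} f) = B(x_j f) - \sqrt{2}\, z_j (Bf)$. Adding and subtracting these two relations produces
\begin{equation*}
B\circ A_j = \tfrac{\partial}{\partial z_j} \circ B, \qquad B \circ A_j^* = M_{z_j} \circ B,
\end{equation*}
where $M_{z_j}$ denotes multiplication by $z_j$. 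These are precisely the stated operator equivalences at the algebraic level.

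Second, I would verify unitarity by testing on an orthonormal basis. Let $\{h_n\}$ be the Hermite basis of $L^2(\mathbb{R})$, on which the one-variable annihilation and creation operators $A$ and $A^*$ act as $Ah_n = \sqrt{n}\,h_{n-1}$ and $A^* h_n = \sqrt{n+1}\,h_{n+1}$. A direct Gaussian integration gives $Bh_0(z) = c$ for an explicit constant $c$, and then repeated application of the intertwining identity with $A^*$ shows $Bh_n(z) = c\, z^n/\sqrt{n!}$. Tensoring, one gets $B(h_{n_1}\otimes h_{n_2})(z) = c^2\, z_1^{n_1}z_2^{n_2}/\sqrt{n_1! n_2!}$, and after fixing the normalization constant via the choice of $\pi^{-1/2}$ in $K$, this is exactly the classical orthonormal monomial basis of the Fock–Bargmann space. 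Hence $B$ sends an orthonormal basis onto an orthonormal basis and is therefore unitary, which proves the lemma.

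The main obstacle is not the intertwining, which is a routine computation, but the completeness assertion underlying unitarity: one must know that the weighted monomials $z_1^{n_1}z_2^{n_2}/\sqrt{n_1!n_2!}$ truly form a complete orthonormal system in $L^2_{\mathrm{hol}}(\mathbb{C}^2, \pi^{-1}e^{-|z|^2})$. This is the classical input from Bargmann, proved by expanding any element of the Fock–Bargmann space in its Taylor series at the origin, computing the weighted Gaussian norm of monomials in polar coordinates, and invoking density of polynomials in the holomorphic $L^2$-space with respect to the Gaussian measure. Once this ingredient is in hand, the intertwining calculations above conclude the argument.
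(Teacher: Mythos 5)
Your proposal is correct: it is the standard Bargmann-transform argument (explicit Gaussian kernel, intertwining of $A_j$ with $\partial_{z_j}$ and $A_j^*$ with $M_{z_j}$ by differentiation under the integral and integration by parts, unitarity via the image of the Hermite basis being the orthonormal monomials $z^\alpha/\sqrt{\alpha!}$), and you correctly flag the only nontrivial input, namely completeness of the monomials in the weighted holomorphic $L^2$-space. The paper itself offers no proof — it simply cites Bargmann's original work — so your write-up is precisely the classical argument being invoked.
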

From now on $z_j$ refer to functions in $L^2_{\text{hol}}(\mathbb{C}^2,\pi^{-1}e^{-|z|^2})$. We call the space $L^2_{\text{hol}}(\mathbb{C}^2,\pi^{-1}e^{-|z|^2})$ from Lemma \ref{l.B2} the Bargmann space. Furthermore, we note that  monomials $\{z^{\alpha}/\sqrt{\alpha!}\}_{\alpha \in \mathbb{N}^2}$
form a Hilbert basis for the Bargmann space. 

Lemma \ref{l.B1} and Lemma \ref{l.B2} allow us to obtain simpler expressions for the quantizations in Definition \ref{n.quantizedoperators}. For example, in the Bargmann space, $\hat{L}=\hbar (\frac{\partial}{\partial z_1}+\frac{\partial}{\partial z_2}+1)$. Using this, one obtains the following:

\begin{lemma}({Pelayo $\&$ \vungoc, \cite[Lemma 4.4]{pelayo2012hamiltonian}})
Let $\alpha_1,\alpha_2\in \mathbb{N}$. The function $\frac{z_1^{\alpha_1}z_2^{\alpha_2}}{\sqrt{\alpha_1!\alpha_2!}}=:\frac{z^{\alpha}}{\sqrt{\alpha!}}$ is an eigenfunction of $\hat{L}$ with norm $1$ and eigenvalue $\hbar(\alpha_1+\alpha_2+1)$.
\end{lemma}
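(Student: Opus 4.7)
The plan is to treat the two assertions—that $z^\alpha/\sqrt{\alpha!}$ is an eigenfunction of $\hat L$ with eigenvalue $\hbar(\alpha_1+\alpha_2+1)$, and that it has unit norm in $L^2_{\text{hol}}(\C^2,\pi^{-1}e^{-|z|^2})$—separately, since they are essentially independent computations once the Bargmann model is set up.

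First I would rewrite $\hat{L}$ in terms of the annihilation and creation operators $a_j,a_j^*$ defined just above the statement. A direct computation using $[x_j,\partial_{x_j}]=-1$ gives
\begin{equation*}
a_j^* a_j \;=\; \frac{1}{2\hbar}\bigl(-\hbar\partial_{x_j}+x_j\bigr)\bigl(\hbar\partial_{x_j}+x_j\bigr) \;=\; \frac{1}{\hbar}\Bigl(-\tfrac{\hbar^2}{2}\partial_{x_j}^2+\tfrac{x_j^2}{2}\Bigr)-\tfrac12,
\end{equation*}
so that $\hat{L}=\hbar(a_1^*a_1+a_2^*a_2)+\hbar$. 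Next, invoking the unitary equivalence $a_j\simeq A_j$ and then the Bargmann identification $A_j\simeq \partial/\partial z_j$, $A_j^*\simeq z_j$ from the two preceding lemmas, the operator $\hat{L}$ is unitarily conjugate to
\begin{equation*}
\hbar\Bigl(z_1\tfrac{\partial}{\partial z_1}+z_2\tfrac{\partial}{\partial z_2}\Bigr)+\hbar
\end{equation*}
acting on the Bargmann space. Applying this expression to $z^\alpha=z_1^{\alpha_1}z_2^{\alpha_2}$ gives eigenvalue $\hbar(\alpha_1+\alpha_2+1)$, as claimed, and the scalar $1/\sqrt{\alpha!}$ is irrelevant for the eigenvalue.

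For the norm, I would evaluate $\|z^\alpha\|^2$ directly in polar coordinates $z_j=r_je^{i\theta_j}$ on each $\C$-factor. The integral factorizes as a product of two one-dimensional integrals, each of which becomes a Gamma integral after the substitution $s=r_j^2$:
\begin{equation*}
\int_{\C}|z_j|^{2\alpha_j}\,e^{-|z_j|^2}\,dz_j \;=\; 2\pi\int_0^\infty r_j^{2\alpha_j+1}e^{-r_j^2}\,dr_j \;=\; \pi\,\alpha_j!\,.
\end{equation*}
Multiplying the two factors and comparing with the normalizing constant in the measure yields $\|z^\alpha\|^2=\alpha_1!\,\alpha_2!$, so dividing by $\sqrt{\alpha!}$ produces a unit vector.

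The main obstacle, in terms of rigor, is making sure the chain of unitary equivalences (harmonic oscillator representation on $L^2(\mathbb{R}^2)$ $\to$ $A_j$-representation $\to$ Bargmann model) is applied consistently; in particular one must verify that the constant $\hbar$ offset coming from normal ordering is preserved under the conjugations, which forces the ground-state eigenvalue $\hbar$ (corresponding to $\alpha=0$) to match the spectrum $\{\hbar(n+1)\}$ already quoted for $\hat{L}$. Apart from bookkeeping this, both steps are short and computational.
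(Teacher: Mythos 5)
Your proof is correct and is essentially the standard verification that the cited source (Pelayo--V\~u Ng\k{o}c) uses: the paper itself gives no proof, only the citation, and your computation --- writing $\hat L=\hbar(a_1^*a_1+a_2^*a_2+1)$, passing to the Bargmann model where $a_j^*a_j$ becomes $z_j\partial_{z_j}$, and evaluating the Gaussian integral for the norm --- is exactly the intended argument. The only bookkeeping point worth noting is that the norm-one claim requires the Bargmann weight $\pi^{-2}e^{-|z|^2}$ on $\mathbb{C}^2$ (the paper's ``$\pi^{-1}$'' is shorthand for $\pi^{-n}$), which is what your factorized integral implicitly uses.
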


\begin{lemma}
The space $\mathcal{H}_{S^2}=\ker(\hat{L}-\hbar(n+1))$ is given by 
\begin{equation*}
\mathcal{H}_{S^2}=\text{span}\left\{\left.\frac{z^{\alpha}}{\sqrt{\alpha!}}\right|\alpha_1+\alpha_2=n\right\},
\end{equation*} 
i.e., it is the space of homogeneous polynomials of degree $n$ of $\mathbb{C}^2$. 
\end{lemma}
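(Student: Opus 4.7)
The plan is to work entirely in the Bargmann representation $L^2_{\mathrm{hol}}(\mathbb{C}^2,\pi^{-1}e^{-|z|^2})$, where by the preceding two lemmas the creation/annihilation operators act as multiplication by $z_j$ and differentiation $\partial/\partial z_j$ respectively. First I would rewrite $\hat L$ in these coordinates. Since $\hat L$ is the sum of two one-dimensional harmonic oscillators, using the identity $\hat L=\hbar(a_1 a_1^*+a_2 a_2^*)-\hbar$ that comes from expanding $-\tfrac{\hbar^2}{2}\partial_{x_j}^2+\tfrac12 x_j^2$ in terms of $a_j=\tfrac{1}{\sqrt{2\hbar}}(\hbar\partial_{x_j}+x_j)$ and $a_j^*$, and then passing to Bargmann variables where $a_j\leftrightarrow\partial/\partial z_j$ and $a_j^*\leftrightarrow z_j$, one obtains
\begin{equation*}
\hat L \;=\; \hbar\!\left(z_1\frac{\partial}{\partial z_1}+z_2\frac{\partial}{\partial z_2}+1\right).
\end{equation*}
(The additive constant is fixed by the previous lemma, which already evaluated the eigenvalue on $z^{\alpha}/\sqrt{\alpha!}$ to be $\hbar(\alpha_1+\alpha_2+1)$.)

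Next I would invoke the standard fact that the normalized monomials $e_{\alpha}(z):=z^{\alpha}/\sqrt{\alpha!}$, with $\alpha\in\mathbb{N}^2$, form a complete orthonormal system of $L^2_{\mathrm{hol}}(\mathbb{C}^2,\pi^{-1}e^{-|z|^2})$. Orthonormality is a direct calculation with the Gaussian weight in polar coordinates, and completeness follows because any element of the Bargmann space is holomorphic and hence admits a Taylor expansion at the origin that converges in the weighted $L^2$-norm; this is exactly the content of Bargmann's original result already cited above.

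Combining these two ingredients, $\hat L$ is diagonalized by the basis $\{e_{\alpha}\}$ with $\hat L\, e_{\alpha}=\hbar(|\alpha|+1)\,e_{\alpha}$ where $|\alpha|:=\alpha_1+\alpha_2$. Consequently
\begin{equation*}
\ker\bigl(\hat L-\hbar(n+1)\bigr)\;=\;\overline{\mathrm{span}}\bigl\{\,e_{\alpha}\,\big|\,|\alpha|=n\,\bigr\},
\end{equation*}
and since the index set $\{\alpha:|\alpha|=n\}$ is finite (of cardinality $n+1$) the closure is unnecessary. Finally, as each $e_{\alpha}$ with $|\alpha|=n$ is a homogeneous polynomial of degree $n$ in $(z_1,z_2)$ and these exhaust a basis of that $(n+1)$-dimensional vector space, one concludes
\begin{equation*}
\mathcal H_{S^2}\;=\;\mathrm{span}\!\left\{\frac{z^{\alpha}}{\sqrt{\alpha!}}\,\Big|\,\alpha_1+\alpha_2=n\right\}\;=\;\{\text{homogeneous polynomials of degree }n\text{ on }\mathbb{C}^2\}.
\end{equation*}
The only mild obstacle is bookkeeping the $\hbar$ in the identification $\hat L=\hbar(N_1+N_2+1)$ with $N_j=z_j\partial_{z_j}$; everything else is a direct application of the previous two lemmas together with Bargmann's completeness theorem.
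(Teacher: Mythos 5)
Your proof is correct and follows essentially the same route as the paper, whose entire argument is the observation that the normalized monomials $z^{\alpha}/\sqrt{\alpha!}$ form a Hilbert basis of the Bargmann space; combined with the preceding lemma's eigenvalue computation $\hat{L}\,(z^{\alpha}/\sqrt{\alpha!})=\hbar(\alpha_1+\alpha_2+1)\,z^{\alpha}/\sqrt{\alpha!}$, the kernel identification is immediate. Your additional bookkeeping (the explicit form $\hat L=\hbar(z_1\partial_{z_1}+z_2\partial_{z_2}+1)$ and the remark that the closure of the span is unnecessary by finiteness) is accurate and simply makes explicit what the paper leaves implicit.
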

\begin{proof}
    Recall that the monomials $\{z^{\alpha}/\sqrt{\alpha!}\}_{\alpha \in \mathbb{N}^2}$ form a Hilbert basis of the Bargmann space $L^2_{\text{hol}}(\mathbb{C}^2,\pi^{-1}e^{-|z|^2})$.
\end{proof}

Henceforth we use the following basis for $\mathcal{H}_{S^2}$:
\begin{equation*}
\{z_2^n,z_1z_2^{n-1},...,z_1^{n-1}z_2,z_1^n\}.
\end{equation*}

Let us first understand the operator $\hat{G}$: 
\begin{lemma}
The matrix of $\frac{1}{\gamma}\hat{G}$ relative to the basis $\{z_2^n,z_1z_2^{n-1},...,z_1^{n-1}z_2,z_1^n\}$ of $\mathcal{H}_{S^2}$ is 
\begin{equation*}
\frac{\hbar^2}{4}\begin{bmatrix}
n^2 & 0 & \dots & 0 \\
0 & (n-2)^2 & \dots & 0 \\
\vdots &\vdots &\ddots &\vdots\\
0 & 0 & \dots & n^2 
\end{bmatrix}.
\end{equation*}
\begin{proof}
    The restriction of the operator $\hat{z}=\frac{\hbar}{2}(a_1a_1^*-a_2a_2^*)$ is identified with $\frac{\hbar}{2}(\frac{\partial}{\partial z_1}z_1-\frac{\partial}{\partial z_2}z_2)$ in $\mathcal{H}_{S^2}$ and in terms of this basis we have $\hat{z}(z_1^kz_2^{n-k})=\frac{\hbar}{2}(k-(n-k))z_1^kz_2^{n-k}$. Therefore, $\widehat{(z^2)}(z_1^kz_2^{n-k})=\hat{z}^2(z_1^kz_2^{n-k})=\frac{\hbar^2}{4}(k-(n-k))^2z_1^kz_2^{n-k}$.
\end{proof}
\end{lemma}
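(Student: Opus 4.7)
The plan is to exploit the fact that, by Definition \ref{n.quantizedoperators}, $\hat{G} = \gamma\,\widehat{(z^2)} = \gamma\,\hat{z}^2$, so $\frac{1}{\gamma}\hat{G} = \hat{z}^2$. Consequently it suffices to diagonalize $\hat{z}$ on the finite-dimensional space $\mathcal{H}_{S^2}$ of homogeneous degree-$n$ polynomials and then square. Since the monomial basis $\{z_2^n, z_1 z_2^{n-1}, \ldots, z_1^n\}$ already diagonalizes $\hat{L}$, I expect it to diagonalize $\hat{z}$ as well, which will make the matrix manifestly diagonal.

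First I would transport the formula $\hat{z} = \frac{\hbar}{2}(a_1 a_1^* - a_2 a_2^*)$ to the Bargmann model using the two lemmas just above the statement, which identify $a_j$ with $\partial/\partial z_j$ and $a_j^*$ with multiplication by $z_j$ acting on $L^2_{\mathrm{hol}}(\mathbb{C}^2,\pi^{-1}e^{-|z|^2})$. This rewrites the restriction of $\hat{z}$ to $\mathcal{H}_{S^2}$ as $\frac{\hbar}{2}\bigl(\tfrac{\partial}{\partial z_1} z_1 - \tfrac{\partial}{\partial z_2} z_2\bigr)$. A direct computation gives $\tfrac{\partial}{\partial z_1} z_1 (z_1^k z_2^{n-k}) = (k+1) z_1^k z_2^{n-k}$ and $\tfrac{\partial}{\partial z_2} z_2 (z_1^k z_2^{n-k}) = (n-k+1) z_1^k z_2^{n-k}$, so that
\begin{equation*}
\hat{z}(z_1^k z_2^{n-k}) = \frac{\hbar}{2}\bigl((k+1)-(n-k+1)\bigr)\, z_1^k z_2^{n-k} = \frac{\hbar}{2}(2k-n)\, z_1^k z_2^{n-k}.
\end{equation*}

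Squaring then yields $\hat{z}^2(z_1^k z_2^{n-k}) = \frac{\hbar^2}{4}(n-2k)^2\, z_1^k z_2^{n-k}$, so every basis vector is an eigenvector and the matrix of $\frac{1}{\gamma}\hat{G}$ in this basis is diagonal. Reading off the eigenvalues in the order dictated by the basis $\{z_2^n, z_1z_2^{n-1}, \ldots, z_1^{n-1}z_2, z_1^n\}$, that is for $k=0,1,\ldots,n$, produces the successive entries $n^2, (n-2)^2, (n-4)^2, \ldots, (n-2)^2, n^2$ (symmetric around the middle, since $(n-2k)^2 = (n-2(n-k))^2$), which is exactly the matrix in the statement up to the prefactor $\hbar^2/4$. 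There is no genuine obstacle here; the only points requiring care are keeping the Bargmann conventions for $a_j$ and $a_j^*$ straight and being consistent with the orientation of the basis so that both the first and last diagonal entries come out as $n^2$, attained at $k=0$ and $k=n$ respectively.
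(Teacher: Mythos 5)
Your proof is correct and follows essentially the same route as the paper: identify $\hat z$ with $\tfrac{\hbar}{2}\bigl(\tfrac{\partial}{\partial z_1}z_1-\tfrac{\partial}{\partial z_2}z_2\bigr)$ via the Bargmann representation, observe that the monomials $z_1^kz_2^{n-k}$ are eigenvectors, and square the eigenvalues. Your bookkeeping with the shifts $(k+1)$ and $(n-k+1)$ is consistent with the paper's $k-(n-k)$ since the constants cancel in the difference, so both arrive at $\tfrac{\hbar^2}{4}(n-2k)^2$ and the stated diagonal matrix.
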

Now apply Bargmann's approach to $\hat{N}=\frac{\hat{u}^2+\hat{v}^2}{2}$ acting on $L^2_{Hol}(\mathbb{C},\pi^{-1}e^{-|\tau|^2})$, where the variable in $\mathbb{C}$ is $\tau$. Therefore, $\hat{N}=\hbar(\tau \frac{\partial}{\partial \tau}+\frac{1}{2})$. Using these ideas, one obtains the following:

\begin{lemma}({Pelayo $\&$ \vungoc, \cite[Lemma $4.5$ \& Corollary $4.6$]{pelayo2012hamiltonian}})
\label{l.basisJCM}
Let $\hat{J}$ be the self-adjoint operator defined in \ref{n.quantizedoperators}. Then
\begin{equation*}
\text{spec}(\hat{J})=\hbar \left(\frac{1-n}{2}+\mathbb{N}\right).
\end{equation*}
In particular the spectrum of $\hat{J}$ is discrete.
Moreover, for $\lambda \in \hbar(\frac{1-n}{2}+\mathbb{N})$, let $\mathcal{E}_{\lambda}:=\ker(\hat{J}-\lambda)$. Then 
\begin{equation*}
\mathcal{E}_{\lambda}=\text{span}\left\{ \tau^{l}\otimes z_1^{k}z_2^{n-k}\left| \hbar \left(l+\frac{1}{2}+k-\frac{n}{2}\right)\right .=\lambda;\quad 0\leq k\leq n;\quad l\geq 0\right\}.
\end{equation*}
In particular $\mathcal{E}_{\lambda}$ has dimension $1+\min(n,\frac{\lambda}{\hbar}+\frac{n-1}{2})$. Furthermore,
given any $n\in \mathbb{N}$, and any $\lambda\in \hbar(\frac{1-n}{2}+\mathbb{N})$, the ordered set 
\begin{equation*}
B_{\lambda}:=\left\{\left.e_{l,k}:=\frac{\tau^{l}}{\sqrt{l!}}\otimes \frac{z_1^kz_2^{n-k}}{\sqrt{k!(n-k)!}}\right |k=0,1,....,\min(n,\frac{\lambda}{\hbar}+\frac{n}{2}-\frac{1}{2}), \quad l=\frac{\lambda}{\hbar}+\frac{n}{2}-\frac{1}{2}-k\right\}.
\end{equation*}
is an orthonormal basis of $\mathcal{E}_{\lambda}$.
\end{lemma}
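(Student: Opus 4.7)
The plan is to exploit the tensor product structure of $\hat{J}$ and separately diagonalize the two commuting summands. Write $\hat{J} = \hat{N} \otimes Id + Id \otimes \hat{z}$, where $\hat{N} = -\tfrac{\hbar^2}{2}\partial_u^2 + \tfrac{u^2}{2}$ acts on $L^2(\mathbb{R})$ and $\hat{z}$ acts on $\mathcal{H}_{S^2}$. Because the two summands commute and act on independent tensor factors, the joint spectrum and joint eigenbasis can be read off from those of each factor.

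First I would pass to the Bargmann picture for the harmonic oscillator: $\hat{N}$ is unitarily equivalent to $\hbar(\tau \partial_\tau + \tfrac{1}{2})$ on $L^2_{\mathrm{hol}}(\mathbb{C},\pi^{-1}e^{-|\tau|^2})$, where the monomials $\tau^l/\sqrt{l!}$ $(l\in\mathbb{N})$ form an orthonormal basis of eigenvectors with eigenvalues $\hbar(l+\tfrac{1}{2})$. For the spin factor I would invoke the computation already recorded just before the lemma: in the orthonormal basis $\{z_1^k z_2^{n-k}/\sqrt{k!(n-k)!} : 0 \le k \le n\}$ of $\mathcal{H}_{S^2}$, the operator $\hat{z} = \tfrac{\hbar}{2}(a_1 a_1^* - a_2 a_2^*)$ corresponds under Bargmann's equivalence to $\tfrac{\hbar}{2}(\partial_{z_1} z_1 - \partial_{z_2} z_2)$, which acts diagonally on monomials with $\hat{z}(z_1^k z_2^{n-k}) = \hbar(k - \tfrac{n}{2})\, z_1^k z_2^{n-k}$.

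Combining the two diagonalizations, the vectors
\begin{equation*}
e_{l,k} := \frac{\tau^l}{\sqrt{l!}} \otimes \frac{z_1^k z_2^{n-k}}{\sqrt{k!(n-k)!}}, \quad l \in \mathbb{N}, \ 0 \le k \le n,
\end{equation*}
form an orthonormal basis of $L^2(\mathbb{R}) \otimes \mathcal{H}_{S^2}$ (orthonormality follows because each tensor factor is orthonormal and tensor products of orthonormal sets are orthonormal), and each $e_{l,k}$ is a joint eigenvector of $\hat{N}\otimes Id$ and $Id\otimes \hat{z}$ with
\begin{equation*}
\hat{J}\, e_{l,k} = \hbar\left(l + \tfrac{1}{2} + k - \tfrac{n}{2}\right) e_{l,k}.
\end{equation*}
This immediately yields the spectrum: as $(l,k)$ ranges over $\mathbb{N} \times \{0,\dots,n\}$, the quantity $l+k$ ranges over all of $\mathbb{N}$, so $\mathrm{spec}(\hat{J}) = \hbar\bigl(\tfrac{1-n}{2} + \mathbb{N}\bigr)$, and each eigenvalue is of finite multiplicity, so the spectrum is discrete.

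Finally, for fixed $\lambda \in \hbar(\tfrac{1-n}{2}+\mathbb{N})$, the eigenspace $\mathcal{E}_\lambda$ is spanned by those $e_{l,k}$ satisfying $l + k = \tfrac{\lambda}{\hbar} + \tfrac{n-1}{2}$. Solving $l = \tfrac{\lambda}{\hbar} + \tfrac{n-1}{2} - k$ and imposing the constraints $l \ge 0$ and $0 \le k \le n$, the admissible values of $k$ are $0,1,\ldots,\min\!\bigl(n, \tfrac{\lambda}{\hbar} + \tfrac{n-1}{2}\bigr)$, giving the claimed basis $B_\lambda$ and dimension $1 + \min\!\bigl(n, \tfrac{\lambda}{\hbar} + \tfrac{n-1}{2}\bigr)$. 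There is no substantive obstacle here once the tensor product decomposition and the Bargmann identification are in place; the only care needed is in bookkeeping the integer constraints on $(l,k)$ so as to count the eigenspace dimension correctly.
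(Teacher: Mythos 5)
Your proof is correct and follows exactly the route the paper intends: the lemma is quoted from Pelayo--V\~u Ng\k{o}c, and the surrounding text (the Bargmann identification of $\hat N$ with $\hbar(\tau\partial_\tau+\tfrac12)$ and the diagonal action of $\hat z$ on the monomial basis of $\mathcal H_{S^2}$) sets up precisely the tensor-product diagonalization you carry out. The eigenvalue bookkeeping, the spectrum $\hbar(\tfrac{1-n}{2}+\mathbb N)$, and the count $1+\min(n,\tfrac{\lambda}{\hbar}+\tfrac{n-1}{2})$ all check out.
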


Since $\hat{H}+\hat{G}$ commutes with $\hat{J}$ the spectral theory of $\hat{H}+\hat{G}$ is reduced to the study of the restriction of $\hat{H}+\hat{G}$ to $\mathcal{E}_{\lambda}$, which is understood in the following proposition:
\begin{proposition}
The matrix of the self-adjoint operator $\hat{H}+\hat{G}$ in the basis $B_{\lambda}$, defined in Lemma \ref{l.basisJCM}, is the symmetric matrix:
\begin{equation*}
\begin{bmatrix}
\gamma\frac{(n\hbar)^2}{4} & (\frac{\hbar}{2})^{3/2}\beta_1 & \dots & \dots & \dots & 0 \\
(\frac{\hbar}{2})^{3/2}\beta_1 & \gamma\frac{((n-2)\hbar)^2}{4} & (\frac{\hbar}{2})^{3/2}\beta_2 & \dots & \dots & 0 \\
0 & (\frac{\hbar}{2})^{3/2}\beta_2 & \gamma\frac{((n-4)\hbar)^2}{4} & (\frac{\hbar}{2})^{3/2}\beta_3 & \dots & 0 \\
\vdots &\vdots &\ddots & \ddots & \ddots  & \vdots \\
\vdots &\vdots &\ddots & \ddots & \ddots  &(\frac{\hbar}{2})^{3/2}\beta_{\mu}\\
0 & 0 & \dots & \dots &(\frac{\hbar}{2})^{3/2}\beta_{\mu} & \gamma\frac{(n\hbar)^2}{4} .
\end{bmatrix}
\end{equation*}
where $l_0:=\frac{l}{\hbar}+\frac{n}{2}-\frac{1}{2}$,  $\mu=\min(l_0,n)$ and 
\begin{equation*}
\beta_k:=\sqrt{(l_0+1-k)k(n-k+1)}.
\end{equation*}
\end{proposition}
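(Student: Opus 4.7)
The plan is a direct computation in the Bargmann representation. For the $\R^2$ factor, $\hat u$ and $\hat v=\frac{\hbar}{i}\partial_u$ are unitarily equivalent, on $L^2_{\text{hol}}(\C,\pi^{-1}e^{-|\tau|^2})$, to $\sqrt{\hbar/2}(\tau+\partial_\tau)$ and $i\sqrt{\hbar/2}(\tau-\partial_\tau)$ respectively, via $\hat u=\sqrt{\hbar/2}(a+a^*)$, $\hat v=i\sqrt{\hbar/2}(a^*-a)$ together with the already-established identifications $a\leftrightarrow\partial_\tau$, $a^*\leftrightarrow\tau$. The diagonal of the matrix is then immediate: $\hat G=\gamma\,\mathrm{Id}\otimes\widehat{z^2}$ has each $\tau^l\otimes z_1^k z_2^{n-k}$ as eigenvector with eigenvalue $\gamma\hbar^2(n-2k)^2/4$, which reproduces exactly the diagonal entries in the claimed matrix as $k$ runs from $0$ to $\mu$.

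For the off-diagonal part, applying $\hat u\otimes\hat x$ and $\hat v\otimes\hat y$ separately to $\tau^l\otimes z_1^k z_2^{n-k}$ produces a priori four types of terms of the form $\tau^{l\pm 1}\otimes z_1^{k\pm 1}z_2^{n-k\mp 1}$. Careful expansion, using the formulas for $\hat x$ and $\hat y$ on monomials already recorded in the section, shows that the factor $1/i$ in $\hat y$ matched with the $i$ in $\hat v$ produces systematic cancellations when the two contributions are summed, leaving only
\begin{equation*}
\hat H(\tau^l\otimes z_1^k z_2^{n-k})=\frac{\hbar^{3/2}}{2\sqrt{2}}\bigl[k\,\tau^{l+1}\otimes z_1^{k-1}z_2^{n-k+1}+l(n-k)\,\tau^{l-1}\otimes z_1^{k+1}z_2^{n-k-1}\bigr].
\end{equation*}
Since $l+k=l_0$ is constant on $\mathcal E_\lambda$, the two surviving terms stay inside $\mathcal E_\lambda$ and shift $k$ by $\pm 1$, so $\hat H$ is tridiagonal in the ordered basis $B_\lambda$.

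The last step is a normalization check. Writing $\tau^l\otimes z_1^k z_2^{n-k}=\sqrt{l!\,k!\,(n-k)!}\,e_{l,k}$ and using $l=l_0-k$, the coefficient in front of $e_{l+1,k-1}$ simplifies to $(\hbar/2)^{3/2}\sqrt{k(l_0-k+1)(n-k+1)}=(\hbar/2)^{3/2}\beta_k$, and the coefficient in front of $e_{l-1,k+1}$ to $(\hbar/2)^{3/2}\beta_{k+1}$, matching exactly the sub- and super-diagonal entries displayed. Symmetry of the matrix is automatic from self-adjointness of $\hat H+\hat G$ together with orthonormality of $B_\lambda$, and the boundary rows take care of themselves because whenever $k\pm 1$ would leave the index range, the corresponding prefactor $k$ or $l(n-k)$ vanishes. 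The main, albeit still mild, obstacle is the bookkeeping: keeping track of the cancellations that force tridiagonality, and propagating the square-root normalizations through the shifts $l\mapsto l\mp 1$, $k\mapsto k\pm 1$; once that is set up, the rest of the verification is mechanical algebra.
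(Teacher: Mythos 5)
Your proof is correct. The paper itself gives essentially no argument here: it records the diagonal part from the preceding lemma (the computation of $\hat z^2$ on the monomial basis) and for the off-diagonal part simply cites Proposition 4.7 of Pelayo and V\~u Ng\k{o}c, where the tridiagonal action of $\hat H$ on $B_\lambda$ is computed. What you have done is carry out that outsourced computation in full inside the Bargmann model, and your bookkeeping checks out: the identifications $\hat u=\sqrt{\hbar/2}\,(a+a^*)$, $\hat v=i\sqrt{\hbar/2}\,(a^*-a)$ give
\begin{equation*}
(\hat u\otimes\hat x+\hat v\otimes\hat y)(\tau^l\otimes z_1^kz_2^{n-k})=\hbar\sqrt{\hbar/2}\,\bigl[k\,\tau^{l+1}\otimes z_1^{k-1}z_2^{n-k+1}+l(n-k)\,\tau^{l-1}\otimes z_1^{k+1}z_2^{n-k-1}\bigr],
\end{equation*}
so the claimed cancellation of the mixed terms is exactly right, and dividing by $2$ and renormalizing with $l=l_0-k$ yields $(\hbar/2)^{3/2}\beta_k$ on the $e_{l+1,k-1}$ component and $(\hbar/2)^{3/2}\beta_{k+1}$ on the $e_{l-1,k+1}$ component, matching the displayed matrix. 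The only thing you might make explicit is the harmless rescaling implicit in the paper's identification of $a_j$ with $\partial_{z_j}$ (the paper's $a_j$ and the Bargmann $A_j$ differ by the $\hbar$-normalization), but since you use the identifications exactly as the paper does elsewhere in the section, this is consistent. Net effect: your argument is self-contained where the paper's is a citation, at the cost of the mechanical algebra you acknowledge.
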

\begin{proof}
This follows from our computation of $\hat{G}$ and the computation of $\hat{H}$ in Proposition $4.7$ in Pelayo \& \vungoc\ \cite{pelayo2012hamiltonian}. 
\end{proof}
 
\begin{figure}[ht]
\begin{center}
    \includegraphics[scale=1]{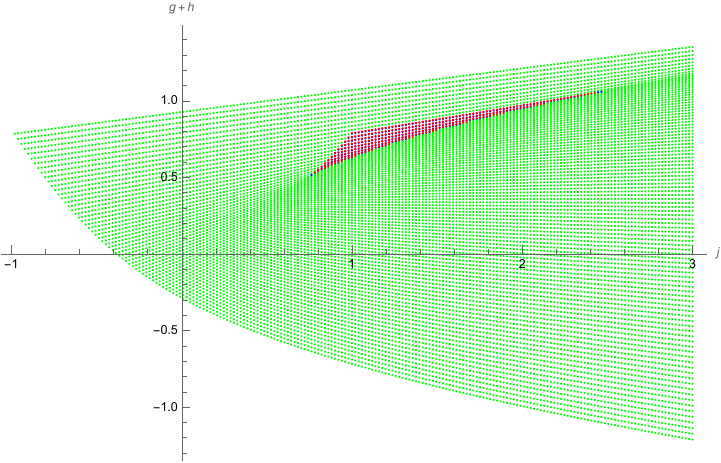}
    \caption{The joint spectrum of $(J,H+G)$ when $\hbar=\frac{2}{101}$. The green dots correspond to values outside of the image of the flap. The red dots correspond to values on the background of the flap. The blue dots correspond to values on the flap, but they are barely visible since they overlap mostly with the red dots.}
    \label{f.jointspectrummodificationJC}
\end{center}
\end{figure}

\subsection{Quantization of the Hirzebruch surface}
\label{s.quantizationhirzebruch} The so called Hirzebruch surface will be one of our leading examples throughout the paper.
Let $\alpha,\beta>0$ and $n\in \mathbb{N}$. Consider the Delzant polytope $\Delta_{\alpha,\beta,n}$ in $\mathbb{R}^2$ given by the vertices $(0,0),(0,\beta),(\alpha,\beta)$ and $(\alpha+n\beta,0)$. We call this polytope the Hirzebruch polytope. Now we associate to the Hirzebruch polytope the symplectic toric manifold $(W_{\alpha,\beta,n},\omega_{\alpha,\beta,
n})$, called the Hirzebruch surface, and a toric integrable system $(W_{\alpha,\beta,n},\omega_{\alpha,\beta,n},F)$ such that $F(W_{\alpha,\beta,n})=\Delta_{\alpha,\beta,n}$. Due to the Delzant construction \cite{delzant1988hamiltoniens}, for details see Cannas da Silva \cite{MR1853077}, $W_{\alpha,\beta,n}$ can be seen as the symplectic reduction of $\mathbb{C}^4$ with respect to the $\mathbb{T}^2$-action generated by 
\begin{equation*}
    N(z_1,z_2,z_3,z_4)=\frac{1}{2}(|z_1|^2+|z_2|^2+n|z_3|^2,|z_3|^2+|z_4|^2)
\end{equation*}
at the level $(\alpha+n\beta,\beta)$. In these coordinates the momentum map is given by $F=\frac{1}{2}(|z_2|^2,|z_3|^2)$. For more details see Le Floch \& Palmer \cite{lefloch:hal-01895250}.
Our goal in this subsection is to quantize this toric system. For this we make use of the description of $W_{\alpha,\beta,n}$ as the symplectic reduction of $\mathbb{C}^4$. The quantization of $\mathbb{C}^4$ is the Hilbert space $L^2(\mathbb{R}^4)$, analogous to Section \ref{s.quantizationflap}, which due to the Bargmann representation, can be identified with the Hilbert space of holomorphic functions on 4 variables $L^2_{hol}(\mathbb{C}^4,\pi^{-1}e^{-|z|^2})$, which is generated by monomials of the form $z_1^{\alpha_1}z_2^{\alpha_2}z_3^{\alpha_3}z_4^{\alpha_4}$ with $\alpha=(\alpha_1,\alpha_2,\alpha_3,\alpha_4)\in \mathbb{N}^{4}$. 

In order to understand the quantization of this toric system, we start by understanding the quantization of $N$. Recall that the harmonic oscillator $\frac{|z_i|^2}{2}$ is quantized as $\hbar(z_i\frac{\partial}{\partial z_i}+\frac{1}{2})$. Since $N$ is given by a linear combination of harmonic oscillators, its quantization, in the Bargmann representation, is given by
\begin{equation*}
    \hat{N}=\hbar\left(z_1\frac{\partial}{\partial z_1}+z_2\frac{\partial}{\partial z_2}+ nz_3\frac{\partial}{\partial z_3}+1+\frac{n}{2}, z_3\frac{\partial}{\partial z_3}+z_4\frac{\partial}{\partial z_4}+1\right).
\end{equation*}
Therefore, the quantization of $W_{\alpha,\beta,n}$ is defined to be the finite dimensional Hilbert space $\mathcal{H}_{\alpha,\beta,n}:=\ker(\hat{N}-(\alpha+n\beta,\beta))$. It is nonzero if $\alpha=\hbar(N_1+\frac{n}{2})$ and $\beta=\hbar(N_2+1)$ for $N_1,N_2\in \mathbb{N}$, which we assume from now on. Therefore, the space $\mathcal{H}_{\alpha,\beta,n}$ is generated by the monomials $z_1^{\alpha_1}z_2^{\alpha_2}z_3^{\alpha_3}z_4^{\alpha_4}$ such that 
\begin{equation}
\label{eq.torichirzebruch}
    \begin{cases}
        \hbar(\alpha_1+\alpha_2+n\alpha_3+\frac{n}{2}+1)=\alpha+n\beta,\\
        \hbar(\alpha_3+\alpha_4+1)=\beta.
    \end{cases}
\end{equation}
Now the quantization of $J(z_1,z_2,z_3,z_4)=\frac{1}{2}|z_2|^2$ is $\hat{J}=\hbar(z_2\frac{\partial}{\partial z_2}+\frac{1}{2})$ restricted to $\mathcal{H}_{\alpha,\beta,n}$ and the quantization of $R(z_1,z_2,z_3,z_4)=\frac{1}{2}|z_3|^2$ is $\hat{R}=\hbar(z_3\frac{\partial}{\partial z_3}+\frac{1}{2})$ restricted to $\mathcal{H}_{\alpha,\beta,n}$. Both operators commute. 
\begin{remark}
    The function $z_1^{\alpha_1}z_2^{\alpha_2}z_3^{\alpha_3}z_4^{\alpha_4}$ is an eigenfunction of $\hat{J}$ with eigenvalue $\hbar(\alpha_2+\frac{1}{2})$. 
\end{remark}

\begin{lemma}
\label{l.conditionspectrumJ}
    The spectrum of $\hat{J}$ is discrete and we have 
    \begin{equation*}
        \text{spec}(\hat{J})=\left\{\hbar \left. \left(k+\frac{1}{2}\right)\right | 0\leq k\leq \lfloor (\alpha+n\beta)/\hbar-1-n/2 \rfloor , k\in \mathbb{N} \right\}.
    \end{equation*}
    For a fixed value $a_2$ let $\mathcal{E}_{a_2}:=\ker (\hat{J}-a_2)$. Then
    \begin{itemize}
        \item If $n=0$ and $\alpha-a_2\geq \frac{\hbar}{2}$ then $\dim(\mathcal{E}_{a_2})=\lfloor \beta/\hbar\rfloor$.
        \item If $n\neq 0$ and $a_2\leq \alpha$ then $\dim(\mathcal{E}_{a_2})=\lfloor\beta/\hbar\rfloor$.
        \item Otherwise $\dim(\mathcal{E}_{a_2})=\lceil \frac{\beta}{\hbar}-1-\frac{1}{\hbar}\frac{1}{n}(a_2-\alpha)-\frac{(n-1)}{2n} \rceil +1.$
    \end{itemize}
\end{lemma}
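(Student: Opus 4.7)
The strategy is to work directly with the explicit monomial basis of $\mathcal{H}_{\alpha,\beta,n}$ and reduce both claims to solving a pair of Diophantine equations. By the Bargmann representation used in Subsection \ref{s.quantizationflap}, the normalized monomials $z^{\alpha}/\sqrt{\alpha!}$ form an orthonormal basis of $L^2_{\text{hol}}(\mathbb{C}^4,\pi^{-1}e^{-|z|^2})$, and each is a simultaneous eigenvector of both components of $\hat{N}$. Hence $\mathcal{H}_{\alpha,\beta,n}=\ker(\hat{N}-(\alpha+n\beta,\beta))$ has an orthonormal basis consisting of those normalized monomials whose exponents $(\alpha_1,\alpha_2,\alpha_3,\alpha_4)\in\mathbb{N}^4$ satisfy the two conditions in \eqref{eq.torichirzebruch}. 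Since $\hat{J}$ acts diagonally on this basis with eigenvalue $\hbar(\alpha_2+\tfrac{1}{2})$ on $z^{\alpha}$, discreteness of the spectrum is immediate.

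To identify the spectrum, I would note that $\hbar(k+\tfrac{1}{2})$ lies in $\text{spec}(\hat{J})$ iff \eqref{eq.torichirzebruch} admits a solution with $\alpha_2=k$. The second condition, $\alpha_3+\alpha_4=\beta/\hbar-1=N_2$, is solvable for any non-negative integer $\alpha_3\leq N_2$, so it imposes no direct constraint on $k$. The first condition becomes $\alpha_1+n\alpha_3=(\alpha+n\beta)/\hbar-k-n/2-1$, which admits non-negative integer solutions iff the right-hand side is a non-negative integer. This forces $k\leq \lfloor (\alpha+n\beta)/\hbar-1-n/2\rfloor$, yielding the claimed description of $\text{spec}(\hat{J})$.

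To compute $\dim\mathcal{E}_{a_2}$ for fixed $a_2=\hbar(k+\tfrac{1}{2})$, I parametrize admissible exponents by $\alpha_3\in\{0,1,\dots,N_2\}$: the second condition determines $\alpha_4=N_2-\alpha_3$, and the first determines $\alpha_1=(\alpha+n\beta)/\hbar-k-n/2-1-n\alpha_3$. The count therefore reduces to the number of $\alpha_3\in\{0,\dots,N_2\}$ making this $\alpha_1$ a non-negative integer. When $n=0$, the expression for $\alpha_1$ is independent of $\alpha_3$ and is non-negative precisely when $\alpha-a_2\geq \hbar/2$, in which case every $\alpha_3$ works, giving the $\lfloor\beta/\hbar\rfloor$ count. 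When $n\geq 1$, the non-negativity of $\alpha_1$ imposes the upper bound $\alpha_3\leq T/n$, where $T=(\alpha+n\beta)/\hbar-k-n/2-1$; a short manipulation shows that this bound exceeds $N_2$ (hence is vacuous) exactly when $a_2\leq\alpha$, again producing $\lfloor\beta/\hbar\rfloor$ solutions. Otherwise the bound is binding and the count equals $\lfloor T/n\rfloor+1$, which is then rewritten via standard floor/ceiling identities into the stated ceiling expression.

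The conceptual content is short; the main obstacle is bookkeeping. The half-integer shifts coming from the Weyl-ordered harmonic oscillators and from the Bargmann shift in $\hat{N}$ must be tracked carefully through the Diophantine inequalities so that the dichotomy ``$a_2\leq\alpha$ versus $a_2>\alpha$'' emerges cleanly and the final floor/ceiling expression matches the stated one. The borderline case $a_2=\alpha$ (and, for $n=0$, the case $\alpha-a_2=\hbar/2$) should then be checked directly to confirm it falls in the intended branch of the case analysis.
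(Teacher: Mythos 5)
Your proposal is correct and is essentially the paper's own argument: the paper likewise reduces everything to the Diophantine system \eqref{eq.torichirzebruch}, derives the bound on $\alpha_2$ from the first equation, solves for $\alpha_1$ in terms of one free exponent (the paper uses $\alpha_4$ where you use $\alpha_3$, an immaterial relabelling since $\alpha_3+\alpha_4=\beta/\hbar-1$), and counts the admissible values. Your closing caution about the borderline case is well placed — for $n\geq 2$ the cutoff on $\alpha_3$ can still be vacuous for the first eigenvalue with $a_2>\alpha$, but there the ceiling expression in the third bullet also evaluates to $\lfloor\beta/\hbar\rfloor$, so the stated trichotomy is consistent.
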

\begin{proof}
    Recall that $\alpha_i\in \mathbb{N}$, in particular $k\in \mathbb{N}$.
   From Equation \eqref{eq.torichirzebruch} we obtain
   \begin{equation*}
       \alpha_1+\alpha_2+n\alpha_3=\frac{1}{\hbar}(\alpha+n\beta)-1-\frac{n}{2}
   \end{equation*}
   which give us an upper bound $\lfloor (\alpha+n\beta)/\hbar-1-n/2 \rfloor$ for $\alpha_2$.
   Now consider a fixed $a_2$. Recall that the basis of $\mathcal{E}_{a_2}$ is generated by the monomials $z_1^{\alpha_1}z_2^{\alpha_2}z_3^{\alpha_3}z_4^{\alpha_4}$ satisfying Equation \eqref{eq.torichirzebruch}. 
   From Equation \eqref{eq.torichirzebruch} we obtain
   \begin{equation*}
       \alpha_1=\frac{1}{\hbar}\alpha-\alpha_2-1+\frac{n}{2}+n\alpha_4
   \end{equation*}
and using that $\alpha_2=\frac{a_2}{\hbar}-\frac{1}{2}$ we obtain
\begin{equation*}
    \alpha_1=\frac{1}{\hbar}(\alpha-a_2)+\frac{(n-1)}{2}+n\alpha_4.
\end{equation*}
Recall that $\alpha_4\in \mathbb{N}$ with values between $0$ and $\frac{\beta}{\hbar}-1$. Since $\alpha_1$ needs to be a nonnegative natural number we obtain the desired results. 
   
\end{proof}

\begin{remark}
    The function $z_1^{\alpha_1}z_2^{\alpha_2}z_3^{\alpha_3}z_4^{\alpha_4}$ is an eigenfunction of $\hat{R}$ with eigenvalue $\hbar(\alpha_3+\frac{1}{2})$.
\end{remark}
Therefore, the joint spectrum of $\hat{J}$ and $\hat{R}$ in $\mathcal{H}_{\alpha,\beta,n}$ is the pair $\hbar(\alpha_2+\frac{1}{2},\alpha_3+\frac{1}{2})$ such that 
\begin{equation*}
    \begin{cases}
        \hbar(\alpha_1+\alpha_2+n\alpha_3+\frac{n}{2}+1)=\alpha+n\beta,\\
        \hbar(\alpha_3+\alpha_4+1)=\beta.
    \end{cases}
\end{equation*}
where $\alpha_2$ ranges from $0$ to $(\alpha+n\beta)/\hbar-1-n/2$, and $\alpha_i\in \mathbb{N}$ for $i=1,...,4$. See Figure \ref{f.hirzebruchquantized} for an example.

\begin{figure}[ht]
\begin{center}
    \includegraphics[]{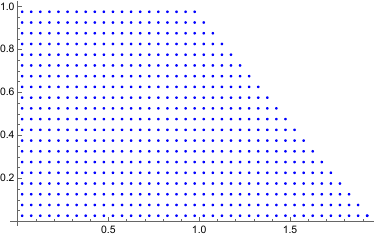}
    \caption{The joint spectrum of $(J=\frac{|z_2|^2}{2},R=\frac{|z_3|^2}{2})$ for $\hbar=0.05$ and $\alpha=\beta=n=1$.
}
\label{f.hirzebruchquantized}
\end{center}
\end{figure}

\section{Affine invariant in the presence of a standard flap}
\label{s.flap}
\subsection{Intuition}

Recall that when a system admits a \textbf{standard} flap $\mathcal{F}$, see Definition~\ref{d.standardflap}, the set of regular values is not simply connected and the system displays monodromy due to the presence of elliptic-elliptic values in $\operatorname{Im}(\mathcal{F})$, see Proposition \ref{p.monodromy}. There are two ways to deal with this phenomenon. We can either make a cut for each flap, or we can make a cut for each elliptic-elliptic value present in the image of the flap. We explore the two ideas. In Section \ref{s.polytopenflaps} we make a cut for each flap. In Section \ref{s.polytopeflap} we make a cut for each elliptic-elliptic value present on the image of a flap. Each approach has its advantages and disadvantages. Doing a cut for each flap may create more discontinuities than the other approach, but on the other hand the resulting affine invariant is smoother, i.e., has fewer corners. 
For an example of the first approach see Figures \ref{p.iafsflap} and Figure \ref{f.polytopeinvariantsflapwith2elliptic}. For an example of the second approach see Figure \ref{f.polytopeinvariantsflapwith2elliptic}. 

If a hypersemitoric system admits a flap with no critical values of rank $0$ in its interior and no elliptic-elliptic values in its boundary, then there is no topological monodromy and hence no choice of cut necessary in order to obtain an affine invariant. We address this phenomenon in Section \ref{s.generalhypersemitoric}.

For simplicity of notation, from Section \ref{s.polytopenflaps} to Section \ref{s.polytopeflap}, and corresponding subsections, we assume that all flaps $\mathcal{F}$ are \textbf{bounded}, see Definition \ref{d.bounded}. We note that the adaption to the general case is straightforward, and is dealt with in Section \ref{s.subsectiongeneralinvariant}.


 \subsection{Affine invariant with one cut for each flap}
 \label{s.polytopenflaps}
Let $(M,\omega,F=(J,H))$ be a hypersemitoric system 
on a $4$-dimensional symplectic manifold such that the only critical values outside of the elliptic critical values occurring at the boundary of $F(M)$ are caused by \textbf{standard flaps} $\mathcal{F}_i,\dots,\mathcal{F}_n$. By Corollary $\ref{c.flapsfinitenflaps},$ it follows that $n<\infty$. 

Define projections $\pi_x:\mathbb{R}^2\rightarrow \mathbb{R}$, $(x,y)\mapsto x$ and $\pi_y:\mathbb{R}^2\rightarrow \mathbb{R}$, $(x,y)\mapsto y$. For each standard flap $\mathcal{F}_i$, with $n_i\in \mathbb{N}$, let $d_{i,j}, 1\leq j\leq n_i$, be the elliptic-elliptic values occurring on $\operatorname{Im}(\mathcal{F}_i)$ ordered as follows $\pi_x(d_{i,1})<\pi_x(d_{i,2})<\dots<\pi_x(d_{i,n_i})$.
Define $c_i$ as the hyperbolic-regular value in $\gamma_{\mathcal{F}_i}$ such that $\pi_x(c_i)=\pi_x(d_{i,1})$, $i=1,\dots,n$.

Let $\vec{\epsilon}=(\epsilon_1,...,\epsilon_n)$ with $\epsilon_i\in \{\pm 1\}$ for $i=1,...,n$. For each $\epsilon\in \{-1,1\}$, let $\mathcal{L}^{\epsilon}_{c_i}$ be the vertical half line starting at $c_i$ and going to $\pm \infty$ according to the sign of $\epsilon$, i.e., $\mathcal{L}_{c_i}^{\epsilon}=\{(x_i,y)| \epsilon y \geq \epsilon y_i\}$ where $c_i=(x_i,y_i)$. Let $l_{c_i}^{\epsilon}:=F(M)\cap \mathcal{L}^{\epsilon}_{c_i}$. Denote by $\pi_j:\{-1,1\}^{n}\rightarrow \{-1,1\}$ the projection onto the $j$-th coordinate. Furthermore, let $\mathcal{T}$ be the subgroup of $\AGL(2,\mathbb{Z})$ which leaves a vertical line, with orientation, invariant. 

Then we have the following result which parallels Theorem~\ref{th.straigheningHomeo} on the classification of semitoric systems.

\begin{theorem}
\label{t.polytopenflaps}
Using the notation from above, let $(M,\omega,F)$ be a hypersemitoric system where the only critical values apart from the elliptic values in the boundary of $F(M)$ occur at the images of \textbf{standard} flaps $\mathcal{F}_1,...,\mathcal{F}_n$. Given any $\vec{\epsilon}\in \{-1,1\}^{n}$, there exists a continuous map $f_{\vec{\epsilon}}: F(M)\backslash(\cup_{i=1}^{n}( \gamma_{\mathcal{F}_i}\cup l^{\pi_i(\vec{\epsilon})}_{c_i}))\rightarrow \mathbb{R}^2$, such that:
\begin{enumerate}
\item $f_{\vec{\epsilon}}$ is a diffeomorphism onto its image; 
\item $f_{\vec{\epsilon}}$ is affine;
\item $f_{\vec{\epsilon}}$ preserves $J$, i.e., is of the form $f_{\vec{\epsilon}}(x,y)=(x,f_{\vec{\epsilon}}^{(2)}(x,y))$;
\item
For any $c\in \text{int}\left(l^{\pi_i(\vec{\epsilon})}_{c_i}\right)$
\begin{equation}
\label{eq.monodromyrelationnflaps}
\lim_{\begin{smallmatrix}(x,y)\rightarrow c \\ x<\pi_x(c_i) \end{smallmatrix}} df_{\vec{\epsilon}}(x,y) = M_{c_i} \lim_{\begin{smallmatrix}(x,y)\rightarrow c \\ x>\pi_x(c_i) \end{smallmatrix}} df_{\vec{\epsilon}}(x,y),
  \end{equation}
  where 
  \begin{equation*}
 M_{c_i}= \begin{bmatrix}
          1 & 0 \\
          k(c_i) & 1
         \end{bmatrix},
\end{equation*}
and $k(c_i)=\sum_{j}\pi_j(\vec{\epsilon})n_j$ where the sum runs over all the $c_j$ such that $c_i\in l_{c_j}^{\pi_j(\vec{\epsilon})}$. 
Furthermore, if $n_i=1$ and $\pi_x(\operatorname{Im}(\mathcal{F}_i))\cap \pi_x(\operatorname{Im}(\mathcal{F}_j))=\emptyset$ for all $j\neq i$, the map $f_{\vec{\epsilon}}$ extends to a continuous map over
\begin{equation*}
F(M)\backslash \left(\bigcup_{j=1}^{n_i-1}l_{c_j}^{\pi_j(\vec{\epsilon})}\cup \bigcup_{j=n_i+1}^{n}l_{c_j}^{\pi_j(\vec{\epsilon})}\cup \bigcup_{j=1}^{n}\gamma_{\mathcal{F}_j}\right).
\end{equation*}
\item The image of $f_{\vec{\epsilon}}$ is a rational polytope with ``holes'' in its interior.
\end{enumerate} 
Such an $f_{\vec{\epsilon}}$ is unique modulo left composition by $\mathcal{T}$. Furthermore, for each $i=1,...,n$ there exists a map $g_i:\operatorname{Im}(\mathcal{F}_i)\backslash \gamma_{\mathcal{F}_i}\rightarrow \mathbb{R}^2$ that is affine, $J$-preserving, and a diffeomorphism onto its image. The maps $g_i$ do not depend on $\vec{\epsilon}$ and are unique modulo left composition by $\mathcal{T}$.


\end{theorem}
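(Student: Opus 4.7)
The plan is to adapt \vungoc's construction from Theorem \ref{th.straigheningHomeo} layerwise. First I would decompose $F(M)$ into connected components of fibers: over the complement of the $x$-projections of the $\mathcal{F}_i$ the fibers are connected, while over a flap's $x$-projection the fiber has two components, one on the background $\mathcal{B}$ and one on the flappy sheet $\mathcal{F}_i \setminus \gamma_i([0,1])$. Work on each leaf separately. Since $J$ generates an effective $S^1$-action, on every regular fiber the orbit of this action provides a globally defined homology class $e_1$, which I complete to a basis $(e_1,e_2)$; normalizing the first action coordinate to $J$ itself will yield property (3).

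Next I would compute the affine monodromy on the background. Around a small loop in $\mathcal{B}_r$ encircling a standard flap $\mathcal{F}_i$, Proposition \ref{p.monodromy} gives a topological monodromy $\begin{bmatrix} 1 & n_i \\ 0 & 1\end{bmatrix}$, the sum $\sum 1/(m_p n_p)$ reducing to $n_i$ for the standard flap (elliptic-elliptic fixed points on the flap boundary are the only interior fixed points enclosed); by Lemma \ref{l.af} the affine monodromy is the transpose transvection $T^{n_i}$, which lies in $\mathcal{T}$. Cutting along the vertical ray $l_{c_i}^{\pi_i(\vec{\epsilon})}$ from each $c_i$ kills one generator of $\pi_1$ per flap and renders the set $F(M) \setminus \bigl(\bigcup_i \gamma_i([0,1]) \cup \bigcup_i l_{c_i}^{\pi_i(\vec{\epsilon})}\bigr)$ simply connected. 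On this set Theorem \ref{t.AL} produces local action-angle coordinates, and simple connectedness allows them to be glued into the global affine diffeomorphism $f_{\vec{\epsilon}}$ with $J$ as first coordinate, establishing (1)--(3).

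Then I would verify the jump relation (4). Crossing the cut $l_{c_i}^{\pi_i(\vec{\epsilon})}$ from right to left, $f_{\vec{\epsilon}}$ differs by the monodromy of a small loop around $c_i$ traversed clockwise; when multiple cuts $l_{c_j}^{\pi_j(\vec{\epsilon})}$ pass over the same point $c$, the loop encloses each corresponding flap, so the composite jump matrix is the product of the individual $T^{n_j}$'s, giving $T^{k(c_i)}$ with $k(c_i) = \sum_{j : c_i \in l_{c_j}^{\pi_j(\vec{\epsilon})}} \pi_j(\vec{\epsilon}) n_j$. The sign $\pi_j(\vec{\epsilon})$ records the orientation of each cut. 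Rationality and convexity of the image in (5) follow from the local normal form at the elliptic-regular boundary pieces of $F(M)$, which yields Delzant-like rational edges and corners; the removed flap curves become the "holes." Construction of $g_i$ on $\mathcal{F}_i \setminus \gamma_i([0,1])$ is strictly easier: for a standard flap this flappy component is itself simply connected, so action-angle coordinates extend directly, requiring no $\vec{\epsilon}$; the continuity extension when $n_i=1$ and $\pi_x(\mathcal{F}_i)$ is disjoint from all other flaps reflects that the jump $T^{1}$ across a single cut is compensated by gluing the flappy leaf to the background along $l_{c_i}^{\pi_i(\vec{\epsilon})}$. Uniqueness modulo $\mathcal{T}$ comes from the residual freedom in choosing a primitive of $\omega$ and the homology class $e_2$ complementary to the $S^1$-orbit.

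The main obstacle I expect is bookkeeping the monodromy contributions when the vertical cuts $l_{c_i}^{\pi_i(\vec{\epsilon})}$ overlap or cross one another in the upper/lower half-planes, so that the sum defining $k(c_i)$ must correctly aggregate \emph{all} flap windings encoded by the cuts passing over a given point. Establishing this combinatorially, together with showing that the resulting piecewise affine map still has convex image (in the presence of holes), is where care is needed; everything else is a direct transcription of \vungoc's semitoric argument with focus-focus monodromy replaced by the flap monodromy supplied by Proposition \ref{p.monodromy}.
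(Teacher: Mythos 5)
Your overall strategy is the one the paper actually uses: work on the background of the unfolded momentum domain, use the $S^1$-orbit class to normalize the first action to $J$, compute the monodromy around each standard flap via Proposition \ref{p.monodromy} and Lemma \ref{l.af}, cut along the vertical rays, glue local action-angle charts, and take action-angle coordinates directly on the (simply connected) flappy sheets to get the $g_i$. Two points, however, are genuine gaps rather than routine bookkeeping. First, your assertion that cutting along the rays $l_{c_i}^{\pi_i(\vec{\epsilon})}$ ``renders the set simply connected'' is false precisely in the case you flag at the end: when two flaps satisfy $\pi_x(\mathcal{F}_i)\cap\pi_x(\mathcal{F}_j)\neq\emptyset$ the complement of the cuts in $B_r$ can become \emph{disconnected}, and then there is no single simply connected domain on which to run the Arnold--Liouville gluing. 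The paper resolves this by perturbing the base points $c_i$ to points $\tilde{c}_i$ with distinct $x$-coordinates, prescribing for each target point a homotopy class of path from a fixed base point $q_0$ (passing above or below each $\gamma_i([0,1])$ according to $\epsilon_i$ and the relative $x$-position), and then observing that the resulting value of the action integral is independent of the residual ambiguity because the monodromy group generated by the $T^{n_i}$ is Abelian. You correctly identify this as the hard spot but leave it unresolved, and without the Abelian-monodromy/path-prescription argument the map $f_{\vec{\epsilon}}$ is not well defined in the overlapping case.

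Second, your justification for the continuous extension across the cut when $n_i=1$ and $\pi_x(\mathcal{F}_i)$ is disjoint from the other flaps --- that the jump $T^{1}$ is ``compensated by gluing the flappy leaf to the background'' --- is not the right mechanism; the flappy leaf is a separate layer carried by $g_i$ and plays no role in the continuity of $f_{\vec{\epsilon}}$ on the background. The paper's argument is that relation \eqref{eq.monodromyrelationnflaps} controls the jump of the \emph{derivative}, and continuity of the map itself then follows because the vertical extent of the image over each $x$ must equal the Duistermaat--Heckman density $\rho_J(x)$ (the symplectic volume of $J^{-1}(x)/S^1$, computed by Karshon's formula in Section \ref{s.duistermaatmeasuregeneral}); this pins down the additive constant on each side of the cut. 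The remaining items --- properties (1)--(3), the jump formula (4) with $k(c_i)=\sum_j\pi_j(\vec{\epsilon})n_j$, the rational convex image with flap-shaped holes, the construction of the $g_i$, and uniqueness modulo $\mathcal{T}$ --- you argue essentially as the paper does.
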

\begin{proof}
    Let $\tilde{F}$ be the map associated with the unfolded momentum domain $A$ of $F$, i.e., there exists $A$ and $\pi:A\rightarrow F(M)$ such that $\pi:A\rightarrow F(M)$ and $\pi \circ \tilde{F}=F$. We work on the background of the flaps $\mathcal{F}_i$ where the fibers of the map $\tilde{F}$ are connected.
    
    Let $B_r$ be the set of regular values in the background of the system $\mathcal{B}$.  By abuse of notation we identify each value $c_i$ with the corresponding point in the background $\mathcal{B}$.
    We first consider the case where $B_r\backslash \cup_{i=1}^{n}l_{c_i}^{\pi_i(\vec{\epsilon})}$ is connected.
    
    Consider the local system $\xi\rightarrow B_r$ whose fiber over $b$ is $H_1(\tilde{F}^{-1}(b),\mathbb{Z})\cong \mathbb{Z}_n$. Let $p:\tilde{B}\rightarrow B_r$ be the universal cover and let $\tilde{\xi}:=p^*(\xi)$. Since $\tilde{B}$ is simply connected, $\tilde{\xi}$ is trivial. Let $c_1,c_2$ be a $\mathbb{Z}$-basis of continuous sections of $\tilde{\xi}\rightarrow \tilde{B}$. Using $c_1,c_2$ we define action coordinates for $\tilde{b}\in \tilde{B}$ as
    \begin{equation}
        \label{eq.ACuniversal}
        G(\tilde{b}):=\left(\frac{1}{2\pi}\int_{c_1(\tilde{b})}\lambda,\ \frac{1}{2\pi}\int_{c_2(\tilde{b})}\lambda\right),
    \end{equation}
where $\lambda$ is a $1$-form such that $d\lambda =-\omega$. 

Now we use the map $G$ to define the map $f_{\vec{\epsilon}}$.
Fix a point $q_0\in B_r$. For each $p\in B_r$ let $\gamma(q_0,p)$ be a path that connects $q_0$ to $p$ in $B_r \backslash \cup_{i=1}^{n}l_{c_i}^{\pi_i(\vec{\epsilon})}$. Define $f_{\vec{\epsilon}}$ as 
\begin{equation}
    \label{definefin1corner}
    f_{\vec{\epsilon}}(p):=G(\gamma(q_0,p)).
\end{equation}
Since $J$ defines an effective global $S^1$-action we may assume $f_{\vec{\epsilon}}(x,y)=(x,f_{\vec{\epsilon}}^{(2)}(x,y))$. Properties $(1)$ and $(2)$ in the Theorem follow from the fact that the map $f_{\vec{\epsilon}}$ is defined using action coordinates.
Using Proposition \ref{p.monodromy} we obtain \eqref{eq.monodromyrelationnflaps}, as in \vungoc \ \cite[Theorem $3.8$]{vu2007moment}. 
 
Over the lines $l_{c_i}^{\pi_i(\vec{\epsilon})}$ where we make the cuts, if $n_i=1$ and $\pi_x(\operatorname{Im}(\mathcal{F}_i))\cap \pi_x(\operatorname{Im}(\mathcal{F}_j))=\emptyset$ for all $j\neq i$, the map can be continuously extended over the line $l^{\pi_i(\vec{\epsilon})}_{c_i}$. This is due to Equation \eqref{eq.monodromyrelationnflaps} and the fact that the height of the resulting polytope near $c_i$ equals the density function for the Duistermaat-Heckman measure, i.e., the volume of the reduced orbifold $J^{-1}(x)/S^1$, for a formula see Karshon \cite{karshon1999periodic} or Section \ref{s.duistermaatmeasuregeneral}.
If $n_i\neq 1$ then the map may not extend continuously, for example see Figure \ref{f.polytopeinvariantsflapwith2elliptic}. 

Using the local normal form for elliptic-elliptic values and the fact that the monodromy of the system around each $\gamma_{\mathcal{F}_i}$ is $\begin{bsmallmatrix}1 & n_i\\0 & 1\end{bsmallmatrix}$, where $n_i$ is the number of elliptic-elliptic values in the flap $\mathcal{F}_i$, we may use the same argument as \vungoc\ \cite[Theorem $3.8$]{vu2007moment} to conclude that the image of $f_{\vec{\epsilon}}$ is a rational polytope with flap-shaled holes in the middle. The discontinuity of the map that gives the flap-shaped holes is due to the fact that the fibers of $F$ inside of the flaps have two connected components, while on the outside the fibers are connected.  

Let us now consider the case where $B_r\backslash \cup_{i=1}^{n}l_{c_i}^{\pi_i(\vec{\epsilon})}$ is not connected. This can happen in multiple situations, specifically when there exist flaps $\mathcal{F}_i, \mathcal{F}_j$ such that $\pi_x(\operatorname{Im}(\mathcal{F}_i))\cap \pi_x(\operatorname{Im}(\mathcal{F}_j))\neq \emptyset$.
 
The idea is to use paths in $B_r$ that obey the following rule. Fix $q_0\in B_r$. Intuitively, if $\epsilon_i=-1$ then the path between $q_0$ and $p$ passes above $\gamma_{\mathcal{F}_i}$ if $\pi_x(p)>\pi_x(c_i)$ and it goes below $\gamma_{\mathcal{F}_i}$ otherwise. If $\epsilon_i=1$ the path passes below $\gamma_{\mathcal{F}_i}$ if $\pi_x(p)>\pi_x(c_i)$ and above $\gamma_{\mathcal{F}_i}$ otherwise. See Figure \ref{p.ruleofpaths} for an example. More precisely,
replace the points $c_i$ by nearby points $\tilde{c_i}$, and if necessary $\gamma_{\mathcal{F}_i}$ by $\tilde{\gamma}_{\mathcal{F}_i}$, such that $B\backslash \cup_{i=1}^{n}l_{\tilde{c_i}}^{\pi_i(\vec{\epsilon})}$is connected. If several $c_i$'s have different $x$-coordinate this replacement is done in a way that preserves the ordering with respect to the $x$-coordinate. Therefore, up to homotopy, there exists a unique path $\gamma(q_0,p)$ joining $q_0$ to $p$ inside $B_r$ avoiding all the $\tilde{\gamma}_{\mathcal{F_i}}, i=1,...,n$ such that the whole picture is isotopic to one in $B_r\backslash \cup_{i=1}^{n}l_{\tilde{c_i}}^{\pi_i(\vec{\epsilon})}$. The homotopy class of $\gamma(q_0,p)$ depends on the choice of ordering of the $x$-coordinates of the points $\tilde{c}_i$. However the value given by equation \eqref{definefin1corner} is well-defined. This is due to the fact that the monodromy is Abelian. The properties of the map follows analogously to the previous case. For more details see Pelayo \& Ratiu \& \vungoc\ \cite[Theorem $B$] {pelayo2017affine} and \vungoc\ \cite[Theorem $3.8$]{vu2007moment}.

The maps $g_i$ are defined using action coordinates on each flap $\mathcal{F}_i$. This is possible since the set of regular values on each flap $\mathcal{F}_i$ is simply connected.
\end{proof}
\begin{figure}[ht]
\centering
    \includegraphics[scale=0.7]{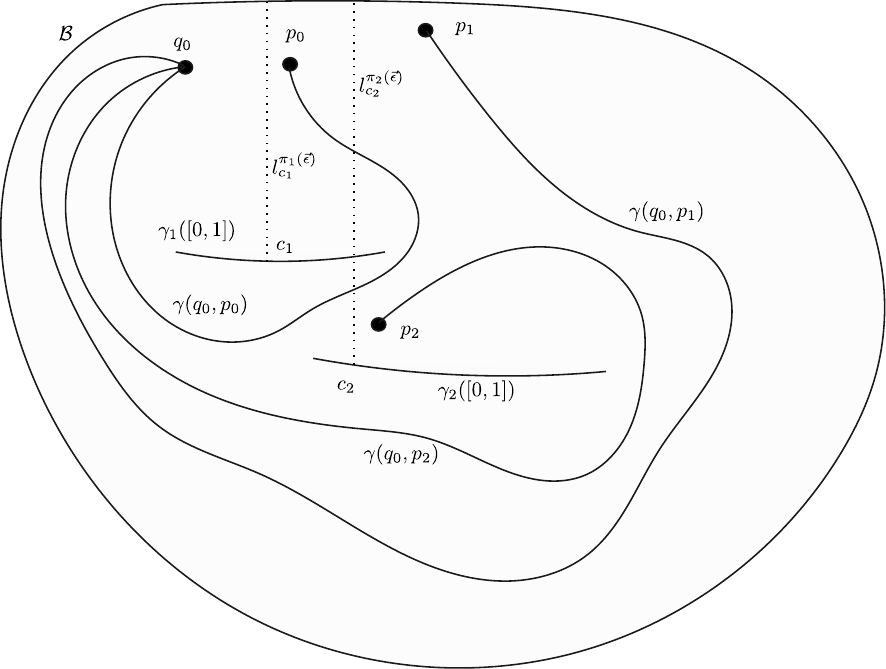}
    \caption{Rule that the paths used in Theorem \ref{t.polytopenflaps} need to follow in the case $n=2$ and $\vec{\epsilon}=(1,1)$.}
\label{p.ruleofpaths}
\end{figure}
\begin{definition}
    We call the image of $f_{\vec{\epsilon}}$ together with the images of $g_i$ from Theorem \ref{t.polytopenflaps} a \textit{representative of the affine invariant} of $(M,\omega,F)$. The \textit{affine invariant} is the collection of all the representatives for all the choices of $\vec{\epsilon}$. For an alternative definition, see Section \ref{s.grouporbitnflaps}.
\end{definition}

\subsection{Group orbit of the affine invariant in the case of one cut for each flap}
\label{s.grouporbitnflaps}
As in Section \ref{s.polytopenflaps} let $(M,\omega,F=(J,H))$ be a hypersemitoric system on a $4$-dimensional symplectic manifold $(M,\omega)$ such that the only critical values outside of the elliptic critical values in the boundary of $F(M)$ are caused by \textbf{standard} flaps $\mathcal{F}_i, i=1,...,n$. Let $c_i$ be the points defined in Section \ref{s.polytopenflaps}. Recall that by Corollary \ref{c.flapsfinitenflaps}, it follows that $n<\infty$. For $\vec{\epsilon}\in \{-1,1\}^{n}$, Theorem \ref{t.polytopenflaps} gives us a polytope $\Delta_{\vec{\epsilon}}:=\operatorname{Im}(f_{\vec{\epsilon}})$ which has holes in its interior, where $f_{\vec{\epsilon}}$ is the map given in Theorem \ref{t.polytopenflaps}. In this Section we investigate, analogous to \vungoc\ \cite[Section $4$]{vu2007moment}, the relations between these polytopes when we vary $\vec{\epsilon}$.

Let $\mathcal{L}$ be a vertical line in $\mathbb{R}^2$. Then $\mathcal{L}$ splits $\mathbb{R}^2$ into two half-spaces, $A_1$ to the left of $\mathcal{L}$ and $A_2$ to the right. Define the piecewise affine transformation $t^{k}_{\mathcal{L}}$ acting as the identity on $A_1$  and as the matrix $T_{k}:=\begin{bsmallmatrix}1 & 0\\ k & 1\end{bsmallmatrix}$ on $A_2$, where $k\in \mathbb{Z}$. Note that $T$ fixes $\mathcal{L}$. 

Consider now the vertical lines $\mathcal{L}_i, i=1,...,n$ through the points $c_i$. For any $\vec{k}:=(k_1,...,k_n)\in \mathbb{Z}^{n}$ we define the piecewise affine transformation of $\mathbb{R}^2$, $t^{\vec{k}}:=t^{k_1}_{\mathcal{L}_i}\circ ... \circ t^{k_n}_{\mathcal{L}_n}$.
Furthermore, let $G=\{0,1\}^n$, with the structure of the Abelian group $(\mathbb{Z}_2)^{n}$ and let $\vec{n}:=(n_1,...,n_n)\in \mathbb{Z}^n$, where $n_i\in \mathbb{N}$ is the number of elliptic-elliptic values in the flap $\mathcal{F}_i$. 

\begin{proposition}
    \label{p.groupofpolytopes1cutforeachflap}
    Let $G$ act transitively on the set
    \begin{equation*}
        \Delta:=\{\Delta_{\vec{\epsilon}}\mid \vec{\epsilon}\in \{-1,1\}^n \}
    \end{equation*}
    by the formula
    \begin{equation}
    \label{eq.Gactionnflaps}
        G\times \Delta \rightarrow \Delta,\quad (\vec{u},\Delta_{\vec{\epsilon}}) \mapsto \vec{u}\cdot \Delta_{\vec{\epsilon}}:=\Delta_{(-2\vec{u}+1)\cdot \vec{\epsilon}}
    \end{equation}
    where scalar multiplication and addition is component wise, and the dot $\cdot$ between $\vec{\epsilon}$ and $\vec{u}$ means component wise multiplication. Then the action satisfies:
    \begin{equation*}
        \vec{u}\cdot \Delta_{\vec{\epsilon}}=t^{\vec{u}\cdot \vec{\epsilon}\cdot \vec{n}}\Delta_{\vec{\epsilon}}
    \end{equation*}
    where the dot $\cdot$ between $\vec{\epsilon}, \vec{u}$ and $\vec{n}$ means point wise multiplication. 
    The action is free if and only if the $x$-coordinates of the elliptic-elliptic critical values 
\end{proposition}
\begin{proof}
    The proof is analogous to the proof of \vungoc\ \cite[Proposition $4.1$]{vu2007moment}, using Equation \eqref{eq.monodromyrelationnflaps}. 

    The action of $G$ commutes with $\mathcal{T}$ and therefore defines an action on the equivalence classes modulo $\mathcal{T}$. Therefore, it suffices to focus on generators of $G$.

     We use here the notations of Theorem \ref{t.polytopenflaps}. We focus on the case $n=1$ for simplicity, the case of general $n$ follows from a similar argument. Let $M_1:=\tilde{F}^{-1}(\mathcal{B}\backslash \gamma_1([0,1]))\cap J^{-1}(]-\infty,\pi_x(c_1)])$. Furthermore, let $M_2:=\tilde{F}^{-1}(\mathcal{B}\backslash \gamma_1([0,1]))\cap J^{-1}([\pi_x(c_1),+\infty[)$.

    Selecting an element of the class $\Delta_{\vec{\epsilon}}$ amounts to fixing the starting local basis of the action variables $f_1$ in $M_1$. Another representative of that class is obtained by composing $f_1$ by a transformation in $\mathcal{T}$. Therefore, in what follows we fix $f_1$ and by the notation $\Delta_{\vec{\epsilon}}$ we always mean a particular representative obtained from $f_1$ by the process of Theorem \ref{t.polytopenflaps}. 

    Consider the action of $G$ given by \eqref{eq.Gactionnflaps}. Consider the element $1\in G$ and let it act on the polytope $\Delta_{1}$ associated to the element $1$ obtaining  $1\cdot \Delta_1=\Delta_{-1}$. Let $f_2$ and $\tilde{f_2}$ be the local action variables in $M_2$ obtained for $\Delta_1$ and $\Delta_{-1}$ respectively. Let us fix $y>y_1$, where  $c_1=(x_1,y_1)$. Using Equation \eqref{eq.monodromyrelationnflaps} we have for $\Delta_{1}$
    \begin{equation*}
        \lim_{\begin{smallmatrix}(x,y)\rightarrow c \\ x<\pi_x(c_1) \end{smallmatrix}} df_{1}(x,y) = \begin{bmatrix}
            1 & 0 \\
            n_1 & 1 
        \end{bmatrix} \lim_{\begin{smallmatrix}(x,y)\rightarrow c \\ x>\pi_x(c_1) \end{smallmatrix}} df_{2}(x,y)
    \end{equation*}
    whereas for $\Delta_{-1}$ the formula reads 
    \begin{equation*}
        \lim_{\begin{smallmatrix}(x,y)\rightarrow c \\ x<\pi_x(c_1) \end{smallmatrix}} df_{1}(x,y) = \begin{bmatrix}
            1 & 0 \\
            0 & 1 
        \end{bmatrix} \lim_{\begin{smallmatrix}(x,y)\rightarrow c \\ x>\pi_x(c_1) \end{smallmatrix}} d\tilde{f}_{2}(x,y)
    \end{equation*}
    entailing 
    \begin{equation*}
        \lim_{\begin{smallmatrix}(x,y)\rightarrow c \\ x>\pi_x(c_1) \end{smallmatrix}} d\tilde{f}_{2}(x,y) = \begin{bmatrix}
            1 & 0 \\
            n_1 & 1 
        \end{bmatrix} \lim_{\begin{smallmatrix}(x,y)\rightarrow c \\ x>\pi_x(c_1) \end{smallmatrix}} df_{2}(x,y)
    \end{equation*}
    and therefore, since in $M_2$ the maps $f_2$ and $\tilde{f_2}$ must differ only by an element of $\mathcal{T}$ we obtain 
    \begin{equation*}
        \tilde{f_2}=\begin{bmatrix}
            1 & 0 \\
            n_1 & 1 
        \end{bmatrix}
        \circ f_2.
    \end{equation*}
The case for $\epsilon=-1$ follows analogously. 
    
\end{proof}

\subsection{The Duistermaat-Heckman measure associated with the \texorpdfstring{$S^1$}{S1}-action in the case of one cut for each flap}
\label{s.measure1cutflap}
As in Section \ref{s.polytopenflaps} let $(M,\omega,F=(J,H))$ be a hypersemitoric system on a $4$-dimensional symplectic manifold $(M,\omega)$ such that the only critical values outside of the elliptic critical values occurring at the boundary of $F(M)$ are caused by standard flaps $\mathcal{F}_i, i=1,...,n$.
The main goal of this section is to prove Corollary \ref{c.flapsfinitenflaps}, from where it follows that $n<\infty$.

Analogous to \vungoc\ \cite[Section $5$]{vu2007moment}, the straightened momentum images $\Delta_{\vec{\epsilon}}:=\operatorname{Im}(f_{\vec{\epsilon}})$ given by the maps $f_{\vec{\epsilon}}$ of Theorem \ref{t.polytopenflaps} are an efficient tool for recovering the various invariants associated with the momentum map of the hypersemitoric system $(M,\omega,F=(J,H))$, in particular of the effective $S^1$-action induced by $J$. 

Recall the Duistermaat-Heckman measure $\mu_J$ and its density function $\rho_J$ from Section \ref{s.duistermaatmeasuregeneral}. We will see that away from the projection of the image of the flaps we recover the function $\rho_J$ from $\Delta_{\vec{\epsilon}}$.

\begin{proposition}
\label{p.lengthnflaps}
    Given any $\vec{\epsilon}\in \{-1,1\}^n$ consider $\Delta_{\vec{\epsilon}}=\operatorname{Im}(f_{\vec{\epsilon}})$, where $f_{\vec{\epsilon}}$ is the map of Theorem \ref{t.polytopenflaps}. Then $\rho_J(x)$, for $x\in J(M)$ such that $x\notin \pi_x(\operatorname{Im}(\mathcal{F}_i)),$ for all $i=1,...,n$, is equal to the length of the vertical segment of $\Delta_{\vec{\epsilon}}$ at $x$, i.e, $\Delta_{\vec{\epsilon},x,max}-\Delta_{\vec{\epsilon},x,min}$, where $\Delta_{\vec{\epsilon},x,max}$ (resp.\ $\Delta_{\vec{\epsilon},x,min}$) is the largest (resp.\ smallest) value of $\Delta_{\vec\epsilon}$ on the vertical line induced by $x$.
\end{proposition}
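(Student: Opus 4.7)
The plan is to identify both sides of the claimed equality with the symplectic volume of the reduced space $J^{-1}(x)/S^1$. On one hand, this is the standard interpretation of the Duistermaat--Heckman density $\rho_J(x)$ (see Section~\ref{s.duistermaatmeasuregeneral}). On the other hand, I will show that the vertical length of $\Delta_{\vec{\epsilon}}$ at $x$ computes precisely the same reduced volume, using the fact that $f_{\vec{\epsilon}}^{(2)}$ is an action coordinate conjugate to the angle coordinate transverse to the $S^1$-orbits generated by $J$.

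First I would fix $x\in J(M)$ with $x\notin \pi_x(\mathcal{F}_i)$ for all $i$. Because $x$ avoids the $x$-projection of every flap, every critical value on the vertical line $\{J=x\}\cap F(M)$ is either an endpoint (an elliptic-regular value on $\partial F(M)$) or lies in the interior of $F(M)$ as a regular value with connected fiber; in particular the interior of the vertical segment $I_x:=\{x\}\times \mathbb{R}\cap F(M)$ consists of regular values whose fibers are single tori. Pick the second action coordinate $f_{\vec{\epsilon}}^{(2)}$ constructed in Theorem~\ref{t.polytopenflaps}, using a cycle $\gamma_2(b)$ transverse to the $S^1$-orbit of $J$ in the fiber $F^{-1}(b)$; it satisfies
\begin{equation*}
f_{\vec{\epsilon}}^{(2)}(b)=\frac{1}{2\pi}\oint_{\gamma_2(b)}\lambda,\qquad d\lambda=-\omega,
\end{equation*}
on the simply connected region in $B_r$ used in the construction of $f_{\vec{\epsilon}}$.

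Next I would compute the length $\Delta_{\vec{\epsilon},x,\max}-\Delta_{\vec{\epsilon},x,\min}$. By Stokes' theorem, the difference of the values of $f_{\vec{\epsilon}}^{(2)}$ at the two endpoints of $I_x$ equals $\frac{1}{2\pi}$ times the symplectic area of the $2$-chain swept out by $\gamma_2(b)$ as $b$ varies along $I_x$; modding out by the $S^1$-action generated by $J$, this $2$-chain projects diffeomorphically onto $J^{-1}(x)/S^1$, and the area equals $\int_{J^{-1}(x)/S^1}\omega_{\mathrm{red}}$ (with the right normalization, exactly the reduced Liouville measure used in Section~\ref{s.duistermaatmeasuregeneral}). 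Near the two elliptic-regular endpoints of $I_x$ the local normal form shows that the cycle $\gamma_2$ shrinks smoothly to a point and so contributes no boundary term, so no correction is needed there.

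Finally, since $\rho_J(x)=\operatorname{vol}(J^{-1}(x)/S^1)$ by definition of the Duistermaat--Heckman measure, we conclude
\begin{equation*}
\Delta_{\vec{\epsilon},x,\max}-\Delta_{\vec{\epsilon},x,\min}=\operatorname{vol}(J^{-1}(x)/S^1)=\rho_J(x).
\end{equation*}
The main obstacle is the bookkeeping at the endpoints of $I_x$ and the verification that the ambiguity in $f_{\vec{\epsilon}}^{(2)}$ (acted on by $\mathcal T$) does not affect the \emph{difference} at a fixed $x$; the hypothesis $x\notin \pi_x(\mathcal F_i)$ is precisely what guarantees that no discontinuity of $f_{\vec{\epsilon}}^{(2)}$ occurs along $I_x$, so the Stokes computation above is unobstructed.
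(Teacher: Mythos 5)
Your argument is correct and is essentially the paper's: both proofs come down to the fact that, since $x$ avoids every $\pi_x(\mathcal{F}_i)$, the vertical segment at $x$ meets no cut and no flap, the fibers there are connected, and $f_{\vec{\epsilon}}$ is a genuine system of action-angle coordinates, so the second action measures symplectic volume. The paper packages this as a direct Fubini computation of $\mu_J([a,b])$ in action-angle coordinates (reading off the density), whereas you route through the identification $\rho_J(x)=\operatorname{vol}(J^{-1}(x)/S^1)$ and a Stokes argument on the primitive $\lambda$; the content is the same.
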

\begin{proof}
    Let $f_{\vec{\epsilon}}$ be the homeomorphism given by Theorem \ref{t.polytopenflaps}.
    In points $(x,y)$ such that $x \notin \pi_x(\operatorname{Im}(\mathcal{F}_i)),$ for all $i=1,...,n$, the map is a set of smooth action variables. Furthermore, the fibers $F^{-1}(x,y)$ are connected. According to the Arnold-Liouville-Mineur Theorem \ref{t.AL}, the symplectic form is given by $\frac{1}{2\pi}dx\wedge d\theta_1+\frac{1}{2\pi}dy\wedge d\theta_2$ and $J^{-1}(x)$ minus two circles is given by $]\Delta_{\vec{\epsilon},x,min},\Delta_{\vec{\epsilon},x,max}[\ \times \mathbb{T}^2$. Thus \begin{equation*}\mu_J([a,b])=\int_{0}^{2\pi}\int_0^{2\pi}\int_a^b\int_{\Delta_{\vec{\epsilon},x,min}}^{\Delta_{\vec{\epsilon},x,max}}\frac{1}{4\pi^2} dy\ dx\ d\theta_1 \ d\theta_2=\int_{a}^{b}(
    \Delta_{\vec{\epsilon},x,max}-\Delta_{\vec{\epsilon},x,min})dx
    \end{equation*}
    and the result follows. 
\end{proof}
Let $\vec{\epsilon}\in \{-1,1\}^n$ and $f_{\vec{\epsilon}}$ be the map given by Theorem \ref{t.polytopeflap}. Denote by $\Sigma_0(F)$ the set of critical values of $F$ of rank $0$. The following theorem shows that away from the holes we can compute the derivative of the density function $\rho_J$ by looking at the slopes of $\Delta_{\vec{\epsilon}}$. Furthermore, it describes the change of slopes, induced by the cuts, in the affine invariant. For completeness we add Karshon's \cite{karshon1999periodic} formula in \eqref{eq.differencespjnflaps}.

\begin{theorem}
\label{t.measurenflaps}
    If $\alpha^{+}(x)$ (resp.\ $\alpha^{-}(x)$) denotes the slope of the top (resp.\ bottom) boundary of the set $\Delta_{\vec{\epsilon}}=f_{\vec{\epsilon}}(F(M))$, then the derivative of the Duistermaat-Heckman function, for $x$ such that $x\notin \pi_x(\operatorname{Im}(\mathcal{F}_i)), i=1,...,n$, is
    \begin{equation}
    \label{eq.derivatitepjnflaps}
        \rho_J'(x)=\alpha^{+}(x)-\alpha^{-}(x)
    \end{equation}
    and is locally constant on $J(M)\backslash \{\pi_x(f_{\vec{\epsilon}}(\Sigma_0(F)))\in \mathbb{R}$. 
    If $(x,y)\in \Sigma_0(F)$ is such that $\pi_x(x,y)=\pi_x(d_{i,1})$, for some $i=1,\cdots,n$, then 
    \begin{equation}
        \label{eq.differenceslopes}
        (\alpha^{+}(x+0)-\alpha^{+}(x-0))-(\alpha^{-}(x+0)-\alpha^{-}(x-0))=-\sum_{j}n_j-e^{+}-e^{-}
    \end{equation}
    where 
    \begin{itemize}
        \item the sum runs over the flaps $\mathcal{F}_j$ such that $\pi_x(d_{j,1})=\pi_x(d_{i,1})$, and $n_j$ is the number of elliptic-elliptic values in the image of the flap $\mathcal{F}_j$;
        \item $e^{+}$ (resp.\ $e^{-}$) is nonzero if and only if an elliptic top vertex (resp.\ a bottom vertex) projects down onto $x$. In this situation
    \begin{equation*}
        e^{\pm}=-\frac{1}{a^{\pm}b^{\pm}}\geq 0
    \end{equation*}
    where $a^{\pm},b^{\pm}$ are the isotropy weights for the $S^1$-action at the corresponding vertices.
    \end{itemize} 
    Moreover, if $(x,y)\in \Sigma_0(F)$ then
    \begin{equation}
    \label{eq.differencespjnflaps} 
        \rho_J'(x+0)-\rho_J'(x-0)=-k-e^{+}-e^{-}
    \end{equation}
    where $e^\pm$ is as above and 
    $k\in \mathbb{N}$ is nonzero if there exists $k$ elliptic-elliptic values $d_1,...,d_k$ in the interior of $F(M)$ such that $\pi_x(d_i)=x$ for $i=1,...,k$.

\end{theorem}
\begin{proof}
     Equation \eqref{eq.derivatitepjnflaps} follows from Proposition \ref{p.lengthnflaps}. Equation \eqref{eq.differenceslopes} follows analogously to the proof of \cite[Theorem $5.3$]{vu2007moment} using Equation \eqref{eq.monodromyrelationnflaps}.
     Equation \eqref{eq.differencespjnflaps} follows from Karshon \cite[Lemma $2.12$]{karshon1999periodic}, where the Duistermaat-Heckman function is computed, see Section \ref{s.duistermaatmeasuregeneral}.
\end{proof}

Theorem \ref{t.measurenflaps} has consequences of topological nature:

\begin{corollary}
    \label{c.flapsfinitenflaps}
    Let $(M,\omega,F:=(J,H))$ be a hypersemitoric system such that the only critical values outside of the elliptic critical values occurring at the boundary of $F(M)$ are caused by standard flaps. Then the number of standard flaps is finite.
\end{corollary}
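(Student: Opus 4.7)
The plan is to reduce the finiteness of standard flaps to the finiteness of interior elliptic-elliptic critical points of $F$, and then to establish the latter by a compactness-and-normal-form argument on $M$.

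First I would use the definition of a standard flap, which requires at least one elliptic-elliptic value on the boundary of each $\mathcal{F}_i$. Distinct flaps are disjoint subsets of $F(M)$, so the map sending a flap to any chosen elliptic-elliptic value in its boundary is injective. Hence it suffices to show that the set of elliptic-elliptic critical points $p \in M$ with $F(p)\in\text{int}(F(M))$ is finite.

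For such a $p$, Theorem \ref{t.localnormalform} provides symplectic coordinates near $p$ in which $F = g\circ (q_1,q_2)$ with $q_i = (x_i^2+\xi_i^2)/2$, and in particular $J = a q_1 + b q_2 + J(p)$ for integers $a,b$. By Lemma \ref{l.localisglobal}, $a$ and $b$ have opposite signs, hence both are nonzero, so $p$ is isolated among fixed points of the $S^1$-action generated by $J$. Now suppose, for contradiction, that there were infinitely many such elliptic-elliptic points $\{p_i\}_{i\in\mathbb{N}}$. Compactness of $M$ gives a subsequence converging to some $p^\ast\in M$, which must be a fixed point of $S^1$ since that set is closed. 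Applying Chaperon's normal form (Lemma \ref{l.weights}) at $p^\ast$, the $S^1$-fixed set near $p^\ast$ is either $\{p^\ast\}$ alone (both isotropy weights nonzero) or a $2$-disk inside a fixed surface (one isotropy weight zero; both zero is excluded by effectiveness). In the first case the accumulation $p_i\to p^\ast$ of distinct fixed points is impossible; in the second, each $p_i$ sufficiently close to $p^\ast$ would lie on a $2$-dimensional fixed surface, contradicting the isolation of $p_i$ established above. Either alternative yields a contradiction, so the number of interior elliptic-elliptic points, and hence the number of standard flaps, is finite.

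The main obstacle, as I see it, is checking the case analysis in the $S^1$-normal form at the accumulation point $p^\ast$, and in particular ruling out that a sequence of interior elliptic-elliptic fixed points can accumulate onto a $2$-dimensional fixed surface. A more analytic alternative would be to invoke Theorem \ref{t.measurenflaps} directly: on the compact interval $J(M)$, the Duistermaat-Heckman density $\rho_J$ is non-negative, continuous, and piecewise linear, while each interior elliptic-elliptic value drops $\rho_J'$ by at least $1$; combined with boundedness of $\rho_J$ and a control on the total variation of $\rho_J'$, this constrains the number of such drops. This second route also works, but the compactness-plus-normal-form argument above is the cleaner of the two.
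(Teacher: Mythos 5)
Your proof is correct, and your primary argument takes a genuinely different route from the paper's. The paper argues downstairs via the affine structure: Theorem \ref{t.polytopenflaps} produces a rational convex polytope, each elliptic-elliptic value on a standard flap forces an integral change of slope of the polytope's boundary by the Duistermaat--Heckman formula \eqref{eq.differencespjnflaps}, and a bounded convex polygon can absorb only finitely many such slope changes (the argument of \vungoc's Corollary 5.10 --- essentially the ``analytic alternative'' you sketch in your last paragraph). You instead work upstairs in $M$: every standard flap carries an elliptic-elliptic value in $\text{int}(F(M))$, the corresponding point is a fixed point of the $S^1$-action with both isotropy weights nonzero and of opposite signs (Lemma \ref{l.localisglobal}), hence isolated in $M^{S^1}$, and compactness of $M$ together with Chaperon's normal form (Lemma \ref{l.weights}) forbids an infinite set of isolated fixed points; the accumulation-onto-a-fixed-surface alternative is excluded exactly as you say, and independently by Lemma \ref{l.fixedsurfaceS1action}, which confines fixed surfaces to the extrema of $J$. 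Your route is more elementary --- no action coordinates or straightening homeomorphism --- and it sidesteps the awkwardness that Theorem \ref{t.polytopenflaps} is itself stated for a finite family of flaps and already cites this corollary. Three small repairs, none of which is a gap: (i) the linearity $J=aq_1+bq_2+J(p)$ with $a,b\in\Z$ does not follow from Theorem \ref{t.localnormalform} alone (which only gives $J-J(p)=g_1(q_1,q_2)$ for a diffeomorphism $g$); it comes from the $2\pi$-periodicity of the flow of $X_J$, i.e.\ from Lemma \ref{l.weights}, which is what you actually use; (ii) you should justify that an elliptic-elliptic value on a flap lies in $\text{int}(F(M))$ --- this holds because its fiber also contains a regular torus on the background sheet, over a neighborhood of which $F$ is a submersion --- since Lemma \ref{l.localisglobal} requires it; (iii) the injectivity of flap $\mapsto$ elliptic-elliptic value is better phrased with \emph{points} in $M$ rather than disjointness of regions in $\R^2$ (flaps live on different sheets of the unfolded momentum domain): each elliptic-elliptic point lies on the boundary of exactly one flap, because the two local arcs of elliptic-regular values emanating from it belong to a single flappy sheet, and this already bounds the number of flaps by the number of interior elliptic-elliptic points.
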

\begin{proof}
    The proof is analogous to the proof of \vungoc \ \cite[Corollary $5.10$]{vu2007moment}: from Theorem \ref{t.polytopenflaps}, to each system $(M,\omega,F=(J,H))$, one can associate a rational polytope with holes in the interior. Each cut in the image of a standard flap makes a change to the slopes of the upper or lower boundary of the polytope, see Equations \eqref{eq.differenceslopes} and  \eqref{eq.differencespjnflaps}. This phenomenon together with convexity in the region away from the holes gives the desired result. For more details see \vungoc \ \cite[Section $5$]{vu2007moment}.
\end{proof}
\begin{remark}
    Note that Corollary \ref{c.flapsfinitenflaps} only applies to standard flaps. The number of flaps with no elliptic-elliptic values need not be finite. 
\end{remark}

\begin{remark} When the number of elliptic-elliptic values present in the image of a flap is greater than one, the right sides of Equations \eqref{eq.differenceslopes} and \eqref{eq.differencespjnflaps} are different. This causes a discontinuity visible on the image of the affine invariant along the line where we made the cut, see Figure \ref{f.polytopeinvariantsflapwith2elliptic} top left.
\end{remark}

\subsection{Affine invariant via a cut for each elliptic-elliptic value}
\label{s.polytopeflap}
In order to keep the notation as simple as possible, in this section we only deal with a single flap $\mathcal{F}$ in a hypersemitoric system $(M,\omega,F=(J,H))$. We note that the adaptation to the case of multiple flaps is analogous and is done in Section \ref{s.generalhypersemitoric}, where an affine invariant for a general simple hypersemitoric system is defined.
Let $(M,\omega,F=(J,H))$ be a hypersemitoric system on a $4$-dimensional symplectic manifold $(M,\omega)$ such that the only critical values outside of the elliptic critical values occurring at the boundary of $F(M)$ are caused by a \textbf{standard} flap $\mathcal{F}$. 

Let $d_{i}, i=1,...,n$ be the elliptic-elliptic values present in $\operatorname{Im}(\mathcal{F})$. By Corollary \ref{c.flapsfinitenflaps} the number $n$ is finite. 
Let $\pi_j:\{-1,1\}^{n}\rightarrow \{-1,1\}$ be the projection onto the $j$-th coordinate, $\pi_x:\mathbb{R}^2\rightarrow \mathbb{R}$ be $(x,y)\mapsto x$ and $\pi_y:\mathbb{R}^2\rightarrow \mathbb{R}$ be $(x,y)\mapsto y$. 
Furthermore, let $c_i\in \gamma_{\mathcal{F}}$ be the hyperbolic-regular value such that $\pi_x(c_i)=\pi_x(d_i)$ and $\vec{\epsilon}=(\epsilon_1,...,\epsilon_n)$ with $\epsilon_i\in \{\pm 1\}$ for $i=1,...,n$. For $\epsilon\in \{-1,1\}$ let $\mathcal{L}^{\epsilon}_{c_i}$ be the vertical half line starting at $c_i$ and going to $\pm \infty$ according to the sign of $\epsilon$, i.e, $\mathcal{L}_{c_i}^{\mathcal{\epsilon}}=\{(x_i,y),\epsilon y\geq \epsilon y_i\}$ where $c_i=(x_i,y_i)$ and $l^{\epsilon}_{c_i}:=F(M)\cap \mathcal{L}^{\epsilon}_{c_i}$. 
Finally, let $\mathcal{T}$ be the subgroup of $\AGL(2,\mathbb{Z})$ which leaves a vertical line, with orientation, invariant.
\begin{theorem}
\label{t.polytopeflap}
Using the notation from above, let $(M,\omega,F)$ be a hypersemitoric system where the only critical values apart from the elliptic values in the boundary of $F(M)$ occur at the image of a \textbf{standard} flap $\mathcal{F}$. Given any $\vec{\epsilon}\in \{-1,1\}^{n}$, there exists a continuous map $f_{\vec{\epsilon}}: F(M)\backslash(\gamma_{\mathcal{F}}\cup_{i=1}^{n} l^{\pi_i(\vec{\epsilon})}_{c_i})\rightarrow \mathbb{R}^2$, such that:
\begin{enumerate}
\item $f_{\vec{\epsilon}}$ is a diffeomorphism onto its image; 
\item $f_{\vec{\epsilon}}$ is affine;
\item $f_{\vec{\epsilon}}$ preserves $J$, i.e., is of the form $f_{\vec{\epsilon}}(x,y)=(x,f_{\vec{\epsilon}}^{(2)}(x,y))$;
\item The map $f_{\vec{\epsilon}}$ extends to a continuous map over $F(M)\backslash\gamma_{\mathcal{F}}$ and for any $c\in \text{int}\left(l^{\epsilon_i}_{c_i}\right)$
\begin{equation}
\label{eq.monodromyrelation}
\lim_{\begin{smallmatrix}(x,y)\rightarrow c \\ x<\pi_x(c_i) \end{smallmatrix}} df_{\vec{\epsilon}}(x,y) = M_{c_i} \lim_{\begin{smallmatrix}(x,y)\rightarrow c \\ x>\pi_x(c_i) \end{smallmatrix}} df_{\vec{\epsilon}}(x,y),
  \end{equation}
  where 
  \begin{equation*}
 M_{c_i}= \begin{bmatrix}
          1 & 0 \\
          k(c_i) & 1
         \end{bmatrix},
\end{equation*}
$k(c_i)=\epsilon_i$ and $\epsilon_i=\pi_i(\vec{\epsilon})$.
\item The image of $f_{\vec{\epsilon}}$ is a rational convex polytope with a ``hole'' in its interior. That is $\operatorname{Im}(f_{\vec{\epsilon}})=B\backslash U$ where $B$ is a rational convex polytope in $\mathbb{R}^2$ and $U\subset B$ is a closed set. 
\end{enumerate} 
Such an $f_{\vec{\epsilon}}$ is unique modulo left composition by $\mathcal{T}$.

Furthermore, there exists a map $g:
\operatorname{Im}(\mathcal{F})\backslash \gamma_{\mathcal{F}}\rightarrow \mathbb{R}^2$ that is affine, $J$-preserving and a diffeomorphism onto its image.
Additionally, it is related to $f_{\vec{\epsilon}}$ by the following formula: for $t \in\ ]0,1[$, $y_t=\pi_y(\gamma_{\mathcal{F}}(t))$ and $x_t=\pi_x(\gamma_{\mathcal{F}}(t))$
\begin{itemize}
\item if $y_t>\pi_y(c_1)$
\begin{equation}
    \label{eq.goodactionflap1}
    \lim_{y\rightarrow y_{t}^{+}}f_{\vec{\epsilon}}^{(2)}(x_t,y)=\lim_{y\rightarrow y_t^{-}}f_{\vec{\epsilon}}^{(2)}(x_t,y)+g(x_t,y)-g(x_t,\tilde{y}_t);
\end{equation}
\item if $\pi_y(c_1)>y_t$
\begin{equation}
    \label{eq.goodactionflap2}
    \lim_{y\rightarrow y_{t}^{-}}f_{\vec{\epsilon}}^{(2)}(x_t,y)=\lim_{y\rightarrow y_t^{+}}f_{\vec{\epsilon}}^{(2)}(x_t,y)+g(x_t,y)-g(x_t,\tilde{y}_t),
\end{equation}
where $(x_t,\tilde{y}_t)$ is the elliptic value in the boundary of $\operatorname{Im}(\mathcal{F})$ with $x$-coordinate $x_t$. 
\end{itemize}
The map $g$ does not depend on $\vec{\epsilon}$ and is unique modulo left composition by $\mathcal{T}$.

\end{theorem}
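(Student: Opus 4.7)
The plan is to follow the template of Theorem~\ref{t.polytopenflaps} and of \vungoc~\cite[Thm.~3.8]{vu2007moment}, but with the refinement that each elliptic-elliptic value $d_i$ of the flap carries its own cut $l^{\pi_i(\vec{\epsilon})}_{c_i}$, instead of grouping them into a single cut per flap. First I work on the background sheet $\mathcal{B}$ of the flap, where the lifted map $\tilde{F}$ has connected torus fibers. Let $B_r \subset \mathcal{B}$ denote the set of regular values. After introducing the $n$ vertical cuts --- and, exactly as in Theorem~\ref{t.polytopenflaps}, slightly perturbing any aligned $c_i$'s while preserving their $x$-ordering --- the open set $B_r \setminus \bigcup_{i} l^{\pi_i(\vec{\epsilon})}_{c_i}$ becomes simply connected. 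On the universal cover I construct global action coordinates via the integrals $G(\tilde{b}) = \bigl(\tfrac{1}{2\pi}\int_{c_1(\tilde{b})}\lambda,\tfrac{1}{2\pi}\int_{c_2(\tilde{b})}\lambda\bigr)$ with $d\lambda = -\omega$, choosing the first cycle $c_1$ to be an orbit of the global $S^1$-action generated by $J$, and then define $f_{\vec{\epsilon}}(p) = G(\gamma(q_0,p))$ where $\gamma(q_0,p)$ is a path obeying the cut rule illustrated in Figure~\ref{p.ruleofpaths}.

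Properties (1)--(3) are immediate from this construction: action coordinates furnish an affine diffeomorphism onto the image, and since $J$ itself is one of the action coordinates we automatically have $f^{(1)}_{\vec{\epsilon}}(x,y) = x$. For (4), the computation of the monodromy is now the key difference from Theorem~\ref{t.polytopenflaps}: each cut encircles a \emph{single} elliptic-elliptic value $d_i$, which is an isolated fixed point with isotropy weights $(\pm 1, \mp 1)$ on the flap side. By Proposition~\ref{p.monodromy} the monodromy matrix around $c_i$ is $\begin{pmatrix}1 & 0 \\ 1 & 1\end{pmatrix}$, so that $k(c_i) = \epsilon_i$, giving (\ref{eq.monodromyrelation}).

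The essential new phenomenon to verify --- and the main technical point --- is the continuous extension over each cut $l^{\epsilon_i}_{c_i}$, not merely over its complement. This hinges on comparing the vertical height of the image of $f_{\vec{\epsilon}}$ above a point $x$ near $\pi_x(c_i)$ with the Duistermaat-Heckman density $\rho_J(x)$ of the full system. By the formula in Section~\ref{s.duistermaatmeasuregeneral}, the contribution of an interior fixed point with weights $(1,-1)$ produces only a kink (a jump in $\rho_J'$ of size $1$) but no jump in $\rho_J$ itself; on the $f_{\vec{\epsilon}}$-side this kink is exactly absorbed by the monodromy $T^{\epsilon_i}$, so the two one-sided limits of $f_{\vec{\epsilon}}^{(2)}$ across $l^{\epsilon_i}_{c_i}$ agree. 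Convexity, rationality, and the presence of the flap-shaped hole follow from the local normal forms at boundary elliptic and elliptic-elliptic values together with the fact that the flappy sheet is absent from the background, exactly as in \vungoc~\cite[Thm.~3.8]{vu2007moment}. Uniqueness modulo $\mathcal{T}$ comes from the freedom in choosing the complementary cycle $c_2$ and the primitive $\lambda$, given that $c_1$ is pinned down by the global $S^1$-action.

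Finally, for the map $g$ on the flappy sheet: this sheet is a topological disk bounded by the elliptic-regular/elliptic-elliptic curve and the hyperbolic-regular curve $\gamma([0,1])$, and hence is simply connected. The Arnold-Liouville-Mineur theorem (Theorem~\ref{t.AL}) applied sheet-wise produces global action coordinates there, yielding $g$ as a $J$-preserving affine diffeomorphism, unique up to $\mathcal{T}$. The relations (\ref{eq.goodactionflap1})--(\ref{eq.goodactionflap2}) are obtained by comparing the density $\rho_J(x_t)$ computed in the background with that of the full system: the jump in $f_{\vec{\epsilon}}^{(2)}$ across $\gamma(t)$ equals precisely the vertical extent of the flappy polytope between $\gamma(t)$ and the elliptic-regular value $(x_t,\tilde y_t)$ lying directly above (or below) it, which in the $g$-coordinates is $g(x_t,y) - g(x_t,\tilde y_t)$. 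The subtle point here, which will require some care, is matching $f_{\vec{\epsilon}}$ and $g$ semi-locally near the two parabolic endpoints of $\gamma$, where both the background and flappy action-angle systems degenerate; the argument uses the smooth parabolic normal form of Proposition~\ref{p.parabolicnormalform} to show that the formula extends continuously up to (but not including) those parabolic values.
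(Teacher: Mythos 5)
Your proposal is correct and follows essentially the same route as the paper: work on the background sheet where the lifted fibers are connected, introduce one vertical cut per elliptic-elliptic value, obtain the monodromy contribution $\pm 1$ per cut from Proposition \ref{p.monodromy}, deduce continuity across each cut from the fact that the Duistermaat--Heckman density $\rho_J$ has only a slope change (no jump) at $\pi_x(c_i)$ which is exactly compensated by $T^{\epsilon_i}$, define $g$ on the simply connected flappy sheet, and derive \eqref{eq.goodactionflap1}--\eqref{eq.goodactionflap2} by comparing heights with $\rho_J$. The only (immaterial) organizational difference is that the paper's proof takes action-angle charts on each connected component of the cut background and glues them via continuity and the $\rho_J$-height normalization, whereas you import the universal-cover/path-rule construction from Theorem \ref{t.polytopenflaps} --- note that the disconnection there is caused by all cuts being anchored on the single curve $\gamma([0,1])$, not by coinciding $x$-coordinates of the $c_i$ (which are distinct on a standard flap), though the same perturbation remedy applies.
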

\begin{proof}
The case $n=1$ follows from Theorem \ref{t.polytopenflaps}. We focus on the case $n=2$ for simplicity, the case $n>2$ is analogous. 

First we prove the existence of the map $f_{\vec{\epsilon}}$. We work on the background $\mathcal{B}$ of the system, i.e., consider the background of the flap. For any choice of $\epsilon$ the domain $B\backslash (l_{c_1}^{\pi_1(\vec{\epsilon})}\cup l_{c_2}^{\pi_2(\vec{\epsilon})})$ is not connected, and splits into two connected components. Let us denote these connected components by $U_1$ and $U_2$. Furthermore, $U_1$ and $U_2$ are simply connected. Therefore, we can take action coordinates on each $U_i, i=1,2$, and obtain $f_i:U_i\rightarrow \mathbb{R}^2$ satisfying conditions $(1)$ and $(2)$ of the Theorem. Since $J$ generates an effective $S^1$-action we can assume that $f_i(x,y)=(x,g_i(x,y))$ for $i=1,2$ and some smooth functions $g_i$. 

Recall that action coordinates can be interpreted as the measurement of the volume of a subset of the symplectic manifold. Therefore, there is a unique way (modulo left composition by an element of $\mathcal{T}$) to choose $f_1$ and $f_2$ to create a function $f_{\vec{\epsilon}}$ such that 
\begin{equation*}
    f_{\vec{\epsilon}}(x,y)=\begin{cases}
        f_1(x,y),\ (x,y)\in U_1,\\
        f_2(x,y),\ (x,y)\in U_2,
    \end{cases}
\end{equation*}
is continuous and $f_{\vec{\epsilon}}$ has the property that for fixed $x_0$, $\max f_{\vec{\epsilon}}(x_0,y)-\min f_{\vec{\epsilon}}(x_0,y)$ for $y\in J^{-1}(x_0)$ is the symplectic volume of the  $J^{-1}(x)/S^1$, which we denote by $\rho(x)$. Recall that for a proper $S^1$-action the function $\rho(x)$ has been computed, see for example Karshon \cite{karshon1999periodic} or Section \ref{s.duistermaatmeasuregeneral}. Using the formula of $\rho(x)$ we obtain equation \eqref{eq.monodromyrelation}. Using the formula of $\rho(x)$ and the fact that near the boundary of $F(M)$ the system is toric, we obtain property $(5)$ of the theorem. For more details on this see \vungoc\ \cite[Theorem $3.8$]{vu2007moment}. 

The map $g$ follows from taking action coordinates on the flap, which is possible since the set of regular values there is simply connected. Equations \eqref{eq.goodactionflap1} and \eqref{eq.goodactionflap2} follow from the fact that the height of the image of $f_{\vec{\epsilon}}$ at a fixed $x$ is $\rho(x)$. 
\end{proof}

\begin{definition}
    We call the image of $f_{\vec{\epsilon}}$ and the image of $g$ from Theorem \ref{t.polytopeflap} a \textit{representative of the affine invariant} of $(M,\omega,F)$. The \textit{affine invariant} is the collection of all the representatives for all the choices of $\vec{\epsilon}$. For an alternative definition, see Section \ref{s.orbitpolytope}.
\end{definition}

\subsection{Group orbit of the affine invariant in the case of one cut for each elliptic-elliptic value}
\label{s.orbitpolytope}
As in Section \ref{s.polytopeflap} let $(M,\omega,F=(J,H))$ be a hypersemitoric system on a $4$-dimensional symplectic manifold $(M,\omega)$ such that the only critical values outside of the elliptic critical values in the boundary of $F(M)$ are caused by a \textbf{standard} flap $\mathcal{F}$. Let $c_i\in \gamma_{\mathcal{F}}$ be defined as in Section \ref{s.polytopeflap}. For $\vec{\epsilon}\in \{-1,1\}^{n}$, Theorem \ref{t.polytopeflap} gives us a rational convex polytope $\Delta_{\vec{\epsilon}}:=\operatorname{Im}(f_{\vec{\epsilon}})$ which has a hole in its interior, where $f_{\vec{\epsilon}}$ is the map given in Theorem \ref{t.polytopeflap}. In this section we investigate, analogous to \vungoc\ \cite[Section $4$]{vu2007moment}, the relations between these polytopes when we change $\vec{\epsilon}$.

Let $\mathcal{L}$ be a vertical line in $\mathbb{R}^2$. Then $\mathcal{L}$ splits $\mathbb{R}^2$ into two half-spaces, $A_1$ to the left of $\mathcal{L}$ and $A_2$ to the right. Define the piecewise affine transformation $t^{\epsilon}_{\mathcal{L}}$ acting as the identity on $A_1$  and as the matrix $T_{\epsilon}:=\begin{bsmallmatrix}1 & 0\\ \epsilon & 1\end{bsmallmatrix}$ on $A_2$, where $\epsilon\in \{-1,1\}$. Note that $T$ fixes $\mathcal{L}$. 

Consider now the vertical lines $\mathcal{L}_i,\ i=1,...,n$, through the elliptic-elliptic values $c_i$. For $\vec{\epsilon}=(\epsilon_1,...,\epsilon_n)\in \{-1,1\}^n$ define the piecewise affine transformation $t^{\vec{\epsilon}}:=t^{\epsilon_1}_{\mathcal{L}_1}\circ \cdot \cdot \cdot \circ t^{\epsilon_n}_{\mathcal{L}_n}$. Furthermore, consider $G=\{0,1\}^n$, with the structure of the Abelian group $(\mathbb{Z}_2)^{n}$. 

\begin{proposition}
    \label{p.groupofpolytopes1cutforeachelliptic}
    Let $G$ act transitively on the set
    \begin{equation*}
        \Delta:=\{\Delta_{\vec{\epsilon}}\mid \vec{\epsilon}\in \{-1,1\}^n \}
    \end{equation*}
    by the formula
    \begin{equation}
    \label{eq.Gaction}
        G\times \Delta \rightarrow \Delta,\quad (\vec{u},\Delta_{\vec{\epsilon}}) \mapsto \vec{u}\cdot \Delta_{\vec{\epsilon}}:=\Delta_{(-2\vec{u}+1)\cdot \vec{\epsilon}}
    \end{equation}
    where scalar multiplication and addition is component wise, and the dot $\cdot$ between $\vec{\epsilon}$ and $\vec{u}$ means component wise multiplication. Then the action satisfies:
    \begin{equation*}
        \vec{u}\cdot \Delta_{\vec{\epsilon}}=t^{\vec{u}\cdot \vec{\epsilon}}\Delta_{\vec{\epsilon}}.
    \end{equation*}
    Furthermore, the action is free. 
\end{proposition}
\begin{proof}
Analogous to the proof of Proposition \ref{p.groupofpolytopes1cutforeachflap}.
\end{proof}

\subsection{Examples}
\label{s.examplespolytopeflap}
\subsubsection{The affine invariant for the modified Jaynes-Cummings model}Recall $\mathbb{R}^2\times S^2$ with coordinates $(u,v,x,y,z)$ and $J,H,G$ from Section \ref{s.jaynescummingsmodel}. The modified Jaynes-Cummings model is given by $(J,H+G):\mathbb{R}^2\times S^2\rightarrow \mathbb{R}^2$. 

Using the computations of Appendix \ref{s.actionJaynesCummings} we obtain the following:
\begin{example}
\label{e.holger}
Figure \ref{p.iafsflap} is a representative of the affine invariant given by Theorem \ref{t.polytopenflaps} and Theorem \ref{t.polytopeflap} of the modified Jaynes-Cummings model for $\vec{\epsilon}=+1$. This example was first presented by Dullin \cite{holgertalk}.
\end{example}
    
\begin{figure}[ht]
\centering
    \includegraphics[scale=1]{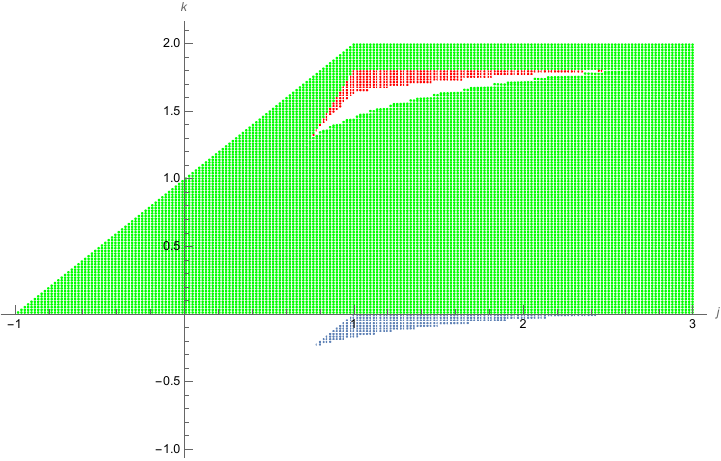}
    \caption{Classical actions on the joint spectrum for $\hbar=\frac{2}{101}$ and $\epsilon=1$. The green dots correspond to the classical actions on the regular values outside of the image of the flap. The red dots correspond to the classical actions on the regular values on the background of the flap, and the white region is the discontinuous jump caused by the hyperbolic-regular values. The blue dots correspond to the classical actions on the flap, which we chose to plot at the bottom in order to make the figure less confusing. This example was first presented by Dullin \cite{holgertalk}.}
\label{p.iafsflap}
\end{figure}

\subsubsection{The affine invariant for a system containing a flap with two elliptic-elliptic values}
\label{s.systemflapwith2elliptic}
Recall the results and notation of Section \ref{s.quantizationhirzebruch} and consider the Hirzebruch surface $W_{1,1,2}$. Consider the functions $J=\frac{1}{2}(|z_2|^2+|z_3|^2), R=\frac{1}{2}(|z_3|^2-|z_4|^2)$ and $X=\Re(\overline{z_1z_2}z_3\overline{z_4})$. Notice that the functions $J$ and $R$ in this section are \textbf{different} from the ones in Section \ref{s.quantizationhirzebruch}. The Hamiltonians of the system are $(J,H_t)$ where 
\begin{equation*}
    H_t=(1-t)R+\frac{t}{3}\left(\frac{9}{20}X+(2J-3)(R+2)\right)+2t|z_3|^2|z_4|^2.
\end{equation*}
For $t=0.44$ the system exhibits a flap with two elliptic-elliptic values. The bifurcation diagram of the system for $t=0.44$ together with its joint spectrum for $\hbar=0.5$ are represented in Figure \ref{f.bifucartiondiagramwithspecflapwith2ellipticelliptic}. For more details on the quantization of the system see Appendix \ref{s.quantizationflap2ellitptic}. Notice that for $t=0$ the system $(J,H_0)$ is \textbf{not} toric, since $(J,R)$ do not generate an effective $\mathbb{T}^2$-action.
\begin{figure}[ht]
\begin{center}
    \includegraphics[width=0.8\textwidth]{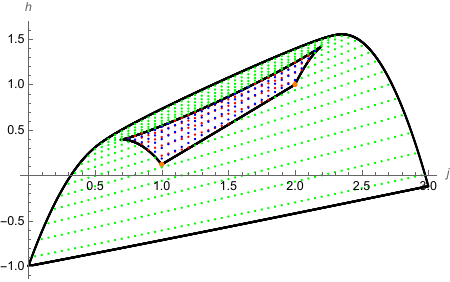}
    \caption{Bifurcation diagram of the system $(W_{1,1,2},\omega_{1,1,2},(J,H_{0.44}))$ together with its joint spectrum for $\hbar=0.05$. The orange and black dots form the bifurcation diagram, where the orange dot represent the elliptic-elliptic values on the image of the flap. Green dots corresponds to values of the joint spectrum outside of the image of the flap. Red dots correspond to values of the joint spectrum in the background of the flap. Blue points correspond to values of the joint spectrum on the flap.}
\label{f.bifucartiondiagramwithspecflapwith2ellipticelliptic}
\end{center}
\end{figure}
Using the computation of the classical actions done in Appendix \ref{s.actionangle2llipticflap} we obtain the following:

\begin{example}
    A representative of the affine invariant given by Theorem \ref{t.polytopenflaps} for the system $(W_{1,1,2},\omega_{1,1,2},(J,H_{0.44}))$ and $\vec{\epsilon}=+1$ is given in Figure \ref{f.polytopeinvariantsflapwith2elliptic}
\end{example}

\begin{example}
    A representative of the affine invariant given by Theorem \ref{t.polytopeflap} for the system $(W_{1,1,2},\omega_{1,1,2},(J,H_{0.44}))$ and $\vec{\epsilon}=(+1,+1)$ is given in Figure \ref{f.polytopeinvariantsflapwith2elliptic}.
\end{example}

\begin{example}
    A representative of the affine invariant given by Theorem \ref{t.polytopeflap} for the system $(W_{1,1,2},\omega_{1,1,2},(J,H_{0.44}))$ and $\vec{\epsilon}=(+1,-1)$ is given in Figure \ref{f.polytopeinvariantsflapwith2elliptic}.
\end{example}

\begin{figure}[ht]
\begin{center}
    \includegraphics[]{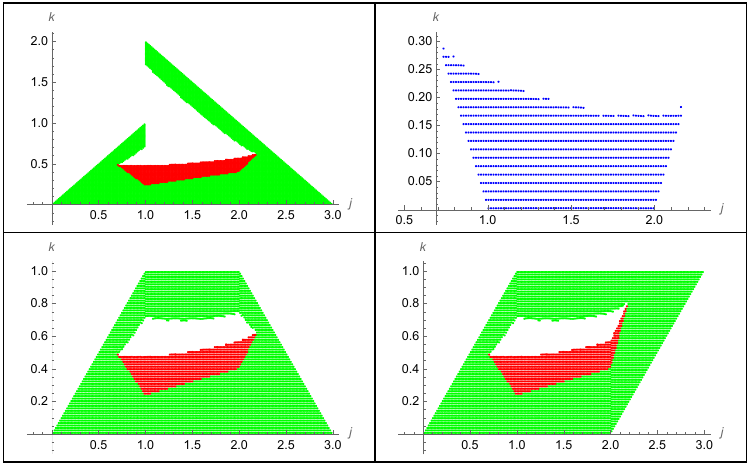}
    \caption{Different representatives of the affine invariant for the system $(W_{1,1,2},\omega_{1,1,2},(J,H_{0.44}))$ computed on the joint spectrum of the system. Green values correspond to the classical actions computed outside of the image of the flap. Red values correspond to the classical actions computed on the background of the flap. Blue values correspond to the classical actions computed on the flap. The joint spectrum for these computations is computed with $\hbar=0.015$. The picture on the left top corner is the result of applying Theorem \ref{t.polytopenflaps} for $\vec{\epsilon}=+1$. The picture on the left lower corner is the result of applying Theorem \ref{t.polytopeflap} for $\vec{\epsilon}=(+1,+1)$. The picture on the right lower corner is the result of applying Theorem \ref{t.polytopeflap} for $\vec{\epsilon}=(+1,-1)$.}
    \label{f.polytopeinvariantsflapwith2elliptic}
\end{center}
\end{figure}

\section{Affine invariant for pleats}
\label{s.pleat}
Consider an  integrable system $(M,\omega,F=(J,H))$ on a $4$-dimensional symplectic manifold $(M,\omega)$ where $J$ generates an effective $S^1$-action. Furthermore, assume that the system has a pleat/swallowtail and does not admit other types of singularities other than elliptic-elliptic or elliptic-regular type occurring at the boundary of $F(M)$.
Denote by $P$ this pleat/swallowtail in $F(M)$, for a sketch see Figure \ref{p.pleat}.
\subsection{Definition of the affine invariant}
\label{s.polytopepleat}
The aim is to associate to $(M,\omega,F)$ an affine invariant. First let us fix some notation. Let $\gamma(t)=(x_t,y_t)$, $t\in [0,1]$, be the curve of singular values such that $\gamma(0)$ and $\gamma(1)$ are the parabolic values and $\gamma(t), \ t\in ]0,1[$ is a hyperbolic-regular value. Let $l_1$ and $l_2$ be the curves of elliptic-regular values that intersect transversely. Recall that the preimage of a regular value $F^{-1}(c)$ for $c\in F(M)\backslash P$ is a torus, the preimage of $c\in \gamma(]0,1[])$ is a bitorus, and the preimage of $c$ in the interior of $P$ is the disjoint union of two tori $T^{+}(c)\cup T^{-}(c)$, see Figure \ref{p.pleat}. We notice that at $l_1$ the torus $T^{-}(c)$ degenerates to a circle and then vanishes while the torus $T^{+}(c)$ continues. The reverse situation happens when crossing $l_2$. Furthermore, note that the swallowtail does not generate monodromy in the sense of Section \ref{s.monodromy}, since the set of regular values is simply connected. 

There are two options to define an affine invariant of $(M,\omega,F)$. Both arise of the idea of introducing action coordinates on the set of regular values, or in a subset of the regular values that is simply connected. However for a pleat there are the following two natural choices. Option $1$ is to obtain action coordinates with respect to the tori $T^{+}(c)$ in the pleat, while option $2$ is to obtain action coordinates with respect to the tori $T^{-}(c)$. This is formalized as follows. Recall that $\mathcal{T}$ denotes the subgroup of $\AGL(2,\mathbb{Z})$ that leaves a vertical line, with orientation, invariant.
\begin{theorem} 
\label{t.polytopepleat}
Let $(M,\omega,F)$ be the system defined above:
    \begin{enumerate}
        \item There exists a smooth map $f_1$, unique modulo left composition by an element in $\mathcal{T}$, from $F(M)\backslash (l_2\cup  \gamma([0,1]))$ to its image in $\mathbb{R}^2$, which is affine and preserves $J$, i.e., it is of the form $f_1(x,y)=(x,f_1^{(2)}(x,y))$.
        \item There exists a smooth map $f_2$, unique modulo left composition by an element in $\mathcal{T}$, from $F(M)\backslash (l_1\cup  \gamma([0,1]))$ to its image in $\mathbb{R}^2$, which is affine and preserves $J$, i.e., it is of the form $f_2(x,y)=(x,f_2^{(2)}(x,y))$.
    \end{enumerate}
    Note that $f_1,f_2$ cannot be continuously extended over $(l_2\cup \gamma([0,1])), (l_1\cup \gamma([0,1]))$ respectively. 
    
    Furthermore, let $\pi_i:\mathbb{R}^2\rightarrow \mathbb{R}$ be the projection onto the i-th coordinate coordinate for $i=x,y$. There exists a choice of $f_1$ and $f_2$, such that for $t\in \ ]0,1[ \ :$
    \begin{equation}
    \label{eq.pleatgoodaction}
        \lim_{y\rightarrow y_t^{-}}f_1^{(2)}(x_t,y)+\lim_{y\rightarrow y_t^{-}}f_2^{(2)}(x_t,y)=\lim_{y\rightarrow y_t^{+}}f_1^{(2)}(x_t,y)=\lim_{y\rightarrow y_t^{+}}f_2^{(2)}(x_t,y)
    \end{equation}
    if the $y$-projection of the hyperbolic-regular values in $P$ is larger than the $y$-projection of the elliptic-regular values in $P$. The reverse situation is analogous. 
    
\end{theorem}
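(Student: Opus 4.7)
The plan is to lift the momentum map to the unfolded momentum domain $\pi\colon A\to F(M)$ of the pleat and to introduce action-angle coordinates on two simply-connected subsets of $A$ corresponding to the two natural choices of sheet inside $P$. Over the interior of $P$ the map $\pi$ is a double cover whose two sheets correspond to the tori $T^{+}(c)$ and $T^{-}(c)$; over $F(M)\setminus\overline{P}$ it is a single sheet, and over $\gamma(]0,1[)$ the two sheets glue along the bitorus fiber. For $f_1$ I would first consider $U_1:=F(M)\setminus(l_2\cup\gamma([0,1]))$ and lift it canonically to $A$ by selecting the $T^{+}$-sheet over $P$ and the unique sheet outside: excising $l_2$ removes the locus where $T^{+}$ degenerates, and excising $\gamma([0,1])$ removes the topological transition between single torus and bitorus. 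The lifted region $\tilde U_1$ is simply connected, so the Arnold-Liouville-Mineur theorem (Theorem \ref{t.AL}) provides global action-angle coordinates on $\tilde F^{-1}(\tilde U_1)$. Because $J$ generates an effective Hamiltonian $S^1$-action, one action can be taken to be $J$ itself, yielding $f_1(x,y)=(x,f_1^{(2)}(x,y))$ which is smooth and affine; the residual freedom in the complementary homology basis is exactly left composition by an element of $\mathcal{T}$. The construction of $f_2$ is symmetric, using the $T^{-}$-sheet and cutting along $l_1\cup\gamma([0,1])$.

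The non-extendability is immediate from the construction: across $l_2$ the cycle whose period defines $f_1^{(2)}$ collapses to a point, while across $\gamma([0,1])$ the fiber topology jumps from a single torus to a bitorus, so no continuous continuation of action coordinates is possible (and symmetrically for $f_2$).

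For the compatibility identity \eqref{eq.pleatgoodaction} the key step is a homological cycle decomposition along the bitorus. The bitorus fiber over a point of $\gamma(]0,1[)$ is $S^{1}\times(\text{figure eight})$, with $H_1$ generated by the $S^{1}$-orbit $\alpha$ and the two loops $\ell_{+},\ell_{-}$ of the figure eight. Under the inclusion of the bitorus into nearby regular fibers, the complementary cycle $\beta$ of the single torus on the side where the fiber is connected becomes $\ell_{+}+\ell_{-}$, whereas the complementary cycles $\beta^{\pm}$ of the two tori $T^{\pm}$ on the other side become $\ell_{\pm}$ respectively. With representatives of $f_1$ and $f_2$ chosen compatibly with these identifications (which uses up the $\mathcal{T}$-ambiguity in a coordinated way), the action integrals satisfy
\[
\int_{\beta}\lambda \;=\; \int_{\ell_{+}}\lambda+\int_{\ell_{-}}\lambda \;=\; \int_{\beta^{+}}\lambda+\int_{\beta^{-}}\lambda,
\]
for any local primitive $\lambda$ of $\omega$. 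In the configuration where the hyperbolic-regular curve lies above the elliptic-regular ones, the connected-fiber region sits above $\gamma$ and the two-torus region below, so taking limits from both sides of $\gamma([0,1])$ produces exactly \eqref{eq.pleatgoodaction}; the reversed case is analogous.

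The main obstacle will be executing the cycle-splitting argument coherently along the whole curve $\gamma([0,1])$: I need to ensure that a single representative of each of $f_1$ and $f_2$ (that is, one coset of $\mathcal{T}$ each) can be chosen so that \eqref{eq.pleatgoodaction} holds for all $t\in\ ]0,1[$ simultaneously rather than pointwise. This reduces to checking that the homology identification $\beta=\beta^{+}+\beta^{-}$ varies continuously along $\gamma$ and is compatible with the local structure at the two parabolic endpoints, where the smooth normal form of Proposition \ref{p.parabolicnormalform} can be invoked to control the degeneration of the cycles as the bitorus limits to a cuspidal torus.
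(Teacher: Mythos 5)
Your proposal follows essentially the same route as the paper: take action-angle coordinates with respect to the $T^{+}$ (resp.\ $T^{-}$) tori on the simply connected cut domains via Theorem \ref{t.AL}, use the $S^1$-action generated by $J$ to fix the first action, and derive both the non-extendability and identity \eqref{eq.pleatgoodaction} from the splitting of the complementary homology cycle $\beta=\beta^{+}+\beta^{-}$ across the hyperbolic-regular curve. The only difference is that you spell out the bitorus cycle decomposition explicitly, whereas the paper states it in one sentence and defers the details to Efstathiou \& Sugny \cite[Section 2.3]{efstathiou2010integrable}; your closing concern about choosing the $\mathcal{T}$-representatives coherently along all of $\gamma(]0,1[)$ is legitimate but is handled by the continuity of the cycle splitting, exactly as you suggest.
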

\begin{proof}
     Recall that there are two options. Option $1$ is to take action coordinates with respect to the tori $T^{+}(c)$, obtaining the map $f_1$, while option $2$ is to take action coordinates with respect to the tori $T^{-}(c)$, obtaining the map $f_2$. This is possible in both cases due to Theorem \ref{t.AL} and the fact that the set is simply connected. The fact that the maps preserve the $S^1$-action follows from an argument analogous to the one in the proof of Theorem \ref{t.polytopeflap}. 
     
    Since a generator of the homology group of a torus $F^{-1}(c)$ with $c\in F(M)\backslash P$ splits into the sum of the two generators of $T^{+}(c)\cup T^{-}(c)$ when we cross the line of hyperbolic-regular values, the maps $f_1$ and $f_2$ cannot be extended continuously over $(l_2\cup \gamma([0,1])), (l_1\cup \gamma([0,1]))$ respectively. Furthermore, we can choose maps $f_1$ and $f_2$ in a way such that equation \eqref{eq.pleatgoodaction} is satisfied. See Efstathiou \& Sugny \cite[Section $2.3$]{efstathiou2010integrable} for more details.
\end{proof}

\begin{definition}
    We call the image of $f_1$ and $f_2$ from Theorem \ref{t.polytopepleat} the affine invariant for the system $(M,\omega,F)$. 
\end{definition}

\subsection{Example} 
\label{s.pleatexample}
In order to compute and visualize Theorem \ref{t.polytopepleat} we now give an example of an integrable system on a Hirzebruch surface with a pleat. 
The example is motivated by Le Floch \& Palmer \cite[Example $2.2.6$]{palmer2019semitoric}. Throughout this subsection we fix $\beta=n=1$ and $\alpha=1.02$. The ``strange'' choice of $\alpha$ is due to the fact that in the computations of the joint spectrum we will take $\hbar=\frac{1}{25}$ and we want to have a nontrivial Hilbert space, i.e.,  there need to exist $M_1$ and $M_2$ natural numbers that are solutions of $\hbar(M_1+\frac{3}{2})=\alpha+n\beta$ and $\hbar(M_2+1)=\beta$. See Subsection \ref{s.quantizationhirzebruch} for more details.  Let $J=\frac{1}{2}|z_2|^2, X:=\Re(\overline{z_1}z_3\overline{z_4})$, $R=\frac{1}{2}|z_3|^2$ and set
\begin{equation*}
    H_t:=(1-2t)R+tX+2t|z_1|^2|z_3|^2.
\end{equation*}
We want to consider the integrable system $(J,H_t)$ on the Hirzebruch surface $W_{1.02,1,1}$. The bifurcation diagram of the system  for $t=1$ together with its joint spectrum for $\hbar=\frac{1}{25}$ is plotted in Figure \ref{f.bifurcationdiagrampleatwithspectrum}. For mode details on the quantization of the system see Appendix \ref{s.quantizationpleat}.
\begin{figure}[ht]
\begin{center}
    \includegraphics[width=0.6\textwidth]{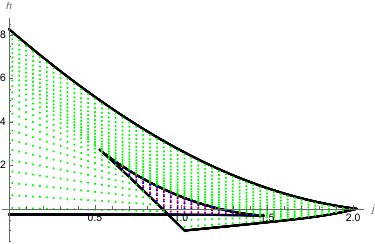}
    \caption{Bifurcation diagram for the system $(W_{1.02,1,1},\omega_{1.02,1,1},(J,H_1))$ together with the joint spectrum for the choice $\hbar = \frac{1}{25}$. Black dots represent the bifurcation diagram. Green dots correspond to values of the joint spectrum outside of the swallowtail. Red and Blue dots correspond to values of the joint spectrum inside the swallowtail depending on the choice of torus. }
    \label{f.bifurcationdiagrampleatwithspectrum}
\end{center}
\end{figure}

Using the computation of the classical actions of the system described in Appendix \ref{s.actionpleat} we obtain:
\begin{example}
    Figure \ref{f.panelswallowtail} shows the affine invariant for the hypersemitoric system $(W_{1.02,1,1},\omega_{1.02,1,1},(J,H_1))$ given by Theorem \ref{t.polytopepleat}.
\end{example}




\begin{figure}[ht]
\begin{center}
   \includegraphics[width=1\textwidth]{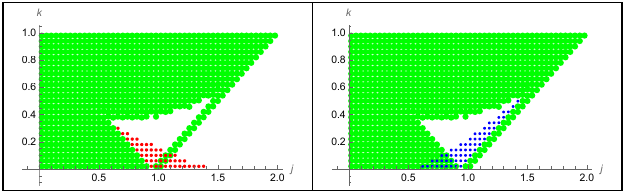}
    \caption{Classical actions for the system $(W_{1.02,1,1},\omega_{1.02,1,1},(J,H_1))$ computed on the joint spectrum of the system when $\hbar=\frac{1}{25}$. The picture on the left corresponds to the choice of tori in the swallowtail with smaller values of $|z_3|^2$. The green dots are values of the action on points outside the swallowtail and the red points correspond to the action on values inside the swallowtail. 
    The picture on the right corresponds to the choice of tori in the swallowtail with bigger values of $|z_3|^2$. The green dots are values of the action on points outside the swallowtail and the blue points correspond to the action on values inside the swallowtail.}
   \label{f.panelswallowtail}
\end{center}
\end{figure}

\section{Affine invariant in the presence of a line of curled tori}
\label{s.curledtori}
\subsection{Motivation}
The motivation for this section is the following situation: consider an integrable system $(M,\omega,F=(J,H))$ on a $4$-dimensional symplectic manifold such that $J$ generates an effective $S^1$-action. Furthermore, suppose that the isotropy weights at the fixed points of $J$ are of the form $(1,n)$ where $n\in \mathbb{Z}$, $|n|\geq 2$. Then the system may fail to be hypersemitoric due to the presence of degenerate points that are not parabolic. The generic situation is that in the bifurcation diagram there exist curves of hyperbolic-regular values that connect to a degenerate, nonparabolic value from which it is further connecting to the image of a so called generalized flap. However, it can also be the case that the curve of hyperbolic-regular values connects to a single degenerate, nonparabolic value.  We want to consider these nonhypersemitoric scenarios. In Section \ref{s.examplecurledtori} we give an example of a line of curled tori ending in a single degenerate value and compute the classical actions of the system. In Section \ref{s.curledtorimicroflap} we give an example of the generic situation, i.e, a line of hyperbolic-regular values ending in a degenerate value connecting to the image of a generalized flap, and compute the classical actions of the system. 

\begin{definition}
    \label{d.lineofcurledtori}
    Let $(M,\omega,J)$ be a Hamiltonian $S^1$-space. We say that the integrable system $(M,\omega,F=(J,H))$ is a \textit{system exhibiting a line of curled tori} if there exists a curve $c:[0,1]\rightarrow F(M)$ such that $c(]0,1[)$ consists of hyperbolic-regular values and $c(0)$ or $c(1)$ is in $\partial(F(M))$. Furthermore, each hyperbolic-regular value is required to have precisely one curled torus as preimage.
\end{definition}
For an illustration of systems exhibiting a line of curled tori see Figure \ref{f.bifurcationdiagramX} and Figure \ref{f.bifurcationdiagrammicroflap}.

\subsection{Line of curled tori ending in a single nonparabolic value}
\label{s.examplecurledtori}
Consider the Hirzebruch surface $W_{1,1,2}$, constructed in Section \ref{s.quantizationhirzebruch}, with the Hamiltonians $F:W_{1,1,2}\rightarrow \mathbb{R}^2$, where  $F=(J,X):=(\frac{1}{2}|z_2|^2,\Re(\overline{z_1}^2z_3\overline{z_4}))$. The fibers $(J,X)^{-1}(t,0)$ with $1<t<3$ are hyperbolic-regular, curled tori. $(J,X)^{-1}(1,0)$ and $(J,X)^{-1}(3,0)$ are degenerate fibers, with $(3,0)$ lying on the boundary of $F(M)$. All fibers of the system are connected. The bifurcation diagram of the system together with its joint spectrum for $\hbar=0.05$ is shown in Figure \ref{f.bifurcationdiagramX}. For more details on the quantization of the system see Appendix \ref{s.quantizationcurledtori}. 

\begin{figure}[ht]
\begin{center}
    \includegraphics[]{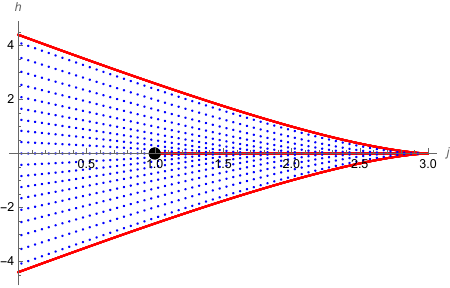}
    \caption{Bifurcation diagram for the system $(W_{1,1,2},\omega_{1,1,2},(J,X))$ together with its joint spectrum for $\hbar=0.05$. The bifurcation diagram is sketched in red. The degenerate, nonparabolic value in the interior of $F(M)$ is indicated in black. The values from the joint spectrum are plotted in blue.}
    \label{f.bifurcationdiagramX}
\end{center}
\end{figure}

Notice that the set of regular values of $(W_{1,1,2},\omega_{1,1,2},F)$ is simply connected, hence there is no topological monodromy. There is however fractional monodromy, see Section \ref{s.fractionalmonodromy} or
Efstathiou \& Martynchuk \cite{martynchuk2017parallel} for more details. 
Since the set of regular values is simply connected and the fibers of the regular values of the system $(W_{1,1,2},\omega_{1,1,2},F)$ are connected there are no choices of vertical cuts that need to be made, contrary to what was necessary in Section \ref{s.polytopenflaps}, Section \ref{s.polytopeflap} and Section \ref{s.polytopepleat}. Therefore, obtaining an affine invariant for this system is solely based on Theorem \ref{t.AL}. Let $\mathcal{T}$ denote the subgroup of $\AGL(2,\mathbb{Z})$ that leaves a vertical line, with orientation, invariant. Furthermore, let $c(t)=(x_t,y_t),\ t\in [0,1]$ be the line in $F(W_{1,1,2})$ such that $c(]0,1[)$ are hyperbolic-regular values and $c(0),c(1)$ are critical values of rank $0$, with $c(0)$ in the boundary of $F(M)$. 
\begin{lemma}
\label{l.curledtori}
Let $(W_{1,1,2},\omega_{1,1,2},F)$ be as defined above. Then there exists a unique diffeomorphism $f$, up to composition with an element in $\mathcal{T}$, from $F(M)\backslash c([0,1])$ to its image in $\mathbb{R}^2$ such that $f$ is affine and preserves $J$, i.e., $f(x,y)=(x,f^{(2)}(x,y))$.
Furthermore, $f$ cannot be continuously extended over $c([0,1[)$.
Let $(1,n)$ be the isotropy weights associated with the critical value $c(1)$ of rank $0$, which is in fact in this example $n=2$.
Then for $t\in \ ]0,1[$
\begin{equation}
    \label{eq.dline2}
    \lim_{y\rightarrow y_t^{+}}f^{(2)}(x_t,y)=\lim_{y\rightarrow y_t^{-}}f^{(2)}(x,y)+\frac{(x_t-x_1)}{n}.
\end{equation}
\end{lemma}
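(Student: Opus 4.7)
The approach parallels the proofs of Theorem \ref{t.polytopenflaps} and Theorem \ref{t.polytopepleat}, with ordinary monodromy replaced by fractional monodromy from Section \ref{s.fractionalmonodromy}. Since one endpoint of $c([0,1])$ lies on $\partial F(M)$ and the other in the interior, the set $F(M)\setminus c([0,1])$ is simply connected; combined with the connectedness of regular fibers, the Arnold-Liouville-Mineur theorem (Theorem \ref{t.AL}) furnishes global smooth action-angle coordinates $f$ on this set. Because $J$ generates an effective Hamiltonian $S^1$-action, one action variable can be chosen as $J$ itself, giving $f(x,y)=(x,f^{(2)}(x,y))$. The residual ambiguity---selecting a complementary cycle basis for $H_1$ of the fiber preserving the $S^1$-orbit, together with an additive constant---is exactly left composition by an element of $\mathcal{T}$.

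To establish non-extendability, I invoke fractional monodromy. Take a loop $\gamma$ around the rank-$0$ degenerate critical value $c(1)=(x_1,y_1)$ which crosses $c([0,1])$ exactly once. Since $c(1)$ is an $S^1$-fixed point with isotropy weights $(1,n)$, Theorem \ref{t.weightsfractionalmonodromy} gives Euler number $e(E)=1/n$, and Theorem \ref{t.fractionalmonodromy} shows that the fractional monodromy matrix, in a basis $(b,a)$ of $H_1$ of a nearby regular fiber with $b$ the $S^1$-orbit, is
$$\begin{bmatrix} 1 & 1/n \\ 0 & 1 \end{bmatrix}.$$
Non-triviality of this matrix obstructs a continuous extension of $f$ across $c([0,1])$.

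The same fractional monodromy, translated into a relation on the differentials across the cut in direct analogy with equation \eqref{eq.monodromyrelationnflaps}, reads
$$\lim_{y\to y_t^+} df(x_t,y) \;=\; \begin{bmatrix} 1 & 0 \\ 1/n & 1 \end{bmatrix} \lim_{y\to y_t^-} df(x_t,y),$$
so that $\partial_x f^{(2)}|_+ - \partial_x f^{(2)}|_- = 1/n$ while $\partial_y f^{(2)}|_+ = \partial_y f^{(2)}|_-$. Setting $\Delta(t) := \lim_{y\to y_t^+} f^{(2)}(x_t,y) - \lim_{y\to y_t^-} f^{(2)}(x_t,y)$ and differentiating along $c$ yields $\Delta'(t) = x_t'/n$, hence $\Delta(t) = x_t/n + C$ for some constant $C$. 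To pin down $C$, note that $F^{-1}(c(1))$ is a single point, the $S^1$-fixed point, so by continuity of the action variable at this collapsing fiber (in analogy with the focus-focus case) both one-sided limits of $f^{(2)}$ approach a common value as $(x_t,y_t)\to c(1)$. Thus $\Delta(t)\to 0$ as $x_t\to x_1$, forcing $C=-x_1/n$ and giving the claimed formula \eqref{eq.dline2}. The main obstacle is precisely this last step: converting the purely homological statement of fractional monodromy into a pointwise jump relation for $df$ and then fixing the integration constant by controlling the action variables as they approach the non-parabolic degenerate point $c(1)$.
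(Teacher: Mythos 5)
Your argument follows the same route as the paper's: global action-angle coordinates exist because $F(M)\backslash c([0,1])$ is simply connected and the fibers are connected, the effective $S^1$-action generated by $J$ forces the form $f(x,y)=(x,f^{(2)}(x,y))$ and the uniqueness modulo $\mathcal{T}$, and the fractional monodromy of Theorems \ref{t.fractionalmonodromy} and \ref{t.weightsfractionalmonodromy} (with $e(E)=1/n$) is the obstruction to extending $f$ over the line of curled tori. You actually go further than the paper's proof, which stops at asserting that the discontinuity opens up triangularly along $c([0,1])$, whereas you integrate the jump of $df$ along $c$ to derive \eqref{eq.dline2} explicitly; that extra detail is welcome.

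One correction to the step that pins down the integration constant: $F^{-1}(c(1))$ is \emph{not} a single point. It is a two-dimensional singular fiber (a curled torus degenerating at the fixed point with isotropy weights $(1,2)$); in the example one checks directly that $(J,X)^{-1}(1,0)$ is a positive-dimensional set. So "the fiber collapses to the fixed point" is not a valid reason for $\Delta(t)\to 0$. The conclusion still holds and can be justified without this claim: $c(1)$ is the tip of the slit $c([0,1])$ inside $F(M)$, so points just above and just below the cut near $c(1)$ can be joined by arbitrarily short paths passing through the region $\{x<x_1\}$, where no cut is present; since the action integrals extend continuously up to the singular fibers over $c([0,1])$ and over $c(1)$, the two one-sided limits of $f^{(2)}$ must agree at the tip, forcing $C=-x_1/n$. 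With that repair your derivation of \eqref{eq.dline2} is sound.
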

\begin{proof}
   Recall that the set of regular values is simply connected and the fibers are connected, therefore, by Theorem \ref{t.AL} there are well defined action coordinates on the set of regular values. Analogously to the proof of Theorem \ref{t.polytopeflap}, since $J$ generates an effective $S^1$-action, the map can be chosen to be of the form $f(x,y)=(x,f^{(2)}(x,y))$.
    
    The degenerate value $c(1)$, image of a fixed point with isotropy weights $(1,n)=(1,2)$, implies the existence of fractional monodromy, see Section \ref{s.fractionalmonodromy}. This means that, when circling around the degenerate value, a generator of a torus $T(c)$, where $c$ is a regular value of $F$, splits into the sum of itself and $\frac{1}{n}$ of the generator induced by the $S^1$-action. More precisely, let $b$ in $H_1(T(c))$ be the generator induced by the $S^1$-action and $a$ be another generator completing the basis. Then, when circling around the degenerate value, by Theorem \ref{t.fractionalmonodromy}, $(b,Na)\mapsto (b,Na+kb)$, where $N$ and $k$ are the constants in Theorem \ref{t.fractionalmonodromy}, which formally means that $(b,a)\mapsto (b,a+\frac{b}{n})$ by Theorem \ref{t.weightsfractionalmonodromy}.

    In other words, for the action coordinates, and therefore for the map $f$, this means that they cannot be continuously extended over the line of curled tori. The discontinuity corresponds to a triangular indention in Figure \ref{f.polytopecurledtori} that is opening up along the line of curled tori.
\end{proof}

We call the image of the map $f$ from Lemma \ref{l.curledtori} the affine invariant of the system $(W_{1,1,2},\omega_{1,1,2},F)$.
    
Using the computation of the classical actions done in Appendix \ref{s.actioncurledtori} we obtain:

\begin{example}
    Figure \ref{f.polytopecurledtori} is a representative of the affine invariant for the system $(W_{1,1,2},\omega_{1,1,2},(J,X))$ given by Lemma \ref{l.curledtori}. The discontinuities mentioned in Lemma \ref{l.curledtori} by equation 
    \eqref{eq.dline2} are given by the formula $\frac{j-1}{2}$.
\end{example}

\begin{figure}[ht]
\begin{center}
    \includegraphics[width=0.6\textwidth]{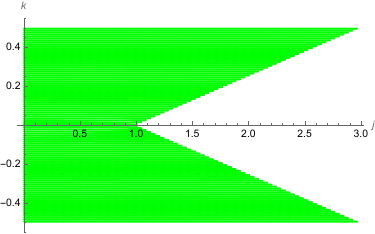}
    \caption{Classical actions of the system $(W_{1,1,2},\omega_{1,1,2},(J,X))$ evaluated on the joint spectrum for $\hbar=0.01$.}
    \label{f.polytopecurledtori}
\end{center}
\end{figure}

\begin{remark}
    Analogously to Section \ref{s.polytopenflaps} and Section \ref{s.polytopeflap} a vertical cut could be made at the value $c(1)$, obtaining maps $f_1$ and $f_2$ in each connected component. The map $f_2$ can be continuously extended to $f_1$ by applying the transformation $f_2-\frac{(J-1)}{2}$. Notice that this transformation is not affine. However, $2f_2$ can be continuously extended by considering the affine transformation $2f_2-(J-1)$. 
\end{remark}

\subsection{Line of curled tori connecting to the image of a generalized flap}
\label{s.curledtorimicroflap}
The example of this section is a perturbation of the example in Section \ref{s.examplecurledtori}. Consider the Hirzebruch surface $W_{1,1,2}$ and the Hamiltonians $J:=\frac{1}{2}|z_2|^2$, $R=\frac{1}{2}|z_3|^2$, $X=\mathfrak{R}(\overline{z_1}^2z_3\overline{z_4})$. The integrable system we consider is $(W_{1,1,2},\omega_{1,1,2},F=(J,H))$ where 
\begin{equation*}
    H:=0.75R+0.25X.
\end{equation*}
The bifurcation diagram of the system together with its joint spectrum for $\hbar=0.05$ is shown in Figure \ref{f.bifurcationdiagrammicroflap}.
The quantization of this system is analogous to what is done in Appendix \ref{s.quantizationcurledtori}. The system exhibits a so called generalized flap, see Figure \ref{f.microflapzoom}.
\begin{figure}[ht]
\begin{center}
    \includegraphics[]{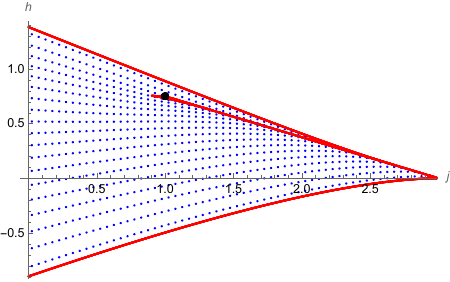}
    \caption{Bifurcation diagram for the system $(W_{1,1,2},\omega_{1,1,2},(J,H))$ together with its joint spectrum for $\hbar=0.05$. The bifurcation diagram is sketched in red. The degenerate, nonparabolic value in the interior of $F(M)$ is indicated in black. The values from the joint spectrum are plotted in blue.}
    \label{f.bifurcationdiagrammicroflap}
\end{center}
\end{figure}

\begin{figure}[ht]
\begin{center}
    \includegraphics[]{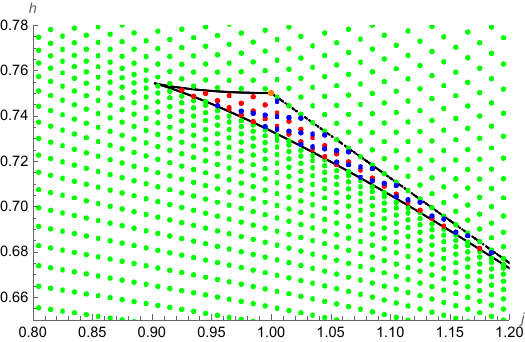}
    \caption{Bifurcation diagram of the system together with its joint spectrum, for $\hbar=0.01$, in the range $(j,h)\in [0.8,1.2]\times[0.65,0.78]$. The image of the generalized flap can be seen to appear. The bifurcation diagram is sketched in black. The orange point represents the degenerate value. Green points correspond to values in the joint spectrum outside of the image of the generalized flap. Red points correspond to values in the joint spectrum in the background of the generalized flap. Blue points correspond to values in the joint spectrum in the generalized flap.}
    \label{f.microflapzoom}
\end{center}
\end{figure}

Notice that the set of regular values on the background of the system is simply connected. Therefore, we can compute the classical actions on the background of the system without introducing any choice of cut, analogous to Lemma \ref{l.curledtori}. Furthermore, we can compute the classical actions on the generalized flap. Using an analogous method to the one in Appendix \ref{s.actioncurledtori} we obtain Figure \ref{f.gridmicroflap}. 

Furthermore, for a system exhibiting a line of curled tori we can apply the previous ideas and obtain the following:
\begin{proposition}
\label{p.generalizedcurledtori}
    Let $(M,\omega,F=(J,H))$ be a system exhibiting a line of curled tori. Let $C_0$ denote the set of critical values of rank $0$ of the system. If $F(M)\backslash C_0$ is simply connected then by making a suitable choice of action coordinates one can associate an affine invariant with the system. Furthermore, no choice of cuts is necessary.
\end{proposition}
\begin{proof}
    Analogous to the first paragraph of the proof of Lemma \ref{l.curledtori}.
\end{proof}
\begin{figure}[ht]
\begin{center}
    \includegraphics[width=1\textwidth]{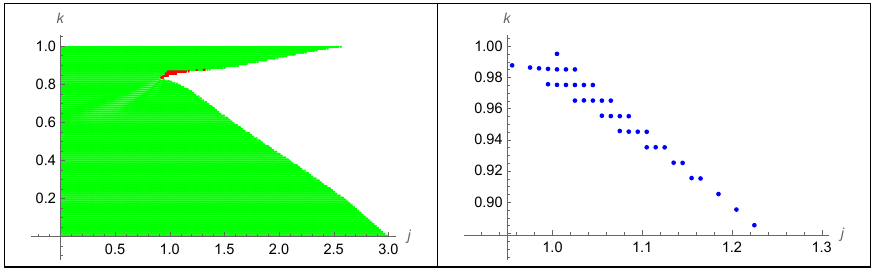}
    \caption{Classical actions of the system $(W_{1,1,2},\omega_{1,1,2},(J,H))$ computed on the joint spectrum for $\hbar=0.01$. On the left the classical actions are computed in the background of the system. Green points correspond to values outside of the image of the generalized flap. Red points correspond to values on the background of the generalized flap. On the right the classical actions are computed on the generalized flap.}
    \label{f.gridmicroflap}
\end{center}
\end{figure}

\section{The affine invariant of an hypersemitoric system}
\label{s.generalhypersemitoric}


\subsection{Introduction}
In this section, we define the affine invariant of a simple hypersemitoric system. First, in Section \ref{s.subsectiongeneralinvariant}, we associate to a simple hypersemitoric system without hyperbolic-regular lines ending in hyperbolic-elliptic values, see Definition \ref{d.hrlineendinginhe}, an affine invariant. The idea is to work ``layer wise'', that is we describe how to apply a procedure to obtain a polytope from considering the background of the system, after appropriate choices of tori are made, and then the procedure can be applied iteratively. We do an approach analogous to that of Section \ref{s.polytopeflap}, to obtain a polytope where the boundary is continuous. Note that different approaches can be made, we chose the one that delivers polytopes with a continuous boundary. Then in Section \ref{s.affinehyperbolicregularlines} we use the results of Section \ref{s.subsectiongeneralinvariant} to define an affine invariant for any simple hypersemitoric system.

\subsection{The affine invariant for simple hypersemitoric systems without hyper{\-}bolic-regular lines ending in hyperbolic-elliptic values}
\label{s.subsectiongeneralinvariant}
Before defining an affine invariant for any simple hypersemitoric system we first consider a more restricted type of simple hypersemitoric systems. The general case will then be dealt with in Section \ref{s.affinehyperbolicregularlines}. We start by introducing the following definition:
\begin{definition}
\label{d.hrlineendinginhe}
    Let $(M,\omega,F=(J,H))$ be a hypersemitoric system. A \textit{hyperbolic-regular line with hyperbolic-elliptic (briefly H-E) values} $\gamma:[0,1]\rightarrow F(M)$ is a line such that $\gamma(s)$ is a hyperbolic-regular value for all $s\in \ ]0,1[$ and $\gamma(0),\gamma(1)$ are hyperbolic-elliptic values.
\end{definition}
\begin{remark}
\label{r.imageofhyperbolicellitptic}
    If $\gamma$ is a hyperbolic-regular line with H-E values of a hypersemitoric system $(M,\omega,F)$, then $\gamma(0),\gamma(1)\in \partial(F(M))$, see Palmer \& Hohloch \cite[Lemma $2.1$ \& Proposition $4.1$]{hohloch2021extending}. Furthermore, those authors prove that $\gamma(0),\gamma(1)$ lie in the image of a fixed surface of the $S^1$-action induced by $J$. Therefore, they lie in the image of the extremal level sets of $J$. Since lines of hyperbolic-regular values do not have vertical tangencies, see Hohloch \& Palmer \cite[Lemma $4.2$]{hohloch2021extending}, $\gamma(0)$ and $\gamma(1)$ are in the image of different extremal level sets of $J$. One value is in the image of the minimum of $J$ and the other value is in the image of the maximum of $J$.
\end{remark}
For the remainder of this section we assume $(M,\omega,F=(J,H))$ to be a compact simple hypersemitoric system without hyperbolic-regular lines with H-E values. Furthermore, let $A$ be its unfolded momentum domain, i.e., there exists a map $\tilde{F}:M\rightarrow A$ and a projection $\pi:A\rightarrow \mathbb{R}^2$ such that the regular level sets of $\tilde{F}$ are the connected components of the level sets of $F$ and $F=\pi \circ \tilde{F}$. 
\begin{definition}
    A flap, respectively boundary flap, is called \textbf{initial} if its image is not contained in the image of another flap, resp.\ boundary flap. A pleat is called \textbf{initial} if it is not contained in another pleat. 
\end{definition}
Let us introduce some notation:
\begin{itemize}
\item Let $\mathcal{P}_1,...,\mathcal{P}_p$, $p\in \mathbb{N}$, denote the pleats of $F(M)$. Recall that for each pleat $\mathcal{P}_k$ there exist two choices of torus $\mathbb{T}^{\pm}_k$ that allow us to define action coordinates (see Section \ref{s.polytopepleat}). For each $\alpha \in \{+,-\}^{p}$ let $F_{\alpha}$ be the subset of $A$ where in each pleat $\mathcal{P}_k$ we consider the tori $\mathbb{T}_k^{\pi^p_j(\alpha)}$, where $\pi^p_j:\{+,-\}^{p}\rightarrow \{+,-\}$ is the projection onto the $j$-coordinate. The background $B_{\alpha}$ of $F_{\alpha}$ is defined as taking the background of all the flaps, respectively boundary flaps, appearing in $F_{\alpha}$.
\item Let $C_{0}$ be the set consisting of focus-focus values, elliptic-elliptic values occurring in pleats, and elliptic-elliptic values in the images of flaps or boundary flaps whose $x$-projection is in the interior of the image of $J$.
Let $P_0\in M$ denote the set of critical \textit{points} of rank $0$ of $F$. For each $p \in P_0$ let $m_p,n_p$ denote its isotropy weights with respect to $J$. For each $c\in C_0$ let \begin{equation*}
m(c):=\sum\limits_{\{p\in P_0\ \text{with}\ F(p)=c\}}\frac{1}{m_pn_p}.
\end{equation*}
\item  Let $\tilde{C}_{1,\alpha}$ be the rank $1$ critical values occurring in $B_{\alpha}$, $C_{1,\alpha}:=\pi(\tilde{C}_{1,\alpha})$ in $F(M)$, and $C_{0,\alpha}$ be the points of $C_{0}$ occurring in $F_{\alpha}$ that are focus-focus values in the background $B_{\alpha}$, or critical values of rank $0$ in the image of a flap.
\item
For each critical value of rank zero $c\in C_{0}$ occurring in the image of a flap, respectively boundary flap $\mathcal{F}$ in $F_{\alpha}$, that is not a focus-focus value in $B_{\alpha}$, redefine $c$ to be the hyperbolic-regular value in $\gamma_{\tilde{\mathcal{F}}}$ with the same $x$-projection, where $\tilde{\mathcal{F}}$ is the initial flap in $F_{\alpha}$, respectively initial boundary flap, containing $\mathcal{F}$. For a value $c\in C_{0}$ contained in a pleat but not in $C_{0,\alpha}$ redefine $c$ to be the hyperbolic-regular value with the same $x$-projection in the corresponding initial pleat.
The new points have the same value of $m(c)$ as the corresponding point it substituted.   
\item Let $\pi_i:\{-1,1\}^{|C_{0}|}\rightarrow \{-1,1\}$ be the projection onto the $i$-th coordinate. 
\item Let $\mathcal{T}$ be the subgroup of $\AGL(2,\mathbb{Z})$ that leaves a vertical line, with orientation, invariant.
\item  Let $\mathcal{F}_{u_1},\dots,\mathcal{F}_{u_n}$, with $u_{i}\in \mathbb{N}$, for $1\leq i\leq n$,
be the unbounded flaps/boundary flaps of the system. For each unbounded flap/boundary flap $\mathcal{F}_{u_i}$, define the set $U_{u_i}:=\pi(\mathcal{F}_{u_i})\backslash \pi(\mathcal{B}_{u_i})$, where $\mathcal{B}_{u_i}$ is the background of each flap/boundary flap $\mathcal{F}_{u_i}$. Finally, define the set $U_{\mathcal{F}}:=\bigcup_{i=1}^{n}U_{u_i}$.
\end{itemize}
\begin{theorem}
\label{t.polytopegeneral}
    Using the notation from above let $(M,\omega,F=(J,H))$ be a compact simple hypersemitoric system without hyperbolic-regular lines with H-E values. Fix an $\alpha\in \{+,-\}^{p}$. 
    
    For each $\vec{\epsilon}\in \{-1,1\}^{|C_{0}|}$ there exists a unique continuous map, up to composition with an element of $\mathcal{T}$,
    \begin{equation*}
    f_{\vec{\epsilon},\alpha}:F(M)\backslash(C_{1,\alpha}\cup \ U_{\mathcal{F}})\rightarrow \mathbb{R}^2
    \end{equation*}
    such that 
    \begin{itemize}
        \item $f_{\vec{\epsilon},\alpha}|_{F(M)\backslash\left(C_{1,\alpha}\cup \ U_{\mathcal{F}} \  \cup\left(\bigcup_{i=1}^{|C_{0}|}l_{c_i}^{\pi_i(\vec{\epsilon})}\right)\right)}$ is smooth;
        \item $f_{\vec{\epsilon},\alpha}|_{F(M)\backslash\left(C_{1,\alpha}\cup \ U_{\mathcal{F}} \  \cup\left(\bigcup_{i=1}^{|C_{0}|}l_{c_i}^{\pi_i(\vec{\epsilon})}\right)\right)}$ is affine;
        \item $f_{\vec{\epsilon},\alpha}$ preserves $J$, i.e., is of the form $f_{\vec{\epsilon},\alpha}(x,y)=(x,f^{(2)}_{\vec{\epsilon},\alpha}(x,y))$; 
        \item 
        For any $c\in \text{int}\left(l_{c_i}^{\pi_i(\vec{\epsilon})}\right)$, with $c_i\in C_{0}$:
        \begin{equation}
        \label{eq.derivativerelationgeneral}
            \lim_{\begin{smallmatrix}(x,y)\rightarrow c \\ x<\pi_x(c_i) \end{smallmatrix}} df_{\vec{\epsilon},\alpha}(x,y) = M_{c_i} \lim_{\begin{smallmatrix}(x,y)\rightarrow c\\ x>\pi_x(c_i) \end{smallmatrix}} df_{\vec{\epsilon},\alpha}(x,y),
            \end{equation}
             where 
             \begin{equation*}
                 M_{c_i}= \begin{bmatrix}
                      1 & 0 \\
                  k(c_i) & 1
                \end{bmatrix}
            \end{equation*}
and 
$k(c_i)= \sum_{j}\pi_j(\vec{\epsilon})m(c_j)$,
where the sum runs over all $c_j\in C_{0}$ satisfying $c_i\in l_{c_j}^{\pi_j(\vec{\epsilon})}$.
    \end{itemize}
\end{theorem}
\begin{proof}
    The proof expands the ideas of the proofs of Theorem \ref{t.polytopeflap}, Theorem \ref{t.polytopepleat} and Theorem $3.8$ of \vungoc\ \cite{vu2007moment}.

    \textit{Step 1:} If the system is semitoric, then the result follows from \vungoc\ \cite[Theorem $3.8$]{vu2007moment}.

    \textit{Step 2:} If no pleats are present in the system, then the results follow completely analogous to the proof of Theorem \ref{t.polytopenflaps}, if $\pi_x(C_{0})$ is single valued, and Theorem \ref{t.polytopeflap} otherwise.

    \textit{Step 3:} If there are pleats but these do not contain critical values of rank $0$ in its interior, then the proof is analogous to the proof of Theorem \ref{t.polytopeflap} and Theorem \ref{t.polytopepleat}.

    \textit{Step 4:} If there are pleats present in the system containing critical values of rank $0$ we proceed analogously to the proof of Theorem \ref{t.polytopeflap}. Consider the set $A:=B_{\alpha}\backslash (\cup_{c_i\in C_0}l^{\pi_i(\vec{\epsilon})}_{c_i} \cup C_{1,\alpha})$. The set $A$ has finitely many simply connected components. In each connected component we take action coordinates as follows:
    \begin{itemize}
    \item We are able to fit them together in order for the resulting function $f_{\vec{\epsilon},\alpha}$ to be continuous,
    \item the height of the resulting polytope is given by the volume of $J^{-1}(x)/S^1$. One can visualize it in the affine invariant as the difference between the highest and the lowest point outside of the image of the pleats intersected with the complement of the boundary of the momentum map; in the example of Figure \ref{f.panelswallowtail}, it is the difference between the highest green and lowest green value in a vertical line (and not the difference between green and red values), except for the intersection of the pleat with the boundary of the momentum map.
    \item Equation \eqref{eq.derivativerelationgeneral}
    holds. For $c$ such that $c\in l_{c_j}^{\pi_j(\vec{\epsilon})}$ only for $c_j\in C_{0}$ in the same connected component that are either 
    a focus-focus value in $B_{\alpha}$ or the hyperbolic-regular value $c_j$ in $\gamma_{\mathcal{F}}$ for a flap $\mathcal{F}$ induced by the unique rank $0$ critical value on the image of the flap $\mathcal{F}$ in $F_{\alpha}$, Equation \eqref{eq.derivativerelationgeneral} follows from Proposition \ref{p.monodromy}, analogous to what is done in Theorem \ref{t.polytopenflaps} and \cite[Theorem $3.8$]{vu2007moment}.
    \end{itemize}
    The above conditions determine $f_{\vec{\epsilon},\alpha}$ due to the formula of the Duistermaat-Heckmann measure, see Section \ref{s.duistermaatmeasuregeneral}.
\end{proof}

\begin{definition}
    Given a compact simple hypersemitoric system $(M,\omega,F=(J,H))$ without hyperbolic-regular lines with H-E values a representative of the affine invariant is defined by the collective images of iteratively applying Theorem \ref{t.polytopegeneral} to each layer of background/flap/boundary flap/pleat found in the system for a choice of $\vec{\epsilon}$ in each layer. If no critical values are present in the layer no choice of $\vec{\epsilon}$ is required. The analogous of Section \ref{s.grouporbitnflaps} and Section \ref{s.orbitpolytope} follows, i.e., the different choices of $\vec{\epsilon}$ can be interpreted as a group orbit. 
\end{definition}

\subsection{Elliptic-regular lines with boundary values and their properties}
Recall that $\gamma_{HR}$ is a hyperbolic-regular line with H-E values of the system $(M,\omega,F)$ if $\gamma_{HR}:[0,1]\rightarrow F(M), \ s\mapsto \gamma_{HR}(s)$ is such that $\gamma_{HR}(s)$ is an hyperbolic-regular value for all $s\in \ ]0,1[$ and $\gamma_{HR}(0),\gamma_{HR}(1)$ are hyperbolic-elliptic values. Moreover, recall $\{\gamma_{HR}(0),\gamma_{HR}(1)\}\in \partial(F(M))$, see Remark \ref{r.imageofhyperbolicellitptic} for more details. Analogously:
\begin{definition}
    Let $(M,\omega,F=(J,H))$ be a hypersemitoric system. An \textit{elliptic-regular line $\gamma_{ER}:[0,1]\rightarrow F(M)$ with boundary values} is a line such that $\gamma_{ER}(s)$ is an elliptic-regular value for all $s\in \ ]0,1[$, $\pi_x(\gamma(0)),\pi_x(\gamma(1))\in \partial(J(M))$, where $\pi_x:(x,y)\mapsto x$,  and $F^{-1}(\gamma_{ER})$ is not a subset of a fixed surface of the $S^1$-action induced by $J$. 
\end{definition}
\begin{remark}
\label{r.differentendpointselliptic}
    An elliptic-regular line with boundary values does not have vertical tangencies, see Lemma \ref{l.nonverticaltangenciesellitpic}. Therefore, for an elliptic-regular line with boundary values $\gamma_{ER}:[0,1]\rightarrow F(M)$, $\gamma(0)$ and $\gamma(1)$ have different $J$-values.
\end{remark}
Note the following: 
\begin{lemma}
\label{l.linepairs}
    Let $(M,\omega,F=(J,H))$ be a simple hypersemitoric system and $\gamma_{HR}\subset F(M)$ a hyperbolic-regular line with H-E values. Then there exists another hyperbolic-regular line with H-E values $\tilde{\gamma}_{HR}$ or an elliptic-regular line with boundary values $\tilde{\gamma}_{ER}$  in $F(M)$.
\end{lemma}
\begin{proof}
    The idea is to use the Morse inequalities on a reduced level set $J^{-1}(j)/S^1$. First we recall the following fact: a hyperbolic-regular value $(j_0,h_0)$, where $j_0$ is a regular value of $J$, of a hypersemitoric system comes from either a hyperbolic-regular line with H-E values, a pleat, a flap or boundary flap, see Hohloch \& Palmer \cite[Corollary $4.3$]{hohloch2021extending}. In the case of a pleat, a flap, or boundary flap, the hyperbolic-regular value comes paired with an elliptic-regular value with the same $j_0$ and lying in the image of the same pleat, flap, or boundary flap.
    Now let $j$ be a regular value of $J$, with the property that all parabolic values $c$ of the system are such that $\pi_x(c)\neq j$, where $\pi_x(j,h):=j$. Since $J^{-1}(j)/S^1 $ is a $2$-dimensional closed connected orientable surface its Euler characteristic satisfies $\chi(J^{-1}(j)/S^1)=2-2g$.  Using the Morse inequalities for the reduced Hamiltonian $H^{j}_{red}:J^{-1}(j)/S^1\rightarrow \mathbb{R}$ we obtain 
    \begin{align*}
        & \#\{\text{elliptic-points of $H^{j}_{red}$}\}- \#\{\text{ hyperbolic-points of $H^{j}_{red}$}\}=2-2g.
    \end{align*}
    Using the above equality, the fact that $(M,\omega,F=(J,H))$ is a simple hypersemitoric system, and the observation that in a pleat, flap, or boundary flap, the critical points come in pairs, we conclude that associated with a hyperbolic-regular point of an hyperbolic-regular line with H-E values, there exists another hyperbolic-regular or elliptic-regular point. Since the set of regular values $j$ of $J$ with the property that $\pi_x(c)\neq j$ for all parabolic values $c$ of the system $(M,\omega,F)$ is dense over $\pi_x(\gamma)$ we conclude the desired result.
\end{proof}
Below we give examples of systems having hyperbolic-regular lines with H-E values in their momentum map images:

\begin{example}
\label{e.heartsphere}
    Consider the standard height function on the $2$-sphere $(S^2,\omega_{S^2},h)$. In particular $h$ induces an $S^1$-action. Furthermore, let $(S^2_{hs},\omega_{hs},h_{hs})$ be the heart-shaped sphere with $4$ elliptic points $p_1,p_2,p_3,p_4$ and $2$ hyperbolic points $q_1,q_2$ satisfying: $h_{hs}(p_1)<h_{hs}(q_1)<h_{hs}(p_2)<h_{hs}(q_2)<h_{hs}(p_3)<h_{hs}(p_4)$, see Figure \ref{f.heightfunctions}. Then
    \begin{align*}
    (M,\omega,F):=(S^2\times S^2_{hs},\omega_{S^2}\oplus \omega_{hs}, h\times h_{hs})
    \end{align*}
    is a simple hypersemitoric system which has $2$ hyperbolic-regular lines with H-E values $\gamma_{q_1},\gamma_{q_2}$, where $\gamma_{q_i}:=F(S^2\times \{q_i\})$ for $i=1,2$, and $4$ elliptic-regular lines with boundary values $\gamma_{p_1},\gamma_{p_2},\gamma_{p_3},\gamma_{p_4}$, where $\gamma_{p_i}:=F(S^2\times \{p_i\})$ for $1\leq i\leq 4$. Furthermore, the set $F(M) \backslash \{\gamma_{q_1},\gamma_{q_2},\gamma_{p_2},\gamma_{p_3}\}$ has $5$ connected components, see Figure \ref{f.momentumaps}.
\end{example}

\begin{example}
\label{e.torus}
    Consider the embedding of the torus $\mathbb{T}^2$ into $\mathbb{R}^3$ given by 
    \begin{align*}
    (\theta_1,\theta_2)\mapsto ((2+\cos(\theta_1))\cos(\theta_2), (2+\cos(\theta_1))\sin(\theta_2),\sin(\theta_1))
    \end{align*}
    and the symplectic form $\omega_{\mathbb{T}^2}$ on $\mathbb{T}^2$ induced by the volume form $dx\wedge dy\wedge dz$ on $\mathbb{R}^3$. We consider the system 
    \begin{align*}
    (M,\omega,F):=(S^2\times \mathbb{T}^2, \omega_{S^2}\oplus \omega_{\mathbb{T}^2},h\times h_T)
    \end{align*}
    where $h_T(\theta_1,\theta_2):=(2+\cos(\theta_1))\sin(\theta_2)$, see Figure \ref{f.heightfunctions}, and $h$ is the standard height function on $S^2$. This is a simple hypersemitoric system. The function $h_T$ has two hyperbolic points $q_1,q_2$ with $h_T(q_1)<h_T(q_2)$. Furthermore, the function $h_T$ has $2$ elliptic points $p_1,p_2$, given by the maximum and minimum of the function, respectively. The system $(M,\omega,F)$ has two hyperbolic-regular lines with H-E values: $\gamma_{q_1}:=F(S^2\times \{q_1\})$ and $\gamma_{q_2}:=F(S^2\times \{q_2\})$. The set $F(M)\backslash \{\gamma_1,\gamma_2\}$ has $3$ connected components, see Figure \ref{f.momentumaps}.
\end{example}

\begin{figure}[ht]
\begin{center}
    \includegraphics[scale=0.3]{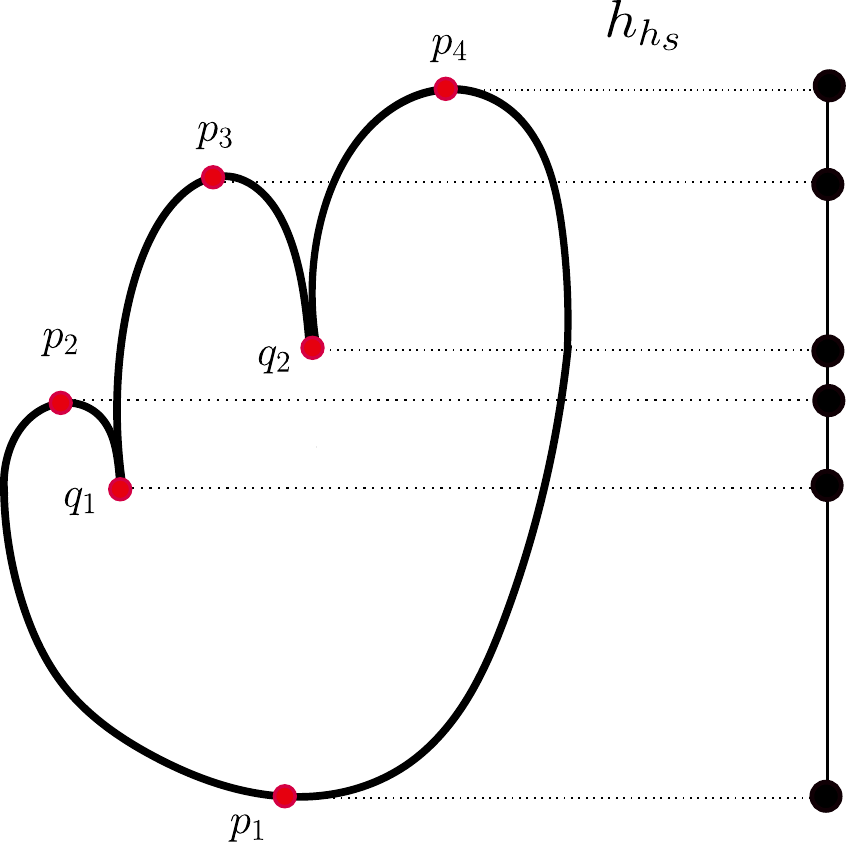}
\hspace{2cm}
    \includegraphics[scale=0.35]{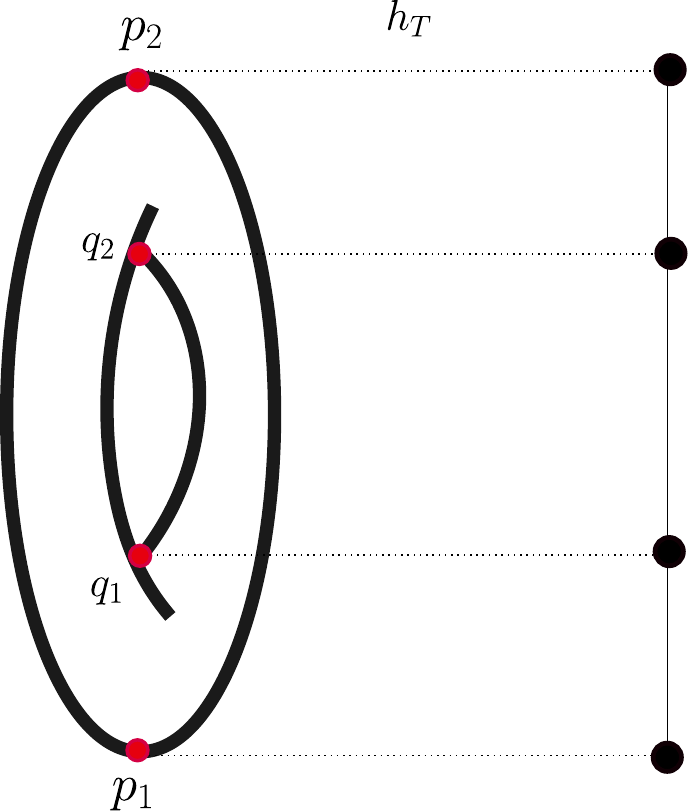}
    \caption{On the right the height function $h_{hs}$ on the heart-shaped sphere $S^2$. On the left the height function $h_T$ on the two torus $\mathbb{T}^2$.}
    \label{f.heightfunctions}
\end{center}
\end{figure}

\begin{figure}[ht]
\begin{center}
    \includegraphics[scale=0.3]{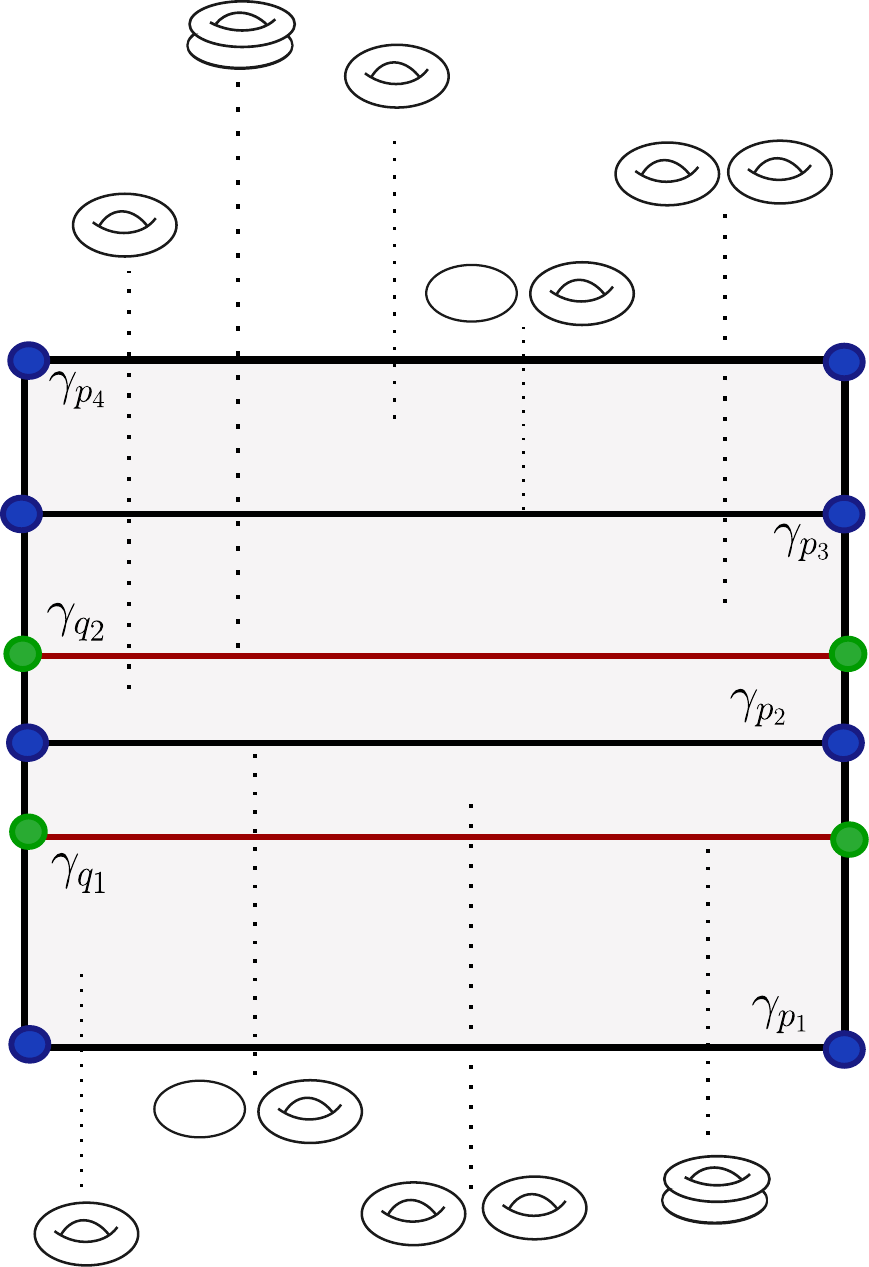}
\hspace{2cm}
    \includegraphics[scale=0.31]{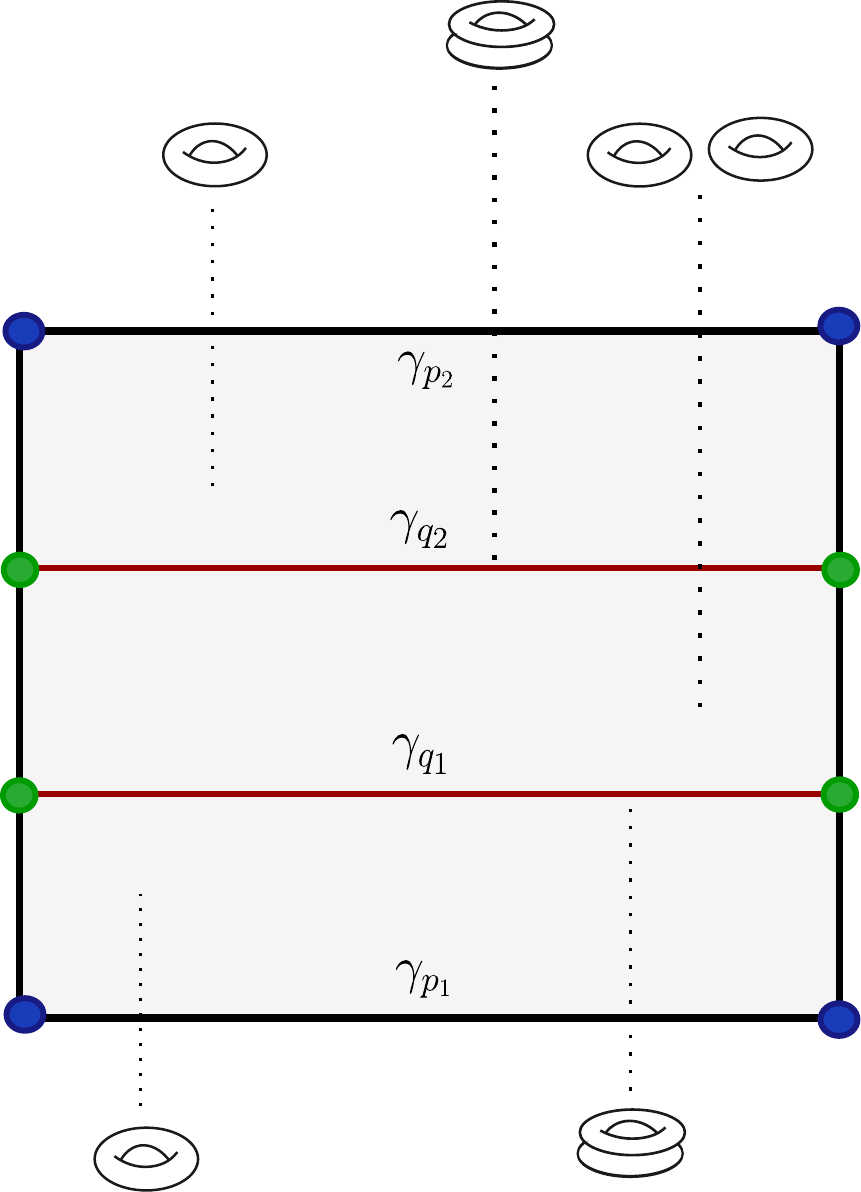}
    \caption{On the left is the momentum map image of Example \ref{e.heartsphere}. On the right is the momentum map image of Example \ref{e.torus}. Hyperbolic-regular values are drawn in red, hyperbolic-elliptic values are drawn in green, elliptic-elliptic values are drawn in blue, and elliptic-regular values are drawn in black.}
    \label{f.momentumaps}
\end{center}
\end{figure}

\subsection{Affine invariant for a simple hypersemitoric system}
\label{s.affinehyperbolicregularlines}
We are now ready to define an affine invariant for any simple hypersemitoric system. Let $(M,\omega,F=(J,H))$ be a simple hypersemitoric system. Let $\{\gamma_1,\cdots,\gamma_k\}$ be the set of hyperbolic-regular lines with H-E values or elliptic-regular lines with boundary values that satisfy $\gamma_i(]0,1[)\subset \text{int}(F(M))$ for $1\leq i\leq k$. Now consider the set $F(M)\backslash \{\gamma_1,\cdots ,\gamma_k\}$. Recall that the hyperbolic-elliptic values lie in the boundary of $F(M)$, namely in the image of the minimum and maximum of $J$. Moreover, for each hyperbolic-regular line with H-E values or elliptic-regular line with boundary values, its endpoints have different $J$-values, see Remark \ref{r.imageofhyperbolicellitptic} and Remark \ref{r.differentendpointselliptic}. Therefore, the set $\tilde{F}(M):=F(M)\backslash \{\gamma_1,\cdots ,\gamma_k\}$ is disconnected, if $\{\gamma_1,\cdots, \gamma_k\}\neq \emptyset$. Let $U$ be a connected component of the set $\tilde{F}(M)$. Since the fibers over $U$ may be disconnected, the choice of tori over $U$ to each we want to apply Theorem \ref{t.polytopegeneral} is not unique. Applying Theorem \ref{t.polytopegeneral} for all possible continuous choices of tori over the set of regular values of the connected components of $\tilde{F}(M)$ we obtain:

\begin{definition}
    Given a compact hypersemitoric system $(M,\omega,F=(J,H))$ a representative of the affine invariant is defined by the collective images of iteratively applying Theorem \ref{t.polytopegeneral} to each layer of background/flap/boundary flap/pleat  over all continuous possible choices of tori over the set of regular values of the connected components of $\tilde{F}(M)$, with a choice of $\vec{\epsilon}$ in each layer. If no critical values are present in the layer no choice of $\vec{\epsilon}$ is required. The analogous of Section \ref{s.grouporbitnflaps} and Section \ref{s.orbitpolytope} follows, i.e., the different choices of $\vec{\epsilon}$ can be interpreted as a group orbit. 
\end{definition}
\begin{remark}
\item In order to consider an invariant for a noncompact simple hypersemitoric system $(M,\omega,F=(J,H))$ one possible way is to work on the compact subsets $J^{-1}(]-\infty,j])$ for $j\in \mathbb{R}$, and let $j\rightarrow \infty$. 
\end{remark}
\subsection{Examples}
\label{s.examplesgeneralhypersemitoric}
Now we illustrate Theorem \ref{t.polytopegeneral} with three examples. The first example is a system with two focus-focus values on a flap. The second example is a system with two focus-focus values outside of a flap. And the third example is a flap with two elliptic-elliptic values inside of another flap.

Let $W_{1,1,2}$ be the Hirzebruch surface defined in Section \ref{s.quantizationhirzebruch}. Let
\begin{equation*}
\begin{cases}
J(z_1,z_2,z_3,z_4):=\frac{1}{2}(|z_2|^2+|z_3|^2),\\ R(z_1,z_2,z_3,z_4):=\frac{1}{2}|z_3|^2,\\
X(z_1,z_2,z_3,z_4):=\Re(z_1z_2\overline{z_3}z_4).
\end{cases}
\end{equation*}
Then for $a,b,c\in \mathbb{R}$ we consider the integrable system $(W_{1,1,2},\omega_{1,1,2},F_{a,b,c}=(J,H_{a,b,c}))$ where 
\begin{equation*}
    H_{a,b,c}:=aR+\frac{X}{\sqrt{2}}+bR^2+cR^3.
\end{equation*}
The quantization of this system is straightforward and analogous to what is done in Section \ref{s.quantizationhirzebruch} and Appendixes \ref{s.quantizationflap2ellitptic}, \ref{s.quantizationpleat}, \ref{s.quantizationcurledtori}. We have 
\begin{equation*}
    \hat{J}=\hbar(z_2\frac{\partial}{\partial z_2}+z_3\frac{\partial}{\partial z_3}+1), \quad \hat{R}=\hbar\left(z_3\frac{\partial}{\partial z_3}+\frac{1}{2}\right), \quad \hat{(R^n)}=(\hat{R})^n
\end{equation*}
for $n\geq1$, and 
\begin{equation*}
    \hat{X}=2\hbar^2\left(\frac{\partial}{\partial z_1}\frac{\partial}{\partial z_2}z_3\frac{\partial}{\partial z_4}+z_1z_2\frac{\partial}{\partial z_3}z_4\right).
\end{equation*}
For $(a,b,c)=(20,-35,17)$ the system exhibits a flap with two focus-focus values in its interior, see Figure \ref{f.bifurcationffinsideflap}. For $(a,b,c)=(\frac{20}{\sqrt{2}},\frac{-35}{\sqrt{2}},\frac{17}{\sqrt{2}})$ the system exhibits a flap and two focus-focus values outside of the flap, see Figure \ref{f.bifurcationffoutsideflap}. For $(a,b,c)=(100,-200,110)$ the system exhibits a flap with two elliptic-elliptic values inside of another flap, see Figure \ref{f.bifurcationflaponflap}.  
\begin{figure}[ht]
\begin{center}
    \includegraphics[scale=0.6]{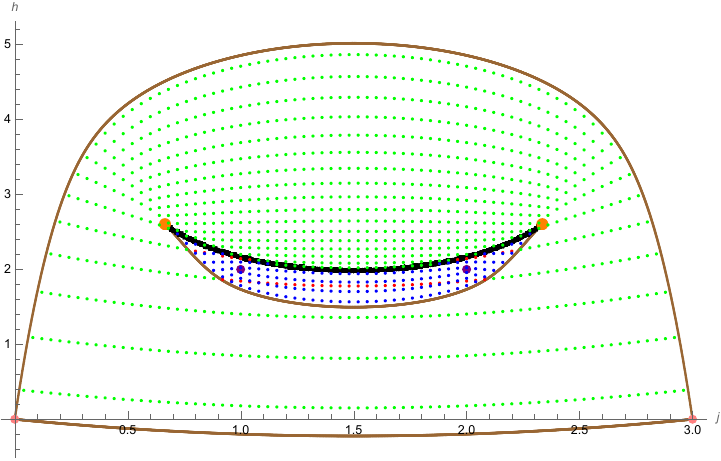}
    \caption{Bifurcation diagram for the system $(W_{1,1,2},\omega_{1,1,2},F_{a,b,c})$ with $(a,b,c)=(20,-35,17)$ together with the joint spectrum for $\hbar=\frac{1}{25}$. The purple points correspond to  focus-focus values.  The black points correspond to hyperbolic-regular values. The orange points correspond to parabolic values. The brown points correspond to elliptic-regular values. The pink points correspond to elliptic-elliptic values. Green points correspond to values of the joint spectrum outside of the image of the flap. Red points correspond to values of the joint spectrum on the background of the flap. Notice that the red dots continue the line of green dots when we pass "underneath" the flap. Blue points correspond to values of the joint spectrum on the flap. }
    \label{f.bifurcationffinsideflap}
\end{center}
\end{figure}
\begin{figure}[ht]
\begin{center}
    \includegraphics[scale=0.6]{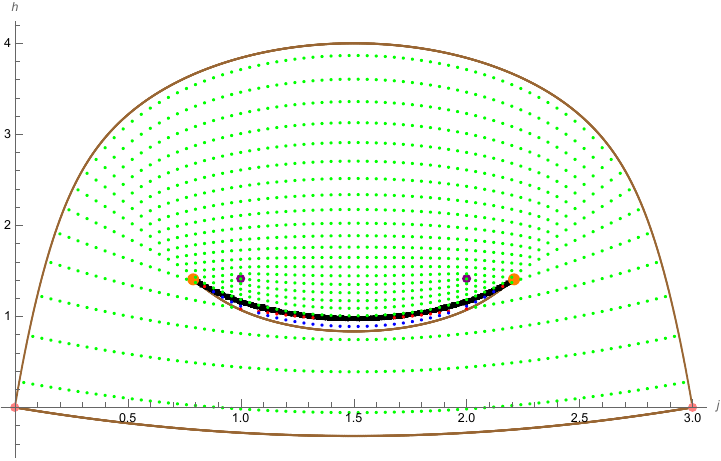}
    \caption{Bifurcation diagram for the system $(W_{1,1,2},\omega_{1,1,2},F_{a,b,c})$ with $(a,b,c)=(\frac{20}{\sqrt{2}},\frac{-35}{\sqrt{2}},\frac{17}{\sqrt{2}})$ together with the joint spectrum for $\hbar=\frac{1}{25}$. The purple points correspond to  focus-focus values.  The black points correspond to hyperbolic-regular values. The orange points correspond to parabolic values. The brown points correspond to elliptic-regular values. The pink points correspond to elliptic-elliptic values. The black points correspond to hyperbolic-regular values. Green points correspond to values of the joint spectrum outside of the image of the flap. Red points correspond to values of the joint spectrum on the background of the flap. Blue points correspond to values of the joint spectrum on the flap.}
    \label{f.bifurcationffoutsideflap}
\end{center}
\end{figure}

\begin{figure}[ht]
\begin{center}
    \includegraphics[scale=0.6]{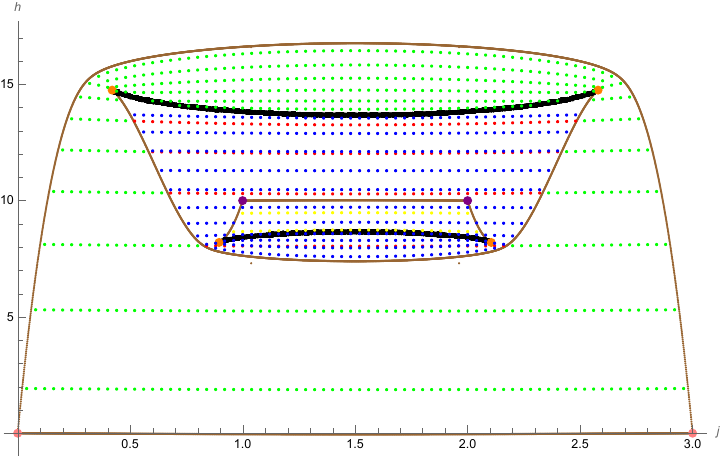}
    \caption{Bifurcation diagram for the system $(W_{1,1,2},\omega_{1,1,2},F_{a,b,c})$ with $(a,b,c)=(100,-200,110)$ together with its joint spectrum for $\hbar=\frac{1}{25}$. The purple points correspond to elliptic-elliptic values. The black points correspond to hyperbolic-regular values. The orange points correspond to parabolic values. The brown points correspond to elliptic-regular values. The pink points correspond to elliptic-elliptic values in the boundary. Green points correspond to values of the joint spectrum outside of the image of the initial flap. Red points correspond to values of the joint spectrum on the background of the initial flap. Blue points correspond to values of the joint spectrum on the initial flap. Yellow points correspond to values of the joint spectrum on the smaller flap.}
    \label{f.bifurcationflaponflap}
\end{center}
\end{figure}

Applying analogous methods to the ones in Appendixes \ref{s.actionJaynesCummings}, \ref{s.actionangle2llipticflap}, \ref{s.actionpleat} and \ref{s.actioncurledtori} to compute the actions of systems, we obtain the following results:

\begin{example}
    Four representatives of the affine invariant for the hypersemitoric system $(W_{1,1,2},\omega_{1,1,2},F_{a,b,c})$ with $(a,b,c)=(20,-35,17)$ given by Theorem \ref{t.polytopegeneral} are shown in Figure \ref{f.fullinvariantffinsideflap}.
\end{example}

\begin{example}
    Four representatives of the affine invariant for the hypersemitoric system $(W_{1,1,2},\omega_{1,1,2},F_{a,b,c})$ with $(a,b,c)=\frac{1}{\sqrt{2}}(20,-35,17)$ given by Theorem \ref{t.polytopegeneral} are shown in Figure \ref{f.fullinvarianffoutsideflap}.
\end{example}

\begin{example}
    Four representatives of the affine invariant for the hypersemitoric system $(W_{1,1,2},\omega_{1,1,2},F_{a,b,c})$ with $(a,b,c)=(100,-200,110)$ given by Theorem \ref{t.polytopegeneral} are shown in Figure \ref{f.fullinvariantflaponflap}.
\end{example}
\begin{figure}[ht]
\begin{center}
    \includegraphics[scale=1.4]{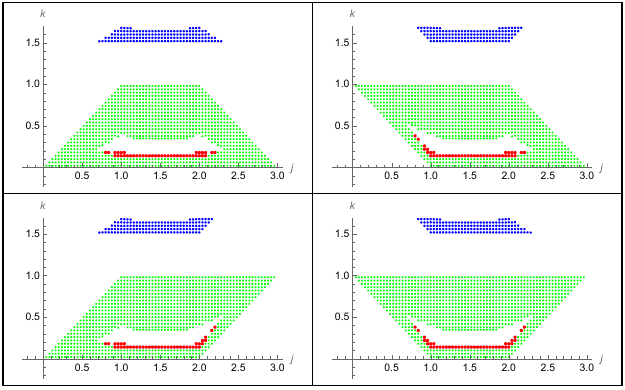}
    \caption{Representatives of the affine invariant given by Theorem \ref{t.polytopegeneral} applied to the joint spectrum of the system $(W_{1,1,2},\omega_{1,1,2},F_{a,b,c})$ with $(a,b,c)=(20,-35,17)$ and $\hbar=\frac{1}{25}$. The green points correspond to the classical actions computed outside of the image of the flap. The red points correspond to the classical actions computed on the background of the flap. The blue points correspond to the classical actions computed on the flap, which we chose to plot away from its original position to make the picture clearer. The first layer of the system is its background. The second layer of the system is the flap. On the top left the choice of $\vec{\epsilon}$ for the first layer is $(+1,+1)$ and the choice of $\vec{\epsilon}$ for the second layer is $(+1,+1)$.  On the top right the choice of $\vec{\epsilon}$ for the first layer is $(-1,+1)$ and the choice of $\vec{\epsilon}$ for the second layer is $(-1,-1)$.  On the bottom left the choice of $\vec{\epsilon}$ for the first layer is $(+1,-1)$ and the choice of $\vec{\epsilon}$ for the second layer is $(+1,-1)$.  On the bottom right the choice of $\vec{\epsilon}$ for the first layer is $(-1,-1)$ and the choice of $\vec{\epsilon}$ for the second layer is $(-1,+1)$.}
    \label{f.fullinvariantffinsideflap}
\end{center}
\end{figure}

\begin{figure}[ht]
\begin{center}
    \includegraphics[scale=1.4]{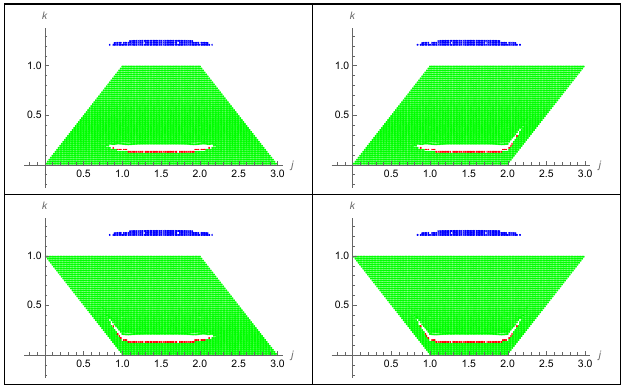}
    \caption{Representatives of the affine invariant given by Theorem \ref{t.polytopegeneral} applied to the joint spectrum of the system $(W_{1,1,2},\omega_{1,1,2},F_{a,b,c})$ with $(a,b,c)=(\frac{20}{\sqrt{2}},-\frac{35}{\sqrt{2}},\frac{17}{\sqrt{2}})$ and $\hbar=\frac{1}{50}$. The green points correspond to the classical actions computed outside of the image of the flap. The red points correspond to the classical actions computed on the background of the flap. The blue points correspond to the classical actions computed on the flap, which we chose to plot away from its original position to make the picture clearer. The first layer of the system is its background. The second layer is the flap. No choice of $\vec{\epsilon}$ is necessary for the second layer. On the top left the choice of $\vec{\epsilon}$ for the first layer is $(+1,+1)$. On the top right the choice of $\vec{\epsilon}$ for the first layer is $(+1,-1)$. On the bottom left the choice of $\vec{\epsilon}$ for the first layer is $(-1,+1)$. On the bottom right the choice of $\vec{\epsilon}$ for the first layer is $(-1,-1)$. }
    \label{f.fullinvarianffoutsideflap}
\end{center}
\end{figure}

\begin{figure}[ht]
\begin{center}
    \includegraphics[scale=1.4]{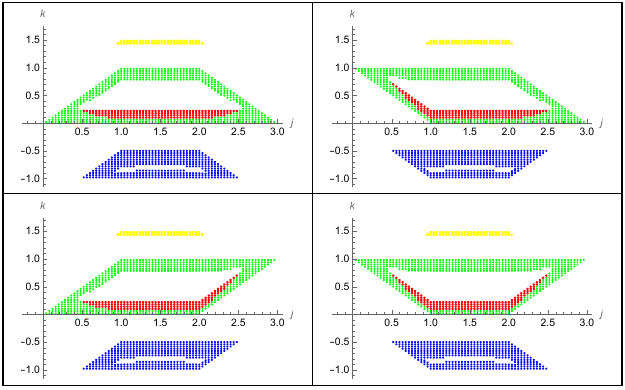}
    \caption{Representatives of the affine invariant given by Theorem \ref{t.polytopegeneral} applied to the joint spectrum of the system $(W_{1,1,2},\omega_{1,1,2},F_{a,b,c})$ with $(a,b,c)=(100,200,-110)$ and $\hbar=\frac{1}{25}$. The green points correspond to the classical actions computed outside of the image of the initial flap. The red points correspond to the classical actions computed on the background of the initial flap. The blue points correspond to the classical actions on the initial flap, which we chose to plot away from its original position to make the picture clearer. The yellow points correspond to the classical actions computed on the smaller flap, which we chose to plot away from its original position to make the picture clearer. The first layer of the system is the layer that contains the background of the initial flap. The second layer of the system is the initial flap. The third layer of the system is the smaller flap. No choice of $\vec{\epsilon}$ is necessary for the third layer. On the top left the choice of $\vec{\epsilon}$ for the first layer is $(+1,+1)$ and the choice of $\vec{\epsilon}$ for the second layer is $(+1,+1)$. On the top right the choice of $\vec{\epsilon}$ for the first layer is $(-1,+1)$ and the choice of $\vec{\epsilon}$ for the second layer is $(-1,-1)$. On the bottom left the choice of $\vec{\epsilon}$ for the first layer is $(+1,-1)$ and the choice of $\vec{\epsilon}$ for the second layer is $(+1,-1)$. On the bottom right the choice of $\vec{\epsilon}$ for the first layer is $(-1,-1)$ and the choice of $\vec{\epsilon}$ for the second layer is $(-1,+1)$. }
    \label{f.fullinvariantflaponflap}
\end{center}
\end{figure}
\clearpage
\appendix
\section{Classical actions}
\label{s.classicalactions}
In this Appendix we give detailed instructions on how to compute the classical actions of several systems introduced throughout the main body of the article. 
\subsection{Classical actions for the modified Jaynes-Cummings model}
\label{s.actionJaynesCummings}
Recall the system $(M,\omega,(J,H+G))$ from Section \ref{s.jaynescummingsmodel}. $J$ generates an $S^1$-action for which the function $z$ is invariant. In addition the functions
\begin{align*}
    L(u,v,x,y,z)=ux+vy,\quad T(u,v,x,y,z)=vx-uy
\end{align*} 
are invariant by the $S^1$-action. They satisfy the relation
\begin{align*}
    L^2+T^2=2(J-z)(1-z^2)
\end{align*}
with $-1\leq z\leq \min\{J,1\}$. For fixed $J=j$ these relations define a surface of revolution $Q_j$. In terms of the invariant functions $L,T,z$ we have
\begin{align*}
    H+G=\frac{1}{2}L+\gamma z^2.
\end{align*}

For a regular value $(J,H+G)=(j,h)$ we consider the corresponding fiber which is a smooth two-dimensional torus $T^2_{j,h}$ or the union of two such tori.
The reduced fiber $T^2_{j,h} /  S^1$ is a circle $S_{j,h}$ or the union of two circles.

Introduce cylindrical coordinates $(z,\theta)$ on $S^2$, with $x = (1-z^2)^{1/2} \cos\theta$ and $y= (1-z^2)^{1/2} \sin\theta$, and symplectic polar coordinates $(I,\phi)$ on $\mathbb{R}^2$ with $u = \sqrt{2I} \cos\phi$, $v = \sqrt{2I} \sin\phi$. 
Then the symplectic form becomes
\[ \omega = d\phi \wedge dI \oplus d\theta \wedge dz
= -d(I d\phi + zd\theta) =: -d\alpha. \]
Moreover,
\[ H+G= \frac12 (2I(1-z^2))^{1/2} \cos(\phi-\theta) + \gamma z^2, \]
and
\[ J = I + z. \]

To obtain the classical actions we integrate $\alpha$ along a homology cycle.
Notice that on a representative $\gamma_J$ of the homology cycle generated by the flow of $X_J$ we have that $I$, $z$ are constant while $\phi$, $\theta$ increase by $2\pi$ and thus the corresponding action is 
\[ \frac{1}{2\pi} \int_{\gamma_J} I d\phi + zd\theta = I + z = J. \]

To construct a second homology cycle $\gamma$ for our regular value $(j,h)$ we proceed as follows.
We first identify a point on the reduced space $Q_j=J^{-1}(j)/S^1$ with $T=0$ such that
\[ h = \frac12 L + \gamma z^2, \]
by solving the equation
\begin{equation}
\label{eq.reduced}
 4 (h-\gamma z^2)^2 = 2(j-z)(1-z^2), 
\end{equation}
with the restriction 
\[ -1 \le z \le \min\{j,1\}. \]
For each solution we can compute the corresponding value of $L$ from $L=2(h-\gamma z^2)$. 
Moreover denote by $(H+G)_{red}$ the reduced Hamiltonian of $H+G$ on $Q_j$ and by $\phi_t^{(H+G)_{red}}$ its Hamiltonian flow. We compute $\dot T:=\frac{d}{dt}(T\circ \phi_t^{(H+G)_{red}})$ at each of the solutions of Equation \eqref{eq.reduced} and we keep only the solutions with $\dot T > 0$. Note that along each closed orbit there are two solutions, one with $\dot T > 0$ and one with $\dot T < 0$.

Notice that
\begin{align*}
L &= (2(j-z)(1-z^2))^{1/2} \cos(\phi-\theta), \\
T &= (2(j-z)(1-z^2))^{1/2} \sin(\phi-\theta).
\end{align*}
Since $T=0$ we find $\phi-\theta=k\pi$, $k\in \mathbb{Z}$. If for the solution with $\dot{T}>0$ we have $L >0$ then we choose $\phi=\theta=0$. Otherwise if $L<0$ we choose $\phi=\pi$, $\theta=0$. Moreover, $I = j-z$. Therefore, we have initial conditions on the fiber $T^2_{j,h}$ expressed as $(I,\phi,z,\theta)$ and we use these as initial conditions to integrate the equations of motion in the phase space $S^2 \times \mathbb{R}^2$. We continue the integration until $\sin(\phi(t)-\theta(t))$ changes sign from negative to positive, corresponding to $T=0$, $\dot T > 0$, that is, the orbit in the reduced space has made one revolution. The time it takes for this to happen is the first return time denoted by $\tau$.

Even though the orbit traced in time $\tau$ in the reduced space is closed, the orbit in the original phase space in general does not close: after time $\tau$ the orbit has reached a point $(I,\tilde{\phi},z,\tilde{\theta})$.
To obtain a closed curve representing the homology cycle $\gamma$ we consider the orbit of $X_J$ from $(I,\tilde{\phi},z,\tilde{\theta})$ for time $-\tilde{\theta}$ to the original initial condition $(I,\phi,z,0)$. 
Then $\vartheta = \tilde{\theta}$ gives the rotation number.

Therefore, we have expressed the loop $\gamma$ as a concatenation of $\gamma_1$ and $\gamma_2$ where $\gamma_1$ is a solution curve of $X_{H+G}$ and $\gamma_2$ is a solution curve of $X_J$. 
The second classical action is, therefore, given by
\[ K = \frac{1}{2\pi} \int_\gamma \alpha = \frac{1}{2\pi} \int_{\gamma_1} \alpha + \frac{1}{2\pi} \int_{\gamma_2} \alpha.  \]
The second integral is given by
\[ \int_{\gamma_2} \alpha = -\vartheta J. \]
The first integral can be computed numerically by integrating $\alpha$ together with the equations of motion as we compute $\gamma_1$.
Therefore, we can compute $K$ combining these two results.

\subsection{Classical actions for the system with a flap with two elliptic-elliptic values}
\label{s.actionangle2llipticflap}
In order to compute the classical actions for the system $(W_{1,1,2},\omega_{1,1,2},(J,H_{0.44}))$ defined in Section \ref{s.systemflapwith2elliptic} we use an approach similar to Section \ref{s.actionJaynesCummings}. Recall that $W_{1,1,2}$ is obtained by $\mathbb{T}^2$ reduction from $\mathbb{C}^4$. Consider the complex coordinates $z_k=p_k+iq_k$, the symplectic polar coordinates $z_k=\sqrt{2I_k}\exp(i\phi_k)$, $k=1,...,4$ and the symplectic form 
\begin{equation*}
    \omega=\sum_{i=1}^{4}d\phi_i\wedge dI_i=-d\left(\sum_{i=1}^{4}I_id\phi_i\right)=:-d(\alpha).
\end{equation*}
The momentum map of the $\mathbb{T}^2$ reduction is $N=(N_1,N_2)$, where $N_1=I_1+I_2+2I_3$ and $N_2=I_3+I_4$. 
Recall that the momentum of the $S^1$-action of $J$ is $I_2+I_3$. The invariant functions for the $\mathbb{T}^3$ action of $(N_1,N_2,J)$ are 
\begin{equation*}
    \begin{cases}
        R=I_3-I_4,\\
        N_1=I_1+I_2+2I_3,\\
        N_2=I_3+I_4,\\
        J=I_2+I_3,\\
        X=4\sqrt{I_1I_2I_3I_4}\cos(-\phi_1-\phi_2+\phi_3-\phi_4),\\
        Y=4\sqrt{I_1I_2I_3I_4}\sin(-\phi_1-\phi_2+\phi_3-\phi_4).
    \end{cases}
\end{equation*}
They satisfy the relation 
\begin{equation*}
    X^2+Y^2=\frac{(N_2+R)}{2}\frac{(N_2-R)}
  {2}\left(J -\frac{(N_2 + R)}{2}\right) \left(N_1-J-\frac{(N_2 + R)}{2}\right).
\end{equation*}
Consider a regular value $(j,h)$.
First we identify a point in the reduced space $J^{-1}(j)/S^1$ with $Y=0$ such that, for $t=0.44$,
\begin{equation*}
    h=(1-t)R+\frac{s}{3}\left( \frac{9}{20}X+(2j-3)(R+2)\right)+8tI_3I_4
\end{equation*}
by solving the equation
\begin{align*}
   \biggl( (-1+2j-R)&(-1+R)(1+R)(-5+2j+R) \biggl) \\
   & -\frac{400 \biggl( 3h-4jt+6R^2t+R(-3+6t-2jt)           \biggl)^2}{81t^2}=0
\end{align*}
for $-1\leq R\leq \min\{3-j,1\}$. Then we compute $\dot{Y}:=\frac{d}{ds}(Y\circ \phi_{s}^{H_{red}})$ at each of the solutions and we keep only the solutions with $\dot{Y}>0$. If for the solution with $\dot{Y}>0$ we have $X>0$ we choose $\phi_1=\phi_2=\phi_3=\phi_4=0$. Otherwise if $X<0$ we choose $\phi_1=\phi_2=\phi_4=0$ and $\phi_3=\pi$. Using the fact that
\begin{equation*}
    \begin{cases}
        I_1=3-j-\frac{1+R}{2},\\
        I_2=j-\frac{(1+R)}{2},\\
        I_3=\frac{(1+R)}{2},\\
        I_4=\frac{(1-R)}{2}
    \end{cases}
\end{equation*}
we have initial conditions $(\phi_1,\phi_2,\phi_3,\phi_4,I_1,I_2,I_3,I_4)$ on the fiber $(J,H_{0.44})^{-1}(j,h)$ and we use these initial conditions to express the equations of motion.

We integrate the equations of motion until $\sin(-\phi_1(t)-\phi_2(t)+\phi_3(t)-\phi_4(t))$ changes sign from negative to positive, that is the orbit in the reduced space has made one revolution. Let us denote the time it takes for this to happen by $\tau$.

Recall that even though the orbit in the reduced space has made one revolution, the orbit in the original space in general does not close after time $\tau$. It has reached a point $(\tilde{\phi_1},\tilde{\phi_2},\tilde{\phi_3},\tilde{\phi_4},I_1,I_2,I_3,I_4)$. 

To obtain a closed curve representing a homology cycle we consider $3$ additional orbits. The orbit of $X_{N_1}$ for time $-\tilde{\phi_1}$. The orbit of $X_{N_2}$ for time $-\tilde{\phi_4}$. And the orbit of $X_{J}$ for time $-(\tilde{\phi_2}-\tilde{\phi_1})$.

Let $\gamma_1$ be the solution curve of $X_{H_{0.44}}$ doing one revolution in the reduced space for the given initial value conditions. Then the second classical action is given as 
\begin{equation*}
    \frac{1}{2\pi}\biggl(\int_{\gamma_1}\alpha-3\tilde{\phi_1}-\tilde{\phi_4}-j(\tilde{\phi_2}-\tilde{\phi_1})\biggl).
\end{equation*}
Since $J$ generates an effective $S^1$-action, the first action is $J$.

\subsection{Classical actions for the system with a pleat}
\label{s.actionpleat}
Recall the system given by $(W_{1.02,1,1},\omega_{1.02,1,1},(J,H_1))$ defined in Section \ref{s.pleatexample}. In this appendix we compute the classical actions for this system. We present a different approach from the one in Appendix \ref{s.actionJaynesCummings} and Appendix \ref{s.actionangle2llipticflap}.
Recall that
\begin{equation*}
    J=\frac{1}{2}|z_2|^2, \quad N_1=\frac{1}{2}(|z_1|^2+|z_2|^2+|z_3|^2), \quad N_2=\frac{1}{2}(|z_3|^2+|z_4|^2).
\end{equation*}
Using the $S^1$-actions generated by $J,N_1$ and $N_2$ in $\mathbb{R}^8$, the reduced space $J^{-1}(j)/S^1$ can be identified with the space described by the equations
\begin{equation*}
    \begin{cases}
        \pi_5^2+\pi_6^2=\pi_1\pi_3\pi_4, \\
        \pi_1+\pi_2+\pi_3=4.04,\\
        \pi_3+\pi_4=2,
    \end{cases}
\end{equation*}
where $\pi_k$, for $k=1,...,6$ are the following functions in $\mathbb{R}^8$:
\begin{equation*}
    \begin{cases}
    \pi_1:=x_1^2+y_1^2,\\
    \pi_2:=x_2^2+y_2^2,\\
    \pi_3:=x_3^2+y_3^2,\\
    \pi_4:=x_4^2+y_4^2,\\
    \pi_5:=\Re(\overline{z_1}z_3\overline{z_4}),\\
    \pi_6:=\Im(\overline{z_1}z_3\overline{z_4}),
    \end{cases}
\end{equation*}
and $z_j=x_j+iy_j$. For $t=1$ we obtain $H:=H_1=-\frac{\pi_3}{2}+\pi_5+2\pi_1\pi_3$. We compute that 
\begin{equation*}
    \dot{\pi_3}:=\{\pi_3,H\}=-2\pi_6.
\end{equation*}
Using these results we can compute the first return time $T(j,h)$,
\begin{equation*}
T(j,h)=\int_{0}^{T(j,h)}dt=2\int_{\pi_3^{-}}^{\pi_3^{+}}\frac{d\pi_3}   {\dot{\pi_3}}=\int_{\pi_3^{-}}^{\pi_3^{+}}\frac{d\pi_3}{\sqrt{S(j,h)}}
\end{equation*}
where $S(j,h)=(4.04-2j-\pi_3)\pi_3(2-\pi_3)-(h+\frac{\pi_3}{2}-2(4.04-2j-\pi_3)\pi_3)^2$ is a polynomial in $\pi_3$ of $4$th degree and $\pi_3^{\pm}$ are real roots of $S(j,h)$. Notice that for some values of $(j,h)$ there exist $4$ real roots of $S(j,h)$ which corresponds to the existence of disconnected components in the preimages of regular values in the swallowtail.

Recall that the combined $S^1$-actions given by $J,N_1$ and $N_2$ are
\begin{equation*}
    (t,\theta_1,\theta_2)\subset \mathbb{T}^3\mapsto (z_1e^{i\theta_1},z_2e^{i(\theta_1+t)},z_3e^{i(\theta_1+\theta_2)},z_4e^{i\theta_2}) 
\end{equation*}
where $t$ corresponds to the $S^1$-action  induced by $J$, $\theta_1$ to the $S^1$-action induced by $N_1$ and $\theta_2$ to the $S^1$-action induced by $N_2$.
The function $\eta_J(z_1,z_2,z_3,z_4)=\arg(z_2)-\arg(z_1)$ measures the action of $J$ relative to the actions of $N_1$ and $N_2$. We compute 
\begin{equation*}
    \dot{\eta}_J:=\{\eta_J,H\}=\frac{\pi_5}{\pi_1}+4\pi_3,
\end{equation*}
hence:
\begin{align*}
\theta_J(j,h)&=\int_{0}^{\theta_J(j,h)}d\eta_J=\int_0^{T(j,h)}\dot{\eta_J}dt
\\ &=\int_{\pi_3^{-}}^{\pi_3^{+}}\left(4\pi_3+\frac{h+\frac{\pi_3}{2}-2(4.04-2j-\pi_3)\pi_3}{(4.04-2j-\pi_3)}\right)\frac{d\pi_3}{\sqrt{S(j,h)}}.
\end{align*}
To obtain $\theta_{N_1}$ we consider the function $\eta_{N_1}(z_1,z_2,z_3,z_4)=\arg(z_1)$. And since $\dot{\eta_{N_1}}=-\dot{\eta_{J}}$ we find $\theta_{N_1}(j,h)=-\theta_{J}(j,h)$.

Finally to obtain $\theta_{N_2}$ we consider $\eta_{N_2}(z_1,z_2,z_3,z_4)=\arg(z_3)-\arg(z_1)$ and we have
\begin{equation*}
    \dot{\eta}_{N_2}:=\{\eta_{N_2},H\}=1-\frac{(\pi_1-\pi_3)\pi_5}{\pi_1\pi_3}-4\pi_1+4\pi_3,
\end{equation*}
hence
\begin{align*}
    &\theta_{N_2}(j,h) \\
    \nonumber &=\int_{\pi_3^{-}}^{\pi_3^{+}}\left(1-4(4.04-2j-\pi_3)+4\pi_3-\frac{(h+\frac{\pi_3}{2}-2(4.04-2j-\pi_3)\pi_3)}{(4.04-2j-\pi_3)\pi_3}\right)\frac{d\pi_3}{\sqrt{S(j,h)}}.
\end{align*}
Using these results we compute the classical actions, which depend on the choice of tori on the disconnected components of the swallowtail. The classical actions are given by the formula
\begin{equation*}
    \label{eq.actionanglepleat}
    \left(j,\quad \frac{1}{2\pi}\left(\int_{\gamma_1}\alpha-2.02\ \theta_{N_1}(j,h)-\theta_{N_2}(j,h)-j\theta_J(j,h)\right)\right)
\end{equation*}
where $\gamma_1$ is a orbit of $X_{H}$ for the initial value conditions given by $(j,h)$ and period $T(j,h)$.

\begin{remark} 
When doing the computations in Mathematica we noticed that applying a method analogous to the methods of Section \ref{s.actionJaynesCummings} and Section \ref{s.actionangle2llipticflap} was faster than using the methods described above. 
\end{remark}

\subsection{Classical actions for the system with curled tori}.
\label{s.actioncurledtori}
Recall the system $(W_{1,1,2},\omega_{1,1,2},(J,X))$ defined in Section \ref{s.examplecurledtori}.
In this section we compute the classical actions of the system $(W_{1,1,2},\omega_{1,1,2},(J,X))$, using a method analogous to the one found in Appendix \ref{s.actionJaynesCummings} and Appendix \ref{s.actionangle2llipticflap}.
Recall that $W_{1,1,2}$ is obtained by a $\mathbb{T}^2$ reduction of $\mathbb{C}^4$. We consider the symplectic polar coordinates $z_k=\sqrt{2I_k}\exp(\phi_k)$, $k=1,...,4$, for the symplectic form 
\begin{equation*}
    \omega=\sum_{i=1}^{4}d\phi_i\wedge dI_i=-d\left(\sum_{i=1}^{4}I_id\phi_i\right)=:-d(\alpha).
\end{equation*}
The action of the reduction has momentum map $N=(N_1,N_2)$, where $N_1=I_1+I_2+2I_3$ and $N_2=I_3+I_4$. The invariant functions of the $\mathbb{T}^3$ action of $(N_1,N_2,J)$ are
\begin{equation*}
    \begin{cases}
        R=I_3,\\
        N_1=I_1+I_2+2I_3,\\
        N_2=I_3+I_4,\\
        J=I_2,\\
        X=4\sqrt{I_1^2I_3I_4}\cos(-2\phi_1+\phi_3-\phi_4),\\
        Y=4\sqrt{I_1^2I_3I_4}\sin(-2\phi_1+\phi_3-\phi_4).
    \end{cases}
\end{equation*}
They satisfy the relation
\begin{equation*}
    X^2+Y^2=16(N_2-R)R(J-N_1+2R)^2.
\end{equation*}
Consider a regular value $(j,h)$. First we identify a point in the reduced space $J^{-1}(j)/S^1$ such that 
$h=X$ by solving the equation
\begin{equation*}
    16(j-3+2R)\biggl( (-j+3)+2(j-6)R+8R^2\biggl)=0
\end{equation*}
for $0\leq R\leq \min\{1,\frac{(3-j)}{2}\}$. Then we compute $\dot{Y}:=\frac{d}{dt}(Y\circ \phi_t^{X_{red}})$ at each of the solutions and keep only the solutions with $\dot{Y}>0$. If for the solution with $\dot{Y}>0$ we have $X>0$ we choose $\phi_i=0$ for $i=1,...,4$. Otherwise if $X<0$ we choose $\phi_i=0$ for $i=1,2,4$ and $\phi_3=\pi$. Using the fact that 
\begin{equation*}
    \begin{cases}
        I_1=3-j-2R,\\
        I_2=j,\\
        I_3=R,\\
        I_4=1-R,
    \end{cases}
\end{equation*}
we have initial conditions on the fiber $(J,X)^{-1}(j,h)$ and we use these initial conditions to express the equations of motion. 

We integrate the equations of motion until $\sin(-2\phi_1(t)+\phi_3(t)-\phi_4(t))$ changes sign from negative to positive, that is the orbit in the reduced space has made one revolution. Denote this first return time by $\tau$. 

Recall that even though the orbit in the reduced space has made one revolution, in the original space, in general, the orbit does not close after time $\tau$. It has reached a point $(\tilde{\phi_1},\tilde{\phi_2},\tilde{\phi_3},\tilde{\phi_4},I_1,I_2,I_3,I_4)$. To obtain a closed orbit we need to consider $3$ additional orbits, namely the orbit of $X_{N_1}$ for time $-\tilde{\phi_1}$, the orbit of $X_{N_2}$ for time $-\tilde{\phi_4}$ and the orbit of $X_{J}$ for time $-(\tilde{\phi_2}-\tilde{\phi_1})$.

Let $\gamma_1$ be the solution curve of $X_{X}$ doing one revolution in the reduced space. Then the second action is given by 
\begin{equation*}
    \frac{1}{2\pi}\biggl(\int_{\gamma_1}\alpha -3\tilde{\phi_1}-\tilde{\phi_4}-j(\tilde{\phi_2}-\tilde{\phi_1}) \biggl).
\end{equation*}
Since $J$ generates an effective $S^1$-action, the first action is simply given by $J$.

\section{Quantization of the systems}
\label{s.quantizationofthesystems}
In this Appendix we show how to quantize several systems introduced throughout the main body of the article. 
\subsection{Quantization of the system with a flap with two elliptic-elliptic values}
\label{s.quantizationflap2ellitptic}
Recall the system $(W_{1,1,2},\omega_{1,1,2},(J,H_{t}))$ defined in section \ref{s.systemflapwith2elliptic}. 
The quantization $\mathcal{H}_{1,1,2}$ of $W_{1,1,2}$, when it is nontrivial, is generated by the monomials $z_1^{\alpha_1}z_2^{\alpha_2}z_3^{\alpha_3}z_4^{\alpha_4}$ such that 
\begin{equation*}
    \begin{cases}
        \hbar(\alpha_1+\alpha_2+2\alpha_3+2)=3,\\
        \hbar(\alpha_3+\alpha_4+1)=1.
    \end{cases}
\end{equation*}
Furthermore, the quantization of $J$ is $\hat{J}=\hbar(z_2\frac{\partial}{\partial z_2}+z_3\frac{\partial}{\partial z_3}+1)$. The quantization of $R$ is $\hat{R}=\hbar(z_3\frac{\partial}{\partial z_3}-z_4\frac{\partial}{\partial z_4})$. The quantization of $X$ is $\hat{X}=2\hbar^2(z_1z_2\frac{\partial}{\partial z_3}z_4+\frac{\partial}{\partial z_1}\frac{\partial}{\partial z_2}z_3\frac{\partial}{\partial z_4})$. Therefore, the quantization of $H_t$ is 
\begin{align*}
    \hat{H_t} =(1-t)\hat{R}+\frac{t}{3}\left(\frac{9}{20}\hat{X}+(2\hat{J}-3)(\hat{R}+2)\right)+8t\hbar^2\left(z_3\frac{\partial}{\partial z_3}+\frac{1}{2}\right) \left(z_4\frac{\partial}{\partial z_4}+\frac{1}{2}\right),
\end{align*}
with all operators mentioned above restricted to $\mathcal{H}_{1,1,2}$. The operators $\hat{J}$ and $\hat{H}_t$ commute. We do the analogous of Section \ref{s.quantizationhirzebruch} to obtain the joint spectrum of the system $(J,H_t)$. Let
\begin{equation*}
e_{k}^a:=\frac{z_1^{\alpha_1(k)}z_2^{\alpha_2(k)}z_3^kz_4^{\alpha_4(k)}}{\sqrt{\alpha(k)!}}=\frac{z_1^{\alpha_1(k)}z_2^{\alpha_2(k)}z_3^kz_4^{\alpha_4(k)}}{\sqrt{(\alpha_1(k)!)(\alpha_2(k)!)(\alpha_3(k)!)(\alpha_4(k)!)}}
\end{equation*}
for a fixed $a\in \text{spec}(\hat{J})$, where $\alpha_1(k),\alpha_2(k),\alpha_4(k)$ solve the following equations
\begin{equation*}
    \begin{cases}
        \hbar(\alpha_1(k)+\alpha_2(k)+2k)=3,\\
        \hbar(k+\alpha_4(k)+1)=1,\\
        \hbar(\alpha_2(k)+k+1)=a.
    \end{cases}
\end{equation*}
For fixed $\hbar$, such that the previous equations are solved, the $e_a^k$ form a orthonormal basis of $\mathcal{E}_a:=\ker(\hat{J}-a)$ where 
\begin{equation*}
    \begin{cases}
        k=0,...,\lfloor a/\hbar\rfloor -1, \quad a\leq 1,\\
        k=0,...,\lfloor 1/\hbar \rfloor -1, \quad 1\leq a\leq 2,\\
        k=0,...,\lfloor (3-a)/\hbar \rfloor -1, \quad 2\leq a\leq 3.
    \end{cases}
\end{equation*}
Furthermore,
\begin{align*}
    \hat{X}(e_k^a)&=2\hbar^2\sqrt{(\alpha_1(k)+1)(\alpha_2(k)+1)(\alpha_4(k)+1)k}\ e_{k-1}^a
    \\
    & \quad +  2\hbar^2\sqrt{\alpha_1(k)\alpha_2(k)\alpha_4(k)(k+1)}\ e_{k+1}^{a}
\end{align*}
and
\begin{equation*}
    \hat{R}(e^{a}_k)=\hbar(k-\alpha_4(k))e_k^{a}, \quad \hat{J}(e_k^{a})=\hbar(a_2(k)+k+1)e_k^{a}.
\end{equation*}

\subsection{Quantization of the system with a pleat}
\label{s.quantizationpleat}
Recall the hypersemitoric system $(W_{1.02,1,1},\omega_{1.02,1,1},(J,H_t))$ of Section \ref{s.pleatexample} and the quantization procedure of Section \ref{s.quantizationhirzebruch}. We have
\begin{equation*}
    \hat{J}=\hbar\left(z_2\frac{\partial}{\partial z_2}+\frac{1}{2}\right).
\end{equation*}

Let us quantize the Hamiltonian $H_t$. Rewrite
\begin{equation*}
    X=\frac{1}{2}\left(\overline{z_1}z_3\overline{z_4}+z_1\overline{z_3}z_4\right).
\end{equation*}

Then replace $z_k$ by $\sqrt{2\hbar}\frac{\partial}{\partial z_k}$ and $\overline{z_k}$ by $\sqrt{2\hbar}z_k$ to obtain 
\begin{align*}
    \hat{H_t} & =(1-2t)\hbar\left(z_3\frac{\partial}{\partial z_3}+\frac{1}{2}\right)+t\sqrt{2}\hbar^{\frac{3}{2}}\left(z_1\frac{\partial}{\partial z_3}z4 +\frac{\partial}{\partial z_1}z_3\frac{\partial}{\partial z_4}\right) \\
    &\quad +8t\hbar^2\left(z_1\frac{\partial}{\partial z_1}+\frac{1}{2}\right)\left(z_3\frac{\partial}{\partial z_3}+\frac{1}{2}\right)
\end{align*}
in the Bargmann representation. Furthermore, the operators $\hat{J}$ and $\hat{H_t}$ commute. In order to understand the joint spectrum of $(J,H_t)$ we need to find the eigenvalues of $H_t$ when restricted to $\mathcal{E}_{a_2}:=\ker(\hat{J}-a_2)$ for $a_2\in \text{spec}(\hat{J})$. Therefore, we need to understand how $\hat{H}_t$ acts on the basis elements $z_1^{\alpha_1}z_2^{\alpha_2}z_3^{\alpha_3}z_4^{\alpha}$. For a fixed $a_2\in \text{spec}(\hat{J})$ let $\alpha_2=\frac{a_2}{\hbar}-\frac{1}{2}$ and 
\begin{equation*}
e^{a_2}_k:=\frac{z_1^{\alpha_1(k)}z_2^{\alpha_2}z_3^{k}z_4^{\alpha(k)}}{\sqrt{\alpha(k)!}}=\frac{z_1^{\alpha_1(k)}z_2^{\alpha_2}z_3^{k}z_4^{\alpha(k)}}{\sqrt{(\alpha_1(k)!)(\alpha_2!)(k!)(\alpha_4(k)!)}}
\end{equation*}
where $0\leq k \leq \lfloor \frac{1}{\hbar}\rfloor -1$ and $\alpha_1(k),\alpha_4(k)$ are such that the following equations are satisfied:
\begin{equation*}
    \begin{cases}
        \hbar(\alpha_1(k)+\alpha_2+k+\frac{3}{2})=2.02,\\
        \hbar(k+\alpha_4(k)+1)=1.
    \end{cases}
\end{equation*}
Evaluating $\hat{H_t}$ on $e^{a_2}_k$ we have:
\begin{align*}
    \hat{H_t}(e^{a_2}_k)&=\left(\frac{1}{2}\hbar(1+2k)(1-2t+4\hbar t)+\hbar^2t\alpha_1(k)4(1+2k)\right)e^{a_2}_k \\ & \quad + \left( \hbar^{3/2} \sqrt{2}t\sqrt{(\alpha_1(k)+1)(\alpha_4(k)+1)k}\right)e^{a_2}_{k-1}\\
    & \quad  +\left(\hbar^{3/2}\sqrt{2}t\sqrt{\alpha_1(k)\alpha_4(k)(k+1)}\right)e^{a_2}_{k+1}.
\end{align*}

\subsection{Quantization of the system with curled tori}
\label{s.quantizationcurledtori}
Recall the integrable system $(W_{1,1,2},\omega_{1,1,2},(J,X))$ defined in Section \ref{s.examplecurledtori} and the quantization procedure of Section \ref{s.quantizationhirzebruch}. The quantization of $J=\frac{1}{2}|z_2|^2$ is given by
\begin{equation*}
    \hat{J}=\hbar\left(z_2\frac{\partial}{\partial z_2}+\frac{1}{2}\right).
\end{equation*}
Using techniques analogous to the ones in Section \ref{s.quantizationpleat}, mainly $\overline{z_i}\mapsto \sqrt{2\hbar}z_i$ and $z_i\mapsto \sqrt{2\hbar}\frac{\partial}{\partial z_i}$, the quantization of $X$ is given by 
\begin{equation*}
    \hat{X}:=2\hbar^2\left(z_1^2\frac{\partial}{\partial z_3}z_4+\frac{\partial ^2}{\partial^2 z_1 }z_3\frac{\partial}{\partial z_4}\right).
\end{equation*}
Note that the operators $\hat{J},\hat{X}$ commute. Recall that the Hilbert space of the quantization of $W_{1,1,2}$, when it is nontrivial, is generated by the monomials $\frac{z_1^{\alpha_1}z_2^{\alpha_2}z_3^{\alpha_3}z_4^{\alpha_4}}{\sqrt{\alpha!}}$ such that 
\begin{equation*}
    \begin{cases}
        \hbar(\alpha_1+\alpha_2+2\alpha_3+2)=3,\\
        \hbar(\alpha_3+\alpha_4+1)=1.
    \end{cases}
\end{equation*}
As in Section \ref{s.quantizationpleat} let us consider, for a fixed value of $a_2\in \text{spec}(\hat{J})$, the basis elements $e_k^{a_2}$ of $\mathcal{E}_{a_2}$. A computation gives 
\begin{align*}
    \hat{X}(e^{a_2}_k)= \ &2\hbar^2\left(\sqrt{\alpha_4(k)}\sqrt{k+1}\sqrt{\alpha_1(k)}\sqrt{\alpha_1(k)-1}\right)e^{a_2}_{k+1}
    \\
    & +2\hbar^2\left(\sqrt{k}\sqrt{\alpha_4(k)+1}\sqrt{\alpha_1(k)+1}\sqrt{\alpha_1(k)+2}\right)e_{k-1}^{\alpha_2},
\end{align*}
where $\alpha_1(k),\alpha_4(k)$ solve the following equations:
\begin{equation*}
    \begin{cases}
        \hbar(\alpha_1(k)+\frac{a_2}{h}-\frac{1}{2}+2k+2)=3,\\
        \hbar(k+\alpha_4(k)+1)=1.
    \end{cases}
\end{equation*}

\vspace{20mm}
\printbibliography
\vspace{10mm}

\noindent
Konstantinos Efstathiou \\
\\
Duke Kunshan University \\
Division of Natural and Applied Sciences and \\
Zu Chongzhi Center for Applied Mathematics and Computational Sciences \\
8 Duke Avenue\\
215316 Kunshan, China\\
\\
{\em E\--mail}: \texttt{k DOT efstathiou AT dukekunshan DOT edu DOT cn} \\
\\
\noindent 
  Sonja Hohloch \& Pedro Santos\\
  \\
  University of Antwerp\\
  Department of Mathematics\\
  Middelheimlaan 1\\
  B-2020 Antwerpen, Belgium\\
  \\
  {\em E\--mail}: \texttt{sonja DOT hohloch AT uantwerpen DOT be} \\
  {\em E\--mail}: \texttt{pedro DOT santos AT uantwerpen DOT be}

\end{document}